\documentclass{compositio}

\usepackage[utf8]{inputenc}
\usepackage[english]{babel}
\usepackage{newlfont}
\usepackage{amsmath,amssymb,amsfonts,amsbsy,bbm}
\usepackage{mathtools,braket,mathrsfs}
\usepackage[all]{xy}
\usepackage{tikz}
\usetikzlibrary{arrows}

 \usepackage[cal=euler]{mathalpha}

\usepackage{stmaryrd}
\usepackage{hyperref}
\usepackage[new]{old-arrows}
\usepackage{extarrows}
\usepackage{textcomp}

\newcommand{\plectic}[0]{\text{\textmarried}}
\newcommand{\llangle}{\langle\hspace{-1mm}\langle}
\newcommand{\rrangle}{\rangle\hspace{-1mm}\rangle}

\newcommand{\bb}{\mathbb}
\newcommand{\mbf}{\mathbf}
\newcommand{\msf}{\mathsf}
\newcommand{\scr}{\mathscr}
\newcommand{\mrm}{\mathrm}
\newcommand*{\bfcdot}{\scalebox{0.6}{$\bullet$}}
\newcommand{\et}{{\acute{\mathrm{e}}\mathrm{t}}}

\newcommand{\Z}{\ensuremath{\mathbb{Z}}}
\newcommand{\OO}{\ensuremath{\mathcal{O}}}
\newcommand{\Q}{\ensuremath{\mathbb{Q}}}

\newcommand{\p}{\ensuremath{\mathfrak{p}}}
\newcommand{\q}{\ensuremath{\mathfrak{q}}}
\newcommand{\A}{\ensuremath{\mathbb{A}}}

\newcommand{\too}{\longrightarrow}								

\newcommand{\twoheadlongrightarrow}{\relbar\joinrel\twoheadrightarrow}
\newcommand{\hooklongrightarrow}{\lhook\joinrel\too}
\newcommand{\into}{\hookrightarrow}

\newcommand{\G}{\ensuremath{\mathcal{G}}}
\newcommand{\n}{\ensuremath{\mathfrak{n}}}

\DeclareMathOperator{\Hom}{Hom}

\DeclareMathOperator{\Gal}{Gal}

\DeclareMathOperator{\GL}{GL}
\DeclareMathOperator{\PGL}{PGL}

\DeclareMathOperator{\disc}{disc}

\renewcommand{\det}{\operatorname{det}}
\DeclareMathOperator{\ord}{ord}

\DeclareFontFamily{U}{wncy}{}
    \DeclareFontShape{U}{wncy}{m}{n}{<->wncyr10}{}
    \DeclareSymbolFont{mcy}{U}{wncy}{m}{n}
    \DeclareMathSymbol{\Sh}{\mathord}{mcy}{"58}

\def\XXint#1#2#3{{\setbox0=\hbox{$#1{#2#3}{\int}$}%
\vcenter{\hbox{$#2#3$}}\kern-.5\wd0}}%

\theoremstyle{plain}
\newtheorem{theorem}{Theorem}[section]
\newtheorem{lemma}[theorem]{Lemma}
\newtheorem{proposition}[theorem]{Proposition}
\newtheorem{corollary}[theorem]{Corollary}
\newtheorem{conjecture}[theorem]{Conjecture}

\newtheorem{thmx}{Theorem}

\theoremstyle{definition}
\newtheorem{remark}[theorem]{Remark}

\newtheorem{definition}[theorem]{Definition}
\newtheorem{assumptions}[theorem]{Assumption}

\def\XXint#1#2#3{{\setbox0=\hbox{$#1{#2#3}{\int}$ }
\vcenter{\hbox{$#2#3$ }}\kern-.585\wd0}}

\makeatletter

\newcommand{\exterior}[1]{\mathop{\mathpalette\exterior@{#1}}}
\newcommand{\exterior@}[2]{%
  \raisebox{\depth}{%
  \fontsize{\sf@size}{0}%
  \m@th
  $\ifx#1\displaystyle\textstyle\else#1\fi\bigwedge$}%
  ^{\mspace{-2mu}#2}
  \kern-\scriptspace
}

\begin{document}

\title{Plectic Heegner classes}

\author{Michele Fornea}
\email{michele.fornea@unipd.it}
\address{Universit\`a degli Studi di Padova, Italia.}

\classification{11F41, 11F67, 11G05, 11G40}

\begin{abstract}
We introduce a new collection of partially global Galois cohomology classes 
subsuming both \emph{plectic Heegner points} and   \emph{mock plectic invariants}. The former are recovered as localizations of \emph{plectic Heegner classes}, while the latter arise as eigenspace projections with respect to a ``partial Frobenius''-action. By overcoming some limitations of previous constructions, plectic Heegner classes are expected to provide finer control over the arithmetic of higher rank elliptic curves. 

We are able to perform our construction via a systematic use of certain automorphic functions whose coefficients are $p$-adic measures valued in Galois cohomology. As we produce these functions through the uniformization of Shimura curves -- rather than higher dimensional quaternionic Shimura varieties -- our results are compatible with a plectic refinement of Tate's conjectures.
\end{abstract}

\maketitle

\tableofcontents

\section{Introduction}
Departing from previous approaches inspired by the uniformization of higher dimensional  quaternionic Shimura varieties (\cite{PlecticInvariants}, \cite{plecticHeegner}, \cite{plecticJacobians}, \cite{nonarchimedeanplecticjacobians}), this article constructs new  Galois cohomology classes relying on the uniformization of Shimura curves and their CM points.     
This  perspective emerged while exploring the ``mock plectic setting'' (\cite[End of Section 3.8]{DarmonFornea},  \cite[Remark 2.10]{Iwasawa4mock}) and, in hindsight, might have been anticipated by considering a plectic refinement of Tate's conjectures. Interestingly, it also seems to shed some light on the connection between  Nekov\'a$\check{\text{r}}$--Scholl's plectic conjectures and the Iwasawa theory of elliptic curves. 

Our approach builds on the work of Bertolini--Darmon \cite{BDHidaRational} and Mok \cite{MokHeegner}. 
To study finer invariants than those previously considered, we have to work with deeper level structures. Therefore, the relevant $p$-adic Hecke operators are defined following Ash--Stevens \cite{Ash-Stevens}, and their dependency on the choice of uniformizers is carefully analyzed throughout.  One of our main contributions is the introduction and use of certain automorphic functions whose coefficients are $p$-adic measures valued in Galois cohomology. These functions provide an efficient way to organize and manipulate arithmetic information, but they introduce new difficulties since the measures take values in large coefficient spaces.

The rest of the introduction is organized as follows: Section \ref{Intro: pHc} presents plectic Heegner classes and explains their relation to previous constructions. Section \ref{conjectures} formulates conjectures describing their significance for the arithmetic of higher rank elliptic curves. Finally, Section \ref{Intro: overview of the construction} gives  an overview of their construction.

\subsection{Plectic Heegner classes}\label{Intro: pHc}
Let $p$ be a rational prime and  $F$ a totally real number field. For any rational place $v$, denote by $\Sigma_v$ the set of $v$-adic places of $F$. We consider a quadratic CM extension $E/F$ and a modular elliptic curve $A_{/F}$  whose conductor $\frak{f}_A$ is unramified in $E$. We can then write
\begin{equation}\label{factorization}
\frak{f}_A=\n^{\mbox{\tiny $+$}}\cdot \n^{\mbox{\tiny $-$}}
\end{equation}
where $\n^{\mbox{\tiny $-$}}$ denotes the largest divisor of $\frak{f}_A$ divisible only by primes which are inert in $E$. We let $\Sigma^{\mbox{\tiny $-$}}$ represent the set of prime divisors of $\n^{\mbox{\tiny $-$}}$.
\begin{assumptions}\label{assum: square-free}
The ideal $\n^{\mbox{\tiny $-$}}$ is \emph{square-free}. 
\end{assumptions}

\begin{definition}
 A subset $\cal{S}\ \subseteq\ \Sigma^{\mbox{\tiny $-$}}\cap \Sigma_p$ is \emph{incoherent} for the triple $(A_{/F},\hspace{.5mm} E/F,\hspace{.5mm} p)$ if
\[
 \lvert \cal{S}\rvert\not\equiv \lvert \Sigma^{\mbox{\tiny $-$}}\cup\Sigma_\infty\rvert\pmod{2}.
\]
Let $w\in(\Sigma^{\mbox{\tiny $-$}}\cup\Sigma_\infty)\setminus\cal{S}$ be an auxiliary place  and set 
\[
S:=\cal{S}\cup\{w\}.
\]
We say that $S$ is \emph{coherent} for the quadruple $(A_{/F},\hspace{.5mm} E/F,\hspace{.5mm} p,\hspace{.5mm} \cal{S})$. Moreover, if $w\not\in\Sigma_\infty$ (resp. $w\in\Sigma_\infty$) we call $S$ definite (resp. indefinite).
\end{definition}
\begin{remark}
    The sign of the functional equation $\epsilon(A/E)$ is $(-1)^{\lvert \cal{S}\rvert+1}$ for any incoherent set $\cal{S}$.
\end{remark}

Fix an incoherent set $\cal{S}$ for the triple $(A_{/F},\hspace{.5mm} E/F,\hspace{.5mm}p)$ and a coherent set  $S$ for the quadruple $(A_{/F},\hspace{.5mm} E/F,\hspace{.5mm}p,\hspace{.5mm}\cal{S})$.  We let $r\in\bb{N}$ be the positive integer such that  
\[
\lvert\cal{S}\rvert=r-1\quad\text{and}\quad \lvert S\rvert=r.
\]
Under our assumptions, there exists a quaternion algebra $B/F$ with ramification set 
\[
(\Sigma^{\mbox{\tiny $-$}}\cup\Sigma_\infty)\setminus S.
\]
In particular, $B$ splits at every place in $S$, and it is definite (resp. indefinite) exactly when $S$ is definite (resp. indefinite).  We denote by $G=B^\times/F^\times$ the associated $F$-algebraic group of units modulo center and set 
\[
\frak{f}:=\frak{f}_A/\mrm{disc}(B).
\]
By hypothesis, there is a cuspidal automorphic representation $\Pi_A$ of $\PGL_2(\A_F)$ whose Hecke eigenvalues count points of $A_{/F}$ over finite fields, and admitting a Jacquet--Langlands transfer $\pi_A$ to $G(\A_F)$ of conductor $\frak{f}$. 

\medskip
Fix $\frak{c}\subseteq\cal{O}_F$ an ideal prime to $\frak{f}_A$ and let $\cal{O}_\frak{c}:=\cal{O}_F+\frak{c}\cal{O}_E$ be the associated order of $\cal{O}_E$. Since the prime divisors of $\n^{\mbox{\tiny $+$}}$ split in $E$, we can choose an $F$-algebra embedding $\psi\colon E\hookrightarrow B$ such that
\begin{equation}\label{choice of optimal embedding}
\psi(E)\cap R_{\n^{\mbox{\tiny $+$}}}=\psi(\cal{O}_\frak{c})
\end{equation}
for some  Eichler order $R_{\n^{\mbox{\tiny $+$}}}$ of level $\n^{\mbox{\tiny $+$}}$.
By a slight abuse of notation, we also write $\psi\colon T\hookrightarrow G$ for the induced morphism of $F$-algebraic groups where $T=E^\times/F^\times$. We denote by $L_\frak{c}$ the abelian extension  of $E$ whose Galois group $\cal{G}_\frak{c} :=\mrm{Gal}(L_\frak{c}/E)$ satisfies
\[
\mrm{rec}_E\colon T(F)\backslash T(\bb{A}^\infty)/\widehat{\cal{O}}^\times_\frak{c}\overset{\sim}{\longrightarrow}\cal{G}_\frak{c}.
\]
For our purposes, especially Lemma \ref{first vanishing of invariants} and Proposition \ref{NOinvariants}, it is convenient to require that the residual representation $\overline{\varrho}\colon \mrm{Gal}_F\to \mrm{GL}_2(\bb{F}_p)$ describing the action of the absolute Galois group $\mrm{Gal}_F$ on the $p$-torsion points of $A_{/F}$  has large non-solvable image :
 \begin{assumptions}\label{global classes assumptions}
The image $\overline{\varrho}(\mrm{Gal}_F)$ contains $\mrm{SL}_2(\bb{F}_p)$ and $p\ge5$.
 \end{assumptions}
Let $\chi\colon \cal{G}_\frak{c}\to\overline{\Q}^\times$ be a character and consider the tensor product of local cohomology groups
\[
\mrm{H}^1_f(E_{\otimes,\cal{S}},V_p(A)):=\bigotimes_{\p\in\cal{S}}\hspace{1mm}\mrm{H}^1_f(E_\p, V_p(A)).
\]
\begin{thmx}
Under Assumptions \ref{assum: square-free}, \ref{global classes assumptions},  we construct a ``plectic Heegner class''
\[
\kappa^\chi_{A,\cal{S}}\hspace{1mm}\in\hspace{1mm}\mrm{H}^1_f(E_{\otimes,\cal{S}},V_p(A))\otimes_{\Q_p} \mrm{H}^1(L_\frak{c}, V_p(A))^\chi
\]
belonging to a tensor product of local and global Galois cohomology groups.
\end{thmx}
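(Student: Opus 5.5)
The plan is to build $\kappa^\chi_{A,\cal{S}}$ from the Shimura curve attached to the coherent set $S$, uniformized $p$-adically at the places of $\cal{S}$. We work with the Shimura set or curve $X$ of the quaternion algebra $B$ at level $\frak{f}$, deepened by Iwahori-type level at each $\p\in\cal{S}$.

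\textbf{Step 1: the cohomology-valued automorphic function.} When $w\in\Sigma_\infty$, $B$ is indefinite and we take the Shimura curve $X$. Its CM points with respect to $\psi$ at conductor $\frak{c}\p_1^{n_1}\cdots\p_{r-1}^{n_{r-1}}$ give Kummer classes of Heegner points in $\mrm{H}^1(L_{\frak{c}p^\infty},V_p(A))$, via the $\pi_A$-isotypic quotient of the Jacobian. When $w$ is finite, $B$ is definite; the same role is then played by the Galois cohomology obtained from a Shimura curve uniformized at $w$ (Bertolini--Darmon/Mok style).

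At each $\p\in\cal{S}$, $A$ has split multiplicative reduction and $\p$ is inert in $E$. We normalize the tower of classes with the Ash--Stevens $U_\p$-operators, whose eigenvalue is $\pm1$, tracking carefully the dependence on the choice of uniformizers. This produces a function on $G(F)\backslash G(\A^\infty)/K$. Its values are $p$-adic measures on $\prod_{\p\in\cal{S}}\bb{P}^1(F_\p)$, or equivalently on the boundaries of the Bruhat--Tits trees, valued in $\mrm{H}^1(L_{\frak{c}p^\infty},T_p(A))$. The distribution relations come from the norm-compatibility of Heegner points (Euler system relations) combined with the $U_\p$-eigenproperty. Boundedness, and hence the existence of the measure, needs integrality of the classes. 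For that I will invoke Lemma \ref{first vanishing of invariants} and Proposition \ref{NOinvariants}: under Assumption \ref{global classes assumptions} there are no $\mathrm{Gal}$-invariants of $A[p]$ over the relevant anticyclotomic towers, so the inflation--restriction maps are injective and the corestrictions are compatible.

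\textbf{Step 2: plectic integration.} For each $\p\in\cal{S}$ we use the Tate uniformization $A(F_\p)\simeq F_\p^\times/q_\p^{\Z}$. The local Kummer map then identifies $\mrm{H}^1_f(E_\p,V_p(A))$ with a quotient of $\widehat{E_\p^\times}\otimes\Q_p$. Pairing the measure with $\mrm{H}^1_f(E_\p,\cdot)$ proceeds as follows. For each $\p$ we take the two fixed points $z_\p,\bar z_\p$ of $\psi(E_\p^\times)$ on $\bb{P}^1(E_\p)$, and we form the multiplicative integrals $\times\!\!\int (t-z_\p)/(t-\bar z_\p)\,d\mu$, one variable for each place of $\cal{S}$. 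This yields an element of $\bigotimes_{\p\in\cal{S}}\widehat{E_\p^\times}\otimes \mrm{H}^1(L_{\frak{c}p^\infty},V_p(A))$.

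\textbf{Step 3: descent and projection.} We then descend from $L_{\frak{c}p^\infty}$ to $L_\frak{c}$ and project to the $\chi$-component. Concretely, we sum the values over $\mrm{rec}_E$-translates of the CM point weighted by $\chi^{-1}$, and use corestriction. Invariance under $T(F)$ follows from $G(F)$-invariance of the automorphic function. Independence of the choice of uniformizers follows from the analysis of the Ash--Stevens operators. The class lands in the Selmer part of each local factor because the multiplicative integrals take values in the units modulo the Tate period.

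\textbf{Main obstacle.} The crux is Step 1's boundedness and well-definedness: showing the cohomology-valued distribution is actually a measure. The coefficient spaces are large, since $\mrm{H}^1$ over an infinite tower is not finite over $\Z_p$. I would first try to prove a uniform integral lattice bound using the vanishing of invariants (Proposition \ref{NOinvariants}) to control torsion in the corestriction maps. Independence of auxiliary choices would be checked second.
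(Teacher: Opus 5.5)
Your construction does not produce $\kappa^\chi_{A,\mathcal{S}}$. It reproduces the earlier mock construction that the paper is designed to refine.

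In Step~2 your measures live on $\prod_{\mathfrak{p}\in\mathcal{S}}\mathbb{P}^1(F_{\mathfrak{p}})$ and come from an Iwahori-level $U_{\mathfrak{p}}$-eigensystem. After projecting to $A$, which is new at $\mathfrak{p}$, they have total mass zero in each $\mathfrak{p}$-variable (compare Lemma~\ref{lemma: steinberg valued}). That is why you can only integrate degree-zero divisors such as $[z_{\mathfrak{p}}]-[\bar z_{\mathfrak{p}}]$.

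For $t\in\mathbb{P}^1(F_{\mathfrak{p}})$ the integrand $(t-z_{\mathfrak{p}})/(t-\bar z_{\mathfrak{p}})=u/\sigma_{\mathfrak{p}}(u)$ has norm one, so $\sigma_{\mathfrak{p}}$ inverts it. After Tate's uniformization your class therefore lands in $\bigotimes_{\mathfrak{p}}\mathrm{H}^1_f(E_{\mathfrak{p}},V_p(A))^{-\varepsilon_{\mathfrak{p}}}\otimes\mathrm{H}^1(L_{\mathfrak{c}},V_p(A))^\chi$. It lies in the stated space only because that space contains this eigen-subspace. What you have built is essentially the value of $\varphi_A^{\circ}$ at $\bigotimes_{\mathfrak{p}}([z_{\mathfrak{p}}]-[\bar z_{\mathfrak{p}}])\otimes[z_w]$, i.e.\ a multi-prime mock plectic invariant. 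This is essentially $\prod_{\mathfrak{p}}(1-\varepsilon_{\mathfrak{p}}\sigma_{\mathfrak{p}})\,\kappa^\chi_{A,\mathcal{S}}$ (see Section~\ref{Relation to mock plectic invariants} and Theorem~\ref{thm: comp circ and nocirc}), and it discards every $+\varepsilon_{\mathfrak{p}}$-component. The plectic Heegner class is the evaluation at the single point $z^\psi_S$, and nothing in your framework gives that evaluation a meaning.

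The missing idea is the lift from degree-zero divisors to points. It comes from a Hida family in the weight direction, not from varying the CM conductor at $\mathfrak{p}$. The paper uses the tower $X_m$ of level $K_{\mathfrak{p}^m}$ (upper unipotent mod $\mathfrak{p}^m$). It packages images of CM points unramified at $w$, valued in $\varinjlim_m J_m({}^{w}E_{\mathrm{ab}})\otimes\mathbb{Z}_p$, into the measure-valued function $\Phi_{\mathcal{S}}$ on $\mathcal{X}_{\mathcal{S}}=\varprojlim_m K_0/K_m$ (Corollary~\ref{distinguished autfun}). Each $\mathcal{X}_{\mathfrak{p}}$ is a principal $\mathcal{O}_{\mathfrak{p}}^\times$-bundle over $\mathbb{P}^1(F_{\mathfrak{p}})$ for the diamond action. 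Only on it can one integrate $f_{\tau,\varpi}=\tfrac12\log_\varpi\big((x-\tau y)^2/z\big)$, which depends on a single $\tau$, over $\mathbb{X}_{\mathrm{v}}$ with $\mathrm{v}=\mathrm{red}(\tau)$.

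The crux is then not boundedness: a measure valued in a $\mathbb{Z}_p$-module integrates continuous functions once you pass to the $p$-adic completion. The crux is Theorem~\ref{vanishing implies invariance}. After localizing at $\mathcal{P}$, any integral whose integrand is constant in a variable $\mathfrak{q}$ and taken over all of $\mathbb{X}_{\mathrm{v}_{\mathfrak{q}}}$ vanishes. The reason is that such a quantity is, inductively, diamond-invariant, so by Corollary~\ref{cor: descent} it comes from a Jacobian of level prime to $\mathfrak{q}$, where $A$ does not occur since it is multiplicative at $\mathfrak{q}$. This is where Proposition~\ref{NOinvariants} is really used, not for integrality.

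That theorem does the real work:
\begin{itemize}
\item it makes $\varphi_A$ the leading coefficient of the $\underline{X}_{\mathcal{S}}$-expansion (Corollary~\ref{cor: exotic AJ as leading term});
\item it yields diamond invariance, independence of uniformizers, and $G(F)$-invariance;
\item it yields invariance under $\mathrm{Gal}({}^{w}E_{\mathrm{ab}}/E_{\psi,h})$ (Theorem~\ref{thm: Galois invariance}), so descent needs no corestriction.
\end{itemize}
None of this is automatic for single points. The function $f_{\tau,\varpi}$ changes by additive constants under $G(F)$ (Lemma~\ref{constant components}), under $U_{\psi,h}$ (equation~\eqref{additive equality}), and under $\varpi\mapsto a\varpi$.

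Finally, $A$ need not be split at $\mathfrak{p}\in\mathcal{S}$; $\varepsilon_{\mathfrak{p}}=-1$ is allowed.
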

We refer to Corollary \ref{field of definition of plectic Heegner class} in the body of the article. As the notation suggests, since $\frak{c}$ is coprime to $\frak{f}_A$, the plectic Heegner class $\kappa^\chi_{A,\cal{S}}$ depends only on the incoherent set $\cal{S}$ and not on the choice of the auxiliary place $w$ used to form $S=\cal{S}\cup\{w\}$.  

\bigskip
In Section \ref{Comparison with previous constructions} we compare plectic Heegner classes to previous constructions. We refer to Section \ref{section: review} for a detailed explanation of our strategy. We report below the main consequences which hold under the additional hypotheses:
 \begin{assumptions}\label{for semisimplicity}
Every $\frak{q}\mid \n^{\mbox{\tiny $+$}}$ has inertia degree one, does not divide the different of $F$, and 
\[
\mrm{ord}_\q(\n^{\mbox{\tiny $+$}})\le2.
\]
 \end{assumptions}
 \begin{remark}
     We use Assumption \ref{for semisimplicity} to deduce the semi-simplicity of a certain Hecke algebra as in \cite[Theorem 4.2]{Coleman-Edixhoven}. The key consequences for us are Corollary \ref{isolocalizations} and Lemma \ref{lemma: T to Totimes} which enable us to study the behavior of certain $p$-adic integrals in Section \ref{sect: On the behavior of certain integrals}.
 \end{remark}

\subsubsection{Relation to mock plectic invariants.}\label{Relation to mock plectic invariants}
When $F=\bb{Q}$, an incoherent set $\cal{S}$ contains at most one prime. If $\cal{S}=\emptyset$, the plectic Heegner class $\kappa^\chi_{A,\cal{S}}$ is simply a linear combination of (Kummer images of) classical Heegner points. While, if $\cal{S}=\{p\}$, the plectic Heegner class  is an element in 
\[
\kappa^\chi_{A,\cal{S}}\hspace{1mm}\in\hspace{1mm} \mrm{H}^1_f(E_p, V_p(A))\otimes \mrm{H}^1(L_\frak{c}, V_p(A))^\chi.
\]
Define $\varepsilon_p\in\{\pm1\}$ by setting $\varepsilon_p=1$ (resp. $-1$) if $A_{/\Q_p}$ has split (resp. non-split) multiplicative reduction. The ``partial Frobenius'' element $\sigma_p$, i.e., the generator of $\mrm{Gal}(E_p/\Q_p)$, acts on $\mrm{H}^1_f(E_p, V_p(A))$. We can consider the eigenspace $\mrm{H}^1_f(E_p, V_p(A))^{-\varepsilon_p}$ on which $\sigma_p$ acts as multiplication by $-\varepsilon_p$. In \cite{DarmonFornea} and \cite[Section 2.2.5]{Iwasawa4mock} a mock plectic invariant 
\[
\cal{Q}^\chi_{A,\cal{S}}\hspace{1mm}\in\hspace{1mm} \mrm{H}^1_f(E_p, V_p(A))^{-\varepsilon_p}\otimes \mrm{H}^1(L_\frak{c}, V_p(A))^\chi
\]
is associated to the triple $(A_{/\Q}, E/F, \chi)$. In perfect analogy with the setting of plectic Stark--Heegner points and plectic $p$-adic invariants (\cite{plecticHeegner}, End of Section 1.3), we find that: 
\begin{thmx}
Suppose $F=\bb{Q}$ and $\cal{S}=\{p\}$. Under Assumptions \ref{assum: square-free}, \ref{global classes assumptions}, $\&$ \ref{for semisimplicity}, plectic Heegner classes recover mock plectic invariants by the formula
\[
(1-\varepsilon_p\cdot\sigma_p)\hspace{1mm}\kappa^\chi_{A,\cal{S}}=\cal{Q}^\chi_{A,\cal{S}}.
\]
\end{thmx}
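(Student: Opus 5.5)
The plan is to compare the two constructions at the level of the $p$-adic measures from which both are built. Take $F=\Q$, $\cal{S}=\{p\}$, and the definite auxiliary choice $S=\{p,w\}$ with $w\in\Sigma^{\mbox{\tiny $-$}}$, or the indefinite choice $w=\infty$ as dictated by the construction in the body. On the one hand, $\kappa^\chi_{A,\cal{S}}$ arises from an automorphic function on the Shimura curve attached to $S$. Its coefficients are $p$-adic measures valued in global Galois cohomology, supported on $\mathbb{P}^1(\Q_p)$ or on the $p$-adic upper half plane. The class is obtained by integrating a local Kummer-type pairing along the CM torus $\psi(T)$ and then projecting to the $\chi$-isotypic part over $\cal{G}_\frak{c}$. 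On the other hand, the mock plectic invariant $\cal{Q}^\chi_{A,\cal{S}}$ of \cite{DarmonFornea} and \cite[Section 2.2.5]{Iwasawa4mock} is defined by a $p$-adic integral of the same shape. In it, the local $\mrm{H}^1_f(E_p,V_p(A))$-component comes from a Tate-period/logarithm of the multiplicative uniformization at $p$. The first step is therefore to write both objects as integrals over the same space, using one $\Gal(E_p/\Q_p)$-equivariant local comparison map $\mrm{H}^1_f(E_p,V_p(A))\cong E_p^\times\widehat{\otimes}\Q_p/q_A^{\Z}$ (or its $\log$-version).

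Next, I would analyse the partial Frobenius. The generator $\sigma_p$ of $\Gal(E_p/\Q_p)$ acts on the local CM point $\tau_p\in\mathcal{H}_p(E_p)$ fixed by $\psi(E_p^\times)$ by sending it to its conjugate $\bar\tau_p$. In the Cerednik--Drinfeld/Mumford-curve picture it also acts through the Atkin--Lehner involution $W_p$ at $p$, and $W_p$ acts on $\pi_A$ by the sign $-a_p=-\varepsilon_p$. Consequently, applying $\sigma_p$ to the integral defining $\kappa^\chi_{A,\cal{S}}$ amounts to replacing $\tau_p$ by $\bar\tau_p$ and multiplying by $\varepsilon_p$ up to the sign convention. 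The mock invariant is built from the ``divisor'' $(\tau_p)-(\bar\tau_p)$, equivalently from $\log$ of $\frac{t-\tau_p}{t-\bar\tau_p}$ integrated against the measure. Combining these gives $(1-\varepsilon_p\sigma_p)\kappa=\cal{Q}$, provided the measures agree. Here one checks that $(1-\varepsilon_p\sigma_p)$ lands in the $-\varepsilon_p$-eigenspace, since $\sigma_p^2=1$, matching the target of $\cal{Q}$.

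The remaining step is to identify the measures. Both are determined by the $p$-adic Hecke eigen-system of $\pi_A$ with $U_p$-eigenvalue $a_p=\varepsilon_p$, normalized via the Ash--Stevens operators. They take values in the $\pi_A$-isotypic Galois cohomology of the Shimura curve, respectively in $\mrm{H}^1(L_\frak{c},V_p(A))$ after the Abel--Jacobi/Kummer map. I would use Corollary \ref{isolocalizations} and Lemma \ref{lemma: T to Totimes}, which rest on the semisimplicity guaranteed by Assumption \ref{for semisimplicity}, to reduce this to a comparison of harmonic cocycles on the Bruhat--Tits tree. Such cocycles are unique up to scalar by multiplicity one. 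I would then pin down the scalar through the normalization at the Heegner point, using Assumption \ref{global classes assumptions} (no invariants, Proposition \ref{NOinvariants}) so that the Galois-cohomological lift is unique.

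I expect the main obstacle to be the equivariance of the partial Frobenius with the uniformization. This means showing rigorously that the arithmetic action of $\sigma_p$ on the local factor corresponds to $\tau_p\mapsto\bar\tau_p$ composed with $W_p$, with exactly the sign $\varepsilon_p$ and with no extra character twist from the dependence on uniformizers. To handle it, I would first try the explicit description of the $p$-adic Abel--Jacobi map via Coleman integration and the $\Gal(E_p/\Q_p)$-equivariance of $\log$ on $E_p^\times$.
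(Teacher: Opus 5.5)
\textbf{Steps 1--2: the right shape, but the wrong source for the sign.} Applying $(1-\varepsilon_p\sigma_p)$ should turn evaluation at the CM point $z^\psi_p$ into evaluation on the degree-zero divisor $[z^\psi_p]-[\bar{z}^\psi_p]$. That divisor's integrand $\log\frac{t-z^\psi_p}{t-\bar{z}^\psi_p}$ descends from $\mathcal{X}_p$ to $\mathbb{P}^1(\mathbb{Q}_p)$, so this part of your plan is sound.

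The mechanism you give for the sign is not right. Since $p\in S$, the algebra $B$ is split at $p$, so there is no \v{C}erednik--Drinfeld uniformization or Atkin--Lehner involution at $p$ involved. The relation \eqref{Galois equivariance CD parametrization} concerns only a finite auxiliary place $w$, and for the comparison with $\mathcal{Q}^\chi_{A,\mathcal{S}}$ the paper takes $w=\infty$.

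The local factor of $\kappa^\chi_{A,\mathcal{S}}$ is $\exp^{\mathrm{Tate}}_{q_p}$ applied to the $E_p$-valued integral of $f_{z_p,\varpi}$. The generator $\sigma_p$ acts on the integrand by $f_{z_p,\varpi}\mapsto f_{\bar{z}_p,\varpi}$. The measure is unchanged, because $z_p$ and $\bar{z}_p$ have the same reduction. The sign $\varepsilon_p$ arises because Tate's uniformization over $E_p$ intertwines $\sigma_p$ on $A(E_p)$ with $x\mapsto\sigma_p(x)^{\varepsilon_p}$. Consequently the ``$\mathrm{Gal}(E_p/\mathbb{Q}_p)$-equivariant comparison map'' you propose does not exist when $\varepsilon_p=-1$. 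Adding a $W_p$-sign on top would count $\varepsilon_p$ twice. This part is repairable bookkeeping.

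\textbf{Step 3: the genuine gap is the identification of measures.} The measure behind $\kappa^\chi_{A,\mathcal{S}}$ is not a harmonic cocycle on the tree.
\begin{itemize}
\item It comes from $\Phi^{\mathrm{glo}}_{\mathcal{S}}$, whose values are measures on $\mathcal{X}_p$, an $\mathcal{O}_p^\times$-bundle over $\mathbb{P}^1(\mathbb{Q}_p)$.
\item Their coefficients lie in the colimit $J^{\mathrm{glo}}_{\mathcal{S}}$ over the full-level $K_m$-tower, on which diamond operators act nontrivially.
\item The $T_{\varpi_p}$-eigenvalue is the family operator $\mathbf{U}_{\varpi_p}$, not $\varepsilon_p$.
\item $\varphi_A$ exists only after localizing at $\mathcal{P}$ and descending (Theorem \ref{vanishing implies invariance}, Corollary \ref{cor: descent}).
\end{itemize}
What you must prove is that the pushforward of this measure to $\mathbb{P}^1(\mathbb{Q}_p)$, projected to $A$, is exactly the Steinberg-valued class $\omega^{\mathrm{glo}}_{A,S}$ that defines $\mathcal{Q}^\chi_{A,\mathcal{S}}$.

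``Multiplicity one up to a scalar, fixed at a Heegner point via Assumption \ref{global classes assumptions}'' cannot deliver this. The coefficient modules are large, so nothing is one-dimensional over them. That assumption is used only for descent, never for normalization. Likewise, Corollary \ref{isolocalizations} and Lemma \ref{lemma: T to Totimes} do not reduce anything to harmonic cocycles. They convert coefficient Hecke and diamond operators into power series in $R_{\otimes,\mathcal{S}}$.

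\textbf{How the paper does it.} The paper obtains the identification through the $K_m^\circ$-tower.
\begin{itemize}
\item It uses the compatibility \eqref{circ-inter-compatibilities} between the two towers.
\item It uses Corollary \ref{description using towers}. That corollary rests on Lemma \ref{lemma: steinberg valued}: the vanishing on $\mathbb{P}^1(\mathbb{Q}_p)\times U$, which holds because $A$ has multiplicative reduction at $p$.
\item It also rests on Lemma \ref{lemma: group action}: $G_p$-equivariance with sign $\varepsilon_p$, coming from the $U_p$-relations of Proposition \ref{prop: comparison of distributions}.
\item Uniqueness comes from the injectivity of \eqref{defining property}, once the image is checked to be the class of the modular parametrization.
\end{itemize}
It then proves Theorem \ref{thm: comp circ and nocirc} for all plectic divisors, and the statement follows by a cap product. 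In that theorem, moving between vertices produces the correction term of Corollary \ref{ord-integral}. The Greenberg--Stevens formula of Proposition \ref{logarithmic derivative U_p} converts that term from $\log_{\varpi_p}$ to $\log_{q_p}$.

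For your particular divisor, both points lie over the standard vertex and $\frac{t-z_p}{t-\bar{z}_p}$ is a unit. So that $\mathcal{L}$-invariant correction does not intervene. The measure identification, however, is indispensable, and your plan does not supply it.
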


\subsubsection{Relation to plectic Heegner points.}\label{Relation to plectic Heegner points}
Let us suppose that the incoherent set $\cal{S}$ can be completed to a coherent set $S=\cal{S}\cup\{w\}$ such that the auxiliary place $w$ is $p$-adic. In this case, we can consider the plectic Heegner point attached to the triple $(A_{/F}, S, \chi)$
\[
\mrm{P}^\chi_{A,S}\hspace{1mm}\in\hspace{1mm}\mrm{H}^1_f(E_{\otimes,\cal{S}},V_p(A))\otimes \mrm{H}^1_f(E_w, V_p(A))^\chi
\]
 \cite[Definition 4.11]{plecticHeegner} -- an element in a tensor product of local Bloch--Kato Selmer groups. 

Since the $\OO_E$-prime ideal generated by $w$ splits completely in $L_\frak{c}$, the choice of an extension $\iota_w\colon L_\frak{c}\hookrightarrow E_w$ of the inclusion $E\hookrightarrow E_w$ determines a  map $\mrm{loc}_w\colon \mrm{H}^1(L_\frak{c}, V_p(A))\to \mrm{H}^1(E_w, V_p(A))$ which allows us to consider 
\[
\mrm{loc}_w(\kappa^\chi_{A,\cal{S}})\hspace{1mm}\in\hspace{1mm} \mrm{H}^1_f(E_{\otimes,\cal{S}},V_p(A))\otimes \mrm{H}^1(E_w, V_p(A))^\chi.
\]
\begin{assumptions}\label{local classes assumptions}
Suppose that $S=\cal{S}\cup\{w\}$ with auxiliary $p$-adic place $w$. We require that
\begin{itemize}
 \item [$(1)$] the local field $F_w$ does not contain $p$-th roots of unity,
    \item [$(2)$]  the Tate parameter $q_w\in F_w^\times$ of $A_{/F_w}$ satisfies 
    \[
    q_w\not\in (F_w^\times)^p.
    \]
\end{itemize}
 \end{assumptions}
 \begin{thmx}
Under Assumptions \ref{assum: square-free}, \ref{global classes assumptions}, $\&$ \ref{for semisimplicity},  \ref{local classes assumptions}, the localization at $w\in\Sigma_p$ belongs to
\[
\mrm{loc}_w(\kappa^\chi_{A,\cal{S}})\hspace{1mm}\in\hspace{1mm} \mrm{H}^1_f(E_{\otimes,\cal{S}},V_p(A))\otimes \mrm{H}^1_f(E_w, V_p(A))^\chi.
\]
Moreover, it recovers plectic Heegner points by the formula
\[
\mrm{loc}_w(\kappa^\chi_{A,\cal{S}})=\mrm{P}^\chi_{A,S}.
\]
\end{thmx}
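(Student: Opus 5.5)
The plan is to compare both sides at the level of the automorphic objects from which they are built, rather than directly in Galois cohomology.

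Throughout the construction, $\kappa^\chi_{A,\cal{S}}$ is obtained by evaluating at CM points a $G$-automorphic function on the Shimura curve attached to the indefinite coherent set. Its coefficients are $p$-adic measures valued in $\mrm{H}^1(L_\frak{c},V_p(A))$, produced by Ash--Stevens $p$-adic Hecke operators at the places of $\cal{S}$. Since $w$ is $p$-adic, we may instead use the coherent set $S=\cal{S}\cup\{w\}$ and the uniformization at $w$.

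\textbf{Step 1: localize the measures at $w$.} We apply $\mrm{loc}_w$, via $\iota_w$, to the coefficients of the automorphic function. Because $w$ splits completely in $L_\frak{c}$, this commutes with the $\chi$-projection. It also turns the global Galois-cohomology-valued measure into one valued in $\mrm{H}^1(E_w,V_p(A))$.

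We then use the Čerednik--Drinfeld $w$-adic uniformization of the Shimura curve, available since $B$ is split at $w$ and $A$ has multiplicative reduction there. Under it, the localized classes of Heegner/CM points become Kummer images of Tate-period integrals on the Bruhat--Tits tree. These are exactly the $p$-adic line integrals used in \cite{plecticHeegner} to define $\mrm{P}^\chi_{A,S}$ as a product of local Kummer images over $S$.

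\textbf{Step 2: land in the Bloch--Kato subgroup.} The localized class is a priori in $\mrm{H}^1(E_w,V_p(A))$. For $A_{/F_w}$ with multiplicative reduction, $\mrm{H}^1_f$ is the Kummer image of $\widehat{E_w^\times}/q_w^{\Z}$ tensored with $\Q_p$. So it suffices to show that the integral lattice-valued measures, after localization, take values in the image of the Tate-curve Kummer map.

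This is where Assumption \ref{local classes assumptions} enters. Part (1) makes $\mrm{H}^0(F_w,\mu_p)=0$, and part (2) makes the $p$-torsion of the Tate curve nonsplit. Together these ensure that the relevant local cohomology of $A[p^n]$ is torsion-free and that the $w$-adic uniformization map is compatible integrally, so that no spurious non-crystalline components can appear.

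\textbf{Step 3: identify the values.} Comparing values, both classes are given by the same formula: a sum over $\cal{G}_\frak{c}$ weighted by $\chi$ of integrals against the same harmonic cocycle. On the $\cal{S}$ factor the two constructions agree once the Hecke operators are matched. The Ash--Stevens operators depend on uniformizer choices, and we normalize them as in \cite{plecticHeegner}. Assumption \ref{for semisimplicity} is used to pass from the Hecke algebra $\mathbb{T}$ to its $\pi_A$-isotypic localization via Corollary \ref{isolocalizations} and Lemma \ref{lemma: T to Totimes}. This lets the integrals of Section \ref{sect: On the behavior of certain integrals} be factored as a tensor over $S$.

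\textbf{Main obstacle.} I expect the main difficulty to be this last step together with Step 2. One must control the $p$-adic integrals of measures valued in large global cohomology spaces and show that they commute with localization and the Čerednik--Drinfeld isomorphism. Concretely, this requires showing that the Hecke-eigen projection of the localized measure coincides with the harmonic cocycle defining $\mrm{P}^\chi_{A,S}$, up to the uniformizer-dependent normalizations, which must cancel.
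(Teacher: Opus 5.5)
Your Step 1 is fine: reducing to the local construction via $\iota_w$ matches the paper, which notes that $\phi^{\mathrm{loc}}_m$ and $\phi^{\mathrm{glo}}_m$ agree on $\mathcal{H}^{\mathrm{glo}}_w$. Step 2, however, has a genuine gap. In the paper, Assumption \ref{local classes assumptions} is used only to get $\mathrm{H}^0(E_w,\overline{\varrho})=0$ (Lemma \ref{local first vanishing of invariants}, Proposition \ref{NOinvariants}). That vanishing feeds the local descent of Theorem \ref{descent} and Corollary \ref{cor: descent}, which is what is needed to \emph{define} $\varphi^{\mathrm{loc}}_A$ with values in $\mathrm{H}^1(E_w,V_p(A))$ at all. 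Nothing in it singles out the half-dimensional subspace $\mathrm{H}^1_f(E_w,V_p(A))$; torsion-freeness or integrality says nothing about which subspace a class lies in. The values of $\varphi^{\mathrm{loc}}_A$ are descended $D_{\mathcal{S}}$-invariants of a completed colimit of points of the tower. There is no a priori reason for them to be Kummer images of base-level points; globally, this is exactly what the Selmer conjecture is about.

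In the paper the Selmer condition at $w$ is an \emph{output}, not an input. Theorem \ref{thm: comp circ and nocirc} gives $(\iota_{\plectic})_*\varphi^{\mathrm{loc}}_A=\mathcal{K}_*\varphi^{\circ,\mathrm{loc}}_A$, but only on plectic divisors. The map $(\iota_{\plectic})_*$ is injective on the $\pi_A$-isotypic part, and $\varphi^{\circ,\mathrm{loc}}_A$ admits a lift $(\varphi^{\circ,\mathrm{loc}}_A)^\sim$ (Theorem 4.10 of \cite{plecticHeegner}; this is where $w\in\Sigma_p$ enters). Hence $r_*\varphi^{\mathrm{loc}}_A=(\varphi^{\circ,\mathrm{loc}}_A)^\sim$ for \emph{every} $\mathbb{Q}_p$-linear retraction $r$ of the Kummer map. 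Independence of $r$ forces $\varphi^{\mathrm{loc}}_A$ to take values in $A(E_w)^\wedge\otimes\mathbb{Q}_p$, and at the same time identifies it with the class whose cap product is $\mathrm{P}^\chi_{A,S}$. This lifting step is unavoidable, because the $\mathcal{S}$-components of $z^\psi_S$ are not degree-zero divisors. Your proposal never passes from $\mathbb{Z}_p[\mathcal{H}_S]^{\plectic}$ to $\mathbb{Z}_p[\mathcal{H}_S]$.

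Step 3 also misidentifies the difficulty. The \v{C}erednik--Drinfeld uniformization only accounts for the $w$-component. At the primes of $\mathcal{S}$ the two classes are \emph{not} given by the same formula:
\begin{itemize}
\item $\varphi_A$ integrates $f_{\tau_{\mathcal{S}},\varpi}$, built from $\log_\varpi\big((x-\tau y)^2/z\big)$, against Ash--Stevens measures on $\mathcal{X}_{\mathcal{S}}$ coming from the $K_m$-tower;
\item $\mathrm{P}^\chi_{A,S}$ integrates $\log_{q}$ of rational functions against the Steinberg-valued class $\omega^\Box_{A,S}$ on $\mathbb{P}^1(F_{\mathcal{S}})$, realized via the $K^\circ_m$-tower in Corollary \ref{description using towers}.
\end{itemize}
The uniformizer normalizations do not cancel. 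The paper pushes forward to $\mathbb{P}^1$ (Proposition \ref{independence pushforward}) and compares measures at adjacent vertices through $\mathbf{U}_{\varpi_{\mathfrak{p}}}$ (Proposition \ref{prop: comparison of distributions}). Moving $\tau_{\mathfrak{p}}$ one step in the tree then produces an $\mathrm{ord}_{\mathfrak{p}}$-integral weighted by $\partial_{\mathfrak{p}}(\alpha^\otimes_{\varpi_{\mathfrak{p}}})/\alpha^\otimes_{\varpi_{\mathfrak{p}}}\big\vert_{\underline{0}}$ (Corollary \ref{ord-integral}).

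The decisive input is the Greenberg--Stevens computation (Proposition \ref{logarithmic derivative U_p}), which identifies this derivative with $-\log_{\varpi_{\mathfrak{p}}}(q_{\mathfrak{p}})/\mathrm{ord}_{\mathfrak{p}}(q_{\mathfrak{p}})$. This is what converts $\log_{\varpi_{\mathfrak{p}}}$ into $\log_{q_{\mathfrak{p}}}$ and explains the appearance of $\exp^{\mathrm{Tate}}_{q_{\mathcal{S}}}$. An induction over $\mathcal{S}$ then yields Theorem \ref{thm: comp circ and nocirc}. Assumption \ref{for semisimplicity} enters exactly here, via Corollary \ref{isolocalizations} and Lemma \ref{lemma: T to Totimes}: it turns the Hecke action on coefficients into multiplication by power series that can be differentiated.
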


 \begin{remark}
    We use Assumption \ref{local classes assumptions}  to obtain the vanishing $\mrm{H}^0(E_w,\overline{\varrho})=0$  for any unramified extension $E_w/F_w$ (Lemma \ref{local first vanishing of invariants}). Assumption \ref{local classes assumptions}(2) ensures that the local representation $\overline{\varrho}_{\lvert D_w}$ is not split, while Assumption \ref{local classes assumptions}(1) implies that the mod-$p$ cyclotomic character is non-trivial when restricted to the absolute Galois group of $E_w$. 
 \end{remark}

\subsection{Conjectures}\label{conjectures}
 In light of the comparison to previous constructions presented in Sections \ref{Relation to mock plectic invariants} $\&$ \ref{Relation to plectic Heegner points}, it is natural to extend the conjectures proposed in \cite[Section 1.4]{plecticHeegner}, \cite[Section 3.8]{DarmonFornea}, and \cite[Section 2.3]{Iwasawa4mock}.
To that end, we say that the plectic Heegner class $\kappa^\chi_{A,\cal{S}}$ is \emph{Selmer} if 
    \[
  \kappa^\chi_{A,\cal{S}}\hspace{1mm}\in\hspace{1mm}\mrm{H}^1_f(E_{\otimes,\cal{S}},V_p(A))\otimes \mrm{H}^1_f(L_\frak{c}, V_p(A))^\chi
    \]
    where $\mrm{H}^1_f(L_\frak{c}, V_p(A))\subseteq \mrm{H}^1(L_\frak{c}, V_p(A))$ denotes the global Bloch-Kato Selmer group.   
\begin{conjecture}
If the character $\chi\colon\G_{\mathfrak{c}}\to\overline{\Q}^\times$ is \emph{primitive}, the following equivalence holds
	\[
	\kappa^{\chi}_{A,\cal{S}}\ \text{ is Selmer}\quad\iff\quad r_\mrm{an}(A/E,\chi)\ge r.
	\]	
    In particular, if $r_\mrm{an}(A/E,\chi)< r$, then $\kappa^{\chi}_{A,\cal{S}}\not=0$.
\end{conjecture}

When plectic Heegner classes are Selmer, we expect them to be related to points on elliptic curves as follows: let $\cal{K}\colon A(L_\frak{c})\to \mrm{H}^1_f(L_\frak{c}, V_p(A))$ denote the Kummer map. Since the $\OO_E$-prime ideals generated by the elements of $\cal{S}$ split completely in the anticyclotomic extension $L_\frak{c}$, we can choose an extension $\iota_\mathfrak{p}\colon L_\frak{c}\hookrightarrow E_\mathfrak{p}$ of the inclusion $E\hookrightarrow E_{\mathfrak{p}}$ for every $\mathfrak{p}\in \cal{S}$. Consider the homomorphism 
\[
\det\colon \wedge^rA(L_\frak{c})\longrightarrow  \mrm{H}^1_f(E_{\otimes,\cal{S}},V_p(A))\otimes \mrm{H}^1_f(L_\frak{c}, V_p(A))
\]
given by 
\[
\det\big(P_1\wedge\dots\wedge P_{r}\big):=\det \begin{pmatrix}
		\iota_{\mathfrak{p}_1}(P_1)&\dots& \iota_{\mathfrak{p}_{r-1}}(P_1)&\cal{K}(P_1)\\
        \vdots&\ddots&\vdots&\vdots\\
	\vdots&\ddots&\vdots&\vdots\\
	\iota_{\mathfrak{p}_{1}}(P_r)&\dots& \iota_{\mathfrak{p}_{r-1}}(P_r)&\cal{K}(P_r)
	\end{pmatrix}.
\]

\begin{conjecture}
If $r_\mrm{alg}(A/E,\chi)\ge r$, then
    \[ 	\kappa^{\chi}_{A,\cal{S}}\not=0\quad\iff\quad r_\mrm{alg}(A/E,\chi)=r.
 	\]	
    Moreover, there exists $w_{A,\cal{S}}^\chi\in \exterior{r} A(L_\frak{c})^\chi$ such that $\kappa^{\chi}_{A,\cal{S}}=\det\big(w_{A,\cal{S}}^\chi\big)$.
\end{conjecture}

\subsection{Overview of the construction}\label{Intro: overview of the construction}
In Section \ref{Intro: pHc} we fixed an incoherent set $\cal{S}$ for the triple $(A_{/F}, E/F, p)$, a coherent set $S=\cal{S}\cup\{w\}$, and an embedding $\psi\colon T\hookrightarrow G$ of $F$-algebraic groups. In Section \ref{Towers of Shimura curves} we will introduce the relevant tower of Shimura curves of increasing level at the primes in $\cal{S}$
\[\xymatrix{
X_0& X_1\ar[l]_-{\pi_1}\ar[l]&\cdots\ar[l]_-{\pi_1}& X_m\ar[l]&\ar[l]\cdots
}\] 
The $E$-CM points of $X_m$   ``unramified at $w$'' can be described by a double coset space 
\[
\mrm{CM}^w_E(X_m)= G(F)\backslash \big(\cal{H}_w^E\times G(\A^{w,\infty})/K_m\big),
\]
and the theorem of complex multiplication ensures that they are defined over the maximal abelian extension ${^{w}}E_\mrm{ab}$ of $E$ where $w$ is totally split.

\smallskip
 Letting $J_m$ denote the  Jacobian of $X_m$, the direct limit of Tate modules $\varinjlim_m T_p(J_m)$ with respect to the maps $\pi_1^*\colon J_m\to J_{m+1}$ inherits a right action of 
\begin{itemize}
    \item [$\bfcdot$] diamond operators $D_\cal{S}:=\varprojlim_m K_m^\circ/K_m$, denoted $\{\llangle a\rrangle\ :\ a\in \cal{O}^\times_\cal{S}\}$,
    \item [$\bfcdot$] and Hecke operators
\[
\big\{\textbf{T}_\q\ :\ \q\nmid\frak{f}_A\big\}\cup\big\{\textbf{U}_\q\ :\ \q\mid\n^{\mbox{\tiny $+$}}\big\}\cup\big\{\textbf{T}_{\varpi_\p}\ :\ \p\in\cal{S},\ \varpi_\p\in\cal{O}_\p\ \text{uniformizer} \big\}.
\] 
\end{itemize}

\subsubsection{Automorphic functions.}
The data above can be packaged into a single automorphic function  $\Phi_\cal{S}\in \cal{A}_{w,\cal{S}}\big(K_0;\hspace{0.5mm}\cal{H}_w^E,\hspace{0.5mm}\cal{M}_\cal{S}\big)$ of full level at $\cal{S}$ (denoted $\Phi_\cal{S}^\mrm{glo}$ in Corollary \ref{distinguished autfun}), whose coefficients $\cal{M}_\cal{S}=\cal{M}_\cal{S}^\mrm{glo}$ are certain $p$-adic measures on  $\cal{X}_\cal{S}:=\varprojlim_m K_0/K_m$  valued in  the colimit
\begin{equation}\label{intro: colimit}
\varinjlim_m\ \mrm{H}^1\big({^{w}}E_\mrm{ab}, T_p(J_m)\big).
\end{equation}
\begin{remark}
  The $p$-adic space $\cal{X}_\cal{S}$  admits a right action $\cal{X}_\cal{S}\times D_\cal{S}\rightarrow \cal{X}_\cal{S}$, $(\gamma,a)\mapsto \gamma\hspace{0.5mm}\bfcdot\hspace{0.5mm} a$, of diamond operators induced by \emph{right} multiplication. A measure $\mu$ belongs to $\cal{M}_\cal{S}$ if  $\mu(U)\llangle a\rrangle=\mu(U\hspace{0.5mm}\bfcdot\hspace{1mm} a)$ for all compact open $U\subseteq\cal{X}_\cal{S}$ and all diamond operators $a \in D_\cal{S}$. We refer to \eqref{iso1} for a description of $\cal{X}_\cal{S}$ in terms of primitive vectors.
\end{remark}
 It is possible -- but, as explained in Sections \ref{Semigroup action on p-adic domains} $\&$ \ref{action on distribution},  delicate at the primes in $\cal{S}$ -- to define a natural left action of Hecke operators 
\[
\big\{T_\q\ :\ \q\nmid\frak{f}_A\big\}\cup\big\{U_\q\ :\ \q\mid\n^{\mbox{\tiny $+$}}\big\}\cup\big\{T_{\varpi_\p}\ :\ \p\in\cal{S},\ \varpi_\p\in\cal{O}_\p\ \text{uniformizer} \big\}
\] 
on the space of automorphic functions $\cal{A}_{w,\cal{S}}\big(K_0;\hspace{0.5mm}\cal{H}_w^E,\hspace{0.5mm}\cal{M}_\cal{S}\big)$ (see Corollary \ref{Hecke-action of families}). The distinguished function  $\Phi_\cal{S}$ has the remarkable property of being a Hecke eigenfunction (Corollary \ref{Hecke action on special form}): 
\begin{equation}\label{intro: eigenfunction}
    T_\q.\Phi_\cal{S}=\Phi_\cal{S}.\mbf{T}_\q,\qquad  U_\q.\Phi_\cal{S}=\Phi_\cal{S}.\mbf{U}_\q,\qquad T_{\varpi_\p}.\Phi_\cal{S}=\Phi_\cal{S}.\mbf{U}_{\varpi_\p}.
    \end{equation}
    Note that in the equations \eqref{intro: eigenfunction} the LHS denotes the Hecke action on automorphic functions, while the RHS represents the Hecke action on the coefficients through the Hecke module \eqref{intro: colimit}.  
The automorphic function $\Phi_\cal{S}$ is a key ingredient in the definition of a ``plectic Abel--Jacobi map'' for a (mock) quaternionic Shimura variety.

\subsubsection{Plectic Abel--Jacobi map.}  For any $v\in S$, consider the $G_v$-space $\cal{H}_v:=\bb{P}^1(E_v)\setminus\bb{P}^1(F_v)$ with $G_v$-action obtained through the twisting of the natural M\"obius transformations action by 
\begin{equation}\label{twisting character}
\chi_v\colon G_v\longrightarrow \mrm{Gal}(E_v/F_v),\qquad g\mapsto (\sigma_v)^{\zeta_v(g)}
\end{equation}
where $\sigma_v\in \mrm{Gal}(E_v/F_v)$ is the generator, and $\zeta_v\colon G_v\rightarrow \Z/2\Z$ is the homomorphism
\[
\zeta_v(g)=\begin{cases}
   \mrm{ord}_v(\det(g))& \text{if } v\not\in\Sigma_\infty\\
   0&\text{if } v\in\Sigma_\infty.
\end{cases}
\]  
Set $\cal{H}_\cal{S}=\prod_{v\in S}\cal{H}_v$ and let $\cal{H}_S^E\subset \cal{H}_\cal{S}$ denote the $G(F)$-orbit of a fixed point for the action of $T$ through $\psi\colon T\hookrightarrow G$. We consider the (mock) quaternionic Shimura variety 
\[
X_G^S(\frak{f}):=G(F)\backslash \big(\cal{H}_S\times G(\A^{S,\infty})/K_0^S\big)
\]
whose $E$-CM points ``unramified at $S$'' are defined by the double coset space
\[
\mrm{CM}_E\big(X_G^S(\frak{f})\big):=G(F)\backslash \big(\cal{H}_S^E\times G(\A^{S,\infty})/K_0^S\big).
\]
\begin{remark}
When $S\subseteq \Sigma_p$, the set $X_G^S(\frak{f})$ consists of $p$-adic points in a $r$-dimensional quaternionic Shimura variety attached to the indefinite quaternion algebra obtained from $B$ by switching invariants at $S$ and a subset of $\Sigma_\infty$ of the same cardinality.
\end{remark}
 Since the conductor of $A_{/F}$ is divisible by every $\p\in\cal{S}$, the descent results of Section \ref{Descent of Galois cohomology classes}, in particular Corollary \ref{cor: descent}, allow us to prove the following:
 \begin{thmx}
 Under Assumptions \ref{assum: square-free}, \ref{global classes assumptions}, there exists a ``plectic Abel--Jacobi map''
\[
\varphi_A\colon\mrm{CM}_E\big(X_G^S(\frak{f})\big)\longrightarrow \mrm{H}^1_f(E_{\otimes,\cal{S}},V_p(A))\otimes_{\Q_p} \mrm{H}^1({^{w}}E_\mrm{ab}, V_p(A)).
\]
\end{thmx}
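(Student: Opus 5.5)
To construct $\varphi_A$, we fix a CM point $x=[(z_S,g)]\in\mrm{CM}_E\big(X_G^S(\frak{f})\big)$ and decompose $z_S=(z_w,z_{\cal{S}})$ with $z_{\cal{S}}=(z_\p)_{\p\in\cal{S}}$. The component $(z_w,g)$ gives a CM point on the tower of Shimura curves $X_m$, and $\Phi_\cal{S}$ is evaluated there. By the eigenproperty \eqref{intro: eigenfunction} and the $G(F)$-equivariance of $\Phi_\cal{S}$, the value $\Phi_\cal{S}(z_w,g)$ is a measure $\mu_x\in\cal{M}_\cal{S}$ on $\cal{X}_\cal{S}$. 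This measure takes values in the colimit \eqref{intro: colimit}, and its behaviour under the diamond operators $D_\cal{S}$ is controlled.

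The idea is then to integrate $\mu_x$ against a function on $\cal{X}_\cal{S}$ built from the points $z_\p\in\cal{H}_\p$. For each $\p\in\cal{S}$, the point $z_\p\in\bb{P}^1(E_\p)\setminus\bb{P}^1(F_\p)$ determines, via the primitive-vector description \eqref{iso1} of $\cal{X}_\cal{S}$, a function $\xi\mapsto \log_{q}$ or Kummer-type pairing $\langle \xi, z_\p\rangle\in E_\p^\times$. Its Kummer image under Tate's uniformization $A(E_\p)\simeq E_\p^\times/q_\p^{\Z}$ lands in $\mrm{H}^1_f(E_\p,V_p(A))$, since $\p\mid\frak{f}_A$ is a prime of multiplicative reduction. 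The multiplicative integral $\times\!\!\!\!\int \langle\xi,z_{\cal{S}}\rangle\, d\mu_x(\xi)$ is formed as a tensor product over $\p\in\cal{S}$. The invariance of $\mu_x$ under $\llangle a\rrangle$, together with homogeneity of the integrand, shows that the integral is independent of representatives. Changing $(z_S,g)$ by $\gamma\in G(F)$ corresponds to translating $\xi$, which uses the equivariance of $\Phi_\cal{S}$. The integral first lands in $\bigotimes_\p \widehat{E_\p^\times}\otimes \varinjlim_m \mrm{H}^1({^{w}}E_\mrm{ab},T_p(J_m))$.

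To pass from the Jacobians $J_m$ to $A$, we project via the $\pi_A$-isotypic quotient. The measure values live in the ordinary part at $\cal{S}$, where $\mbf{U}_{\varpi_\p}$ acts, so they must be descended to level $K_0$ and then to $A$. For this we invoke Corollary \ref{cor: descent}. Since $\p\in\cal{S}$ divides the conductor exactly once, the $U_\p$-eigenvalue is $\pm1$, and the Galois cohomology classes in the colimit descend compatibly to $\mrm{H}^1({^{w}}E_\mrm{ab},V_p(A))$. The vanishing of $\overline{\varrho}$-invariants in Lemma \ref{first vanishing of invariants} and Proposition \ref{NOinvariants}, which relies on Assumption \ref{global classes assumptions}, guarantees that restriction maps in the colimit are injective and the limit classes are well defined. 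Finally, we tensor the local Tate-parameter identification $\widehat{E_\p^\times}\otimes\Q_p/\langle q_\p\rangle\to \mrm{H}^1_f(E_\p,V_p(A))$ over $\cal{S}$.

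The main obstacle is the descent step: showing that the integral of an eigen-measure valued in the large colimit of Jacobian cohomology produces a class that factors through the $A$-part without denominators. It must also be independent of the choice of uniformizers $\varpi_\p$ in the Hecke operators. I would first verify that the total mass of $\mu_x$ on the primitive vectors vanishes. This uses harmonicity, which follows from $T_{\varpi_\p}\Phi=\Phi\mbf{U}_{\varpi_\p}$ and the eigenvalue $\pm1$, and it makes the integral insensitive to scaling $\xi$, hence to the choice of $\varpi_\p$. Corollary \ref{cor: descent} would then be applied prime by prime in $\cal{S}$.
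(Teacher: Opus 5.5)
Your outline has the right overall shape: integrate the measure from $\Phi_\cal{S}$ against a logarithm built from $z_\cal{S}$, descend with Corollary \ref{cor: descent}, then use Tate's uniformization. However, the two steps that actually make the map well defined are missing, and the substitutes you propose do not work.

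\textbf{$G(F)$-invariance.} $\Phi_\cal{S}$ is equivariant only for $(K_0)_\cal{S}$, the only part of $G_\cal{S}$ acting on $\cal{X}_\cal{S}$. Put $\mu_x=\Phi_\cal{S}(g)(z_w)$. For the representative $(\delta_S.z_S,\delta^Sg)$, the automorphy of $\Phi_\cal{S}$ gives $\Phi_\cal{S}(\delta^Sg)(\delta_w.z_w)=\Phi_\cal{S}(g\,\delta_\cal{S}^{-1})(z_w)$. In general $\delta_\cal{S}\notin(K_0)_\cal{S}$, so this is not a translate of $\mu_x$ on $\cal{X}_\cal{S}$.

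The paper instead works with the vertices $\mrm{v}_\cal{S}=\mrm{red}(\tau_\cal{S})$. It chooses Cartan representatives $\widetilde{g}_{\mrm{v}_\cal{S}}$ in the Ash--Stevens semigroup $\Delta^\varpi_\cal{S}$ and evaluates $\Phi_\cal{S}$ at $g_{\mrm{v}_\cal{S}}h$. It then pushes the measure onto $\bb{X}_{\mrm{v}_\cal{S}}\subset\widetilde{\cal{Y}}_\cal{S}$ and twists by $\chi(g_{\mrm{v}_\cal{S}})$. The integrand is the additive $f_{\tau_\cal{S},\varpi}=\otimes_\p\frac{1}{2}\log_{\varpi}\big((x-\tau_\p y)^2/z\big)$, and $\exp^{\mrm{Tate}}_{q_\cal{S}}$ is applied only at the end. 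Even then $\delta$ does not preserve the integral on the nose. By Lemma \ref{constant components} it changes the integrand by terms with one constant $\cal{S}$-component.

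\textbf{The vanishing theorem.} This is the heart of the proof, and you are missing Theorem \ref{vanishing implies invariance}. It says that after integrating and then tensoring with $\bb{I}$ (localizing at $\cal{P}$), every integral of $\mathbbm{1}_{\Sigma^c}\otimes f_{\lambda_\Sigma,\varpi}$ with $\Sigma\subsetneq\cal{S}$ over the relevant domains vanishes.

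Your claim that $\mu_x$ is invariant under $\llangle a\rrangle$ is wrong; it is only equivariant, $\mu(U)\llangle a\rrangle=\mu(U\bfcdot a)$. An integrand invariant under $\bfcdot a$ factors through $\bb{P}^1(F_\p)$, so it only sees degree-zero divisors, as in the older construction. To evaluate at a point you need something like $f_{\tau,\varpi}$, which shifts by $\frac{1}{2}\log_p(a_\p)$ under $\bfcdot a_\p^{-1}$. So diamond invariance, independence of $\varpi$ and $G(F)$-invariance all reduce to the vanishing of such mixed integrals.

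Vanishing of the total mass is only the case $\Sigma=\emptyset$, and it does not suffice once $\lvert\cal{S}\rvert\ge2$. It is also not available before localizing: the total mass is the image of a point of $X_0$ in $J_\cal{S}$ and need not vanish. After localization it dies not by harmonicity but because $A$ is new at $\q$.

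The paper argues by induction on $\lvert\Sigma\rvert$. The induction hypothesis makes the quantity diamond-invariant. Then \eqref{inter-compatibilities}, Lemma \ref{identify-inv} and Corollary \ref{cor: descent} put it in the image of $\mrm{H}^1(E^\Box,T_p(J_1^\circ(\q^c))_{\overline{\varrho}})\otimes\bb{I}$. This group is zero because $A$ has multiplicative reduction at $\q$. Diamond invariance of $\varphi_\varpi\otimes1$ then follows from the leading-term argument with $F_{\tau_\cal{S},\varpi}$ in $R_{\otimes,\cal{S}}$ (Corollaries \ref{cor: exotic AJ as leading term} and \ref{inv-diamond}).

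Finally, Corollary \ref{cor: descent} takes $D_\cal{S}$-invariants as input. You therefore cannot apply it, or define the projection to $A$, until this invariance has been established.
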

We refer to \eqref{varphi localized}, Definition \ref{def: Exotic AJ map}, and Corollary \ref{piA-isotypic subspace} for more details.  Here we can say that it is defined by an explicit expression of the form 
\[
[\tau_S,h]\mapsto (\exp_{q_\cal{S}}^\mrm{Tate}\otimes\hspace{0.3mm}\mrm{pr}_A)\left(\int_{\bb{X}_{\mrm{v}_\cal{S}}}f_{\tau_\cal{S},_\varpi}(\gamma)\hspace{1mm}\mrm{d}\mu^{\varpi}_{(\tau_{S},h)}(\gamma)\otimes 1\right)
\]
\begin{itemize}
\item [$\bfcdot$] where $\varpi=\{\varpi_\p\}_{\p\in\cal{S}}$ denotes a choice of uniformizers $\varpi_\p\in \cal{O}_\p$,
    \item [$\bfcdot$] $f_{\tau_\cal{S},_\varpi}$ is a function depending on $\varpi$ and the $\cal{S}$-component $\tau_\cal{S}$ of $\tau_S$,
     \item [$\bfcdot$] $\bb{X}_{\mrm{v}_\cal{S}}$ is a translate of $\cal{X}_\cal{S}$ depending only on the reduction $\mrm{v}_\cal{S}=\mrm{red}(\tau_\cal{S})$ of $\tau_\cal{S}$ in the Bruhat--Tits building of $G_\cal{S}$, 
      \item [$\bfcdot$] $\mu^{\varpi}_{(\tau_{S},h)}$ is a $p$-adic measure constructed from $\Phi_\cal{S}$ and depending on $\varpi$, $\mrm{v}_\cal{S}$, the $w$-component of $\tau_S$, and $h\in G(\A^{S,\infty})$.
\end{itemize}  
As the notation suggests, by Corollary \ref{independence of choice of uniformizers} the function $\varphi_A$ does not depend on the choice of uniformizers $\varpi$. Moreover, in Theorem \ref{thm: Galois invariance} we show that for every $x\in \mrm{CM}_E\big(X_G^S(\frak{f})\big)$ there exists an explicit finite abelian extension $L_x$ of $E$, contained in ${^{w}}E_\mrm{ab}$, such that
\[
\varphi_A(x)\ \in\ \mrm{H}^1_f(E_{\otimes,\cal{S}},V_p(A))\otimes \mrm{H}^1(L_x, V_p(A)).
\]
Plectic Heegner classes are obtained as linear combinations of values of $\varphi_A$.

 \vspace{2em}
 \textbf{Acknowledgements}

\noindent This article develops an idea proposed  by Henri Darmon for refining mock plectic invariants.  I am deeply grateful to him for generously sharing his ideas during our stay at MSRI/SLMath, Berkeley, in the Spring of 2023. Our joint work brought renewed energy and inspiration to the exploration of plectic points. I  warmly thank Lennart Gehrmann and Mart\'i Roset for many exchanges on the subject, especially during more challenging phases of the project. Finally, I thank Federico Bambozzi and James Taylor for patiently explaining me aspects of $p$-adic geometry.

\section{Preliminaries}

Let $F$ be a totally real number field  with ring of integers $\cal{O}_{F}$. 
For any rational place $v$, we denote by $\Sigma_v$ the set of $v$-adic places of $F$. If $\q\subset \cal{O}_{F}$ is a prime ideal, we write $\OO_\q$ for the completion of $\cal{O}_{F}$ at $\q$.
We let $\A$ stand for the ring of adeles of $F$, and $\A^{\Sigma}$ for the $\Sigma$-truncated adeles
for any finite set of places $\Sigma$ of $F$. For notational convenience, we set $\A^\infty=\A^{\Sigma_\infty}$.
If $H$ is an $F$-algebraic group and $w$ is a place of $F$, we write $H_w=H(F_w)$ and put $H_\Sigma=\prod_{w\in \Sigma} H_v$.

\smallskip
Let $B/F$ be a quaternion algebra of discriminant $\disc(B)$. Denote by $G=B^\times/F^\times$ the associated $F$-algebraic group of units modulo center. For every prime $\q\nmid \disc(B)$, we fix an isomorphism of $B_\q$ with the algebra $\mrm{M}_2(F_\q)$ of two-by-two matrices over $F_\q$ to obtain an identification
\begin{equation}\label{choseniso}
G_\q= \PGL_2(F_\q)\qquad \forall\ \q\nmid \disc(B).
\end{equation}
Given an integer $m\ge0$ and a prime $\q\nmid \disc(B)$, define the compact open subgroups 
\[
K_{\q^m}\subseteq K^\circ_{\q^m}\subset G_\q
\]
as the images  under (\ref{choseniso}) of the subgroups
\[
\Big\{g \in \GL_2(\OO_\q)\ \Big\vert\ g \mbox{ upper unipotent} \hspace{-2.5mm}\pmod{\q^m}\Big\}\subseteq\Big\{g \in \GL_2(\OO_\q)\ \Big\vert\ g \mbox{ upper triangular}\hspace{-2.5mm} \pmod{\q^m}\Big\}.
\]
For $\q\mid \disc(B)$ define $K_{\q^0}=K_{\q^0}^\circ$ to be the image in $G_\q$ of the group of units of the maximal order in $B_\q$.
Moreover, if $\n\subseteq\OO_F$ is a non-zero ideal coprime to $\disc(B)$, $\Sigma$ is a finite set of primes of $F$, we set
\[
K(\n)^{\Sigma}:=\prod_{\q\notin \Sigma}K_{\q^{\ord_\q(\n)}}\hspace{2mm} \subseteq \hspace{2mm} K^\circ(\n)^{\Sigma}:=\prod_{\q\notin \Sigma}K^\circ_{\q^{\ord_\q(\n)}}.
\]
When $\Sigma=\emptyset$ we simply write $K(\n)=K(\n)^{\emptyset}$ and $K^\circ(\n)=K^\circ(\n)^{\emptyset}$.


\subsubsection{$p$-adic Measures and Distributions.}\label{generalities on measures and distributions}
Let $\cal{X}$ be a profinite topological space, $M$ and $N$ two $p$-adically separated and complete $\Z_p$-modules. Denote by $\mrm{LC}(\cal{X},M)\subseteq \cal{C}(\cal{X},M)$ the inclusion of locally constant functions in the space of continuous functions. The associated spaces of measures and distributions are
\[
\cal{M}(\cal{X};M,N):=\mrm{Hom}_{\Z_p}(\mrm{LC}(\cal{X},M),N),\qquad \cal{D}(\cal{X};M,N):=\mrm{Hom}_{\Z_p}^\mrm{ct}(\cal{C}(\cal{X},M),N).
\]
We recall some useful properties:
\begin{itemize}
    \item[$\bfcdot$] By
\cite[Lemma 4.3]{plecticHeegner} the map 
\begin{equation}\label{measures=distributions}
    \cal{D}(\cal{X};M,N)\overset{\sim}{\longrightarrow} \cal{M}(\cal{X};M,N)
\end{equation}
induced by the inclusion $\mrm{LC}(\cal{X},M)\subseteq \cal{C}(\cal{X},M)$ is an isomorphism.

    \item[$\bfcdot$] As $N$ is $p$-adically complete and separated, so is $\mrm{Hom}_{\Z_p}(M,N)$. Then, the isomorphism $\mrm{LC}(\cal{X},M)\cong\mrm{LC}(\cal{X},\Z_p)\otimes_{\Z_p}M$ and the tensor-hom adjunction induce the following commuting diagram
\begin{equation}\label{diagram of iso}\xymatrix{
\cal{D}(\cal{X};M,N)\ar[r]^-\sim\ar[d]_\sim&\cal{M}(\cal{X};M,N)\ar[d]_-\sim\\
\cal{D}\big(\cal{X};\Z_p,\mrm{Hom}_{\Z_p}(M,N)\big)\ar[r]^-\sim&\cal{M}\big(\cal{X};\Z_p,\mrm{Hom}_{\Z_p}(M, N)\big).
}\end{equation}


    \item[$\bfcdot$]If $\cal{X}$ is a product $\cal{X}_1\times \cal{X}_2$ of profinite topological spaces, then \eqref{diagram of iso} together with the factorization $\mrm{LC}(\cal{X},\Z_p)\cong \mrm{LC}(\cal{X}_1,\Z_p)\otimes_{\Z_p}\mrm{LC}(\cal{X}_2,\Z_p)$
give
\begin{equation}\label{factorization distributions}
\xymatrix{
\cal{D}(\cal{X}_1\times\cal{X}_2;M,N)\ar[r]^-\sim\ar[d]_\sim&\cal{M}(\cal{X}_1\times\cal{X}_2;M,N)\ar[d]_-\sim\\
\cal{D}\big(\cal{X}_1;\Z_p,\cal{D}(\cal{X}_2;M,N)\big)\ar[r]^-\sim&\cal{M}\big(\cal{X}_1;\Z_p,\cal{M}(\cal{X}_2;M, N)\big).
}\end{equation}
\end{itemize}

\subsection{Shimura curves}\label{Shimura curves}
Fix an embedding $\iota_\infty\colon \overline{\Q}\to \bb{C}$, and one embedding $\iota_\ell\colon \overline{\Q}\to \overline{\Q}_\ell$ for every rational prime $\ell$. Thus, 
\[
\mrm{Hom}_{\Q-\mrm{alg}}(F,\overline{\Q}_\ell)=\mrm{Hom}_{\Q-\mrm{alg}}(F,\bb{C}),
\]
and we obtain a surjective function $\Sigma_\infty\twoheadrightarrow \Sigma_\ell$ for every $\ell$.

\medskip
\noindent As in Section \ref{Intro: pHc}, we have fixed an incoherent set $\cal{S}$ for the triple $(A_{/F},\hspace{.5mm} E/F,\hspace{.5mm}p)$, a coherent set  $S=\cal{S}\cup\{w\}$ for the quadruple $(A_{/F},\hspace{.5mm} E/F,\hspace{.5mm}p,\hspace{.5mm}\cal{S})$, and an embedding $\psi\colon T\hookrightarrow G$ of $F$-algebraic groups. We consider the $G_w$-space $\cal{H}_w:=\bb{P}^1(E_w)\setminus\bb{P}^1(F_w)$ with $G_w$-action obtained through the twisting of the natural M\"obius transformations action by the character \eqref{twisting character}.
Setting $G(\A^{w,\infty}):=
    G(\A^{\{w\}\cup\Sigma_\infty})$, for any  compact open subgroup $K\subset G(\A^{w,\infty})$ there is a Shimura curve $X_K$ defined over $F$ whose $E_w$-valued points $X_K(E_w)$ are identified to a double quotient:
\begin{equation}\label{def:Shimura curve}
\phi_w^\mrm{CD}\colon G(F)\backslash \big(\cal{H}_w\times G(\A^{w,\infty})/K\big)\cup\{\text{cusps}\}\overset{\sim}{\longrightarrow} X_K(E_w)
\end{equation}
 -- see for example \cite[Sections 1.3-1.4]{NekCanadian}.

\medskip
\noindent     If the admissible set $S$ is indefinite, the auxiliary place $w\in S$ is Archimedean and the Shimura curve $X_K$ arise from the Shimura datum $(G,\cal{H}_w)$. While, if $S=\cal{S}\cup\{w\}$ is definite with associated definite quaternion algebra $B/F$, it is possible to choose an Archimedean place $\nu\in\Sigma_\infty$ mapping to $w\in\Sigma_\ell$ under the fixed surjection $\Sigma_\infty\twoheadrightarrow\Sigma_\ell$ and define the indefinite admissible set
    \[
    S':=(S\setminus\{w\})\cup\{\nu\}.
    \]
    The quaternion algebra $B'/F$ with ramification set $(\Sigma^{\mbox{\tiny $-$}}\cup\Sigma_\infty)\setminus S'$ is obtained from $B/F$ by switching invariants at $w$ and $\nu$. Let $G'$ be the associated $F$-algebraic group of units modulo center, then the Shimura curve $X_K$  arises from the Shimura datum $(G',\cal{H}_\nu)$.

    \begin{remark}
    The modularity of $A_{/F}$ and the Jacquet--Langlands correspondence ensure the existence of a non-constant $F$-rational morphism $\varphi\colon X_K\to A$ for an appropriate level structure $K$. When $w\not\in\Sigma_\infty$ the Shimura curve $X_K$ admits an Atkin-Lehner involution $W_w\colon X_K\to X_K$ associated to the ramified prime $w$ of $B'$ such that 
    \[
    \varphi\circ W_w=\varepsilon_w\cdot \varphi
    \]
    where $\varepsilon_w\in\{\pm1\}$ is $+1$ (resp. $-1$) if $A_{/F_w}$ has split (resp. non-split) multiplicative reduction -- for example, see \cite[Theorem 1.2]{MumfordHeegner}. Moreover, the uniformization morphism $\phi_w^\mrm{CD}$ satisfies 
    \[
    \phi_w^\mrm{CD}(-^{\sigma_w})=W_w\circ\phi_w^\mrm{CD}(-)^{\sigma_w}
    \]
   where $\sigma_w\in\mrm{Gal}(E_w/F_w)$ is the generator (\cite{CDuniformization}, Theorem 4.7). We deduce that the composition $\varphi_w:=\varphi\circ\phi_w^\mrm{CD}$ satisfies
    \begin{equation}\label{Galois equivariance CD parametrization}
    \varphi_w(-^{\sigma_w})=\varepsilon_w\cdot \varphi_w(-).
    \end{equation}
    \end{remark}

\subsubsection{CM points.} 
The embedding $\psi$  let $T(F)$ act on $\cal{H}_w$ with two fixed points $z_\psi,\overline{z}_\psi$.  Let $\cal{H}_w^E$ denote  the $G(F)$-orbit of $z_\psi$ in $\cal{H}_w$. For $\Box\in\{\text{loc}, \text{glo}\}$ we set 
 \begin{equation}\label{loc-glo upper half}
 \cal{H}_w^\Box:=\begin{cases}
    \cal{H}_w&\Box=\text{loc}\\
     \cal{H}_w^E&\Box=\text{glo}.\\
\end{cases}
\end{equation}
The $E$-CM points of $X_K$ ``unramified at $w$''  are described by the quotient
\begin{equation}\label{uniformization CM points}
\mrm{CM}^w_E(X_K)= G(F)\backslash \big(\cal{H}_w^E\times G(\A^{w,\infty})/K\big).
\end{equation}
Let ${^{w}}E_{\mrm{ab}}$ denote the maximal abelian extension of $E$ where $w$ is totally split.
The theory of complex multiplication implies that  all points in $\mrm{CM}^w_E$ are defined over ${^{w}}E_{\mrm{ab}}$, i.e.,
\[
\mrm{CM}^w_E(X_K)\subseteq \mrm{Im}\Big(X_K({^{w}}E_{\mrm{ab}})\longhookrightarrow X_K(E_w)\Big)
\]
where the inclusion ${^{w}}E_{\mrm{ab}}\hookrightarrow E_w$ is determined by the fixed embedding $\iota_w\colon \overline{\Q}\hookrightarrow \overline{\bb{Q}}_w$.
Moreover, normalizing the reciprocity map of class field theory $\mrm{rec}_E\colon T(F)\backslash T(\A^\infty)\to \mrm{Gal}(E^\mrm{ab}/E)$ by matching uniformizers with geometric Frobenius elements, the Galois action is described by Shimura's reciprocity law  
\begin{equation}\label{reciprocityLAW}
([z_\psi,b])^{\mrm{rec}_E(u)}=[z_\psi,\psi(u)^{w}b]\qquad \forall\ u\in T(\A^{\infty})
\end{equation}
where $\psi(u)^{w}$ denotes the projection to $G(\A^{w,\infty})$ (for instance \cite[Section 1.8]{NekCanadian}).

\subsubsection{Hecke correspondences.}
Let $K,K'\subset G(\A^{w,\infty})$ be compact open subgroups, $g\in G(\A^{w,\infty})$. We consider the Hecke correspondence $T_g=[KgK']\colon X_K \dashrightarrow X_{K'}$  described by the diagram
\[\xymatrix{
X_{K\cap g K'g^{-1}}\ar[r]^{[\cdot g]}\ar[d]^{\mrm{pr}}& X_{g^{-1}Kg\cap K'}\ar[d]^{\mrm{pr}'}\\
X_K\ar@{-->}[r]^{T_g}&X_{K'}
}\]
where $[\cdot g]\colon [x,b]\mapsto [x,bg]$. In this normalization,  $T_g$ acts on non-cuspidal points by the formula
\[
T_g\colon [x,b]_K\mapsto \sum_{j\in I}[x,bg_j]_{K'}\qquad\text{where}\qquad KgK'=\coprod_{j\in I} g_jK'.
\]
\begin{remark}
    Fix $K\subset G(\A^{w,\infty})$. The free group generated by the set $\{[KgK]\ \lvert\ g\in G(\A^{w,\infty})\}$ admits a ring structure (for example \cite[Section 3.1]{IntroShimura}).
\end{remark}
\begin{definition}\label{def: Hecke operators}
Let $K=\prod_\frak{q}K_\frak{q}\subset G(\A^{w,\infty})$ be a compact open subgroup. For every prime ideal $\frak{q}\nmid\mrm{disc}(B)$ and uniformizer $\varpi\in \cal{O}_\frak{q}$, we consider the double coset
\[
 \big[K\cdot \eta_\varpi\cdot K\big]\qquad\text{where}\qquad \eta_\varpi=\begin{pmatrix}
    1&0\\0&\varpi
\end{pmatrix}.
\]
Let $m\ge1$. We will write 
\[
 \big[K\cdot \eta_\varpi\cdot K\big]=\begin{cases}
    T_\frak{q}&\text{if}\hspace{3mm} K_\frak{q}=K_{\frak{q}^0},\\
U_{\varpi}&\text{if}\hspace{3mm}K_{\frak{q}^m}\subseteq K_\frak{q}\subseteq K_{\frak{q}^m}^\circ.
\end{cases}
\]
   As the notation suggests, $T_{\frak{q}}$ is independent of the choice of uniformizer $\varpi\in \cal{O}_\frak{q}$. Moreover, whenever we want to stress the dependence on $m\ge1$, we will write ${_{m}}U_{\varpi}$ instead of $U_{\varpi}$.
These double cosets admit standard decompositions into left-cosets
\begin{equation}\label{cosets-decompositions}
\begin{split}
&T_\frak{q}=\coprod_{\bar{a}\in\bb{P}^1(\kappa_\frak{q})}\sigma_aK\qquad\text{where}\qquad \sigma_a=\begin{cases}
    \begin{pmatrix}
    \varpi&a\\0&1
\end{pmatrix}&\text{if}\ \bar{a}\not=\infty,\\
\\
\begin{pmatrix}
    1&0\\0&\varpi
\end{pmatrix}&\text{if}\ \bar{a}=\infty,
\end{cases}\\
\\
&U_\varpi=\coprod_{\bar{a}\in\kappa_\frak{q}}\widehat{\sigma}_{m,a}K \qquad\text{where}\qquad
\widehat{\sigma}_{m,a}=\begin{pmatrix}
    1&0\\a\varpi^m&\varpi
\end{pmatrix}
\end{split}
\end{equation}
where the coset representatives differ from the identity only at the place $\q$.
\end{definition}

\subsection{Towers of Shimura curves}\label{Towers of Shimura curves}
Fix a subset $\Sigma\subseteq\cal{S}$.
For $m\ge0$ we consider the subgroups of $G(\A^{w,\infty})$
\[
K_m:=K^\circ(\n^{\mbox{\tiny $+$}})\cap K\big(p_\Sigma^m\big)\cap G(\A^{w,\infty})\hspace{2mm} \subseteq\hspace{2mm} 
K_m^\circ:=K^\circ\big(\n^{\mbox{\tiny $+$}}\cdot p_\Sigma^m\big)\cap G(\A^{w,\infty}),
\]
and the associated Shimura curves
\[
X_m:=X_{K_m},\qquad X_m^{\circ}:=X_{K^\circ_m}.
\]
 Since $K_m$ is a normal subgroup of $K_m^\circ$, its cosets $D_{\Sigma,m}:=K_m^\circ/K_m$ form a group. The inclusion $K_{m+1}^\circ\subseteq K_m^\circ$ induces a homomorphism $D_{\Sigma, m+1}\to D_{\Sigma, m}$ which, under the identification
\begin{equation}\label{explict description diamond}
\zeta\colon D_{\Sigma, m}\overset{\sim}{\longrightarrow} (\cal{O}_{\Sigma}/p_{\Sigma}^m)^\times,\qquad\begin{pmatrix}
    a&b\\c&d
\end{pmatrix}\mapsto a_{\Sigma}^{-1}\cdot d_{\Sigma},
\end{equation}
corresponds to the natural quotient map $(\cal{O}_{\Sigma}/p_{\Sigma}^{m+1})^\times\twoheadlongrightarrow (\cal{O}_{\Sigma}/p_{\Sigma}^m)^\times$.  It is convenient to set
\begin{equation}\label{def: gamma_a}
 \gamma_a:=\begin{pmatrix}
    1&0\\0&a
\end{pmatrix}\ \in\ K^\circ_m\qquad \forall \
 a\in \cal{O}_{\Sigma}^\times
\end{equation}
for the following definition:
\begin{definition}
    For any $k\in K_m^\circ$ and $a\in\cal{O}_\Sigma^\times$ we set 
    \[
    \langle k\rangle_m:=[K_m\cdot k\cdot K_m],\qquad \langle a\rangle_m:=[K_m\cdot \gamma_a\cdot K_m].
    \]
\end{definition}
Clearly, for any lift $\widetilde{\zeta(k)}\in\cal{O}_\Sigma^\times$ of $\zeta(k)$, we have $\langle k\rangle_m=\langle \widetilde{\zeta(k)}\rangle_m$. Moreover, if $\Lambda_{\Sigma,m}$ is the $\Z_p$-algebra generated by the diamond operators $\{\langle k\rangle_m\ \lvert \ k\in D_{\Sigma,m}\}$, the map 
\begin{equation}\label{Iwasawa algebra}
\Z_p[(\cal{O}_{\Sigma}/p_{\Sigma}^m)^\times]\overset{\sim}{\longrightarrow} \Lambda_{\Sigma, m},\qquad [a\bmod{p_\cal{S}^m}]\mapsto\langle a\rangle_m
\end{equation}
is an isomorphism of $\Z_p$-algebras. 
\begin{remark}\label{rmk: dependence on uniformizer}
   Let $\p\in\Sigma$, $\varpi\in\cal{O}_\p$ a uniformizer, and $a\in\cal{O}_\p^\times$ then  
\[\
{_{m}}U_{a\varpi}\hspace{1mm}=\hspace{1mm}{_{m}}U_{\varpi}\cdot \langle a \rangle_m\hspace{1mm}=\hspace{1mm}\langle a\rangle_m\cdot  {_{m}}U_{\varpi}.
\] 
\end{remark}

\begin{definition}
    Let $m\ge0$. The abstract Hecke algebra of level $K_m$ is the $\Lambda_{\Sigma,m}$-algebra $\frak{h}_{\Sigma,m}$ generated by the Hecke operators
    \[
  \big\{T_\q\ :\ \q\nmid\frak{f}_A\big\}\cup\big\{U_\q\ :\ \q\mid \n^{\mbox{\tiny $+$}}\cdot p_{\cal{S}\setminus\Sigma}\big\}\cup\big\{{_{m}}U_{\varpi}\ :\ \p\in\Sigma,\ \varpi\in\cal{O}_\p\ \text{uniformizer} \big\}.
    \]
\end{definition}

\begin{remark}\label{abstract compatibility}
    There exist ring homomorphisms $\Lambda_{\Sigma,m+1}\to\Lambda_{\Sigma,m}$, $\langle a\rangle_{m+1}\mapsto\langle a\rangle_m$, and
\[
\frak{h}_{\Sigma,m+1}\to\frak{h}_{\Sigma,m},\qquad T\mapsto\begin{cases}
    T_\q \text{ (resp. }U_\q\text{) }&\text{if }\  T=T_\q \text{  (resp. }U_\q\text{)},\\
    {_{m}}U_\varpi&\text{if }\ T={_{m+1}}U_\varpi.\\
\end{cases}
\]
\end{remark}

\subsubsection{Directed systems.}\label{Directed systems}
For any $m\ge1$ the morphism $X_m\to X^\circ_m$ induced by the inclusion $K_m\subset K_m^\circ$ is Galois with  Galois group $D_{\Sigma,m}:=K^\circ_m/K_m$. 
The inclusions $K_{m+1}\subset K_m\subset K_0$ determine morphisms $\pi_1\colon X_{m+1}\to X_m$ which are part of the projective system
\[\xymatrix{
X_0& X_1\ar[l]_-{\pi_1}\ar[l]&\cdots\ar[l]_-{\pi_1}& X_m\ar[l]&\ar[l]\cdots
}\] 
We denote by $J_m$ (resp. $J_m^\circ$) the Jacobian of $X_m$ (resp. $X_m^\circ)$ and let Hecke correspondences act on Jacobians by covariant functoriality, for instance (\cite{NekEuler}, (1.16.3)).
\begin{definition}
Let $m\ge0$. The (physical) Hecke algebra of level $K_m$ is defined as
\[
\bb{T}_{\Sigma,m}:=\mrm{Im}\Big(\frak{h}_{\Sigma,m}\rightarrow \mrm{End}(J_m)\otimes\Z_p\Big).
\]
It is a $\Z_p$-module of finite rank.
\end{definition}
\begin{proposition}\label{prop: compatibility Hecke}
Let $m\ge1$. The map $\pi_1\colon X_{m+1}\to X_m$ produces a morphism $\pi^*_1\colon J_m\to J_{m+1}$ between the corresponding Jacobians, and  ring homomorphisms $\Lambda_{\Sigma,m+1}\to \Lambda_{\Sigma,m}$, $\bb{T}_{\Sigma,m+1}\to\bb{T}_{\Sigma,m}$ compatible with the maps in Remark \ref{abstract compatibility}.
\end{proposition}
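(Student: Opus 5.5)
The plan is to show that pulling back along $\pi_1$ intertwines the Hecke actions in the direction $\bb{T}_{\Sigma,m+1}\to\bb{T}_{\Sigma,m}$. I would work with the trace (pushforward) $\pi_{1,*}$ alongside $\pi_1^*$ and verify the relations at the level of double cosets.

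First, $\pi_1\colon X_{m+1}\to X_m$ is a finite morphism of smooth projective curves over $F$. Pullback of divisors of degree zero therefore gives $\pi_1^*\colon J_m\to J_{m+1}$, and Albanese functoriality gives $\pi_{1,*}\colon J_{m+1}\to J_m$. In adelic terms $\pi_1$ is the correspondence $[K_{m+1}\cdot 1\cdot K_m]$.

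Next I would define the candidate homomorphisms. The natural guess is that $T\in\bb{T}_{\Sigma,m+1}$ maps to the endomorphism of $J_m$ determined by the identity
\[
\pi_{1}^*\circ T_{(m)}=T_{(m+1)}\circ\pi_1^*,
\]
where $T_{(m)}$ is the image of the corresponding abstract operator under Remark \ref{abstract compatibility}. To make this a map out of $\bb{T}_{\Sigma,m+1}$, I would check that $\pi_1^*\colon J_m\otimes\Z_p\to J_{m+1}\otimes\Z_p$ (on Tate modules, say) is injective up to a finite kernel. Then $J_m$ sits, up to isogeny, as a $\bb{T}_{\Sigma,m+1}$-stable sub-object of $J_{m+1}$, and restriction gives a ring homomorphism whose image is $\bb{T}_{\Sigma,m}$. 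For this step I would use that $\pi_{1,*}\circ\pi_1^*=\deg(\pi_1)$ and that $\mathrm{Hom}$ into $\mathrm{End}(J_m)\otimes\Z_p$ is torsion-free, so the restriction is well defined on the image algebra.

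The heart of the argument is the double coset compatibility
\[
[K_{m+1}K_m]\circ[K_m\eta K_m]=[K_{m+1}\eta K_{m+1}]\circ[K_{m+1}K_m]
\]
as correspondences, up to the direction conventions of covariant functoriality. This is the relation that $\pi_1^*$ must satisfy with respect to each generator. I would check it generator by generator:
\begin{itemize}
\item[$\bfcdot$] For $T_\q$ and $U_\q$ with $\q\notin\Sigma$, the level at $\q$ is unchanged between $K_m$ and $K_{m+1}$, so the coset decompositions \eqref{cosets-decompositions} agree and the correspondences commute trivially.
\item[$\bfcdot$] For diamond operators, $\gamma_a$ normalizes both $K_m$ and $K_{m+1}$. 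Hence $\langle a\rangle_{m+1}$ covers $\langle a\rangle_m$, and this also gives $\Lambda_{\Sigma,m+1}\to\Lambda_{\Sigma,m}$ via \eqref{Iwasawa algebra}.
\item[$\bfcdot$] For ${_{m}}U_\varpi$ at $\p\in\Sigma$, I would compute ${_{m+1}}U_\varpi$ on points $[x,b]$ with the representatives $\widehat{\sigma}_{m+1,a}$, $a\in\kappa_\p$, and compare with the images of $\widehat{\sigma}_{m,a}$.
\end{itemize}

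I expect the $U_\varpi$ case to be the main obstacle. Its resolution rests on two observations, both requiring $m\ge1$. First, $K_{m}\widehat{\sigma}_{m,a}=K_{m}\widehat{\sigma}_{m+1,a'}$-type identities hold because $\widehat{\sigma}_{m+1,a}\widehat{\sigma}_{m,a}^{-1}\in K_m$: the lower-left entries differ by a multiple of $\varpi^m$ after conjugation, and $m\ge1$ ensures the diagonal stays $\equiv 1$. Second, the number of cosets is $|\kappa_\p|$ at both levels. Together these say that $\eta_\varpi$ is in good position relative to the tower, namely
\[
K_m\eta_\varpi K_m=\coprod_{a}\widehat{\sigma}_{m+1,a}K_m
\]
with the same representatives that serve at level $m+1$. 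The compatibility then follows. The case $m=0$ fails because there $U$ would become $T_\p$; this explains the hypothesis $m\ge1$.
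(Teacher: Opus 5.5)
Your framework is sound. Once $\pi_1^*\circ T_{(m)}=T_{(m+1)}\circ\pi_1^*$ is known for every generator, the finiteness of $\ker\pi_1^*$ (from $\pi_{1,*}\circ\pi_1^*=\deg\pi_1$) gives the ring map $\bb{T}_{\Sigma,m+1}\to\bb{T}_{\Sigma,m}$; the paper leaves this point implicit. Your treatment of $T_\q$, $U_\q$ and the diamond operators is the paper's.

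The gap is in the $U_\varpi$ case, which is the only non-trivial one: the identity $K_m\eta_\varpi K_m=\coprod_a\widehat\sigma_{m+1,a}K_m$ is false. Indeed
\[
\widehat\sigma_{m+1,a}^{-1}\,\widehat\sigma_{m+1,b}=\eta_\varpi^{-1}\begin{pmatrix}1&0\\(b-a)\varpi^{m+1}&1\end{pmatrix}\eta_\varpi=\begin{pmatrix}1&0\\(b-a)\varpi^{m}&1\end{pmatrix}\in K_{\p^m},
\]
so all the cosets $\widehat\sigma_{m+1,a}K_m$ coincide with $\eta_\varpi K_m$. Your first observation, $\widehat\sigma_{m+1,a}\widehat\sigma_{m,a}^{-1}\in K_m$, is true. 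However, it concerns right cosets $K_m g$; in fact every $\widehat\sigma_{m,a}$ lies in the single right coset $K_m\eta_\varpi$. So it says nothing about the left-coset decomposition that defines ${_{m}}U_\varpi$.

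Shared representatives are also not the right thing to aim for. Unwinding the definitions, $\pi_1^*\circ{_{m}}U_\varpi={_{m+1}}U_\varpi\circ\pi_1^*$ is an equality of multisets of left $K_{m+1}$-cosets: $\{\widehat\sigma_{m,a}\xi K_{m+1}\}$ must equal $\{\xi\,\widehat\sigma_{m+1,a'}K_{m+1}\}$, with $\xi$ running over $K_m/K_{m+1}$. So you must control how the $\widehat\sigma$'s move past representatives of $K_m/K_{m+1}$.

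This is what the paper does, via the intermediate level $K_{\p^m}\cap K^\circ_{\p^{m+1}}$. Take $\xi=u_b\gamma_\delta$ with $u_b=\begin{pmatrix}1&0\\\varpi^m b&1\end{pmatrix}$, where $\bar b\in\kappa_\p$ and $\delta$ runs over $(1+\p^m)/(1+\p^{m+1})$. These represent $K_{\p^m}/K_{\p^{m+1}}$. Then
\[
\widehat\sigma_{m,a}\,u_b=u_a\,\widehat\sigma_{m+1,b},\qquad \widehat\sigma_{m+1,b}\,\gamma_\delta=\gamma_\delta\,\widehat\sigma_{m+1,\delta^{-1}b}.
\]
Moreover $\widehat\sigma_{m+1,\delta^{-1}b}K_{m+1}=\widehat\sigma_{m+1,b}K_{m+1}$ because $\delta\equiv1\pmod{\p}$; this is where $m\ge1$ is actually used in the computation. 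Hence $\widehat\sigma_{m,a}u_b\gamma_\delta K_{m+1}=(u_a\gamma_\delta)\,\widehat\sigma_{m+1,b}K_{m+1}$. The map $(a,b,\delta)\mapsto(u_a\gamma_\delta,\,b)$ is then the required bijection between the two families of cosets.
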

\begin{proof}
The argument to verify the claim is standard, we sketch it for the convenience of the reader because it is used several times in the article. As  $K_m$ and $K_{m+1}$ differ only at primes $\p\in\Sigma$, from the coset decompositions \eqref{cosets-decompositions} it is clear that the diagram
\[\xymatrix{
J_{m+1}\ar[rr]^{T} & &J_{m+1}\\
J_{m}\ar[u]^{\pi^*_1}\ar[rr]^{T}&& J_{m}\ar[u]^{\pi^*_1}
}
\]
commutes if $T\in \big\{T_\q\hspace{1mm} :\hspace{1mm} \q\nmid\frak{f}_A\big\}\cup \big\{U_\q\hspace{1mm} :\hspace{1mm} \q\mid\n^{\mbox{\tiny $+$}}\big\}$. For any $a\in\cal{O}_\Sigma^\times$, the diagonal matrix $\gamma_a$ normalizes every $K_m$, thus a direct computation gives
\[
 \langle a\rangle_{m+1}\circ\pi^*_1=\pi^*_1\circ\langle a\rangle_m\qquad\forall\ a\in \cal{O}_\Sigma^\times.
\]
We claim that the equality
\[
({_{m+1}}U_{\varpi})\circ\pi^*_1=\pi^*_1\circ({_{m}}U_{\varpi}),\qquad
\]
holds for every $\p\in\Sigma$ and every uniformizer $\varpi\in\cal{O}_\p$. 
Fix $\p\in\Sigma$ and a uniformizer $\varpi\in\cal{O}_\p$. It is advantageous to consider the intermediate congruence subgroup 
\[
K_{\p^{m+1}}\subset (K_{\p^m}\cap K_{\p^{m+1}}^\circ)\subset K_{\p^m}.
\]
From the isomorphism
\[
(K_{\p^m}\cap K_{\p^{m+1}}^\circ)/K_{\p^{m+1}}\overset{\sim}{\longrightarrow}(1+\p^m\cal{O}_\p)/(1+\p^{m+1}\cal{O}_\p),\qquad\begin{pmatrix}
    a&b\\c&d
\end{pmatrix}\mapsto a^{-1}d,
\]
one deduces that $\left\{\begin{pmatrix}
    1&0\\0&\delta
\end{pmatrix}\right\}_{\delta}$
is a set of representatives for $(K_{\p^m}\cap K^\circ_{\p^{m+1}})/K_{\p^{m+1}}$ if $\{\delta\}_\delta$ is a set of representatives for $(1+\p^m\cal{O}_\p)/(1+\p^{m+1}\cal{O}_\p)$. A direct computation then shows that ${_{m+1}}U_\varpi$ commutes with the pullback from level $(K_{\p^m}\cap K_{\p^{m+1}}^\circ)$ to level $K_{\p^{m+1}}$. 

\noindent After taking  $\left\{\begin{pmatrix}
    1&0\\\varpi^ma&1
\end{pmatrix}\right\}_{\bar{a}\in\kappa_\p}$ as set of representatives
for $K_{\p^{m}}/(K_{\p^m}\cap K^\circ_{\p^{m+1}})$, the equality
\[
\begin{pmatrix}
    1&0\\\varpi^{m}a&\varpi
\end{pmatrix}\begin{pmatrix}
    1&0\\\varpi^ma'&1
\end{pmatrix}=\begin{pmatrix}
    1&0\\\varpi^ma&1
\end{pmatrix}\begin{pmatrix}
    1&0\\\varpi^{m+1}a'&\varpi
\end{pmatrix}
\]
implies that the pullback from level $K_{\p^{m}}$ to level $(K_{\p^m}\cap K_{\p^{m+1}}^\circ)$ intertwines ${_{m}U_\varpi}$ with ${_{m+1}U_\varpi}$.
\end{proof}

 Proposition \ref{prop: compatibility Hecke} implies that  the projective limits 
\[
\Lambda_\Sigma:=\varprojlim_m\Lambda_{\Sigma,m},\qquad \bb{T}_\Sigma:=\varprojlim_m\bb{T}_{\Sigma,m},
\]
admit a ring homomorphism $\Lambda_\Sigma\rightarrow \bb{T}_\Sigma$.
We denote by
 \begin{equation}
\big\{\mbf{T}_\q\ :\ \q\nmid\frak{f}_A\big\}\cup\{\mbf{U}_\q\ :\ \q\mid\n^{\mbox{\tiny $+$}}\cdot p_{\cal{S}\setminus\Sigma}\big\}\cup\big\{\mbf{U}_{\varpi}\ :\ \p\in\Sigma,\ \varpi\in\cal{O}_\p\ \text{uniformizer} \big\}
\end{equation}
the natural generators of $\bb{T}_\Sigma$ as a $\Lambda_\Sigma$-algebra. Letting $\llangle a\rrangle:=(\langle a\rangle_m)_{m\ge1}\in \Lambda_\Sigma$, equation \eqref{Iwasawa algebra} implies that the character $\cal{O}_\Sigma^\times\ni a\mapsto \llangle a\rrangle\in\Lambda^\times_\Sigma$ induces an isomorphism $\Z_p\llbracket\cal{O}_\Sigma^\times\rrbracket\overset{\sim}{\longrightarrow}\Lambda_\Sigma$. 

\smallskip
\noindent For $\Box\in\{\text{loc}, \text{glo}\}$ we set 
 \begin{equation}\label{loc-glo Jacobian points}
 J_m^\Box:=\begin{cases}
    J_m(E_w)&\Box=\text{loc}\\
     J_m({^{w}}E_{\mrm{ab}})&\Box=\text{glo}.\\
\end{cases}
\end{equation}
\begin{corollary}\label{Hecke-action on J}
   The direct limit
    \[
J_\Sigma^\Box:=\varinjlim_m \hspace{1mm}\Big(J_m^\Box\otimes\bb{Z}_p, \pi^*_1\Big)
\] 
    inherits the structure of a right  $\bb{T}_\Sigma$-module.
   
\end{corollary}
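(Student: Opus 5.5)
The plan is to build the $\bb{T}_\Sigma$-action level by level and then pass to the colimit using Proposition \ref{prop: compatibility Hecke}.

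First, fix $m\ge0$ and $\Box\in\{\text{loc},\text{glo}\}$. The Hecke correspondences in $\frak{h}_{\Sigma,m}$ act on $J_m$ by covariant functoriality. They are defined over $F$, since they are algebraic correspondences between Shimura curves over $F$. So each one induces an endomorphism of the group of points $J_m(E_w)$, and likewise of $J_m({^{w}}E_\mrm{ab})$. This is compatible with the inclusion ${^{w}}E_\mrm{ab}\hookrightarrow E_w$ given by $\iota_w$. Tensoring with $\Z_p$ gives an action of $\mrm{End}(J_m)\otimes\Z_p$ on $J_m^\Box\otimes\Z_p$, and hence of its subring $\bb{T}_{\Sigma,m}$. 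Any $\Z$-linear endomorphism of $J_m$ acts on points, so the map $\mrm{End}(J_m)\otimes\Z_p\to\mrm{End}_{\Z_p}(J_m^\Box\otimes\Z_p)$ is well defined. We record it as a right action, the convention used for Jacobians throughout. Commutativity of the Hecke algebra away from level issues is not needed; we only have to be consistent about composition order.

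Second, the transition maps. By Proposition \ref{prop: compatibility Hecke}, for every generator $T$ of $\frak{h}_{\Sigma,m+1}$ with image $\bar T\in\frak{h}_{\Sigma,m}$ (Remark \ref{abstract compatibility}) we have $T\circ\pi_1^*=\pi_1^*\circ\bar T$ as maps $J_m\to J_{m+1}$. This covers the operators $T_\q$ and $U_\q$, the diamond operators $\langle a\rangle$, and ${_{m+1}}U_\varpi\mapsto{_m}U_\varpi$. The identity is one of morphisms of abelian varieties, so it persists after evaluating on points and tensoring with $\Z_p$. Consequently it holds for every element of $\bb{T}_{\Sigma,m+1}$ and its image under $\bb{T}_{\Sigma,m+1}\to\bb{T}_{\Sigma,m}$, because both sides are multiplicative and additive in $T$.

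Third, define the action on the colimit. An element $t=(t_m)_m\in\bb{T}_\Sigma=\varprojlim\bb{T}_{\Sigma,m}$ acts on the class of $x\in J_m^\Box\otimes\Z_p$ by sending it to the class of $x.t_m$. This is independent of the representative: if $x$ is replaced by $\pi_1^*x$ at level $m+1$, the second step gives $\pi_1^*(x).t_{m+1}=\pi_1^*(x.t_m)$, since $t_{m+1}\mapsto t_m$. The module axioms (additivity, associativity, unit) then follow termwise from those at finite level. The $\Lambda_\Sigma$-structure comes via $\Lambda_\Sigma\to\bb{T}_\Sigma$.

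The main point to check carefully is the second step, namely that the compatibility for the $U_\varpi$ operators genuinely passes to $\bb{T}$-level images. This is where the dependence on $\varpi$ and the intermediate level $K_{\p^m}\cap K^\circ_{\p^{m+1}}$ enter, but it has already been carried out in Proposition \ref{prop: compatibility Hecke}.
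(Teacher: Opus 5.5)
Your proposal is correct and follows the paper's implicit argument: the corollary is read off directly from Proposition \ref{prop: compatibility Hecke}, whose identities $T\circ\pi_1^*=\pi_1^*\circ\bar T$ make the transition maps of the direct system Hecke-equivariant, so that $\varprojlim_m\bb{T}_{\Sigma,m}$ acts on the colimit exactly as you describe. One small thing to tidy up is the indexing: that proposition is stated for $m\ge1$, and at $m=0$ the pullback $J_0\to J_1$ does not intertwine $T_\p$ with ${_{1}}U_\varpi$, so you should run the argument over the cofinal system $m\ge1$, which leaves the colimit unchanged.
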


\subsubsection{Nearly ordinary idempotent.} The nearly ordinary idempotent $e_\Sigma\in \bb{T}_\Sigma$ is defined in the usual fashion: For every $m\ge1$ consider an element ${_{m}}U_\Sigma \in \bb{T}_{\Sigma,m}$ obtained as the product $\prod_{\p\in\Sigma}{_{m}}U_{\varpi_\p}$ for some choice of uniformizers $\varpi_\p\in\cal{O}_\p$. Since $\bb{T}_{\Sigma,m}$ is finite over $\Z_p$, the limit 
\[
e_{\Sigma, m}:=\underset{n\to+\infty}{\lim}({_{m}}U_\Sigma)^{n!}\ \in \ \bb{T}_{\Sigma,m}
\]
exists, and by Remark \ref{rmk: dependence on uniformizer} is independent of the choices. The idempotents $\{e_m\}_m$ are compatible and define the nearly ordinary projector 
\begin{equation}
e_\Sigma:=\varprojlim_m e_{\Sigma, m}\ \in\ \bb{T}_\Sigma.
\end{equation}
For any $\bb{T}_\Sigma$-module $M$, we will write
\[
M^\mrm{n.o.}:=e_\Sigma M.
\]

\subsubsection{Further observations.}\label{Further observations}
Let  $\frak{l}\nmid \frak{f}_A$ be an auxiliary prime such that $a_\frak{l}(E)\not\equiv\mrm{N}_{F/\bb{Q}}(\frak{l})+1\pmod{p}$ and define the family of morphisms  
\[
\varphi_m\colon X_m\longrightarrow J_m\otimes\bb{Z}_p,\qquad x\mapsto (T_\frak{l}-\mrm{N}_{F/\bb{Q}}(\frak{l})-1)[x]\otimes(a_\frak{l}(E)-\mrm{N}_{F/\bb{Q}}(\frak{l})-1)^{-1}.
\]
They are defined over $F$ and satisfy the compatibility relation 
\begin{equation}\label{compatibility}
\varphi_{m+1}\circ\pi^*_1=\pi^*_1\circ\varphi_m.
\end{equation}
We write $j_m\colon J_m^\Box\otimes\bb{Z}_p\to J_\Sigma^\Box$ for the natural map in the direct limit and denote the $p$-adic completion of $J_\Sigma^\Box$ by
\begin{equation}
(J_\Sigma^\Box)^\wedge :=\varprojlim_n \big(J_\Sigma^\Box\otimes\Z/p^n\Z\big).
\end{equation}
\begin{remark}
  If $S\not\subseteq\Sigma_p$, then  $(J_\Sigma^\text{loc})^\wedge=0$ because the auxiliary place $w$ is not $p$-adic.
\end{remark}
Since tensor products commute with direct limits, and both direct and inverse limits are left exact functors, we can use Kummer maps to obtain an injective homomorphism 
\[
(J_\Sigma^\Box)^\wedge\longhookrightarrow\varprojlim_n\varinjlim_m\ \mrm{H}^1\big(E^\Box, J_m[p^n]\big)
\]
where $E^\text{loc}=E_w$ and $E^\text{glo}=E^\mrm{ab}$.
The goal of the next section is to understand the submodule of elements fixed by the action of diamond operators
\[
D_\Sigma:=\varprojlim_m D_{\Sigma,m}\cong \cal{O}_{\Sigma}^\times.
\]

\begin{remark}\label{relation hom-cohm}
   For any smooth projective curve $C_{/F}$, with Jacobian $J_C$, there is a functorial isomorphisms of $G_F$-modules (\cite{etale-Milne}, \S III, Corollary 4.18)
   \[
   \mrm{H}^1_{\et}\big(C_{\overline{\Q}},\mu_{p^n}\big)\cong J_C[p^n].
   \]
    In particular, the isomorphism is Hecke equivariant for $C=X_m^\circ$ or $X_m$. Further, the functorial isomorphism of the universal coefficient theorem 
    \[
    \mrm{H}^1(C,\Z)\overset{\sim}{\longrightarrow}\mrm{Hom}_{\Z}\big( \mrm{H}_1(C,\Z),\Z)
    \]
    together with the freeness of the (co)homology of curves  will be used in Section \ref{Descent of Galois cohomology classes} to study the Hecke module structure of $J_m[p^n]^\mrm{n.o.}$,  and in Section \ref{hida theory} to establish properties of the Hecke algebra $\bb{T}^\mrm{n.o.}_\cal{S}$.
\end{remark}

\subsection{Descent of Galois cohomology classes}\label{Descent of Galois cohomology classes}
We continue to consider a fixed subset $\Sigma\subseteq\cal{S}$.
For $m\ge1$, the Galois group $D_{\Sigma,m}:=K^\circ_m/K_m$ of the covering $X_m\to X^\circ_m$  canonically decomposes as the product 
\[
D_{\Sigma,m}= \mu_\Sigma\times \Gamma_{\Sigma,m}
\]
of its prime-to-$p$ part $\mu_\Sigma$ and its $p$-primary part $\Gamma_{\Sigma,m}$.

\begin{lemma}\label{pullbackCOMP}
    For any $m,n\ge1$ the map
    \[
J^\circ_m[p^n]^\mrm{n.o.}\overset{\sim}{\longrightarrow}\mrm{H}^0\big(D_{\Sigma,m}, J_m[p^n]^\mrm{n.o.}\big),
    \]
    obtained from $ X_m\to X_m^\circ$, is an isomorphism of Hecke and $G_F$-modules.
\end{lemma}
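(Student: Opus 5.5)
Write $\pi\colon X_m\to X_m^\circ$ for the Galois covering with group $D:=D_{\Sigma,m}$. The plan is to compare $J_m^\circ[p^n]$ with $J_m[p^n]^{D}$ via the pullback $\pi^*$. Splitting $D=\mu_\Sigma\times\Gamma_{\Sigma,m}$ handles the prime-to-$p$ part and the $p$-part separately. Hecke and $G_F$-equivariance are formal: $\pi$ is defined over $F$, and $\pi^*$ intertwines the Hecke operators by the coset computations of Proposition \ref{prop: compatibility Hecke}. In particular $\pi^*$ commutes with ${_{m}}U_\Sigma$, hence with $e_{\Sigma,m}$. So the real content is that $\pi^*$ induces a bijection onto the $D$-invariants after applying the nearly ordinary projector. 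Since $e_{\Sigma,m}$ commutes with the diamond operators, $\mathrm{H}^0(D,J_m[p^n]^{\mrm{n.o.}})=(J_m[p^n]^D)^{\mrm{n.o.}}$, and it suffices to study $\pi^*\colon J_m^\circ[p^n]\to J_m[p^n]^D$.

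\textbf{Step 1: translate to cohomology.} By Remark \ref{relation hom-cohm}, $J_C[p^n]\cong\mrm{H}^1_\et(C_{\overline{\Q}},\mu_{p^n})$ Hecke-equivariantly. We then use the Hochschild--Serre spectral sequence for the finite étale part of the Galois cover $X_m\to X_m^\circ$, or, topologically over $\bb{C}$, for the quotient of $\Gamma_m\backslash\cal{H}$ by $D$. This gives the exact sequence
\[
0\to \mrm{H}^1(D,\mu_{p^n})\to \mrm{H}^1(X_m^\circ,\mu_{p^n})\to\mrm{H}^1(X_m,\mu_{p^n})^D\to\mrm{H}^2(D,\mu_{p^n}).
\]
One caveat: the cover may be ramified at elliptic points or cusps. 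We therefore work with the open curves, or with the group-cohomology description $\mrm{H}^1(\Gamma^\circ,\cdot)$ versus $\mrm{H}^1(\Gamma,\cdot)^{\Gamma^\circ/\Gamma}$. The resulting error terms are again of the form $\mrm{H}^i(D,\Z/p^n)$, possibly twisted, with $i=1,2$, together with boundary contributions at the cusps.

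\textbf{Step 2: kill the error terms.} For the prime-to-$p$ factor $\mu_\Sigma$ all higher cohomology vanishes, so only $\Gamma_{\Sigma,m}$ matters. The error terms are finite modules on which the Hecke operators act through the geometry of the cover. The key computation is that ${_{m}}U_\varpi$ acts on these terms, and on the Eisenstein/boundary contributions, topologically nilpotently, or more precisely that their nearly ordinary parts vanish. Concretely, the classes in $\mrm{H}^1(D,\mu_{p^n})=\Hom(D,\mu_{p^n})$ come from the covering $X_m\to X_m^\circ$ itself. Using the coset representatives $\widehat\sigma_{m,a}$ of \eqref{cosets-decompositions}, I would check that $U_\varpi$ acts on them by $\sum_{\bar a\in\kappa_\p}$ of a fixed character value, giving multiplication by $|\kappa_\p|\cdot(\text{something})$. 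Since $|\kappa_\p|\equiv 0\pmod p$, this is nilpotent mod $p^n$, so $e_\Sigma$ kills these terms.

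\textbf{Step 3: conclude.} Apply the exact functor $e_\Sigma$ to the sequence of Step 1. The outer terms vanish by Step 2, which yields the isomorphism $J^\circ_m[p^n]^{\mrm{n.o.}}\cong \mrm{H}^0(D,J_m[p^n]^{\mrm{n.o.}})$.

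The main obstacle is Step 2. It requires making the Hecke action on the group-cohomology and $\mrm{H}^2(D,\cdot)$ error terms explicit enough to see nilpotence of $U_\varpi$, and handling ramification and cusps. If a direct computation is messy, I would instead use an Ash--Stevens/Hida style control argument. The idea is to factor $U_\varpi^{\,m}$ through level $K^\circ_m$ via the trace, since $U_\varpi^{\,m}$ maps level-$K_m$ classes to ones pulled back from the $\Gamma_0$-type level up to diamond twists. This shows that on nearly ordinary parts $\pi^*$ has an inverse given by $e\circ(\text{trace})\circ U^{-m}$.
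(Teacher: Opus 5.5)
Your proposal follows the paper's route: the paper deduces the lemma from Remark \ref{relation hom-cohm} and the Hochschild--Serre spectral sequence for the cover $X_m\to X_m^\circ$, and the nearly ordinary projector removes everything except the $D_{\Sigma,m}$-invariants of $\mrm{H}^1$. What the paper calls ``concentration of nearly ordinary cohomology in middle degree'' is your Step 2 organized more efficiently: your error terms are the $E_2$-terms $\mrm{H}^i\big(D_{\Sigma,m},\mrm{H}^0(X_m,\mu_{p^n})\big)$, and the nearly ordinary projector kills $\mrm{H}^0(X_m,\mu_{p^n})$ (this is your factor $|\kappa_\p|$), so the $\mrm{H}^2(D_{\Sigma,m},\cdot)$ term you flagged as the main obstacle disappears together with the $\mrm{H}^1$ term, with no separate computation needed.
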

\begin{proof}
The claim is deduced from Remark \ref{relation hom-cohm}, the Hochschild--Serre spectral sequence for \'etale cohomology (\cite{etale-Milne}, \S III, Theorem 2.20),  and the concentration of nearly ordinary cohomology of curves in middle degree.
\end{proof}

The proof of the next lemma was inspired by \cite[\S 10]{Mokrane-Tilouine} and \cite[Proposition 2.4]{Dimitrov-Ihara}. 

\begin{proposition}\label{precursor} 
For every $m\ge1$ the $\Z_p[\Gamma_{\Sigma,m}]$-module $\mrm{H}_1(X_m,\Z_p)^\mrm{n.o.}$ is finite and free. 
\end{proposition}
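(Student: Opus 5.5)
We will argue as in \cite[\S 10]{Mokrane-Tilouine} and \cite[Proposition 2.4]{Dimitrov-Ihara}: freeness over the local ring $\Z_p[\Gamma_{\Sigma,m}]$ (note $\Gamma_{\Sigma,m}$ is a finite $p$-group, so $\Z_p[\Gamma_{\Sigma,m}]$ is local with residue field $\bb{F}_p$) will follow from the vanishing of Tate cohomology, or equivalently from a comparison of ranks via the coinvariants. Set $M:=\mrm{H}_1(X_m,\Z_p)^\mrm{n.o.}$. Since $M$ is a direct summand of the finite free $\Z_p$-module $\mrm{H}_1(X_m,\Z_p)$, it is finite and $\Z_p$-free. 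By Nakayama, if $M_{\Gamma}\otimes\bb{F}_p$ has $\bb{F}_p$-dimension $d$, there is a surjection $\Z_p[\Gamma_{\Sigma,m}]^d\twoheadrightarrow M$. It therefore suffices to show $\mrm{rank}_{\Z_p}M= d\cdot\lvert\Gamma_{\Sigma,m}\rvert$, since then the surjection between $\Z_p$-free modules of equal rank is an isomorphism.

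The key geometric input is that the covering $X_m\to X_m'$, where $X_m'$ is the intermediate curve with Galois group $\Gamma_{\Sigma,m}$ over it (quotient by $\Gamma_{\Sigma,m}$ inside $X_m\to X_m^\circ$), is \emph{étale} for $p\ge5$ after possibly shrinking the tame level. Indeed, elliptic points have stabilizers of order dividing $2$ or $3$ (or small prime-to-$p$ orders coming from torsion in $G(F)$ modulo center), and $\Gamma_{\Sigma,m}$ is a $p$-group, so it acts freely. For a free action of a finite group on a compact Riemann surface (or on the Mumford/Čerednik–Drinfeld analytic curve when $w$ is $p$-adic, where one uses the Bruhat–Tits tree quotient instead), the complex of chains $C_\bullet(X_m,\Z_p)$ for a $\Gamma$-equivariant triangulation is a bounded complex of finite free $\Z_p[\Gamma]$-modules, computing $\mrm{H}_\bullet(X_m,\Z_p)$, with $C_\bullet\otimes_{\Z_p[\Gamma]}\Z_p$ computing $\mrm{H}_\bullet(X_m',\Z_p)$.

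Next, apply the nearly ordinary projector to this perfect complex. The Hecke operators ${_m}U_{\varpi}$ commute with the $\Gamma$-action (diamond operators are central by Remark \ref{rmk: dependence on uniformizer} and the isomorphism \eqref{Iwasawa algebra}), so $e_\Sigma$ acts on $C_\bullet$ up to homotopy and $e_\Sigma C_\bullet$ is still a perfect complex of $\Z_p[\Gamma]$-modules. The crucial claim, exactly as in Lemma \ref{pullbackCOMP}, is that nearly ordinary homology is concentrated in degree $1$: $e\mrm{H}_0=e\mrm{H}_2=0$ since $U_\varpi$ acts on $\mrm{H}_0,\mrm{H}_2$ (constants/fundamental classes on components) through the degree, which is divisible by $\mrm{N}\p$ and hence topologically nilpotent. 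A perfect complex over the local ring $\Z_p[\Gamma]$ with homology concentrated in a single degree, and whose homology after derived base change to $\bb{F}_p$ is also concentrated in one degree (which follows by applying the same vanishing to $\bb{F}_p$-coefficients on $X_m'$, again via nilpotence of $U_\varpi$ on $\mrm{H}_0,\mrm{H}_2$), has that homology free. This gives the freeness of $M$ directly, avoiding the rank count.

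The main obstacle I expect is making the chain-level nearly ordinary projector rigorous and verifying the vanishing of $e\,\mrm{H}_0$ and $e\,\mrm{H}_2$ with $\bb{F}_p$-coefficients at the intermediate level $X_m'$, where the operators ${_m}U_\varpi$ are defined via the Ash–Stevens double cosets; I would handle it by realizing $U_\varpi$ as a correspondence acting on the equivariant triangulation and computing its effect on $\mrm{H}_0$ and $\mrm{H}_2$ as multiplication by the number of cosets in \eqref{cosets-decompositions}, namely $\mrm{N}\p\equiv0\pmod p$. The freeness of the $\Gamma$-action (absence of $p$-torsion stabilizers) is the other point requiring care, using $p\ge5$ from Assumption \ref{global classes assumptions}.
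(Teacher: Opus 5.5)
Your proof is correct in outline, and it rests on the same two inputs as the paper. These are the freeness of the $\Gamma_{\Sigma,m}$-action, and the vanishing of nearly ordinary $\mathrm{H}_0,\mathrm{H}_2$ of the quotient curve $X_m'=X_m/\Gamma_{\Sigma,m}$ with $\mathbb{F}_p$-coefficients. Where you differ is in how these inputs are packaged.

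The paper first reduces to $\mathbb{F}_p$-coefficients by Nakayama. It then proves flatness over the Artinian local ring $\mathbb{F}_p[\Gamma_{\Sigma,m}]$ by the local criterion. Concretely, it identifies $\mathrm{Tor}_1^{\mathbb{F}_p[\Gamma_{\Sigma,m}]}(\mathbb{F}_p,-)$ with $\mathrm{H}_1(\Gamma_{\Sigma,m},-)$ and reads off its vanishing from the Cartan--Leray spectral sequence of $X_m\to X_m'$. The $E^2$-page collapses to the row $k=1$, which gives $\mathrm{H}_1(\Gamma_{\Sigma,m},\mathrm{H}_1(X_m,\mathbb{F}_p)^{\mathrm{n.o.}})\cong\mathrm{H}_2(X_m',\mathbb{F}_p)^{\mathrm{n.o.}}=0$.

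You instead stay integral. You split off $e_\Sigma C_\bullet$ as a perfect $\Z_p[\Gamma_{\Sigma,m}]$-complex and apply the minimal-complex lemma to its derived fibre $e_\Sigma C_\bullet\otimes^{\mathbb{L}}_{\Z_p[\Gamma_{\Sigma,m}]}\mathbb{F}_p$, whose homology is $\mathrm{H}_\ast(X_m',\mathbb{F}_p)^{\mathrm{n.o.}}$.
\begin{itemize}
\item This needs degree-one concentration only on $X_m'$. Your extra hypothesis on $\mathrm{H}_\ast(e_\Sigma C_\bullet)$ is superfluous, since it comes out of the lemma.
\item It yields integral freeness at once, without a separate Nakayama lift.
\item The cost is realizing $e_\Sigma$ as an idempotent in the homotopy category of perfect complexes. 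This is harmless: the endomorphism ring is finite over $\Z_p$ and that category is idempotent complete. It is also exactly the chain-level Hecke equivariance the paper uses tacitly when it takes nearly ordinary parts of the spectral sequence.
\item Your route also survives in settings where nearly ordinary homology is spread over several degrees, since one still obtains a perfect complex.
\end{itemize}

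One correction: your argument for freeness of the action is too quick. An element of order $p$ in the relevant quaternion unit group modulo $F^\times$ generates the CM extension $F(\zeta_p)$. So $p$-torsion, and with it possibly non-trivial stabilizers in $\Gamma_{\Sigma,m}$, can occur when $\mathbb{Q}(\zeta_p)^+\subseteq F$; the orders $2$ and $3$ are not the only possibilities. Moreover, ``shrinking the tame level'' replaces $X_m$ by a different curve, and you would then owe a descent step back to $X_m$, which you do not supply.

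When $\mathbb{Q}(\zeta_p)^+\not\subseteq F$, stabilizers are images of finite subgroups of $G(F)$ of prime-to-$p$ order, so the action is indeed free. The paper's appeal to Cartan--Leray for the ``Galois covering'' $X_m\to X_m'$ makes the same freeness assumption just as implicitly. So this is not a defect relative to the paper's proof.
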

\begin{proof}
 Since $\mrm{H}_1(X_m,\Z_p)^\mrm{n.o.}$ is finite and free over $\Z_p$, by Nakayama's lemma it suffices to show that $\mrm{H}_1(X_m,\bb{F}_p)^\mrm{n.o.}$ is finite and free over $\bb{F}_p[\Gamma_{\Sigma,m}]$.
 We claim that 
 \begin{equation}\label{tor-to-vanish}
\mrm{Tor}_1^{\bb{F}_p[\Gamma_{\Sigma,m}]}\big(\bb{F}_p,\ \mrm{H}_1(X_m,\bb{F}_p)^\mrm{n.o.}\big)=0,
\end{equation}
then the local criterion for flatness implies that the finite $\bb{F}_p[\Gamma_{\Sigma,m}]$-module $ \mrm{H}_1(X_m,\bb{F}_p)^\mrm{n.o.}$ is flat and hence free. As the maximal ideal and the augmentation ideal of $\bb{F}_p[\Gamma_{\Sigma,m}]$ coincide, we can interpret the LHS of \eqref{tor-to-vanish} in terms of group homology, i.e., 
\[
\mrm{Tor}_1^{\bb{F}_p[\Gamma_{\Sigma,m}]}\big(\bb{F}_p,\ \mrm{H}_1(X_m,\bb{F}_p)^\mrm{n.o.}\big)=\mrm{H}_1(\Gamma_{\Sigma,m}, \mrm{H}_1(X_m,\bb{F}_p)^\mrm{n.o.}).
\]
Set $X_m':=X_m/\Gamma_{\Sigma,m}$ and consider the Cartan--Leray spectral sequence for the Galois covering $X_m\to X_m'$ (\cite{Brown}, \S VII, Theorem 7.9 \& Exercise 4)
 \[
 E^2_{j,k}=\mrm{H}_j(\Gamma_{\Sigma,m},\mrm{H}_k(X_m,\bb{F}_p)^\mrm{n.o.})\implies \mrm{H}_{j+k}(X_m',\bb{F}_p)^\mrm{n.o.}.
 \]
 As the nearly ordinary part of the homology of curves is concentrated in middle degree, we obtain 
 \[
\mrm{H}_j(\Gamma_{\Sigma,m},\mrm{H}_k(X_m,\bb{F}_p)^\mrm{n.o.})=\begin{cases}
    \mrm{H}_{1}(X_m',\bb{F}_p)^\mrm{n.o.}&j=0, k=1\\
   0&\text{otherwise}.
\end{cases}
\]
In particular, the claim \eqref{tor-to-vanish} holds. 
\end{proof}

\begin{corollary}\label{coinduced}
   Let $\chi\colon\mu_\Sigma\to\overline{\bb{Q}}^\times$ be a character. For every $m\ge1$ the $\chi$-isotypic component $\mrm{H}_1(X_m,\Z_p)^\mrm{n.o.}_\chi$ is finite and free over $\bb{Z}_p(\chi)[\Gamma_{\Sigma,m}]$. 
\end{corollary}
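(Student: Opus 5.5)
The corollary follows from Proposition \ref{precursor} and the product decomposition $D_{\Sigma,m}=\mu_\Sigma\times\Gamma_{\Sigma,m}$. Set $M:=\mrm{H}_1(X_m,\Z_p)^\mrm{n.o.}$. The diamond operators commute with the nearly ordinary idempotent, so $M$ is a module over $\Z_p[D_{\Sigma,m}]=\Z_p[\mu_\Sigma]\otimes_{\Z_p}\Z_p[\Gamma_{\Sigma,m}]$. By Proposition \ref{precursor}, $M$ is finite and free over $\Z_p[\Gamma_{\Sigma,m}]$.

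Since $\mu_\Sigma$ has order prime to $p$, the group ring $\Z_p[\mu_\Sigma]$, after extending scalars to $\Z_p(\chi)$, contains the idempotent
\[
e_\chi=\frac{1}{|\mu_\Sigma|}\sum_{\zeta\in\mu_\Sigma}\chi(\zeta)^{-1}\zeta .
\]
Here $\Z_p(\chi)$ is the extension of $\Z_p$ generated by the values of $\chi$; these are prime-to-$p$ roots of unity, so the extension is unramified. I would define
\[
M_\chi:=e_\chi\big(M\otimes_{\Z_p}\Z_p(\chi)\big).
\]
The next step is base change. The module $M\otimes_{\Z_p}\Z_p(\chi)$ is finite and free over $\Z_p(\chi)[\Gamma_{\Sigma,m}]$, being the base change of a free module along $\Z_p\to\Z_p(\chi)$. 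The idempotent $e_\chi$ commutes with the $\Gamma_{\Sigma,m}$-action, so
\[
M\otimes\Z_p(\chi)=\bigoplus_{\chi'}e_{\chi'}\big(M\otimes\Z_p(\chi)\big)
\]
as $\Z_p(\chi)[\Gamma_{\Sigma,m}]$-modules. Hence $M_\chi$ is a direct summand of a finite free module, so it is finitely generated and projective.

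It remains to upgrade projective to free, using locality of the ring. The ring $\Z_p(\chi)[\Gamma_{\Sigma,m}]$ is local: $\Gamma_{\Sigma,m}$ is a finite $p$-group and $\Z_p(\chi)$ is a complete DVR of residue characteristic $p$. Its unique maximal ideal is generated by the uniformizer together with the augmentation ideal. Finitely generated projective modules over a local ring are free, which concludes the argument.

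There is one point to settle. If $\Z_p(\chi)$ is not contained in $\Z_p$, the isotypic component may instead be meant as the quotient or submodule of $M$ cut out by the corresponding $\Z_p[\mu_\Sigma]$-idempotent, that is, a $\Gal$-orbit of characters. In that case the same summand-plus-locality argument works verbatim over the factor $\Z_p(\chi)$ of $\Z_p[\mu_\Sigma]$. So the main thing to check is which convention for $\chi$-components is in force; the algebra is otherwise routine.

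Alternatively, one could rerun the proof of Proposition \ref{precursor} directly. Its Tor-vanishing argument goes through after applying $e_\chi$, since $e_\chi$ is exact and commutes with the Cartan--Leray spectral sequence. This route avoids the projective-to-free step but is no simpler.
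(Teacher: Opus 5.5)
Your proposal is correct and matches the paper's proof, which consists only of the remark that the claim follows from Proposition \ref{precursor} because finitely generated projective modules over the local ring $\Z_p(\chi)[\Gamma_{\Sigma,m}]$ are free. That is precisely your summand-plus-locality argument. One small correction: to exhibit $M_\chi$ as a direct summand you only need the splitting by $e_\chi$ and $1-e_\chi$, since the other idempotents $e_{\chi'}$ need not be defined over $\Z_p(\chi)$.
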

\begin{proof}
    The claim follows from Proposition \ref{precursor} as projective modules over a local ring are free.
\end{proof}

\subsubsection{Vanishing of Galois invariants.}
Let  $\overline{\varrho}\colon G_F\to\mrm{GL}_2(\bb{F}_p)$ be the mod-$p$ Galois representation attached to $A_{/F}$.
\begin{lemma}\label{first vanishing of invariants}
     Suppose Assumption \ref{global classes assumptions} holds. Then, for any solvable extension $L/F$ we have 
     \[
     \mrm{H}^0(L,\overline{\varrho})=0.
     \]
\end{lemma}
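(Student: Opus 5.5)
The plan is to show that $\overline{\varrho}$ stays irreducible, indeed without nonzero fixed vectors, after restriction to $\mathrm{Gal}_L$, using only that $\mathrm{SL}_2(\mathbb{F}_p)$ is not solvable and has no nontrivial solvable normal subgroups beyond the center when $p\ge5$. Set $H:=\overline{\varrho}(\mathrm{Gal}_F)\subseteq\mathrm{GL}_2(\mathbb{F}_p)$ and $N:=\overline{\varrho}(\mathrm{Gal}_L)$. Replacing $L$ by its Galois closure over $F$ only shrinks $\mathrm{Gal}_L$ and so only enlarges the invariants. The Galois closure of a solvable extension is again solvable, so we may assume $L/F$ is Galois with solvable group. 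Then $N$ is a normal subgroup of $H$ and $H/N$ is a quotient of $\mathrm{Gal}(L/F)$, hence solvable. Moreover $\mathrm{H}^0(L,\overline{\varrho})=(\mathbb{F}_p^2)^N$.

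Next, intersect with $\mathrm{SL}_2(\mathbb{F}_p)$. Put $N':=N\cap\mathrm{SL}_2(\mathbb{F}_p)$, which is normal in $\mathrm{SL}_2(\mathbb{F}_p)$ since $\mathrm{SL}_2(\mathbb{F}_p)\subseteq H$. The quotient $\mathrm{SL}_2(\mathbb{F}_p)/N'$ embeds in $H/N$, so it is solvable. For $p\ge5$ the normal subgroups of $\mathrm{SL}_2(\mathbb{F}_p)$ are $1$, $\{\pm1\}$ and $\mathrm{SL}_2(\mathbb{F}_p)$ itself, because $\mathrm{PSL}_2(\mathbb{F}_p)$ is simple. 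Neither $\mathrm{SL}_2(\mathbb{F}_p)$ nor $\mathrm{PSL}_2(\mathbb{F}_p)$ is solvable, so the quotient by $1$ or by $\{\pm1\}$ is not solvable. The only remaining possibility is $N'=\mathrm{SL}_2(\mathbb{F}_p)$, so $N\supseteq\mathrm{SL}_2(\mathbb{F}_p)$.

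Finally, $\mathrm{SL}_2(\mathbb{F}_p)$ fixes no nonzero vector of $\mathbb{F}_p^2$: the elements $\begin{pmatrix}1&1\\0&1\end{pmatrix}$ and $\begin{pmatrix}1&0\\1&1\end{pmatrix}$ have only the common fixed vector $0$. Hence $(\mathbb{F}_p^2)^N=0$.

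The main obstacle is the group theory of the second step, namely the simplicity of $\mathrm{PSL}_2(\mathbb{F}_p)$ for $p\ge5$, which is exactly where the hypothesis $p\ge5$ enters. That fact is classical and can be cited rather than reproved. The reduction to a Galois extension is routine but should be stated explicitly, since "solvable extension" may not mean Galois.
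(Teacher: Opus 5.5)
Your proposal is correct and follows essentially the same route as the paper: you reduce to $L/F$ Galois, let $\mathrm{SL}_2(\mathbb{F}_p)$ map to the solvable quotient $\overline{\varrho}(G_F)/\overline{\varrho}(G_L)$, use the normal subgroups of $\mathrm{SL}_2(\mathbb{F}_p)$ to force $\mathrm{SL}_2(\mathbb{F}_p)\subseteq\overline{\varrho}(G_L)$, and then spell out the no-fixed-vector step that the paper leaves implicit. One small precision: simplicity of $\mathrm{PSL}_2(\mathbb{F}_p)$ alone does not give your list of normal subgroups, since it leaves open an index-two normal subgroup not containing $-1$; to exclude it you also need that $\mathrm{SL}_2(\mathbb{F}_p)$ is perfect for $p\ge5$, which is exactly the quasisimplicity the paper invokes.
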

\begin{proof}
  Without loss of generality we can assume that $L/F$ is Galois. Moreover, it suffices to prove that $\mrm{SL}_2(\bb{F}_p)\subseteq\overline{\varrho}(G_L)$. 
 To show the required inclusion, note that since $p\ge5$ the group $\mrm{SL}_2(\bb{F}_p)$ is quasisimple, i.e. it is perfect, $\mrm{SL}_2(\bb{F}_p)=[\mrm{SL}_2(\bb{F}_p),\mrm{SL}_2(\bb{F}_p)]$, and its inner automorphism group, $\mrm{PSL}_2(\bb{F}_p)$, is simple. It follows that the proper normal subgroups of $\mrm{SL}_2(\bb{F}_p)$ are contained in its center $\{\pm1\}$. We deduce that the morphism from $\mrm{SL}_2(\bb{F}_p)$ to the solvable group $\overline{\varrho}(G_F)/\overline{\varrho}(G_L)$ induced by the inclusion $\mrm{SL}_2(\bb{F}_p)\subseteq\overline{\varrho}(G_F)$ has to be trivial because $\mrm{SL}_2(\bb{F}_p)$ and $\mrm{PSL}_2(\bb{F}_p)$ are not solvable for $p\ge5$.
\end{proof}

\begin{lemma}\label{local first vanishing of invariants}
    Let $S$ be definite with auxiliary $p$-adic place $w$. Suppose that Assumption \ref{local classes assumptions} holds, then  $\mrm{H}^0(E_w,\overline{\varrho})=0$.
\end{lemma}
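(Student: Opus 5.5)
Since $w\in\cal{S}$'s complement is a $p$-adic place dividing $\n^{\mbox{\tiny $-$}}$, the curve $A_{/F_w}$ has multiplicative reduction, and $w$ is inert in $E$, so $E_w/F_w$ is the unramified quadratic extension. The plan is to describe $\overline{\varrho}_{\lvert G_{E_w}}$ explicitly through the Tate uniformization and then show that its invariants vanish.

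First, over the unramified quadratic extension $E_w$, the curve $A$ becomes a Tate curve: split multiplicative reduction at $F_w$ is preserved, and non-split reduction becomes split after the unramified quadratic twist. So $A(\overline{E}_w)\cong \overline{E}_w^\times/q_w^{\Z}$ as $G_{E_w}$-modules. This gives the exact sequence of $G_{E_w}$-modules
\[
0\longrightarrow \mu_p\longrightarrow A[p]\longrightarrow \Z/p\Z\longrightarrow 0,
\]
where the extension class in $\mrm{H}^1(E_w,\mu_p)=E_w^\times/(E_w^\times)^p$ is the image of $q_w$ under the Kummer map.

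Next, I take a nonzero invariant vector $v\in A[p]^{G_{E_w}}$ and split into two cases.

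\emph{Case 1: $v\in\mu_p$.} Then $\mu_p\subset E_w$. Because $E_w/F_w$ has degree $2$, prime to $p$, and $F_w(\mu_p)/F_w$ has degree dividing $p-1$, I expect $\mu_p\subset E_w$ to force a contradiction with Assumption \ref{local classes assumptions}(1) in most situations. However, $[F_w(\mu_p):F_w]=2$ is a priori possible with $F_w(\mu_p)=E_w$, since both extensions are unramified only when $p$-adic ramification permits. I would handle this as follows. $F_w(\mu_p)/F_w$ is totally ramified of degree $(p-1)/\gcd(p-1,e_w)$, so it equals the unramified $E_w$ only if that degree is $1$, meaning $\mu_p\subset F_w$, which is excluded. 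Hence the cyclotomic character is nontrivial on $G_{E_w}$ and $\mu_p^{G_{E_w}}=0$.

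\emph{Case 2: $v\notin\mu_p$.} Then $v$ maps to a generator of $\Z/p$, and since $\mu_p^{G_{E_w}}=0$ by Case 1, $v$ spans a $G_{E_w}$-stable complement, so the sequence splits. Splitting means $q_w\in (E_w^\times)^p$. Because $[E_w:F_w]=2$ is prime to $p$, the norm argument gives $q_w^2=N(q_w)\in N((E_w^\times)^p)\subseteq (F_w^\times)^p$. Since $2$ is invertible mod $p$, it follows that $q_w\in(F_w^\times)^p$, contradicting Assumption \ref{local classes assumptions}(2). (In the non-split case, $q_w$ still lies in $F_w^\times$ as the Tate parameter of the quadratic twist, so the argument applies unchanged.)

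The main obstacle is the careful justification in Case 1 that $\mu_p\not\subset E_w$ when $\mu_p\not\subset F_w$. This rests on the total ramification of $F_w(\mu_p)/F_w$ at $p$-adic places, since the maximal unramified subextension of $\Q_p(\mu_p)/\Q_p$ is trivial.
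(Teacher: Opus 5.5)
The gap is in Case~1: your claim that $F_w(\mu_p)/F_w$ is totally ramified is false. The number $(p-1)/\gcd(p-1,e_w)$ is only its ramification index. The compositum of $F_w$ with the totally ramified field $\Q_p(\mu_p)$ can acquire an unramified part.

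Concretely, take $p=5$ and $F_w=\Q_5(5^{1/4})$. Since $\Q_p(\mu_p)=\Q_p\big((-p)^{1/(p-1)}\big)$, you get $F_w(\mu_5)=F_w\big((-1)^{1/4}\big)=F_w(\zeta_8)$. This is the unramified quadratic extension of $F_w$: the residue field $\mathbb{F}_5$ has no element of order $8$, while $\mathbb{F}_{25}$ does. So $\mu_5\not\subset F_w$, yet $\mu_5\subset E_w$.

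This is not merely a defect of your argument. In that situation the subgroup $\mu_5\subset A[5]$ supplied by the Tate uniformization over $E_w$ is fixed by $G_{E_w}$. Hence $\mathrm{H}^0(E_w,\overline{\varrho})\neq 0$, and the lemma fails under Assumption~(1) as literally stated.

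What the argument really needs is $\mu_p\not\subset E_w$, i.e.\ that $\chi_{\mathrm{cyc}}|_{G_{F_w}}$ is not the unramified quadratic character. This is automatic when $(p-1)\nmid e(F_w/\Q_p)$: then $F_w(\mu_p)/F_w$ is ramified, so it cannot lie inside the unramified $E_w$. Otherwise it has to be imposed as an extra hypothesis.

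The paper's own proof takes the same unjustified step. It asserts $\chi_{\mathrm{cyc}}|_{G_{E_w}}\neq 1$ ``because $p\ge 3$, $\mu_p\not\subset F_w$, and $E_w/F_w$ is quadratic.'' So you found the right weak point, but you resolved it with an incorrect lemma rather than by strengthening the hypothesis.

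Granting $\mu_p\not\subset E_w$, the rest of your proof is sound and matches the paper's structure. Your norm argument ($q_w^2=N_{E_w/F_w}(y)^p$ together with $\gcd(2,p)=1$) is an equivalent substitute for the paper's use of the irreducibility of $X^p-q_w$ over $F_w$.
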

\begin{proof}
     The local representation $\overline{\varrho}_{\lvert D_w}$ is an extension $\xymatrix{
     0\ar[r]&\chi_{\mrm{cyc}}\ar[r]&\overline{\varrho}_{\lvert D_w}\ar[r]&\mathbbm{1}\ar[r]&0
     }$
     of the trivial character by the mod-$p$ cyclotomic character.  Note that $\chi_\mrm{cyc}\lvert_{G_{E_w}}\not=\mathbbm{1}$ because $p\ge3$, $\mu_p\not\subset F_w$, and $E_w/F_w$ is quadratic. Therefore, 
     \[\begin{split}
     \mrm{H}^0(E_w,\overline{\varrho})\not=0&\qquad\iff\qquad \overline{\varrho}_{\lvert G_{E_w}}\cong \mathbbm{1}\oplus\chi_\mrm{cyc}\\
     &\qquad\iff\qquad q_w\in (E_w^\times)^p.
     \end{split}\]
     The hypothesis $q_w\not\in (F_w^\times)^p$ implies the irreducibility of $X^p-q_w\in F_w[X]$. Since $[E_w:F_w]=2<p$ we can then deduce $q_w\not\in (E_w^\times)^p$.
\end{proof}

If the mod-$p$ representation $\overline{\varrho}$ appears in the $p$-torsion of $J_m$, for a $\bb{T}_{\Sigma,m}$-module $M$ we set 
\[
M_{\overline{\varrho}}:=M \otimes_{\bb{T}_{\Sigma,m}}(\bb{T}_{\Sigma,m})_{\frak{m}}
\]
 where $\frak{m}$ denotes the maximal ideal of $\bb{T}_{\Sigma,m}$ associated to $\overline{\varrho}$. Otherwise, we simply set $M_{\overline{\varrho}}=\{0\}$.

\begin{proposition}\label{NOinvariants}
   Suppose Assumption \ref{global classes assumptions} holds. Then, for any solvable extension $L/F$  and any pair of integers $m,n\ge 1$, we have 
   \[
  \mrm{H}^0\big(L, \hspace{0.7mm}J_m[p^n]_{\overline{\varrho}}\big)=0.
   \]
    Moreover, if $S$ is definite and Assumption \ref{local classes assumptions} also holds, then $\mrm{H}^0\big(E_w, \hspace{0.7mm}J_m[p^n]_{\overline{\varrho}}\big)=0$.
\end{proposition}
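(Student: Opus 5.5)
We reduce to the residual statements Lemma \ref{first vanishing of invariants} and Lemma \ref{local first vanishing of invariants}, using a filtration of $J_m[p^n]_{\overline{\varrho}}$ by Galois-stable submodules whose graded pieces are (sums of) copies of $\overline{\varrho}$. The reduction rests on the left exactness of $\mrm{H}^0$: if a finite $G_L$-module $M$ has a filtration all of whose graded pieces have trivial $G_L$-invariants, then $\mrm{H}^0(L,M)=0$ by induction along the filtration.

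\emph{Step 1 (reduction to $n=1$).} Multiplication by $p$ gives the exact sequence
\[
0\too J_m[p]_{\overline{\varrho}}\too J_m[p^n]_{\overline{\varrho}}\overset{p}{\too} J_m[p^{n-1}]_{\overline{\varrho}},
\]
which is $G_F$-equivariant and Hecke equivariant; localization at $\frak{m}$ is exact. By induction on $n$ it therefore suffices to show $\mrm{H}^0(L,J_m[p]_{\overline{\varrho}})=0$.

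\emph{Step 2 (filtration of $M:=J_m[p]_{\overline{\varrho}}$).} $M$ is a finite module over the Artinian local ring $(\bb{T}_{\Sigma,m}/p)_{\frak{m}}$, whose maximal ideal $\frak{m}$ is nilpotent on $M$. Hence $M\supseteq\frak{m}M\supseteq\frak{m}^2M\supseteq\cdots\supseteq 0$ is a finite filtration, and each step is $G_F$-stable because the Hecke operators commute with the Galois action. Every graded piece is a $G_F$-module killed by $\frak{m}$, so it is a finite-dimensional $\bb{F}_p$-vector space on which each $T_\q$ acts by $a_\q(A)\bmod p$ for $\q\nmid\frak{f}_A p$.

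By the Eichler--Shimura relation, Frobenius at $\q\nmid \frak{f}_A p\cdot\mrm{level}$ satisfies $\mrm{Frob}_\q^2-T_\q\mrm{Frob}_\q+\mrm{N}\q\langle\q\rangle=0$ on $J_m[p]$. One must check that the diamond/nebentypus contribution is trivial modulo $\frak{m}$. This holds because $\overline{\varrho}$ has trivial determinant twist, so $\frak{m}$ contains $\langle\q\rangle-1$ for the relevant classes. Consequently each $\frak{m}$-torsion graded piece $W$ satisfies the characteristic polynomial of $\overline{\varrho}(\mrm{Frob}_\q)$. Using Chebotarev together with the Boston--Lenstra--Ribet theorem (valid since $\overline{\varrho}$ is absolutely irreducible, as its image contains $\mrm{SL}_2(\bb{F}_p)$), $W$ has a $G_F$-stable filtration with all subquotients isomorphic to $\overline{\varrho}$.

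\emph{Step 3 (conclusion).} Restricting to $G_L$, each subquotient $\overline{\varrho}$ has $\mrm{H}^0(L,\overline{\varrho})=0$ by Lemma \ref{first vanishing of invariants}, since $L/F$ is solvable. Left exactness then yields the first claim. For the local claim, run the same argument with $G_{E_w}\subseteq G_F$ (via $\iota_w$) and use Lemma \ref{local first vanishing of invariants} instead, which gives $\mrm{H}^0(E_w,\overline{\varrho})=0$ under Assumption \ref{local classes assumptions}.

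If $\overline{\varrho}$ does not occur in $J_m[p]$, then $M=0$ by convention and there is nothing to prove.

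The main obstacle is Step 2: justifying that the $\frak{m}$-torsion is $\overline{\varrho}$-isotypic as a $G_F$-module. This needs Eichler--Shimura on the Shimura curves $X_m$, in the form giving Frobenius on $J_m$ in terms of $T_\q$ and the diamond operators. It also needs control of the determinant character (the diamond operators), and then the Boston--Lenstra--Ribet theorem. I would first try to invoke these as standard results for Shimura curves over totally real fields.
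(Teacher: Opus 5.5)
Your argument is correct, but it organizes the d\'evissage differently from the paper.

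\textbf{The paper's route.} It applies Boston--Lenstra--Ribet once, to the reduction $J_m[p^n]_{\overline{\varrho}}/\mathfrak{m}J_m[p^n]_{\overline{\varrho}}\cong\overline{\varrho}^{\oplus k}$. It then uses a Nakayama argument (after Longo) to assert a $G_F$-equivariant splitting $J_m[p^n]_{\overline{\varrho}}\cong\bigoplus_j M_j$ into free rank-two $\mathbb{Z}/p^{n_j}\mathbb{Z}$-modules carrying lifts of $\overline{\varrho}$. Finally it filters each $M_j$ by the submodules $p^aM_j$, whose graded pieces are copies of $\overline{\varrho}$.

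\textbf{Your route.} You reduce to the $p$-torsion and filter by $\mathfrak{m}^iJ_m[p]_{\overline{\varrho}}$. You then apply Eichler--Shimura, Chebotarev and Boston--Lenstra--Ribet to each graded piece, which is killed by $\mathfrak{m}$.

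\textbf{Comparison.} Both proofs end with Lemma~\ref{first vanishing of invariants} (resp.\ Lemma~\ref{local first vanishing of invariants}) and left exactness of $\mathrm{H}^0$. Yours only uses that subquotients killed by $\mathfrak{m}$ are $\overline{\varrho}$-isotypic, which is exactly what Boston--Lenstra--Ribet provides. The paper's splitting is a much stronger structural claim. Since $J_m[p^n]_{\overline{\varrho}}$ is a direct summand of $J_m[p^n]$, it is free over $\mathbb{Z}/p^n\mathbb{Z}$. So the splitting forces $n_j=n$, and reducing mod $p$ it amounts to the semisimplicity $J_m[p]_{\overline{\varrho}}\cong\overline{\varrho}^{\oplus k}$. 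One should not expect this in the present tower: $(\mathbb{T}_{\Sigma,m})_{\mathfrak{m}}/p$ is non-reduced, and inertia at $\mathfrak{p}\in\Sigma$ acts through the diamond operators of $\Gamma_{\Sigma,m}$, which are unipotent but non-trivial modulo $p$. So your filtration is the more economical and more robust way to carry out the d\'evissage.

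\textbf{A minor point on your Step 2.} No nebentypus term needs to be controlled, because $G=B^\times/F^\times$ has trivial centre. The Eichler--Shimura relation therefore reads $\mathrm{Frob}_\mathfrak{q}^2-T_\mathfrak{q}\mathrm{Frob}_\mathfrak{q}+\mathrm{N}\mathfrak{q}=0$, the determinant is the cyclotomic character, and the diamond operators at $\Sigma$ play no role at this step.
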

\begin{proof}
Fix integers $m,n\ge1$. Since $\overline{\varrho}$ is absolutely irreducible, the main theorem of \cite{BLR} together with the Eichler--Shimura relations, the Brauer--Nesbitt theorem and Chebotarev density theorem, imply that the Galois representation obtained by reducing $J_m[p^n]_{\overline{\varrho}}$ modulo the maximal ideal of $(\bb{T}_{\Sigma,m})_{\overline{\varrho}}$ is isomorphic to $\overline{\varrho}^{\oplus k}$ for some $k\ge0$. As in the proof of \cite[Proposition 3.5]{Longo-BLR}), a repeated use of Nakayama's lemma then provides lifts $\{\varrho_j\colon G_F\to\mrm{GL}_2(\bb{Z}/p^{n_j}\bb{Z})\}_{j=1}^k$ of  $\overline{\varrho}$ and a $G_F$-equivariant isomorphism
\[
J_m[p^n]_{\overline{\varrho}}\cong \bigoplus_{j=1}^kM_j
\]
where $M_j=(\bb{Z}/p^{n_j}\bb{Z})^2$ with $G_F$-action induced by $\varrho_j$. We now show that $\mrm{H}^0(L,M_j)=0$ for every $j=1,\dots k$. Indeed, for any $0\le a<n_j$ multiplication by $p^a$ induces a $G_F$-equivariant isomorphism 
\[
\overline{\varrho}\cong M_j/p M_j\overset{\sim}{\longrightarrow}p^aM_j/p^{a+1}M_j.
\]
Thus, $\mrm{H}^0(L,p^aM_j/p^{a+1}M_j)=0$ by Lemma \ref {first vanishing of invariants}. Since $\overline{\varrho}\cong p^{n_j-1}M_j$, we deduce that
\[
\mrm{H}^0(L,M_j)\cong \mrm{H}^0(L,pM_j)\cong\dots \cong\mrm{H}^0(L,p^{n_j-1}M_j)=0.
\]
The second claim is proved similarly by replacing Lemma \ref {first vanishing of invariants} with Lemma \ref{local first vanishing of invariants}.
\end{proof}

\begin{remark}
  Proposition \ref{NOinvariants} implies that if $L_1\subseteq L_2\subseteq\dots\subseteq L_m\subseteq\dots$ is a tower of solvable extensions of $F$, then 
   \[
   \varprojlim_n\varinjlim_m\ \mrm{H}^1(L_m,\hspace{0.7mm}J_m[p^n]_{\overline{\varrho}})\overset{\sim}{\longrightarrow}  \varprojlim_n\varinjlim_m\ \mrm{H}^1({{^w}}E_\mrm{ab},\hspace{0.7mm}J_m[p^n]_{\overline{\varrho}})^{\mrm{Gal}({{^w}}E_\mrm{ab}/L_m)},
   \]
   which injects in 
   \[
   \varprojlim_n\varinjlim_m\ \mrm{H}^1({{^w}}E_\mrm{ab},\hspace{0.7mm}J_m[p^n]_{\overline{\varrho}})
   \]
because projective limits are left exact and direct limits are exact.
\end{remark}

\subsubsection{Descent.}\label{subsub descent}
We combine Lemma \ref{pullbackCOMP}, Corollary \ref{coinduced} and Proposition \ref{NOinvariants} to obtain:
\begin{theorem}\label{descent}
Suppose Assumption \ref{global classes assumptions} holds. Then, for any solvable extension $L/F$  and any pair of integers $m,n\ge 1$, the map  
\[
    \mrm{H}^1\big(L,\hspace{0.7mm} J^\circ_m[p^n]_{\overline{\varrho}}\big)\overset{\sim}{\longrightarrow}\mrm{H}^1\big(L, \hspace{0.7mm}J_m[p^n]_{\overline{\varrho}}\big)^{D_{\Sigma,m}}
   \]
induced by the Galois covering $ X_m\to X^\circ_m$ is an isomorphism.
    Moreover, if $S$ is definite and Assumption \ref{local classes assumptions} also holds, then the local map
    \[
    \mrm{H}^1\big(E_w,\hspace{0.7mm} J^\circ_m[p^n]_{\overline{\varrho}}\big)\overset{\sim}{\longrightarrow}\mrm{H}^1\big(E_w, \hspace{0.7mm}J_m[p^n]_{\overline{\varrho}}\big)^{D_{\Sigma,m}}
   \]
   is an isomorphism.
\end{theorem}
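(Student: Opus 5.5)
The plan is a Hochschild--Serre argument for the finite group $D_{\Sigma,m}=\mu_\Sigma\times\Gamma_{\Sigma,m}$ acting on the Galois module $M:=J_m[p^n]_{\overline{\varrho}}$. Because $D_{\Sigma,m}$ acts through automorphisms of the covering $X_m\to X_m^\circ$, which are defined over $F$, its action commutes with that of $G_L$. We may therefore consider the two ways of computing the cohomology of $G_L\times D_{\Sigma,m}$, or, more directly, the Grothendieck spectral sequence for the composite functor
\[
M\mapsto \mrm{H}^0(L,\mrm{H}^0(D_{\Sigma,m},M))=\mrm{H}^0(D_{\Sigma,m},\mrm{H}^0(L,M)).
\]
By Lemma \ref{pullbackCOMP} (after localizing at $\overline{\varrho}$, which commutes with taking nearly ordinary parts and $D_{\Sigma,m}$-invariants since the diamond operators lie in the Hecke algebra), we have $\mrm{H}^0(D_{\Sigma,m},M)\cong J_m^\circ[p^n]_{\overline{\varrho}}$. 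The modules $J_m[p^n]_{\overline{\varrho}}$ are contained in the nearly ordinary part, since $U_\p$ is invertible on the $\overline{\varrho}$-localization when $\p$ divides the conductor with multiplicative reduction. If this fails, we instead work with $e_\Sigma$ applied throughout.

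The key input is that $M$ has trivial higher $D_{\Sigma,m}$-cohomology. The prime-to-$p$ part $\mu_\Sigma$ has order invertible on $M$, so only $\Gamma_{\Sigma,m}$ matters, and taking $\mu_\Sigma$-isotypic components commutes with everything. By Remark \ref{relation hom-cohm}, $J_m[p^n]\cong \mrm{H}^1_\et(X_{m,\overline{\Q}},\mu_{p^n})\cong \mrm{Hom}(\mrm{H}_1(X_m,\Z),\Z/p^n)\otimes\mu_{p^n}$, Hecke-equivariantly. Corollary \ref{coinduced} gives that $\mrm{H}_1(X_m,\Z_p)^{\mrm{n.o.}}_\chi$ is free over $\Z_p(\chi)[\Gamma_{\Sigma,m}]$, so its $\Z/p^n$-linear dual is coinduced and thus cohomologically trivial for $\Gamma_{\Sigma,m}$. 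Localization at $\frak{m}$ is a direct summand, so $M$ is a direct summand of a cohomologically trivial module. Hence $\mrm{H}^i(D_{\Sigma,m},M)=0$ for $i\ge1$, and the map $\mrm{H}^1(L,M^{D_{\Sigma,m}})\to \mrm{H}^1(L,M)$ is inverted by the coinvariants/norm structure. More precisely, the spectral sequence
\[
\mrm{H}^i(D_{\Sigma,m},\mrm{H}^j(L,M))\Rightarrow \mrm{H}^{i+j}(G_L\times D_{\Sigma,m},M)\Leftarrow \mrm{H}^j(L,\mrm{H}^i(D_{\Sigma,m},M))
\]
collapses on the right to $\mrm{H}^j(L,J_m^\circ[p^n]_{\overline{\varrho}})$.

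On the left-hand sequence, the five-term exact sequence gives
\[
0\to \mrm{H}^1(D_{\Sigma,m},\mrm{H}^0(L,M))\to \mrm{H}^1\to \mrm{H}^0(D_{\Sigma,m},\mrm{H}^1(L,M))\to \mrm{H}^2(D_{\Sigma,m},\mrm{H}^0(L,M)).
\]
Proposition \ref{NOinvariants} yields $\mrm{H}^0(L,M)=0$, so the outer terms vanish. Combining the two sequences gives the claimed isomorphism $\mrm{H}^1(L,J^\circ_m[p^n]_{\overline{\varrho}})\cong \mrm{H}^1(L,M)^{D_{\Sigma,m}}$. We then check that this abstract isomorphism is the map induced by pullback along $X_m\to X_m^\circ$, which holds by naturality of the edge maps. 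The local statement is identical, replacing $G_L$ by $G_{E_w}$ and using the second part of Proposition \ref{NOinvariants}.

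The main obstacle is the cohomological triviality step. Specifically, one must transport the freeness of $\mrm{H}_1(X_m,\Z_p)^{\mrm{n.o.}}$ over $\Z_p[\Gamma_{\Sigma,m}]$ to the torsion module $J_m[p^n]_{\overline{\varrho}}$ compatibly with the Galois action and the Hecke localization. This requires identifying the $D_{\Sigma,m}$-action on homology with the geometric deck action; the Tate twist by $\mu_{p^n}$ is harmless, since it does not affect the $D_{\Sigma,m}$-action. If a direct argument is wanted, one can avoid the spectral sequence over $G_L\times D_{\Sigma,m}$ and simply use the inflation-restriction-style sequence for $\mrm{H}^0(D,-)$ applied to an injective resolution of $M$ as $G_L$-modules. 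This resolution can be chosen $D$-acyclic using coinduction.
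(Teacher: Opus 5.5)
Your proposal is correct and is essentially the paper's argument. You compute $\mathrm{H}^1(G_L\times D_{\Sigma,m},J_m[p^n]_{\overline{\varrho}})$ via the two inflation--restriction (Hochschild--Serre) sequences: Lemma~\ref{pullbackCOMP} and Corollary~\ref{coinduced} identify it with $\mathrm{H}^1(L,J^\circ_m[p^n]_{\overline{\varrho}})$, and Proposition~\ref{NOinvariants} identifies it with $\mathrm{H}^1(L,J_m[p^n]_{\overline{\varrho}})^{D_{\Sigma,m}}$. The only difference is that you prove vanishing of all higher $D_{\Sigma,m}$-cohomology, whereas the paper only uses the vanishing of $\mathrm{H}^1(D_{\Sigma,m},J_m[p^n]_{\overline{\varrho}})$ to get surjectivity of inflation.
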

\begin{proof}
The idea is to compute the cohomology group $\mrm{H}^{1}\big(G_L\times D_{\Sigma,m},\hspace{0.7mm} J_m[p^n]_{\overline{\varrho}}\big)$ in two different ways. Consider the short exact sequence of groups 
\[
\xymatrix{1\ar[r]&D_{\Sigma, m}\ar[r]&G_L\times D_{\Sigma,m}\ar[r]& G_L\ar[r]&1}.
\]
 The inflation-restriction exact sequence  allows us to deduce that the inflation map 
 \[
 \mrm{H}^{1}\big(G_L, \hspace{0.7mm}J^\circ_m[p^n]_{\overline{\varrho}}\big)\overset{\sim}{\longrightarrow}\mrm{H}^{1}\big(G_L\times D_{\Sigma,m}, \hspace{0.7mm}J_m[p^n]_{\overline{\varrho}}\big)
 \]
 is an isomorphism, using Lemma \ref{pullbackCOMP} to identify the domain and Corollary \ref{coinduced} to obtain the surjectivity. By considering instead the short exact sequence 
\[
\xymatrix{1\ar[r]&G_L\ar[r]&G_L\times D_{\Sigma, m}\ar[r]& D_{\Sigma,m}\ar[r]&1},
\]
 the inflation-restriction exact sequence  implies that the restriction map 
  \[
\mrm{H}^{1}\big(G_L\times D_{\Sigma,m}, \hspace{0.7mm}J_m[p^n]_{\overline{\varrho}}\big)\overset{\sim}{\longrightarrow}\mrm{H}^1(G_L, \hspace{0.7mm}J_m[p^n]_{\overline{\varrho}})^{D_{\Sigma,m}}
  \]
  is an isomorphism once Proposition \ref{NOinvariants} is invoked to ensure both injectivity and surjectivity. The proof of the local version is identical.
\end{proof}

Let
$(J_\Sigma^\Box)_{\overline{\varrho}}:=\varinjlim_m \hspace{1mm}\big(J_m^\Box\otimes\Z_p\big)_{\overline{\varrho}}$ and write $(J_\Sigma^\Box)^\wedge_{\overline{\varrho}}$ for its $p$-adic completion. For additional clarity, in the next corollary we make the dependence on $\Sigma$ more evident by writing $J_m(\Sigma), J_m^\circ(\Sigma)$ for $J_m, J_m^\circ$, the Jacobian of the Shimura curves of levels  $K^\circ(\n^{\mbox{\tiny $+$}})\cap K\big(p_\Sigma^m\big)$, $K^\circ(\n^{\mbox{\tiny $+$}}\cdot p_\Sigma^m)$.
\begin{corollary}\label{cor: descent}
Suppose Assumption \ref{global classes assumptions} holds. Then, for any abelian extension $L/E$, Galois over $F$, there is an injective homomorphism
 \[
\mrm{H}^0\big(G_L\times D_\Sigma,\hspace{1mm} (J_\Sigma^\text{glo})^\wedge_{\overline{\varrho}}\big)\hooklongrightarrow \mrm{H}^1\big(L,\hspace{1mm}  T_p(J_1^\circ(\Sigma))_{\overline{\varrho}}\big)
 \]
   where $T_p(J_1^\circ(\Sigma))$ denotes the $p$-adic Tate module of $J_1^\circ(\Sigma)$.  Moreover, if $S$ is definite and Assumption \ref{local classes assumptions} also holds, there is an injective map
    \[
\mrm{H}^0\big(D_\Sigma,\hspace{1mm} (J_\Sigma^\text{loc})^\wedge_{\overline{\varrho}}\big)\hooklongrightarrow \mrm{H}^1\big(E_w,\hspace{1mm}  T_p(J_1^\circ(\Sigma))_{\overline{\varrho}}\big).
 \] 
\end{corollary}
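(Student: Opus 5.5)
The plan is to pass from the completed direct limit to finite level through the Kummer embedding, then descend along the diamond covering using Theorem \ref{descent}. Finally we move from level $m$ to level $1$ by using the fact that $X_m^\circ\to X_1^\circ$ induces an isomorphism on nearly ordinary parts.

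\textbf{Step 1: Kummer embedding.} As in \S\ref{Further observations}, the Kummer maps give an injection
\[
(J_\Sigma^\text{glo})^\wedge_{\overline{\varrho}}\hooklongrightarrow\varprojlim_n\varinjlim_m \mrm{H}^1\big({^{w}}E_\mrm{ab}, J_m[p^n]_{\overline{\varrho}}\big).
\]
Since $L\subseteq {^{w}}E_\mrm{ab}$ can be assumed, or more precisely one works with $\overline{F}$-points, we take $\mrm{H}^0(G_L\times D_\Sigma,-)$. The functors $\mrm{H}^0$ and $\varprojlim_n$ are left exact, and $\varinjlim_m$ is exact. By the Remark following Proposition \ref{NOinvariants}, and because all extensions in sight are solvable over $F$ (here we use that $L/E$ is abelian and $E/F$ quadratic), we may replace $G_L$-invariants of $\mrm{H}^1({^{w}}E_\mrm{ab},-)$ by $\mrm{H}^1(L,-)$. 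This produces an injection into
\[
\varprojlim_n\varinjlim_m \mrm{H}^1\big(L,J_m[p^n]_{\overline{\varrho}}\big)^{D_{\Sigma,m}}.
\]
Note that $D_\Sigma$ acts on level $m$ through $D_{\Sigma,m}$. Strictly, one must commute the $D_\Sigma$-invariants with the direct limit. This is fine because invariants under a profinite group acting on a direct limit through finite quotients are computed levelwise, up to checking that transition maps are equivariant. That equivariance follows from $\langle a\rangle_{m+1}\circ\pi_1^*=\pi_1^*\circ\langle a\rangle_m$ in Proposition \ref{prop: compatibility Hecke}.

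\textbf{Step 2: descent to $X_m^\circ$.} Theorem \ref{descent} identifies each term with $\mrm{H}^1(L,J_m^\circ[p^n]_{\overline{\varrho}})$. The transition maps become those induced by $X_{m+1}^\circ\to X_m^\circ$.

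\textbf{Step 3: stabilisation at level $1$.} This is the main obstacle. We need that, after localisation at $\overline{\varrho}$ (which sits inside the nearly ordinary part, since $A$ has multiplicative reduction at $\Sigma\subseteq\cal{S}$ and $\mbf{U}_\varpi$ acts by a unit), the pullback $J_1^\circ[p^n]_{\overline{\varrho}}\to J_m^\circ[p^n]_{\overline{\varrho}}$ is an isomorphism. This is Hida's control theorem for the $\Gamma_0$-type tower at primes of $\Sigma$. I would prove it as in Proposition \ref{prop: compatibility Hecke}, by showing that $U_\varpi$ at level $K^\circ_{\p^{m+1}}$ factors through the degeneracy map to level $K^\circ_{\p^m}$. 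The coset computation there gives that trace composed with pullback equals $U_\varpi$ up to the diamond-trivial part, so on the $U$-invertible part pullback is injective and surjective. To do this at the level of torsion points, I would use Remark \ref{relation hom-cohm} and the freeness of homology.

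Granting Step 3, the system in $m$ is constant, equal to $\mrm{H}^1(L,J_1^\circ[p^n]_{\overline{\varrho}})$. Taking $\varprojlim_n$ then gives $\mrm{H}^1(L,T_p(J_1^\circ(\Sigma))_{\overline{\varrho}})$. Here we use that $\varprojlim_n$ of continuous cohomology with finite coefficients computes $\mrm{H}^1$ of the Tate module, since $\varprojlim^1$ of the finite groups $\mrm{H}^0$ vanishes (indeed they are $0$ by Proposition \ref{NOinvariants}).

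\textbf{Local statement.} The local version is identical, using Lemma \ref{local first vanishing of invariants}, the local part of Proposition \ref{NOinvariants}, and the local isomorphism in Theorem \ref{descent} with $E_w$ in place of $L$.
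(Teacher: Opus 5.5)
Your proposal is correct and follows essentially the same route as the paper: the Kummer embedding, inflation--restriction via Proposition \ref{NOinvariants}, descent to $X_m^\circ$ via Theorem \ref{descent}, and stabilisation at level $1$ by ordinarity. The paper's compatible isomorphisms $U_\Sigma^{1-m}\circ(\pi_2)_*^{m-1}$ are exactly the inverses of your iterated pullbacks on $\overline{\varrho}$-parts. In Step 3, the identities you need are $(\pi_2)_*\circ\pi_1^*=U_\Sigma$ at level $m$ and $\pi_1^*\circ(\pi_2)_*=U_\Sigma$ at level $m+1$ (for $m\ge1$). Here $\pi_2$ is the $\eta_\varpi$-twisted degeneracy map, not the natural trace $(\pi_1)_*$, since $(\pi_1)_*\circ\pi_1^*$ is only multiplication by the degree. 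Also, no diamond operators are involved on the $K_m^\circ$-tower.
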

\begin{proof}
We prove only the first claim since the proof of the second is identical to part of the argument given below. Recall that the Kummer map induces the injection
\[
(J_\Sigma^\text{glo})^\wedge_{\overline{\varrho}}\longhookrightarrow\varprojlim_n\varinjlim_m\ \mrm{H}^1\big(E^\mrm{ab},\hspace{0.7mm} J_m(\Sigma)[p^n]_{\overline{\varrho}}\big).
\]
Proposition \ref{NOinvariants} and Theorem \ref{descent} give the isomorphisms
\[\xymatrix{
\mrm{H}^1\big(L,\hspace{0.7mm}J_m^\circ(\Sigma)[p^n]_{\overline{\varrho}}\big)\ar[r]^-\sim& \mrm{H}^1\big(L,\hspace{0.7mm}J_m(\Sigma)[p^n]_{\overline{\varrho}}\big)^{D_{\Sigma,m}}\ar[r]^-\sim&\mrm{H}^1\big(E^\mrm{ab},\hspace{0.7mm}J_m(\Sigma)[p^n]_{\overline{\varrho}}\big)^{G_L\times D_{\Sigma,m}}
}\]
which allows us to deduce the injection
\[
\mrm{H}^0\big(G_L\times D_\Sigma,\hspace{1mm} (J_\Sigma^\text{glo})^\wedge_{\overline{\varrho}}\big)\longhookrightarrow\varprojlim_n\varinjlim_m\ \mrm{H}^1\big(L,\hspace{0.7mm}J_m^\circ(\Sigma)[p^n]_{\overline{\varrho}}\big).
\]
Since the Hecke eigensystem associated to the elliptic curve $A_{/F}$  is ordinary at every prime in $\cal{S}$ and $U_{\Sigma}=(\pi_2)_*\circ(\pi_1)^*$, we can consider the following commuting diagram
\[\xymatrix{
J_{m+1}^\circ(\Sigma)[p^n]_{\overline{\varrho}}\ar[rrr]^-{U_{\Sigma}^{-m}\circ(\pi_2)_*^{m}}&&& J_1^\circ(\Sigma)[p^n]_{\overline{\varrho}}\\
J_m^\circ(\Sigma)[p^n]_{\overline{\varrho}}\ar[u]^{(\pi_1)^*}\ar[rrr]^-{U_{\Sigma}^{1-m}\circ(\pi_2)_*^{m-1}}&&& J_1^\circ(\Sigma)[p^n]_{\overline{\varrho}}\ar@{=}[u]
}\]
where the horizontal arrows are Galois equivariant isomorphisms, to deduce 
\[
\varinjlim_m\ \mrm{H}^1\big(L,\hspace{0.7mm}J_m^\circ(\Sigma)[p^n]_{\overline{\varrho}}\big)\overset{\sim}{\longrightarrow}\mrm{H}^1\big(L,\hspace{0.7mm}J_1^\circ(\Sigma)[p^n]_{\overline{\varrho}}\big).
\]
\end{proof}

\subsection{Hida theory}\label{hida theory}
This section focuses on a minor variant of Hida theory. As we are not aware of a suitable reference, for the reader's convenience we prove the results needed for our applications.

Fix $\Sigma\subseteq \cal{S}$ and recall the decomposition $D_\Sigma=\mu_\Sigma\times\Gamma_\Sigma$. We denote by $\Lambda^{\bfcdot}$, $\bb{T}^{\bfcdot}$, and $M_m^{\bfcdot}$ the components of $\Lambda_\Sigma$, $\bb{T}_\Sigma^\mrm{n.o.}$, and $\mrm{H}_1(X_m,\Z_p)^\mrm{n.o.}$ on which $\mu_\Sigma$ acts trivially. 
By Remark \ref{relation hom-cohm} we can study $\bb{T}^{\bfcdot}$ through its action on  completed homology
\[
M^{\bfcdot}:=\varprojlim_m \big(M_m^{\bfcdot}, (\pi_{1})_*\big).
\]
In particular, $\bb{T}^{\bfcdot}\subset\mrm{End}_{\Lambda^{\bfcdot}}(M)$.

\begin{lemma}\label{classic Hida}
For every $m\ge1$ the covering $\pi_1\colon X_{m+1}\to X_m$ induces the isomorphism
\[
M_{m+1}^{\bfcdot}\otimes_{\Lambda^{\bfcdot}_{m+1}}\Lambda^{\bfcdot}_m\overset{\sim}{\longrightarrow} M_m^{\bfcdot}.
\]
\end{lemma}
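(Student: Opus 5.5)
The plan is to factor $\pi_1\colon X_{m+1}\to X_m$ through the intermediate quotient, and to treat the two resulting coverings in turn using the Cartan--Leray spectral sequence as in Proposition \ref{precursor}. Let $X'_{m+1}$ be the curve of level $K_{m}\cap K^\circ_{m+1}$, so that $X_{m+1}\to X'_{m+1}$ is Galois with group
\[
\Delta:=\ker\big(D_{\Sigma,m+1}\to D_{\Sigma,m}\big)\cong \prod_{\p\in\Sigma}(1+\p^m\cal{O}_\p)/(1+\p^{m+1}\cal{O}_\p).
\]
This group is a $p$-group, so it lies in $\Gamma_{\Sigma,m+1}$; in particular $\mu_\Sigma$-isotypic projections commute with everything below. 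Since $\Lambda^{\bfcdot}_m=\Lambda^{\bfcdot}_{m+1}\otimes_{\Z_p[\Delta]}\Z_p$, the source of the map in the lemma is the module of $\Delta$-coinvariants $(M^{\bfcdot}_{m+1})_\Delta$.

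The first step identifies these coinvariants. Apply Cartan--Leray to $X_{m+1}\to X'_{m+1}$ with $\Z_p$-coefficients, after applying $e_\Sigma$ and the $\mu_\Sigma$-projection; this is legitimate because both operations commute with the $\Delta$-action, by Proposition \ref{prop: compatibility Hecke} and its proof. Because nearly ordinary homology of curves is concentrated in degree $1$, the spectral sequence degenerates and yields
\[
(M^{\bfcdot}_{m+1})_\Delta\cong \mrm{H}_1(X'_{m+1},\Z_p)^{\mrm{n.o.},\bfcdot}.
\]
Alternatively, one can use the freeness from Corollary \ref{coinduced} and argue as in Proposition \ref{precursor} directly. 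The map realizing this isomorphism is $(\pi)_*$ for the quotient $X_{m+1}\to X'_{m+1}$.

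The second step compares $X'_{m+1}$ with $X_m$. These have levels $K_m\cap K^\circ_{m+1}\subset K_m$, which differ at each $\p\in\Sigma$ by the Iwahori-type condition $c\in\p^{m+1}$ versus $c\in\p^m$ (upper unipotent mod $\p^m$). I will show that the pushforward $\mrm{H}_1(X'_{m+1})^{\mrm{n.o.}}\to \mrm{H}_1(X_m)^{\mrm{n.o.}}$ is an isomorphism. The standard argument uses the coset identity from the proof of Proposition \ref{prop: compatibility Hecke}: with representatives $\left(\begin{smallmatrix}1&0\\ \varpi^m a&1\end{smallmatrix}\right)$, it shows that ${_{m}}U_\varpi$ on $X_m$ factors as $\mrm{pr}_*\circ(\text{something})$ through level $K_m\cap K^\circ_{m+1}$. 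Symmetrically, ${_{m+1}}U_\varpi$ on $X'_{m+1}$ factors through $X_m$ via the pullback, using $K_m\cap K^\circ_{m+1}\supseteq \eta K_m\eta^{-1}\cap K_m$ with $\eta=\widehat\sigma_{m,0}$. This produces maps $u\colon \mrm{H}_1(X_m)\to \mrm{H}_1(X'_{m+1})$ with $\mrm{pr}_*\circ u=U_\varpi$ and $u\circ \mrm{pr}_*=U_\varpi$. Since $U_\varpi$ is invertible on nearly ordinary parts, $\mrm{pr}_*$ is an isomorphism there. I do this one prime of $\Sigma$ at a time, taking the product of the $U_{\varpi_\p}$.

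Composing the two steps gives the isomorphism of the lemma, induced by $(\pi_1)_*$. The main obstacle is the second step: one must write down the intertwining correspondence $u$ explicitly and check both compositions at the level of double cosets. This is Ihara-lemma-free but requires care in choosing the conjugating matrix so that it lands in the correct congruence subgroup for the nonstandard "upper unipotent" level $K_{\p^m}$. I will first try the matrix $\left(\begin{smallmatrix}1&0\\0&\varpi\end{smallmatrix}\right)$ and verify that conjugation sends $K_{\p^{m}}\cap K^\circ_{\p^{m+1}}$-cosets to $K_{\p^m}$-cosets compatibly with the diamond action $a^{-1}d$.
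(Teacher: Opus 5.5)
Your argument is correct in outline, but it is organized differently from the paper's. The paper proves the lemma in one step. It asserts that the single correspondence $[K_m\eta_{\varpi_{\mathcal S}}K_{m+1}]\colon M^{\bfcdot}_m\to M^{\bfcdot}_{m+1}\otimes_{\Lambda^{\bfcdot}_{m+1}}\Lambda^{\bfcdot}_m$ composes with $(\pi_1)_*$ on both sides to $U_{\mathcal S}$. It then concludes from the invertibility of $U_{\mathcal S}$ on nearly ordinary parts, with no spectral sequence. You instead factor $\pi_1$ through $X'_{m+1}$ and identify the source with the $\Delta$-coinvariants. You treat the diamond layer by the Cartan--Leray/concentration argument of Proposition \ref{precursor}, and use the $U$-factorization only on the Iwahori layer $K_m\cap K^\circ_{m+1}\subset K_m$. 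The paper's route is shorter. Yours applies the trick exactly where it is exact: that layer has index $q=|\kappa_\p|$ per prime, matching the $q$ cosets of $U_\varpi$. By contrast, $[K_m:K_{m+1}]=q^2$ per prime, and inside $K_m\eta^{\pm1}K_{m+1}$ the $\Delta$-cosets contribute a factor $\sum_{\delta\in\Delta}\langle\delta\rangle$, which is multiplication by $|\Delta|$ on coinvariants. Your Step 1 is the homological input that handles that layer.

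One correction in Step 2, which as written mixes two conventions. Use the covariant action the paper writes down, $[x,b]\mapsto\sum_a[x,b\,\widehat\sigma_{m,a}]$. Then the matrices $\left(\begin{smallmatrix}1&0\\ \varpi^m a&1\end{smallmatrix}\right)$ are representatives of $K_m/K'$, where $K'=K_m\cap K^\circ_{m+1}=K_m\cap\eta K_m\eta^{-1}$. The identities you actually get are ${}_mU_\varpi=[\cdot\eta]_*\circ\mrm{pr}^!$ on $X_m$ and ${}_{m+1}U_\varpi=\mrm{pr}^!\circ[\cdot\eta]_*$ on $X'_{m+1}$. Here $[\cdot\eta]\colon X'_{m+1}\to X_m$ is $[x,b]\mapsto[x,b\eta]$ and $\mrm{pr}^!$ is the transfer. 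This shows that $\mrm{pr}^!$ is an isomorphism on nearly ordinary parts. Since $\mrm{pr}_*\circ\mrm{pr}^!=q$, on those parts $\mrm{pr}_*=q\,(\mrm{pr}^!)^{-1}$ is \emph{not} an isomorphism.

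To get a statement about $\mrm{pr}_*$, and hence about $(\pi_1)_*$, you must use a different action on $\mrm{H}_1$. Take the one dual to the action on Tate modules of Jacobians, via the universal coefficient isomorphism of Remark \ref{relation hom-cohm}. This is the covariant action of the transposed double cosets $[K\eta^{-1}K]$. For it, $\mrm{pr}_*$ is dual to $\mrm{pr}^*$ and hence Hecke-equivariant by Proposition \ref{prop: compatibility Hecke}. So the matrix to use is $\eta^{-1}$, i.e.\ $\left(\begin{smallmatrix}\varpi&0\\0&1\end{smallmatrix}\right)$ in $\PGL_2$, not $\eta$.

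Concretely, take $u=[K_m\eta^{-1}K']=\coprod_b\left(\begin{smallmatrix}\varpi&b\\0&1\end{smallmatrix}\right)K'$, with $b$ running over lifts of $\kappa_\p$. Then $\mrm{pr}_*\circ u=[K_m\eta^{-1}K_m]$ and $u\circ\mrm{pr}_*=[K'\eta^{-1}K']$ hold on the nose, because the same $q$ representatives decompose all three double cosets. Equivalently, run your Step 2 with pullbacks on $\mrm{H}^1$ and dualize. Step 1 needs no change: for the diamond covering $X_{m+1}\to X'_{m+1}$, both $\pi_*$ and $\pi^!$ commute with either version of $U_\varpi$.
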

\begin{proof}
  Since $U_\cal{S}$ is an automorphism of $M_m^{\bfcdot}$ for every $m\ge1$. The claim  holds because the $\Lambda_m^{\bfcdot}$-module map $[K_m\eta_{\varpi_\cal{S}}K_{m+1}]\colon M_m^{\bfcdot}\to M_{m+1}^{\bfcdot}\otimes_{\Lambda^{\bfcdot}_{m+1}}\Lambda_m^{\bfcdot}$ makes the following diagram commute
    \[\xymatrix{
    M_{m+1}^{\bfcdot}\otimes_{\Lambda_{m+1}^{\bfcdot}}\Lambda_m^{\bfcdot}\ar[rr]^-{(\pi_{1})_*}\ar[d]_-{U_\cal{S}}&&M_m^{\bfcdot}\ar[d]^-{U_\cal{S}}\ar@{.>}[lld]\\
M_{m+1}^{\bfcdot}\otimes_{\Lambda_{m+1}^{\bfcdot}}\Lambda_m^{\bfcdot}\ar[rr]^-{(\pi_{1})_*}&& M_m^{\bfcdot}.
    }\]
\end{proof}

\begin{corollary}\label{cor: T finite torsion-free}
   The $\Lambda_{m}^{\bfcdot}$-rank of $M_m^{\bfcdot}$ is independent of $m\ge1$, and $M$ is finite free over $\Lambda^{\bfcdot}$. Thus, the $\Lambda^{\bfcdot}$-module $\bb{T}^{\bfcdot}$ is finite and torsion-free.
\end{corollary}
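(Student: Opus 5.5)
The plan is to combine Corollary \ref{coinduced} with Lemma \ref{classic Hida}, and then pass to the limit.

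\emph{Rank independence.} By Corollary \ref{coinduced}, applied with the trivial character of $\mu_\Sigma$, each $M_m^{\bfcdot}$ is finite and free over $\Lambda_m^{\bfcdot}=\Z_p[\Gamma_{\Sigma,m}]$, say of rank $d_m$. Lemma \ref{classic Hida} gives an isomorphism $M_{m+1}^{\bfcdot}\otimes_{\Lambda^{\bfcdot}_{m+1}}\Lambda^{\bfcdot}_m\cong M_m^{\bfcdot}$. The left-hand side is free of rank $d_{m+1}$ over $\Lambda_m^{\bfcdot}$, so $d_{m+1}=d_m=:d$ for all $m\ge1$.

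\emph{Freeness of $M^{\bfcdot}$.} Next I will build a compatible system of bases.
\begin{itemize}
\item Choose a $\Lambda_1^{\bfcdot}$-basis of $M_1^{\bfcdot}$.
\item Inductively, lift a basis of $M_m^{\bfcdot}$ along the surjection $(\pi_1)_*\colon M_{m+1}^{\bfcdot}\to M_m^{\bfcdot}$, which is surjective by Lemma \ref{classic Hida}.
\item The kernel of $\Lambda^{\bfcdot}_{m+1}\to\Lambda^{\bfcdot}_m$ lies in the maximal ideal of the local ring $\Lambda^{\bfcdot}_{m+1}$. So by Nakayama's lemma the $d$ lifted elements generate $M_{m+1}^{\bfcdot}$.
\item Since $M_{m+1}^{\bfcdot}$ is free of rank $d$, a surjection $(\Lambda^{\bfcdot}_{m+1})^d\to M_{m+1}^{\bfcdot}$ is an isomorphism, by comparing $\Z_p$-ranks of finite free $\Z_p$-modules.
\end{itemize}
We thus get compatible isomorphisms $(\Lambda_m^{\bfcdot})^d\cong M_m^{\bfcdot}$ intertwining the transition maps. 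Taking inverse limits gives $M^{\bfcdot}\cong(\varprojlim_m\Lambda_m^{\bfcdot})^d=(\Lambda^{\bfcdot})^d$.

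\emph{Consequence for $\bb{T}^{\bfcdot}$.} We have $\bb{T}^{\bfcdot}\subset\mrm{End}_{\Lambda^{\bfcdot}}(M^{\bfcdot})\cong\mrm{M}_d(\Lambda^{\bfcdot})$. The latter is finite free over $\Lambda^{\bfcdot}$, hence torsion-free, so any submodule is torsion-free. For finiteness, note that $\Lambda^{\bfcdot}\cong\Z_p\llbracket\Gamma_\Sigma\rrbracket$ is a finite product of Noetherian rings, because $\Gamma_\Sigma$ is a finitely generated $\Z_p$-module, possibly with finite torsion. Submodules of finitely generated modules over a Noetherian ring are finitely generated, so $\bb{T}^{\bfcdot}$ is finite over $\Lambda^{\bfcdot}$.

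\emph{Main obstacle.} The only delicate point is that the inclusion $\bb{T}^{\bfcdot}\subset\mrm{End}_{\Lambda^{\bfcdot}}(M^{\bfcdot})$ is genuinely injective. This was asserted just before the statement via Remark \ref{relation hom-cohm}: $\bb{T}_{\Sigma,m}$ acts faithfully on $\mrm{H}_1(X_m,\Z_p)$ by duality with $T_p(J_m)$. Compatibility in the limit follows from Proposition \ref{prop: compatibility Hecke} together with the surjectivity of the transition maps, which I would check explicitly.
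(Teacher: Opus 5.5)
Your proposal is correct and follows the paper's route, whose proof is simply ``combine Corollary~\ref{coinduced} and Lemma~\ref{classic Hida}''. You make explicit what the paper leaves implicit: the Nakayama/compatible-bases argument for passing to $M^{\bfcdot}=\varprojlim_m M_m^{\bfcdot}$, and the deduction for $\bb{T}^{\bfcdot}$ from $\bb{T}^{\bfcdot}\subset\mrm{End}_{\Lambda^{\bfcdot}}(M^{\bfcdot})\cong\mrm{M}_d(\Lambda^{\bfcdot})$. (A minor wording point: $\Gamma_\Sigma$ is pro-$p$, so $\Lambda^{\bfcdot}$ is a local Noetherian ring rather than a nontrivial product of rings, but your argument only uses Noetherianity.)
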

\begin{proof}
 Combine  Corollary \ref{coinduced} and Lemma \ref{classic Hida}.   
\end{proof}

 Let $\bb{T}_{1}^{\bfcdot}$ denote the component of $\bb{T}_{1}^\mrm{n.o.}$ on which $\mu_\Sigma$-acts trivially. It is naturally identified with the Hecke algebra acting on $\mrm{H}_1(X_1^{\circ},\Z_p)^\mrm{n.o.}$. We denote by $\cal{I}$  the augmentation ideal, i.e., the kernel of the augmentation map $\Lambda^{\bfcdot}\to\Z_p$.

\begin{proposition}\label{prop: exact control}
  The natural surjective map
    \[
\bb{T}^{\bfcdot}\otimes_{\Lambda^{\bfcdot}}\Q_p\twoheadlongrightarrow \bb{T}_{1}^{\bfcdot}\otimes_{\Lambda_1^{\bfcdot}}\Q_p
    \]
is an isomorphism.
\end{proposition}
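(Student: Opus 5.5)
The plan is to rewrite both sides in terms of the completed homology $M^{\bfcdot}$ and its level-one specialization, and then prove injectivity after inverting $p$. First I will simplify the target. Since $D_{\Sigma,1}=(\cal{O}_\Sigma/p_\Sigma)^\times$ has order prime to $p$, we have $\Gamma_{\Sigma,1}=1$, hence $\Lambda_1^{\bfcdot}=\Z_p$, and $\Lambda^{\bfcdot}\to\Lambda^{\bfcdot}_1$ is the augmentation. The statement therefore says that
\[
(\bb{T}^{\bfcdot}/\cal{I}\bb{T}^{\bfcdot})\otimes_{\Z_p}\Q_p\too \bb{T}_1^{\bfcdot}\otimes_{\Z_p}\Q_p
\]
is injective. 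Surjectivity is already built in, since both algebras are generated by the images of the same Hecke operators and diamond operators (Proposition \ref{prop: compatibility Hecke}).

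Next I will identify the specialization of $M^{\bfcdot}$. By Corollary \ref{cor: T finite torsion-free}, $M^{\bfcdot}$ is finite free over $\Lambda^{\bfcdot}$. Iterating Lemma \ref{classic Hida} and passing to the limit (Mittag-Leffler holds because all terms are finite over $\Z_p$) gives $M^{\bfcdot}/\cal{I}M^{\bfcdot}\cong M_1^{\bfcdot}$, compatibly with the Hecke action. So $\bb{T}^{\bfcdot}\subset \mrm{End}_{\Lambda^{\bfcdot}}(M^{\bfcdot})$, and $\bb{T}^{\bfcdot}_1$ is the image of $\bb{T}^{\bfcdot}$ in $\mrm{End}_{\Z_p}(M^{\bfcdot}/\cal{I}M^{\bfcdot})$. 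The kernel to control consists of those $t\in\bb{T}^{\bfcdot}$ with $tM^{\bfcdot}\subseteq \cal{I}M^{\bfcdot}$, and I must show $p^kt\in\cal{I}\bb{T}^{\bfcdot}$ for some $k$. Because $\bb{T}^{\bfcdot}$ is only known to be torsion-free and $\Lambda^{\bfcdot}$ may have dimension greater than two, flatness of $\bb{T}^{\bfcdot}$ over $\Lambda^{\bfcdot}$ is not automatic. I therefore intend to argue after inverting $p$ and completing at $\cal{I}$. Let $R$ be the $\cal{I}$-adic completion of $\Lambda^{\bfcdot}[1/p]$ at the maximal ideal $\cal{I}[1/p]$; it is a regular complete local ring with residue field $\Q_p$. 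Put $\bb{T}_R=\bb{T}^{\bfcdot}\otimes R$ and $M_R=M^{\bfcdot}\otimes R$, which is free over $R$.

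The key input is semisimplicity at level one. Under Assumption \ref{for semisimplicity}, $\bb{T}_1^{\bfcdot}\otimes\Q_p$ is a product of finite field extensions of $\Q_p$ (as in \cite[Theorem 4.2]{Coleman-Edixhoven}), and $M_1^{\bfcdot}[1/p]$ is a faithful module over it. Since $R$ is complete, $\bb{T}_R$ is a finite product of complete local $R$-algebras, and the idempotents of $\bb{T}_1^{\bfcdot}\otimes\Q_p$ lift to $\bb{T}_R$. These lifted idempotents split $M_R$ into free summands $M_{R,i}$, on each of which $\bb{T}_R$ acts through a local factor $\bb{T}_{R,i}$ with residue field $k_i$, a finite extension of $\Q_p$. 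On a given summand, the fiber $\bb{T}_{R,i}/\cal{I}$ acts on $M_{R,i}/\cal{I}M_{R,i}$ through the field $k_i$.

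It then suffices to show that each $\bb{T}_{R,i}$ is finite étale over $R$. Since its residue field $k_i$ is separable over $\Q_p$, I will try to show $\bb{T}_{R,i}\cong R\otimes_{\Q_p}k_i$, the unramified lift, generated over $R$ by lifts of primitive elements of the fields $k_i$. The point to verify is that the image of $\cal{I}$ generates the maximal ideal of $\bb{T}_{R,i}$. My first attempt will be the following: given $t$ in the maximal ideal of $\bb{T}_{R,i}$, its image in $\bb{T}_1$ is nilpotent, hence zero by reducedness, so $tM_{R,i}\subseteq\cal{I}M_{R,i}$. Because $M_{R,i}$ is free and faithful over $\bb{T}_{R,i}$, I then want to conclude $t\in\cal{I}\bb{T}_{R,i}$, using a Cayley--Hamilton or determinant trick over the regular ring $R$, where $M_{R,i}$ is free over $R\otimes k_i$ after a choice of basis. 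Once this holds, $\bb{T}_R/\cal{I}\bb{T}_R\cong\bb{T}_1^{\bfcdot}\otimes\Q_p$, and descending from $R$ to $\Lambda^{\bfcdot}[1/p]$ is harmless: the map we need is a map of finite-dimensional $\Q_p$-vector spaces, and specialization at $\cal{I}$ commutes with completion at $\cal{I}$.

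I expect this last step to be the main obstacle, namely deducing $t\in\cal{I}\bb{T}_{R,i}$ from $tM_{R,i}\subseteq\cal{I}M_{R,i}$. This is exactly where faithfulness of the specialization and reducedness of the fiber at level one must be combined, and it is the only place where Assumption \ref{for semisimplicity} enters.
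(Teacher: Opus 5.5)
Your preliminary reductions are sound: $\Lambda_1^{\bfcdot}=\mathbb{Z}_p$, $M^{\bfcdot}/\mathcal{I}M^{\bfcdot}\cong M_1^{\bfcdot}$, base change to $R$, and splitting by lifted idempotents. The step you flag as the main obstacle, however, is a genuine gap, and the tools you propose cannot close it. Drop the superscript $\bfcdot$ from now on.

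If $M_{R,i}$ is free over $R$ and $tM_{R,i}\subseteq\mathcal{I}M_{R,i}$, the Cayley--Hamilton/determinant trick gives $t^n+c_1t^{n-1}+\dots+c_n=0$ with $c_j\in\mathcal{I}^j$. That is, it gives only $t^n\in\mathcal{I}\mathbb{T}_{R,i}$: the kernel of $\mathbb{T}\otimes_{\Lambda}\mathbb{Q}_p\to\mathbb{T}_1\otimes\mathbb{Q}_p$ is nilpotent, and nothing more. Semisimplicity of $\mathbb{T}_1\otimes\mathbb{Q}_p$ is a property of the \emph{image} of the fiber in $\mathrm{End}(M_1\otimes\mathbb{Q}_p)$, not of the fiber itself, so it cannot exclude these nilpotents. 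Granting semisimplicity, reducedness of the fiber is equivalent to the proposition, so reducing to unramifiedness of $\mathbb{T}_{R,i}$ over $R$ only restates the problem.

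Here is a toy model satisfying every input you list. Take $R=\mathbb{Q}_p[[X]]$, $\mathcal{I}=(X)$, and $\mathbb{T}_R=\{(a,b)\in R^2 : a\equiv b \bmod X\}$ acting componentwise on $M_R=R^2$. Then $M_R$ is $R$-free and faithful, the image of $\mathbb{T}_R$ in $\mathrm{End}(M_R/XM_R)$ is the field $\mathbb{Q}_p$, and $M_R/XM_R$ is free of rank two over it, as in Eichler--Shimura. Yet $t=(0,X)$ satisfies $tM_R\subseteq XM_R$ while $t\notin X\mathbb{T}_R$, and $\mathbb{T}_R/X\mathbb{T}_R\cong\mathbb{Q}_p[\epsilon]/(\epsilon^2)$. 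If you meant that $M_{R,i}$ is free over $\mathbb{T}_{R,i}$, the conclusion would be immediate, but that multiplicity-one statement is exactly what is unproven. Note also that your route would prove the proposition only under Assumption \ref{for semisimplicity}, which the paper invokes only afterwards, for \'etaleness.

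The paper takes a different route, aiming directly at rank-two freeness over the Hecke algebra. From the level-one Eichler--Shimura isomorphism, Lemma \ref{classic Hida} and Nakayama, it builds a surjection $\alpha\colon\mathbb{T}_{\mathcal{I}}^{\oplus2}\to M_{\mathcal{I}}$ of localizations at $\mathcal{I}$ and asserts $\ker\alpha=0$. It then deduces that $\mathbb{T}_{\mathcal{I}}$ is free over $\Lambda_{\mathcal{I}}$ and concludes by counting: $2\dim(\mathbb{T}\otimes_\Lambda\mathbb{Q}_p)=\mathrm{rk}\,M_{\mathcal{I}}=\dim(M_1\otimes\mathbb{Q}_p)=2\dim(\mathbb{T}_1\otimes\mathbb{Q}_p)$. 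This is exactly the kind of input about $\mathbb{T}$ itself that your argument lacks.

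Be aware, though, that the justification given there for $\ker\alpha=0$ (freeness of $M_{\mathcal{I}}$ over $\Lambda_{\mathcal{I}}$ plus Nakayama) runs into the same wall. Modulo $\mathcal{I}$, $\alpha$ factors through $(\mathbb{T}\otimes_\Lambda\mathbb{Q}_p)^{\oplus2}\to(\mathbb{T}_1\otimes\mathbb{Q}_p)^{\oplus2}$, so its injectivity there is equivalent to the claim itself. In the toy model, $(t_1,t_2)\mapsto t_1\cdot(1,1)+t_2\cdot(1,0)$ is such a surjection, and its kernel has $R$-rank two.

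Either route needs an extra input. One option is the generic rank identity $\mathrm{rk}_{\Lambda}M=2\,\mathrm{rk}_{\Lambda}\mathbb{T}$: then $\ker\alpha$ is torsion inside the torsion-free module $\mathbb{T}_{\mathcal{I}}^{\oplus2}$, hence zero. Another is a Hida-style duality between $\mathbb{T}$ and $\Lambda$-adic forms together with a control theorem for the forms.
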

\begin{proof}
We prove injectivity by counting dimensions. Let  $(\bb{T}^{\bfcdot})_\cal{I}$ denote the localization at the augmentation ideal, so that $\bb{T}^{\bfcdot}\otimes_{\Lambda^{\bfcdot}}\Q_p=(\bb{T}^{\bfcdot})_\cal{I}/\cal{I}(\bb{T}^{\bfcdot})_\cal{I}$. The Eichler--Shimura isomorphism 
\[
M_1^{\bfcdot}\otimes_{\Lambda_1^{\bfcdot}}\Q_p\cong  \big(\bb{T}_{1}^{\bfcdot}\otimes_{\Lambda_1^{\bfcdot}}\Q_p\big)^{\oplus2},
\]
together with Lemma \ref{classic Hida} and Nakayama's lemma, imply the existence of a surjective map
\[
\alpha\colon(\bb{T}^{\bfcdot})_\cal{I}^{\oplus2}\twoheadlongrightarrow(M^{\bfcdot})_\cal{I}
\]
of  $(\bb{T}^{\bfcdot})_\cal{I}$-modules. Since $(M^{\bfcdot})_\cal{I}$ is free over $(\Lambda^{\bfcdot})_\cal{I}$, Nakayama's lemma implies that $\ker(\alpha)$ is trivial.
In particular, $(\bb{T}^{\bfcdot})_\cal{I}$ is $(\Lambda^{\bfcdot})_\cal{I}$-projective, hence free. Putting all together,
\[\begin{split}
2\cdot \mrm{dim}_{\Q_p}\big(\bb{T}^{\bfcdot}\otimes_{\Lambda^{\bfcdot}}\Q_p\big)&=2\cdot \mrm{rk}_{(\Lambda^{\bfcdot})_\cal{I}}(\bb{T}^{\bfcdot})_\cal{I}\\
&=\mrm{rk}_{(\Lambda^{\bfcdot})_\cal{I}}(M^{\bfcdot})_\cal{I}\\
&=\mrm{dim}_{\Q_p}\big(M_1^{\bfcdot}\otimes_{\Lambda_1^{\bfcdot}}\Q_p\big)\\
&=2\cdot \mrm{dim}_{\Q_p}\big(\bb{T}_{1}^{\bfcdot}\otimes_{\Lambda_1^{\bfcdot}}\Q_p\big).
\end{split}
\]
\end{proof}

\begin{proposition}\label{Etalness at classical points}
 Suppose Assumption \ref{for semisimplicity} holds, then  the  Hecke algebra $\bb{T}^{\bfcdot}$ is finite \'etale over the augmentation ideal $\cal{I}\subset\Lambda^{\bfcdot}$.
\end{proposition}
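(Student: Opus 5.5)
Since $\Lambda^{\bfcdot}/\cal{I}\cong\Z_p$, the statement amounts to: $(\bb{T}^{\bfcdot})_\cal{I}$ is finite free over the discrete valuation ring $(\Lambda^{\bfcdot})_\cal{I}$, and its fibre $(\bb{T}^{\bfcdot})_\cal{I}/\cal{I}(\bb{T}^{\bfcdot})_\cal{I}=\bb{T}^{\bfcdot}\otimes_{\Lambda^{\bfcdot}}\Q_p$ is a finite étale $\Q_p$-algebra, i.e.\ a finite product of finite separable field extensions of $\Q_p$. Freeness is already established in the proof of Proposition \ref{prop: exact control}: there $(\bb{T}^{\bfcdot})_\cal{I}$ is shown to be $(\Lambda^{\bfcdot})_\cal{I}$-projective, hence free and in particular flat. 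The whole question therefore reduces to the fibre. By Proposition \ref{prop: exact control} this fibre is isomorphic to $\bb{T}_1^{\bfcdot}\otimes_{\Lambda_1^{\bfcdot}}\Q_p$, the nearly ordinary Hecke algebra of weight $2$ acting on $\mrm{H}_1(X_1^\circ,\Q_p)^\mrm{n.o.}$. Since we are in characteristic zero, it suffices to show that this $\Q_p$-algebra is reduced, equivalently that the Hecke operators act semisimply on $\mrm{H}_1(X_1^\circ,\Q_p)^\mrm{n.o.}$.

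For semisimplicity I would decompose $\mrm{H}_1(X_1^\circ,\overline{\Q}_p)$ via Jacquet--Langlands and strong multiplicity one into isotypic pieces indexed by automorphic representations $\pi$ of $G$ of weight $2$ and level dividing $K^\circ(\n^{\mbox{\tiny $+$}}p_\Sigma)$. Each piece is $2\cdot\dim\pi^{K_1^\circ}$-dimensional, and the away-from-level operators $T_\q$ act on it by scalars. It then remains to show that, on each $\pi^{K_1^\circ}$, the operators $U_\q$ for $\q\mid\n^{\mbox{\tiny $+$}}p_{\cal{S}\setminus\Sigma}$ and $U_{\varpi_\p}$ for $\p\in\Sigma$ act semisimply once the nearly ordinary projector is applied. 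This is a purely local question about the spaces $\pi_\q^{K_\q}$. At $\p\in\Sigma$ the level is $K^\circ_{\p}$ (Iwahori) with $\pi_\p$ either an unramified principal series, where the Iwahori-fixed space is $2$-dimensional with $U_\p$-eigenvalues $\alpha,\beta$, or a Steinberg twist, where it is $1$-dimensional. Ordinarity picks out at most one unit root, and since $\alpha\beta$ is a $p$-adic non-unit (equal to $\mrm{N}\p$ up to a unit), the two roots cannot both be units. A repeated root $\alpha=\beta$ would therefore force $\alpha^2$ to be a non-unit, so a repeated root is never ordinary. Hence $e$ kills any possible non-semisimple block.

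At $\q\mid\n^{\mbox{\tiny $+$}}$ the argument is that of Coleman--Edixhoven, \cite[Theorem 4.2]{Coleman-Edixhoven}. With $\ord_\q(\n^{\mbox{\tiny $+$}})\le2$, degree one and $\q$ unramified over $\Q$, the possibly non-semisimple situation is an unramified $\pi_\q$ at level $\q^2$, acting on a $3$-dimensional oldspace. There the $U_\q$-characteristic polynomial is $X(X^2-a_\q X+\mrm{N}\q)$, and the Ramanujan bound (known for Hilbert modular forms of parallel weight $2$) gives $\alpha\neq\beta$, so $U_\q$ acts semisimply. At level $\q$ one gets the same quadratic factor. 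In the other cases ($\pi_\q$ Steinberg, or new of conductor $\q^2$) one has $U_\q=\pm1$ or $U_\q=0$ on a $1$-dimensional space.

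I expect the main obstacle to be this local semisimplicity at $\n^{\mbox{\tiny $+$}}$ in the square-level case. It is exactly where Assumption \ref{for semisimplicity} enters, so I would invoke \cite[Theorem 4.2]{Coleman-Edixhoven}, noting that its argument transposes verbatim to totally real $F$ under that assumption. Once the fibre is known to be a product of fields, flatness of $(\bb{T}^{\bfcdot})_\cal{I}$ plus étaleness of the fibre gives that $\bb{T}^{\bfcdot}$ is étale over $\Lambda^{\bfcdot}$ at $\cal{I}$, as claimed.
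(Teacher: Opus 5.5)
Your outline is correct, but you obtain flatness by a different route from the paper.

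\textbf{Comparison of routes.} You take flatness as an input, citing the freeness of $(\bb{T}^{\bfcdot})_{\cal{I}}$ over $(\Lambda^{\bfcdot})_{\cal{I}}$ asserted inside the proof of Proposition \ref{prop: exact control}. That leaves only the fibre $\bb{T}^{\bfcdot}\otimes_{\Lambda^{\bfcdot}}\Q_p$ to check.

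The paper does not use that freeness here. From the semisimplicity of the fibre it only deduces that $\Lambda^{\bfcdot}\to\bb{T}^{\bfcdot}$ is unramified above $\cal{I}$. Flatness then comes from the criterion that a finite, torsion-free map over a geometrically unibranch (e.g.\ normal) local ring, unramified at a prime, is \'etale there \cite[Tag 0GSC]{stacks-project}. The inputs are the normality of $\Lambda^{\bfcdot}$ and Corollary \ref{cor: T finite torsion-free}.

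So the paper needs only the \emph{statement} of Proposition \ref{prop: exact control} plus torsion-freeness, and freeness comes out as an output. You bypass the unibranch criterion and any normality of the base (flat, finitely presented and unramified is \'etale over any local ring), at the price of a strictly stronger input.

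On the fibre you are more explicit than the paper, which simply cites \cite[Theorem 4.2]{Coleman-Edixhoven} and \cite[Proposition 2]{Chiriac}. Your remark at $\p\in\Sigma$ is a nice addition: a double root $\alpha=\beta$ has $\alpha^2=\alpha\beta$ a $p$-adic non-unit, so it is killed by $e_\Sigma$. This explains why no hypothesis is needed at the primes of $\Sigma$.

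A minor point: $(\Lambda^{\bfcdot})_{\cal{I}}$ is a regular local ring of dimension $\sum_{\p\in\Sigma}[F_\p:\Q_p]$. It is a discrete valuation ring only when this equals $1$. You never use the DVR property, so nothing breaks.

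\textbf{The error.} The justification at the step you call the main obstacle is wrong. The Ramanujan bound does not give $\alpha\neq\beta$ at $\q\mid\n^{\mbox{\tiny $+$}}$. It only says $|\iota(\alpha)|=|\iota(\beta)|=\mrm{N}(\q)^{1/2}$ for every complex embedding $\iota$, which is compatible with $\alpha=\beta=\pm\mrm{N}(\q)^{1/2}$.

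This does occur. Take $E_0/\Q$ supersingular at a prime $\ell\ge5$ and $F$ real quadratic with $\ell$ inert. At $\q=\ell\cal{O}_F$ the base change of $E_0$ has Hecke polynomial $X^2+2\ell X+\ell^2=(X+\ell)^2$. Then $U_\q$ acts on the two-dimensional $\q$-old space by a non-trivial Jordan block.

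Excluding a double root is the actual content of Coleman--Edixhoven's weight-two result, which rests on Tate's theorems on abelian varieties over finite fields. For totally real $F$ the paper takes this from \cite[Proposition 2]{Chiriac}, not from a verbatim transposition. The example also shows that some restriction on the residue degree is unavoidable; that is the role of the degree-one condition in Assumption \ref{for semisimplicity}. With this input in place of Ramanujan, your case analysis at $\n^{\mbox{\tiny $+$}}$ goes through.
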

\begin{proof}
  The algebra $\Lambda^{\bfcdot}$ is a normal domain being isomorphic to a power series ring with coefficients in $\Z_p$. In particular, the localization at any of its prime ideals is geometrically unibranch. Moreover, the injective structure map $\Lambda^{\bfcdot}\hookrightarrow \bb{T}^{\bfcdot}$ is finite and torsion-free by Corollary \ref{cor: T finite torsion-free}.
  
 By Proposition \ref{prop: exact control} and the Jacquet--Langlands correspondence, $\bb{T}^{\bfcdot}\otimes_{\Lambda^{\bfcdot}}\Q_p$ is identified with the Hecke algebra acting on Hilbert cuspforms of weight $2$, level $\frak{f}_A$, trivial character, which are new at the primes in $\Sigma^-\setminus\cal{S}$ and nearly ordinary at the primes in $\cal{S}$. Then, Assumption \ref{for semisimplicity} implies its semi-simplicity as in \cite[Theorem 4.2]{Coleman-Edixhoven} using \cite[Proposition 2]{Chiriac}.
  It follows that $\Lambda^{\bfcdot}\hookrightarrow \bb{T}^{\bfcdot}$ is unramified at every prime above $\cal{I}$. 
  
  Finally, (\cite[\href{https://stacks.math.columbia.edu/tag/0GSC}{Tag 0GSC}]{stacks-project}, Lemma 15.108.2) shows that, for every $\cal{P}\in \mrm{Spec}(\bb{T}^{\bfcdot})$ over $\cal{I}$, the map induced between the localizations $(\Lambda^{\bfcdot})_{\cal{I}}\hookrightarrow (\bb{T}^{\bfcdot})_{\cal{P}}$ is finite \'etale.
\end{proof}

\begin{corollary}\label{isolocalizations}
    For $\cal{P}\in \mrm{Spec}(\bb{T}^{\bfcdot})$ above the augmentation ideal $\cal{I}\subset\Lambda^{\bfcdot}$ with residue field $\Q_p$, the map between localizations
    \[
    (\Lambda^{\bfcdot})_{\cal{I}}\longhookrightarrow(\bb{T}^{\bfcdot})_\cal{P}
    \]
    is an isomorphism.
\end{corollary}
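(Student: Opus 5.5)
Proposition \ref{Etalness at classical points} shows that for every $\cal{P}\in\mrm{Spec}(\bb{T}^{\bfcdot})$ lying over $\cal{I}$, the map $(\Lambda^{\bfcdot})_\cal{I}\hookrightarrow(\bb{T}^{\bfcdot})_\cal{P}$ is finite étale. The plan is to combine this with the residue field hypothesis. A finite étale local homomorphism of local rings with trivial residue field extension is an isomorphism. Throughout, write $A:=(\Lambda^{\bfcdot})_\cal{I}$ and $B:=(\bb{T}^{\bfcdot})_\cal{P}$.

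First I identify the residue fields. Since $\Lambda^{\bfcdot}/\cal{I}\cong\Z_p$, the residue field of $A$ is $\mrm{Frac}(\Z_p)=\Q_p$. By hypothesis the residue field $\kappa(\cal{P})$ of $B$ is also $\Q_p$. Hence the induced extension of residue fields $\Q_p\to\kappa(\cal{P})$ is an isomorphism.

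Next, $A\to B$ is a local homomorphism, finite, flat, and unramified. By unramifiedness $\cal{I}B=\cal{P}B$, so
\[
B/\cal{I}B=\kappa(\cal{P})=A/\cal{I}A.
\]
Finiteness and Nakayama's lemma then show that $B$ is generated as an $A$-module by $1$, so $A\to B$ is surjective. Injectivity comes from the torsion-freeness of $\bb{T}^{\bfcdot}$ over $\Lambda^{\bfcdot}$ (Corollary \ref{cor: T finite torsion-free}), which survives localization. Alternatively, $B$ is finite flat over the local ring $A$, hence free, and its rank equals $\dim_{\Q_p}B/\cal{I}B=1$; so $B\cong A$.

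The only delicate point is making sure that the conclusion of Proposition \ref{Etalness at classical points} is really a statement about the localization at $\cal{P}$, and not merely about $(\bb{T}^{\bfcdot})_\cal{I}$. If only the latter is available, $(\bb{T}^{\bfcdot})_\cal{I}$ is a finite étale $A$-algebra. It is then semilocal, and its localization at the maximal ideal corresponding to $\cal{P}$ is a direct factor, hence still finite étale over $A$. The argument above then applies verbatim.
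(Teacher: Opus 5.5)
Your main argument is the paper's own. Granting, as Proposition~\ref{Etalness at classical points} asserts, that $(\Lambda^{\bfcdot})_{\mathcal I}\to(\mathbb{T}^{\bfcdot})_{\mathcal P}$ is \emph{finite} \'etale, the steps go through:
unramifiedness identifies the closed fiber with $\kappa(\mathcal P)=\Q_p$;
Nakayama then gives surjectivity;
injectivity comes from torsion-freeness, or from faithful flatness.

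Your last paragraph, however, is false and should be deleted. A finite \'etale algebra over a non-Henselian local ring need not be the product of its localizations at the points of its closed fiber. For example, $\Z_{(5)}\to\Z_{(5)}[\sqrt6]$ is finite \'etale. The target is a domain with two maximal ideals $(\sqrt6\pm1)$, each with residue field $\mathbb{F}_5$. The localization at either one is a DVR with fraction field $\Q(\sqrt6)$, so it is neither finite over $\Z_{(5)}$ nor isomorphic to it. The same phenomenon can occur over $(\Lambda^{\bfcdot})_{\mathcal I}$, which is not Henselian.

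So finite \'etaleness of $(\mathbb{T}^{\bfcdot})_{\mathcal I}$ alone would only give an isomorphism after $\mathcal I$-adic completion (or henselization). Given \'etaleness and $\kappa(\mathcal P)=\Q_p$, the statement about the localizations themselves is equivalent to $(\mathbb{T}^{\bfcdot})_{\mathcal P}$ being finite over $(\Lambda^{\bfcdot})_{\mathcal I}$. Your proof, like the paper's, rests on that finiteness as supplied by Proposition~\ref{Etalness at classical points}, not on the fallback.
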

\begin{proof}
   By Proposition \ref{Etalness at classical points} the map $(\Lambda^{\bfcdot})_{\cal{I}}\hookrightarrow(\bb{T}^{\bfcdot})_\cal{P}$ is finite \'etale. Therefore, 
    \[
    (\bb{T}^{\bfcdot})_{\cal{P}}\otimes_{(\Lambda^{\bfcdot})_{\cal{I}}}\Q_p=(\bb{T}^{\bfcdot})_\cal{P}/\cal{P}(\bb{T}^{\bfcdot})_\cal{P}
    \]
    since the fiber is reduced and local. As $(\bb{T}^{\bfcdot})_\cal{P}/\cal{P}(\bb{T}^{\bfcdot})_\cal{P}=\Q_p$, Nakayama's lemma implies the required surjectivity.
\end{proof}

\subsection{Interlude on arithmetic $\cal{L}$-invariants}\label{Interlude on arithmetic L-invariants}
    

Given a ring $A$ and index set $I$, we denote by $A\llbracket\underline{X}_I\rrbracket$ the ring of formal power series in the variables $\{X_i\}_{i\in I}$ and coefficients in $A$.
The construction of the ring of power series is functorial both in the datum of the index set $I$ and of the coefficient ring $A$: given a subset $I\subseteq J$ and  a ring homomorphism $A\rightarrow B$, there is a canonical ring map $A\llbracket\underline{X}_I\rrbracket\rightarrow B\llbracket\underline{X}_J\rrbracket$.
Let $\p\in\cal{S}$. The inclusion $\{\p\}\subseteq \cal{S}$ and the homomorphism $E_\p\hookrightarrow E_{\otimes,\cal{S}}$, $e_\p\mapsto e_\p\otimes(\otimes_{\q\not=\p}1)$ induce the ring map
\begin{equation}\label{ringmap}
E_\p\llbracket X_\p\rrbracket\longhookrightarrow E_{\otimes,\cal{S}}\llbracket\underline{X}_\cal{S}\rrbracket.
\end{equation}
Let $\log_p\colon \cal{O}_\p^\times\to \p\cal{O}_\p$ denote the $p$-adic logarithm. The group homomorphism
\[
\cal{O}_\p^\times\longrightarrow E_\p\llbracket X_\p\rrbracket^\times,\qquad a_\p\mapsto  a_\p ^{X_\p}:=\sum_{n\ge0}\frac{\log_p(a_\p)^n}{n!}X_\p^n,
\]
considered for every $\p\in\cal{S}$, together with the morphism \eqref{ringmap}, gives us the homomorphism
\[
\cal{O}_\cal{S}^\times\longrightarrow E_{\otimes,\cal{S}}\llbracket\underline{X}_\cal{S}\rrbracket^\times,\qquad a_\cal{S}=(a_\p)_\p\mapsto a_\cal{S}^{\underline{X}_\cal{S}}:=\prod_{\p\in\cal{S}}a_\p ^{X_\p}.
\]
This defines the $\Z_p$-algebra homomorphism $\mrm{pw}_\otimes\colon\Lambda_\cal{S}\hookrightarrow E_{\otimes,\cal{S}}\llbracket\underline{X}_\cal{S}\rrbracket$.
Since $\cal{I}=\ker(\Lambda_\cal{S}\to\Z_p)$ is the preimage of the kernel of  $\mrm{ev}_{\underline{0}}\colon E_{\otimes,\cal{S}}\llbracket\underline{X}_\cal{S}\rrbracket\twoheadrightarrow  E_{\otimes,\cal{S}}$, $f(\underline{X}_\cal{S})\mapsto f(\underline{0})$, the morphism extends to the localization at $\cal{I}$:
\[
\mrm{pw}_\otimes\colon(\Lambda_\cal{S})_\cal{I}\longhookrightarrow E_{\otimes,\cal{S}}\llbracket\underline{X}_\cal{S}\rrbracket.
\]
\begin{remark}\label{one explicit deriviative}
   Fix $\p\in\cal{S}$. Formal differentiation in the variable $X_\p$ together with evalutation at $\underline{X}_\cal{S}=\underline{0}$ induces a $E_{\otimes,\cal{S}}$-linear map $\partial_{\p}(-)\big\lvert_{\underline{0}}\colon E_{\otimes,\cal{S}}\llbracket\underline{X}_\cal{S}\rrbracket\rightarrow E_{\otimes,\cal{S}}$. From the definitions, we find
    \[
\partial_{\p}\big(a_\cal{S}^{\underline{X}_\cal{S}}\big)\Big\lvert_{\underline{0}}=\log_p(a_\p)\otimes(\otimes_{\q\not=\p}1)\ \in\ E_{\otimes,\cal{S}}.
    \]
\end{remark}

 Let $f_A\colon\bb{T}^{\bfcdot}\twoheadrightarrow \Z_p$ be the homomorphism associated to our elliptic curve $A_{/F}$, we denote by $\cal{P}\in\mrm{Spec}(\bb{T}^{\bfcdot})$ its kernel which lay above the augmentation ideal $\cal{I}\in\mrm{Spec}(\Lambda^{\bfcdot})$. Write  $\bb{I}:=(\bb{T}^{\bfcdot})_\cal{P}$ for the associated localization and $ \scr{F}\colon \bb{T}^{\bfcdot}\longrightarrow\bb{I}$
 for the localization map.
By Corollary \ref{isolocalizations} the natural map $(\Lambda^{\bfcdot})_\cal{I}\overset{\sim}{\to} \bb{I}$ is an isomorphism. Hence, we obtain 
\[
\mrm{pw}_\otimes\colon\bb{I}\longhookrightarrow E_{\otimes,\cal{S}}\llbracket\underline{X}_\cal{S}\rrbracket.
\]
To simplify the notation, we set
\[
T^\otimes:=\mrm{pw}_\otimes(T)\qquad \forall\ T\in \bb{I}.
\]

\begin{proposition}\label{logarithmic derivative U_p}
  Let $\p\in\cal{S}$ and $\varpi_\p\in \p\cal{O}_\p$ a uniformizer. Set $\alpha_{\varpi_\p}=\scr{F}(\mbf{U}_{\varpi_\p})$ and denote by $q_\p\in \p\cal{O}_\p$ the Tate parameter of $A_{/F_\p}$. We have
  \[
  \frac{\partial_{\p}\big(\alpha_{\varpi_\p}^\otimes\big)}{\alpha_{\varpi_\p}^\otimes}\bigg\lvert_{\underline{0}}=-\frac{\log_{\varpi_\p}(q_\p)}{\mrm{ord}_\p(q_\p)}\otimes(\otimes_{\q\not=\p}1).
  \]
\end{proposition}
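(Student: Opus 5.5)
This is the Greenberg--Stevens / Mok style computation of the $\cal{L}$-invariant, carried out one prime at a time inside the multivariable family $\bb{I}$. First we reduce to a one-variable problem. By Corollary \ref{isolocalizations} we have $\bb{I}\cong(\Lambda^{\bfcdot})_\cal{I}$, so $\alpha_{\varpi_\p}$ is an analytic function on a neighbourhood of the trivial weight in the weight space $\mrm{Hom}(\cal{O}_\cal{S}^\times,\cdot)$. The derivative $\partial_\p(-)\lvert_{\underline{0}}$ only sees the restriction of this function to the line of characters $a_\cal{S}\mapsto a_\p^{X_\p}$, i.e. weights that are trivial away from $\p$. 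Along that line we would identify $\alpha_{\varpi_\p}$ with the $U_{\varpi_\p}$-eigenvalue of a one-variable family. After applying $\mrm{pw}_\otimes$, everything lives in the $\p$-tensor factor, as in Remark \ref{one explicit deriviative}. This accounts for the factor $\otimes(\otimes_{\q\neq\p}1)$.

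Next we Galois-theoretically interpret the family. The $p$-adic Galois representation $\varrho_{\bb{I}}$ on $M^{\bfcdot}\otimes\bb{I}$ is nearly ordinary at $\p$. Its restriction to $G_{F_\p}$ has an unramified-twisted quotient character $\delta_\p$ with the following properties:
\begin{itemize}
\item[$\bfcdot$] $\delta_\p(\mrm{rec}_{F_\p}(\varpi_\p))=\alpha_{\varpi_\p}$, with geometric Frobenius normalization;
\item[$\bfcdot$] $\delta_\p\circ\mrm{rec}_{F_\p}$ restricted to $\cal{O}_\p^\times$ is the universal diamond character $a\mapsto\llangle a\rrangle^{\pm1}$.
\end{itemize}
This follows from local--global compatibility at classical points and density. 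The consistency of both properties with Remark \ref{rmk: dependence on uniformizer} is the check $U_{a\varpi}=U_\varpi\langle a\rangle$. We therefore get a character $\Psi\colon F_\p^\times\to\bb{I}^\times$ with $\Psi(\varpi_\p)=\alpha_{\varpi_\p}$ and $\Psi(u)=u^{X_\p}$ (up to sign convention) for units $u$. Its specialization at $\underline{0}$ is trivial, because $A$ has split multiplicative reduction and $\alpha_{\varpi_\p}(\underline 0)=a_\p(A)=1$, after twisting by $\varepsilon_\p$ if necessary.

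Then we run the Greenberg--Stevens argument. The Tate curve gives a nonsplit extension of $\Q_p$ by $\Q_p(1)$, namely the Kummer class of $q_\p$. First-order deformation of the ordinary filtration forces the derivative homomorphism
\[
\frac{\mrm{d}\Psi}{\Psi}\Big\lvert_{\underline{0}}\colon F_\p^\times\to E_\p,
\]
viewed as a class in $\mrm{H}^1(F_\p,\Q_p)$, to annihilate the Kummer class $\kappa(q_\p)$ under the local Tate pairing $\mrm{H}^1(F_\p,\Q_p)\times\mrm{H}^1(F_\p,\Q_p(1))\to\Q_p$. By local class field theory this pairing is $\langle\ell,\kappa(q)\rangle=\ell(q)$. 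Hence $\frac{\mrm{d}\Psi}{\Psi}(q_\p)=0$. Write $q_\p=\varpi_\p^{\mrm{ord}_\p(q_\p)}u$ with $u\in\cal{O}_\p^\times$, so that $\log_p(u)=\log_{\varpi_\p}(q_\p)$. Then
\[
\mrm{ord}_\p(q_\p)\,\frac{\partial_\p\alpha^\otimes_{\varpi_\p}}{\alpha^\otimes_{\varpi_\p}}\Big\lvert_{\underline 0}+\log_p(u)\otimes(\otimes_{\q\neq\p}1)=0,
\]
where Remark \ref{one explicit deriviative} computes the derivative on $u$. This is the claimed formula.

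The main obstacle is the vanishing $\frac{\mrm{d}\Psi}{\Psi}(q_\p)=0$. It requires the family's Galois representation to be realized honestly on $M^{\bfcdot}_\cal{P}$. That in turn needs the freeness and étaleness results (Corollary \ref{cor: T finite torsion-free}, Corollary \ref{isolocalizations}), so that $\varrho_{\bb{I}}$ is free of rank two over $\bb{I}$ with a saturated ordinary line. One then makes the standard cocycle computation modulo $X_\p^2$: the upper-right entry of the infinitesimal deformation kills the obstruction exactly when the pairing vanishes. We would follow Mok's Hilbert-modular treatment, cited earlier, for this step, checking carefully that the diamond-operator normalization $\zeta$ of \eqref{explict description diamond} produces the minus sign.
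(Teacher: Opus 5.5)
Your proposal is correct and is essentially the paper's argument. Both use Hida's description of $\varrho_{\scr{F}}\lvert_{D_\p}$, whose quotient character $\delta_\p$ is attached to $\xi_\p(u\varpi_\p^m)=[u]\alpha_{\varpi_\p}^m$, then reduce modulo $(X_\p^2,\{X_\q\}_{\q\neq\p})$, apply the Greenberg--Stevens orthogonality of $q_\p$ with the first-order deformation under Tate's pairing, and finish with the same computation via $q_\p=\langle q_\p\rangle\varpi_\p^{\mrm{ord}_\p(q_\p)}$.

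The differences are cosmetic. The paper twists by $\delta_\p$ and works with $\xi_\p^2$, the quotient in $0\to\cal{R}_\p(\cal{N}_F)\to V\to\cal{R}_\p(\delta_\p^2)\to0$; this is trivial at $\underline{0}$ even when $\varepsilon_\p=-1$, so it replaces your twist by $\varepsilon_\p$ at the cost of a harmless factor $2$. The sign you leave ``up to convention'' is fixed by Hida's normalization $\xi_\p(u)=[u]\mapsto u^{X_\p}$, and this gives exactly your final displayed relation.
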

\begin{proof}
   We follow Greenberg--Stevens' strategy to compute arithmetic $\cal{L}$-invariants (for example, \cite[Section 2]{Greenberg-Stevens}). By (\cite{HidaGalois}, Theorem 1 $\&$ Proposition 2.3), the restriction  at a decomposition group $D_\p$ for $\p\in\cal{S}$ of the Galois representation $\varrho_\scr{F}\colon G_F\to \mrm{GL}_2(\bb{I})$ associated to the homomorphism $\scr{F}\colon \bb{T}^{\bfcdot}\to\bb{I}$ has the form 
    \[
    (\varrho_\scr{F})\big\lvert_{D_\p}\sim \begin{pmatrix}
        \cal{N}_F\cdot \delta_\p^{-1}&*\\ 
        0&\delta_\p
    \end{pmatrix}
    \]
    where $\cal{N}_F\colon D_\p\to\Z_p^\times$ is the $p$-adic cyclotomic character, and $\delta_\p\colon D_\p\to \bb{I}^\times$ is the character corresponding under local class field theory to the homomorphism 
    \[
    \xi_\p\colon F_\p^\times\longrightarrow \bb{I}^\times,\qquad u_\p\cdot \varpi_\p^{m}\mapsto [u_\p]\cdot \alpha_{\varpi_\p}^m
    \]
    for $u_\p\in\cal{O}_\p^\times, m\in\Z$. Consider the ring 
    \[
    \cal{R}_\p:=E_{\otimes,\cal{S}}\llbracket\underline{X}_\cal{S}\rrbracket/(X_\p^2,\{X_\q\}_{\q\not=\p})
    \]
    and the  homomorphism $\bb{I}\to \cal{R}_\p$ obtained by composing $\mrm{pw}_\otimes\colon \bb{I}\to E_{\otimes,\cal{S}}\llbracket\underline{X}_\cal{S}\rrbracket$ with the natural quotient map. Then, there exists a free $\cal{R}_\p$-module $V$ with a continuous $D_\p$-such that
    \begin{equation}\label{extension GS}
    \xymatrix{
    0\ar[r]& \cal{R}_\p(\cal{N}_{F})\ar[r]&V\ar[r]&\cal{R}_\p(\delta_\p^2)\ar[r]&0.
    }\end{equation}
    As in \cite[Theorem 2.3.4 ($\textbf{c}\hspace{-1mm}\implies\hspace{-1mm}\textbf
    {a}$)]{Greenberg-Stevens}, the existence of the extension \eqref{extension GS} implies that
    \[
q_\p\otimes1\in \mrm{H}^1\big(F_\p, E_{\otimes,\cal{S}}(\cal{N}_F)\big),\qquad \partial_\p(\xi_\p^2)\in \mrm{H}^1\big(F_\p, E_{\otimes,\cal{S}}\big)
    \]
    are orthogonal to each other under Tate's pairing 
    \[
    \langle-,-\rangle\colon\mrm{H}^1\big(F_\p, E_{\otimes,\cal{S}}(\cal{N}_F)\big)\times \mrm{H}^1\big(F_\p, E_{\otimes,\cal{S}}\big)\longrightarrow \mrm{H}^2\big(F_\p, E_{\otimes,\cal{S}}(\cal{N}_F)\big).
    \]
    The explicit description of Tate's paring (\cite{Greenberg-Stevens}, equation (2.3.3)) implies that
    \[
\partial_\p\Big(\xi_\p^2(q_\p)\Big)\Big\lvert_{\underline{0}}=0.
    \]
Now the claim follows by a direct calculation. Writing $q_\p=\langle q_\p\rangle\cdot \varpi_\p^{\mrm{ord}_\p(q_\p)}$, we find that
    \[
\partial_\p\Big(\xi_\p^2(q_\p)\Big)\Big\lvert_{\underline{0}}=\partial_\p\Big([\langle q_\p\rangle]^2\cdot (\alpha_{\varpi_\p}^\otimes)^{2\mrm{ord}_\p(q_\p)}\Big)\Big\lvert_{\underline{0}}.
\]
     On the one hand, using Remark \ref{one explicit deriviative} and $[\langle q_\p\rangle]\big\lvert_{\underline{0}}=1$,  we compute  that
    \[\begin{split}
    \partial_\p\Big([\langle q_\p\rangle]^2\Big)\Big\lvert_{\underline{0}}&=2\cdot \partial_\p\Big([\langle q_\p\rangle]\Big)\Big\lvert_{\underline{0}}\cdot [\langle q_\p\rangle]\Big\lvert_{\underline{0}}\\
    &=2\log_{\varpi_\p}(q_\p)\otimes(\otimes_{\q\not=\p}1).
    \end{split}\]
    On the other hand, as $ \big(\alpha^\otimes_{\varpi_\p}\big)^{2\mrm{ord}_\p(q_\p)}\Big\lvert_{\underline{0}}=1$ we see that
    \[\begin{split}
    \partial_\p\Big((\alpha^\otimes_{\varpi_\p})^{2\mrm{ord}_\p(q_\p)}\Big)\Big\lvert_{\underline{0}}&=2\mrm{ord}_\p(q_\p)\cdot \partial_\p\big(\alpha^\otimes_{\varpi_\p}\big)\Big\lvert_{\underline{0}}\cdot (\alpha^\otimes_{\varpi_\p})^{2\mrm{ord}_\p(q_\p)-1}\Big\lvert_{\underline{0}}\\
    &=2\mrm{ord}_\p(q_\p)\cdot \frac{\partial_\p\big(\alpha^\otimes_{\varpi_\p}\big)}{ \alpha_{\varpi_\p}^\otimes}\bigg\lvert_{\underline{0}}.
    \end{split}\]
\end{proof}

\section{Automorphic functions and Hecke theory}
 Let $\Sigma\subseteq \Sigma_p$ be a subset of $p$-adic places, $w\not\in\Sigma$ another place of $F$, and $K=\prod_vK_v\subseteq G(\A^{w,\infty})$ a compact open subgroup. We consider the $\Z_p$-module $\mrm{Hom}_{\Z_p}(\Z_p[\Xi], N)$ where
 \begin{itemize}
     \item [$\bfcdot$]  $\Xi$ is a set endowed with a left action of the image of $G(F)\to G_w$, $\delta\mapsto \delta_w$, and
      \item [$\bfcdot$]  $N$ is a left $\Z_p[K_\Sigma]$-module.
 \end{itemize}
 We equip $\mrm{Hom}_{\Z_p}(\Z_p[\Xi], N)$ with the induced \emph{left action} of $K_\Sigma$ and the \emph{right action} of $G(F)$:
 \[
 (k_\Sigma.\hspace{0.1mm}f.\hspace{0.1mm}\delta_w)(-):=k_\Sigma.\hspace{0.1mm}f(\hspace{0.1mm}\delta_w.-)
 \]
for all $k_\Sigma\in K_\Sigma$, $\delta\in G(F)$, and  $f\in \mrm{Hom}_{\Z_p}(\Z_p[\Xi], N)$. 

\begin{definition}
    The set $\cal{A}_{w,\Sigma}(K;\Xi, N)$ consists of those functions
    \[
\theta\colon G(\A^{w,\infty})\longrightarrow \mrm{Hom}_{\Z_p}(\Z_p[\Xi], N)
    \]
    such that 
    \[
   \theta(\delta^w\cdot  b\cdot k)=k_\Sigma^{-1}.\theta(b).\delta_w^{-1}\qquad\forall\ b\in G(\A^{w,\infty}),\  k\in K,\ \delta\in G(F).
    \]
\end{definition}

\subsubsection{Hecke theory.}\label{Hecke on aut forms}
The set $\cal{A}_{w,\Sigma}(K;\Xi, N)$ of automorphic functions always admits a left action of Hecke operators away from $\{w\}\cup\Sigma$. Indeed, for any $g\in G(\bb{A}^{\{w\}\cup\Sigma\cup\Sigma_\infty})$ let  $\{g_j\}_{j\in I}$ be left-coset representatives for $T_g$, i.e., $KgK=\coprod_{j\in I}g_jK$. As $(g_j)_\Sigma\in K_\Sigma$ for every $j\in I$, the expression 
  \[
  (T_g\theta)(b)=\sum_{j\in I}(g_j)_\Sigma.\theta(bg_j)\qquad\text{where}\qquad \theta\in \cal{A}_{w,\Sigma}(K;\Xi, N),\quad b\in G(\bb{A}^{w,\infty}),
  \]
 does not depend on the choice of coset representatives. To obtain an action of the Hecke operator $T_g$ for some $g\in G_\Sigma$, one has to endow the $\Z_p[K_\Sigma]$-module $N$ with a compatible action of the elements of the double coset $K g K$.

\subsection{Automorphic functions from the uniformization of Shimura curves}\label{Main examples}
As in Section \ref{Intro: pHc}, we have fixed an incoherent set $\cal{S}$ for the triple $(A_{/F},\hspace{.5mm} E/F,\hspace{.5mm}p)$ and a coherent set  $S=\cal{S}\cup\{w\}$ for the quadruple $(A_{/F},\hspace{.5mm} E/F,\hspace{.5mm}p,\hspace{.5mm}\cal{S})$. Let us also fix a subset 
\[
\Sigma\subseteq\cal{S}.
\]
Let $m\ge1$ and set $I_{m}:=\ker(D_\Sigma\to D_{\Sigma,m})$, so that the space of invariants $(J_\Sigma^\Box)^{I_{m}}$ is a right $\bb{T}_\Sigma\otimes_{\Lambda_\Sigma}\Lambda_{\Sigma,m}$-module. The $\Z_p$-module $(J_\Sigma^\Box)^{I_{m}}$ has a natural the left action $(K_m^\circ)_\Sigma$-action given by 
\begin{equation}\label{def: module structure on J}
k_\Sigma^{-1}.(-):=(-).\langle k_\Sigma\rangle_m\qquad\forall\ k\in (K_m^\circ)_\Sigma,
\end{equation}
while the set $\cal{H}_w^\Box$ is endowed with its natural $G(F)$-action twisted by the character \eqref{twisting character}.

\smallskip
\noindent The $w$-adic uniformization of $E_w$-points \eqref{def:Shimura curve} and $\mrm{CM}_E^w$ points \eqref{uniformization CM points} of the Shimura curve $X_m$ together with the morphisms 
\[
\varphi_m\colon X_m\longrightarrow J_m\otimes\bb{Z}_p,\qquad j_m\colon J_m^\Box\otimes\bb{Z}_p\longrightarrow J_\Sigma^\Box
\]
of Section \ref{Further observations} can be parleyed into two automorphic functions   
\[
\phi_m^\Box\in\cal{A}_{w,\Sigma}\big(K_m^\circ; \cal{H}_w^\Box, (J_\Sigma^\Box)^{I_m}\big)\qquad \text{for}\qquad \Box\in\{\text{loc}, \text{glo}\}.
\]
More precisely, we have:
\begin{lemma}
For $\Box\in\{\text{loc}, \text{glo}\}$, $m\ge1$, the function $\phi_m^\Box\colon G(\A^{w,\infty})\rightarrow \mrm{Hom}_{\Z_p}(\Z_p[\cal{H}_w^\Box], (J_\Sigma^\Box)^{I_m})$
described by the formula
\[\phi_m^\Box(b)\colon [z]\mapsto j_m\circ\varphi_m\big([z,b]_{m}\big)\qquad \forall\ b\in G(\A^{w,\infty}),\ z\in \cal{H}_w^\Box,
\]
belongs to $\cal{A}_{w,\Sigma}(K_m^\circ; \cal{H}_w^\Box, (J_\Sigma^\Box)^{I_m})$. Moreover, $\phi_m^\text{loc}, \phi_m^\text{glo}$ coincide in  $\cal{A}_{w,\Sigma}(K_m^\circ; \cal{H}_w^\text{glo}, (J_\Sigma^\text{loc})^{I_m})$.
\end{lemma}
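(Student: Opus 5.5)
We verify the three requirements in turn: $\phi_m^\Box(b)$ is a well-defined element of $\mrm{Hom}_{\Z_p}(\Z_p[\cal{H}_w^\Box],(J_\Sigma^\Box)^{I_m})$; the automorphy rule holds; and the local and global versions agree.

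\emph{Well-definedness and the value space.} For $\Box=\text{loc}$, the point $[z,b]_m$ lies in $X_m(E_w)$ by the uniformization \eqref{def:Shimura curve}. For $\Box=\text{glo}$ and $z\in\cal{H}_w^E$, the point $[z,b]_m$ is a CM point by \eqref{uniformization CM points}, so it lies in $X_m({^{w}}E_\mrm{ab})$. Since $\varphi_m$ is defined over $F$, we get $\varphi_m([z,b]_m)\in J_m^\Box\otimes\Z_p$, and applying $j_m$ lands in $J_\Sigma^\Box$. Extending $\Z_p$-linearly gives a homomorphism out of $\Z_p[\cal{H}_w^\Box]$.

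It remains to see that the image is $I_m$-invariant. Take $a\in I_m$, so $a\equiv 1\pmod{p_\Sigma^m}$. At level $m'\ge m$ the operator $\llangle a\rrangle$ acts as $\langle a\rangle_{m'}=[K_{m'}\gamma_aK_{m'}]$. Because $\gamma_a$ normalizes $K_{m'}$, the compatibility computation in the proof of Proposition \ref{prop: compatibility Hecke} gives
\[
\langle a\rangle_{m'}\circ(\pi_1^*)^{m'-m}=(\pi_1^*)^{m'-m}\circ\langle a\rangle_m .
\]
Since $\gamma_a\in K_m$, the operator $\langle a\rangle_m$ is the identity. Hence $j_m(\cdot)$ is fixed by $I_m$.

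\emph{Automorphy.} Fix $\delta\in G(F)$, $b\in G(\A^{w,\infty})$ and $k\in K_m^\circ$. In the double coset we have $[z,\delta^w b k]_m=[\delta_w^{-1}z,\, bk]_m$, since $\delta$ acts diagonally with the twisted action on $\cal{H}_w$. Right translation by $k$ is the Hecke correspondence $[K_m k K_m]=\langle k\rangle_m$, which acts on points by $[x,b]\mapsto[x,bk]$ because $K_m$ is normal in $K_m^\circ$.

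The morphism $\varphi_m$ is built from $T_\frak{l}$ and $[x]$. By covariant functoriality, and because $\langle k\rangle_m$ commutes with $T_\frak{l}$, we get
\[
\varphi_m([x,bk]_m)=\varphi_m([x,b]_m).\langle k\rangle_m .
\]
Next we transport this through $j_m$. Combining the formula $\langle k\rangle_m=\langle\widetilde{\zeta(k)}\rangle_m$ with the compatibility of diamond operators under $\pi_1^*$, we find that $j_m$ intertwines $\langle k\rangle_m$ with the right action of $\llangle \widetilde{\zeta(k)}\rrangle$ on $(J_\Sigma^\Box)^{I_m}$. This action is well defined modulo $I_m$. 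By definition \eqref{def: module structure on J}, it equals $k_\Sigma^{-1}.(-)$. Only the $\Sigma$-component of $k$ matters, because $\zeta$ depends only on it. Putting these together gives
\[
\phi_m^\Box(\delta^w b k)([z])=k_\Sigma^{-1}.\phi_m^\Box(b)([\delta_w^{-1}z]),
\]
which is exactly the automorphy rule $\theta(\delta^w b k)=k_\Sigma^{-1}.\theta(b).\delta_w^{-1}$.

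\emph{Comparison of local and global.} The embedding ${^{w}}E_\mrm{ab}\hookrightarrow E_w$ given by $\iota_w$ induces a map $J_\Sigma^\text{glo}\to J_\Sigma^\text{loc}$. This map is compatible with $j_m$ and with the Hecke and diamond actions. For $z\in\cal{H}_w^E$, the CM point $[z,b]_m$ is the image of the corresponding point of $X_m(E_w)$ under this inclusion, since both are described by the same double coset via $\phi_w^\mrm{CD}$. Therefore, after composing with this map, $\phi_m^\text{glo}$ agrees with the restriction of $\phi_m^\text{loc}$ to $\cal{H}_w^\text{glo}$.

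\emph{Main obstacle.} The delicate step is the bookkeeping of conventions in the automorphy rule. One has to check that the right action of $\langle k\rangle_m$, under covariant functoriality of correspondences on Jacobians, matches the left action $k_\Sigma^{-1}.$ exactly, including the direction of $\gamma_a$ versus the map $\zeta(k)=a^{-1}d$. One also has to check that the $G(F)$-twist by the character \eqref{twisting character} on $\cal{H}_w$ is the one used in $\phi_w^\mrm{CD}$. I would settle both by testing on $k=\gamma_a$ directly.
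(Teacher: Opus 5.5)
Your proof is correct and is essentially the paper's argument: a direct unwinding of the definitions, using $[z,\delta^w b k]_m=[\delta_w^{-1}.z,\,bk]_m$ together with the convention $k_\Sigma^{-1}.(-)=(-).\langle k_\Sigma\rangle_m$ on $(J_\Sigma^\Box)^{I_m}$. You additionally verify that the values are $I_m$-invariant and check the local--global comparison via $\iota_w$; the paper leaves both implicit as ``a direct consequence of the definitions.''
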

\begin{proof}
This a direct consequence of the definitions. For additional clarity -- noting equation \eqref{def: module structure on J} --  we compute that
\[\begin{split}
\phi_m^\Box(\delta^w\cdot b\cdot k)(z)&=j_m\circ\varphi_m\big([z,\delta^w\cdot b\cdot k]_{m}\big)\\
&=j_m\circ\varphi_m\big([\delta_w^{-1}. z,b\cdot k]_{m}\big)\\
&=(k^{-1}_\Sigma.\phi_m^\Box.\delta_w^{-1}) (b)(z)
\end{split}\]
for any  $k\in K_m^\circ$, $\delta\in G(F)$, and $b\in G(\A^{w,\infty})$.
\end{proof}

\begin{lemma}\label{intro: trace map}
The formula
\[
\mrm{Tr}^{m+1}_{m}(\theta)(b):=\sum_{\xi\in K_{m}/K_{{m+1}}}\theta(b \xi)\qquad \forall\ b\in G(\A^{w,\infty})
\] 
defines a map
 \[
\mrm{Tr}^{m+1}_{m}\colon \cal{A}_{w,\Sigma}\big(K_{m+1}^\circ; \cal{H}_w^\Box, (J_\Sigma^\Box)^{I_{m+1}}\big)\longrightarrow \cal{A}_{w,\Sigma}\big(K_m^\circ; \cal{H}_w^\Box, (J_\Sigma^\Box)^{I_m}\big),
\]
\end{lemma}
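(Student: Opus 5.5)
We need to check three things for $\theta\in\cal{A}_{w,\Sigma}(K_{m+1}^\circ;\cal{H}_w^\Box,(J_\Sigma^\Box)^{I_{m+1}})$ and $\mrm{Tr}^{m+1}_m(\theta)$ as defined: (i) the sum does not depend on the choice of representatives $\xi$ of $K_m/K_{m+1}$; (ii) the values of $\mrm{Tr}^{m+1}_m(\theta)$ lie in $\mrm{Hom}_{\Z_p}(\Z_p[\cal{H}_w^\Box],(J_\Sigma^\Box)^{I_m})$; (iii) the automorphy rule holds for $\delta\in G(F)$ and $k\in K_m^\circ$.

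For (i), replace $\xi$ by $\xi k'$ with $k'\in K_{m+1}$. The automorphy of $\theta$ gives $\theta(b\xi k')=k'^{-1}_\Sigma.\theta(b\xi)$. By \eqref{def: module structure on J}, $k'_\Sigma$ acts through $\langle k'_\Sigma\rangle_{m+1}$, which is trivial because $k'\in K_{m+1}$. So each summand depends only on the coset.

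For (iii), the $G(F)$-part is immediate: left multiplication by $\delta^w$ commutes with the sum, and the right action by $\delta_w^{-1}$ on $\Xi=\cal{H}_w^\Box$ is linear. For $k\in K_m^\circ$, use that $K_m$ is normal in $K_m^\circ$ and $K_{m+1}$ is normal in $K_{m+1}^\circ$. I expect this to be the main obstacle, because $k$ need not normalize $K_{m+1}$.

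To handle it, write $k=\gamma_a k_0$ with $k_0\in K_m$ and $\gamma_a$ the diagonal matrix \eqref{def: gamma_a}, taking $a\in\cal{O}_\Sigma^\times$ to be a lift of $\zeta(k)$. Since $\gamma_a$ normalizes every $K_n$, it lies in $K_{m+1}^\circ$.
\begin{itemize}
\item For $k_0\in K_m$, right multiplication by $k_0$ permutes the cosets $K_m/K_{m+1}$. Hence $\sum_\xi\theta(bk_0\xi)=\sum_\xi\theta(b\xi)$, with the trivial $K_m$-action on $(J_\Sigma^\Box)^{I_m}$ as the target.
\item For $\gamma_a$, we have $\sum_\xi\theta(b\gamma_a\xi)=\sum_\xi\theta(b(\gamma_a\xi\gamma_a^{-1})\gamma_a)=(\gamma_a)_\Sigma^{-1}.\sum_{\xi'}\theta(b\xi')$. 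This uses that conjugation by $\gamma_a$ permutes $K_m/K_{m+1}$.
\end{itemize}
Finally, the action $\langle a\rangle_{m+1}$ agrees with $\langle a\rangle_m$ on the relevant module by the compatibility in Proposition \ref{prop: compatibility Hecke}.

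For (ii), the sum over $K_m/K_{m+1}$ is a norm over $I_m/I_{m+1}$. Indeed, $K_m/K_{m+1}$ surjects onto $\ker(D_{\Sigma,m+1}\to D_{\Sigma,m})$ via $\zeta$. For $\epsilon\in I_m$, acting by $\langle\epsilon\rangle_{m+1}$ on a summand amounts to right-translating $\xi$ by a diagonal $\gamma_\epsilon\in K_m$. This translation permutes the cosets, so the sum is $I_m$-invariant. The identification of $\gamma_\epsilon$ with an element of $K_m$ uses $\epsilon\equiv1\bmod p_\Sigma^m$.
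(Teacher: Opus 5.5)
Your proposal is correct and follows the paper's proof. As there, you get $K_m$-invariance by permuting the cosets $K_m/K_{m+1}$, then write $kK_m=\gamma_aK_m$ and conjugate by $\gamma_a\in K_{m+1}^\circ$, which normalizes $K_m$ and $K_{m+1}$.

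You also check independence of representatives, $G(F)$-equivariance, and that the values lie in $(J_\Sigma^\Box)^{I_m}$; the paper leaves these implicit (the last follows by combining its two computations for $a\equiv 1\bmod p_\Sigma^m$). One aside is imprecise: $\zeta$ does not induce a map on $K_m/K_{m+1}$, because $a^{-1}d\bmod p_\Sigma^{m+1}$ is not constant on cosets $\xi K_{m+1}$ with $\xi\notin K_{m+1}^\circ$. Your right-translation argument by $\gamma_\epsilon$ does not depend on that aside.
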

\begin{proof}
   For any $k\in K_m$ 
   \begin{equation}\label{K_m-invariance}
   \begin{split}
    \mrm{Tr}^{m+1}_{m}(\theta)(bk)&=\sum_{\xi\in K_{m}/K_{{m+1}}}\theta(b\cdot k \xi)\\
&=\sum_{\xi'\in K_{m}/K_{{m+1}}}\theta(b\cdot \xi')=\mrm{Tr}^{m+1}_{m}(\theta)(b).\\
\end{split}
   \end{equation}
Now, if $k\in K_m^\circ$, there exists $a\in\cal{O}_\Sigma^\times$ such that  $kK_m=\gamma_aK_m$ -- see \eqref{explict description diamond} and \eqref{def: gamma_a}. Note that $\gamma_a\in K_{m+1}^\circ$ . Using \eqref{K_m-invariance} we compute
\[\begin{split}
\mrm{Tr}^{m+1}_{m}(\theta)(bk)&=\mrm{Tr}^{m+1}_{m}(\theta)(b\gamma_a)\\
&=\sum_{\xi\in K_{m}/K_{{m+1}}}\theta(b\cdot (\gamma_a\xi\gamma_a^{-1})\cdot \gamma_a)=k^{-1}.\mrm{Tr}^{m+1}_{m}(\theta)(b).\\
\end{split}\] 
Hence, $\mrm{Tr}^{m+1}_{m}(\theta)\in \cal{A}_{w,\Sigma}\big(K_m^\circ; \cal{H}_w^\Box, (J_\Sigma^\Box)^{I_{m}}\big)$.
\end{proof}

Interestingly, relation \eqref{compatibility} implies that the collection $\{\phi_m^\Box\}_{m\ge1}$ of automorphic functions is compatible with respect to the trace maps
giving rise to an element $\phi_\infty^\Box$ in the projective limit $\varprojlim_m \cal{A}_{w,\Sigma}(K_m^\circ; \cal{H}_w^\Box, (J_\Sigma^\Box)^{I_m})$. 

\subsubsection{Hecke module structures.}
The space $\cal{A}_{w,\Sigma}(K_m^\circ; \cal{H}_w^\Box, (J_\Sigma^\Box)^{I_m})$ inherits a right $\bb{T}_\Sigma\otimes_{\Lambda_\Sigma}\Lambda_{\Sigma,m}$-module structure from $(J_\Sigma^\Box)^{I_m}$. On top of that, we can endow it with a left action of $\frak{H}_{\Sigma,m}$ in the sense of Section \ref{Hecke on aut forms} as follows: 
    
    for $g\in G(\A^{w,\infty})$ write $K_m g K_m=\coprod_{j\in I}g_j K_m$, then
    \[
  (T_g\theta)(-)=\sum_{j\in I}\theta(-g_j)\qquad\forall\ \theta\in \cal{A}_{w,\Sigma}(K_m^\circ; \cal{H}_w^\Box, (J_\Sigma^\Box)^{I_m}).
  \]

Recall the the morphism $\frak{h}_{\Sigma,m+1}\to\frak{h}_{\Sigma,m}$ of Remark \ref{abstract compatibility}.
\begin{lemma}
    For all $m\ge1$ the trace map
\[
\mrm{Tr}^{m+1}_{m}\colon \cal{A}_{w,\Sigma}\big(K_{{m+1}}^\circ;\cal{H}_w^\Box,(J^\Box_\Sigma)^{I_{m+1}}\big)\longrightarrow \cal{A}_{w,\Sigma}\big(K_{m}^\circ;\cal{H}_w^\Box,(J^\Box_\Sigma)^{I_m}\big)
\]  
intertwines the $\frak{h}_{\Sigma,m+1}$-structure on the domain with the $\frak{h}_{\Sigma,m}$-structure on the codomain.
\end{lemma}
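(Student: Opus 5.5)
We have to show that $\mrm{Tr}^{m+1}_m(T\theta)=\bar T\,\mrm{Tr}^{m+1}_m(\theta)$ for each generator $T$ of $\frak{h}_{\Sigma,m+1}$, where $\bar T$ is its image in $\frak{h}_{\Sigma,m}$ under Remark \ref{abstract compatibility}. We treat the generators in three groups, mirroring the proof of Proposition \ref{prop: compatibility Hecke}.

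\emph{Hecke operators away from $\Sigma$.} These are the operators $T_\q$, $U_\q$, and the $U_\q$ for $\q\mid p_{\cal{S}\setminus\Sigma}$. The groups $K_m$ and $K_{m+1}$ differ only at $\Sigma$, so a coset decomposition $K_{m+1}gK_{m+1}=\coprod_j g_jK_{m+1}$ with $g$ supported outside $\Sigma$ also gives $K_mgK_m=\coprod_j g_jK_m$. Moreover the $g_j$ commute with any set of representatives $\xi$ of $K_m/K_{m+1}$, which we may take supported at $\Sigma$. Hence
\[
\sum_\xi\sum_j\theta(b\xi g_j)=\sum_j\sum_\xi\theta(bg_j\xi),
\]
which is the required identity.

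\emph{Diamond operators.} We have $\langle a\rangle_{m+1}\mapsto\langle a\rangle_m$, and $\gamma_a$ normalizes both $K_m$ and $K_{m+1}$. The computation is the one already carried out in the proof of Lemma \ref{intro: trace map}: conjugation by $\gamma_a$ permutes $K_m/K_{m+1}$.

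\emph{The operators ${_{m+1}}U_\varpi$ for $\p\in\Sigma$.} This is the main step, and we argue one prime at a time. Factor the trace through the intermediate group $K'=K_{\p^m}\cap K^\circ_{\p^{m+1}}$ at $\p$, as in Proposition \ref{prop: compatibility Hecke}. This writes $\mrm{Tr}^{m+1}_m$ as a composite of two partial traces:
\begin{itemize}
\item[$\bfcdot$] a sum over the representatives $\mathrm{diag}(1,\delta)$ of $K'/K_{\p^{m+1}}$, with $\delta\in(1+\p^m)/(1+\p^{m+1})$;
\item[$\bfcdot$] a sum over the representatives $\begin{pmatrix}1&0\\ \varpi^ma'&1\end{pmatrix}$ of $K_{\p^m}/K'$.
\end{itemize}

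For the first partial trace, the matrices $\mathrm{diag}(1,\delta)$ commute with the representatives $\widehat\sigma_{m+1,a}$ up to right multiplication by $K_{\p^{m+1}}$. So this trace commutes with ${_{m+1}}U_\varpi$. We will check this by a direct matrix identity: $\mathrm{diag}(1,\delta)\widehat\sigma_{m+1,a}$ equals $\widehat\sigma_{m+1,a\delta}$ times an element of $K_{\p^{m+1}}$.

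For the second partial trace, we use the identity displayed in the proof of Proposition \ref{prop: compatibility Hecke}:
\[
\widehat\sigma_{m,a}\begin{pmatrix}1&0\\ \varpi^ma'&1\end{pmatrix}=\begin{pmatrix}1&0\\ \varpi^ma&1\end{pmatrix}\widehat\sigma_{m+1,a'}.
\]
Summing over $a,a'\in\kappa_\p$ gives
\[
\sum_{a'}\sum_a\theta\!\left(b\begin{pmatrix}1&0\\ \varpi^ma'&1\end{pmatrix}\widehat\sigma_{m+1,a}\right)=\sum_{a}\sum_{a'}\theta\!\left(b\,\widehat\sigma_{m,a}\begin{pmatrix}1&0\\ \varpi^ma'&1\end{pmatrix}\right).
\]
This is exactly ${_mU_\varpi}$ applied after the partial trace, i.e.\ the desired intertwining.

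The main obstacle is bookkeeping. One must check that these representatives remain valid for the right-$K^\circ$-equivariance built into $\cal{A}_{w,\Sigma}$; since the $\widehat\sigma$ lie in $G_\Sigma$ and not in $K_\Sigma$, no $K_\Sigma$-twist appears in the left action. One must also check that the values land in the correct invariant spaces $(J_\Sigma^\Box)^{I_m}$, and the factor $(g_j)_\Sigma$ in the general Hecke formula of Section \ref{Hecke on aut forms} is absent here by definition of the left action. Once these are confirmed, the three cases give the lemma.
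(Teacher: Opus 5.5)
Your strategy is the paper's. The generators away from $\Sigma$ and the diamond operators are immediate. For ${_{m+1}}U_\varpi$ you factor $\mrm{Tr}^{m+1}_m=\mrm{Tr}^2\circ\mrm{Tr}^1$ through $K'=K_{\p^m}\cap K^\circ_{\p^{m+1}}$ and check two relations: $\mrm{Tr}^1$ commutes with ${_{m+1}}U_\varpi$, and $\mrm{Tr}^2$ intertwines ${_{m+1}}U_\varpi$ with ${_m}U_\varpi$. Your $\mrm{Tr}^2$ computation is correct: it uses $\widehat\sigma_{m,a}n_{a'}=n_a\widehat\sigma_{m+1,a'}$ with $n_{a'}=\begin{pmatrix}1&0\\ \varpi^ma'&1\end{pmatrix}$ and then swaps $a,a'$. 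It should be applied to the $K'$-level function $\mrm{Tr}^1\theta$ rather than to $\theta$.

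The step that fails as written is the matrix identity you propose for $\mrm{Tr}^1$. Writing $\gamma_\delta=\begin{pmatrix}1&0\\0&\delta\end{pmatrix}$, one has exactly
\[
\gamma_\delta\,\widehat\sigma_{m+1,a}=\widehat\sigma_{m+1,a\delta}\,\gamma_\delta ,
\]
and $\gamma_\delta\notin K_{\p^{m+1}}$, even modulo the center, unless $\delta\equiv1\pmod{\p^{m+1}}$. Your version would also prove the wrong statement. Summing $\theta(b\gamma_\delta\widehat\sigma_{m+1,a})=\theta(b\widehat\sigma_{m+1,a\delta})$ over $\delta$ would give $[K':K_{\p^{m+1}}]\cdot({_{m+1}}U_\varpi\theta)(b)$, not $({_{m+1}}U_\varpi\,\mrm{Tr}^1\theta)(b)$.

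The repair uses three facts:
\begin{itemize}
\item[$\bfcdot$] Since $m\ge1$, we have $a\delta\equiv a\pmod{\p}$.
\item[$\bfcdot$] $\widehat\sigma_{m+1,a}^{-1}\widehat\sigma_{m+1,a'}=\begin{pmatrix}1&0\\(a'-a)\varpi^m&1\end{pmatrix}$, so $\widehat\sigma_{m+1,a\delta}\in\widehat\sigma_{m+1,a}K_{\p^{m+1}}$.
\item[$\bfcdot$] $\gamma_\delta$ normalizes $K_{\p^{m+1}}$.
\end{itemize}
Together these give $\gamma_\delta\widehat\sigma_{m+1,a}\in\widehat\sigma_{m+1,a}\gamma_\delta K_{\p^{m+1}}$. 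This is the correct meaning of ``commute up to right multiplication by $K_{\p^{m+1}}$''. Combined with the right $K_{m+1}$-invariance of $\theta$ (elements of $K_{m+1}$ act trivially on $(J^\Box_\Sigma)^{I_{m+1}}$), it yields
\[
\sum_\delta\sum_a\theta(b\gamma_\delta\widehat\sigma_{m+1,a})=\sum_a\sum_\delta\theta(b\widehat\sigma_{m+1,a}\gamma_\delta),
\]
which is $\mrm{Tr}^1\circ{_{m+1}}U_\varpi={_{m+1}}U_\varpi\circ\mrm{Tr}^1$. With this correction your argument is complete and coincides with the paper's proof.
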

\begin{proof}
The same ideas appearing in the proof of Proposition \ref{Hecke-action on J} work here. In particular, the claim is clear for the Hecke operators away from $\Sigma$ and the diamond operators (see also Lemma \ref{intro: trace map}). To prove that, for $\p\in\Sigma$ and $\varpi\in\cal{O}_\p$ a uniformizer,
\[
\mrm{Tr}^{m+1}_m\circ ({_{m+1}}U_\varpi)=({_{m}}U_\varpi)\circ \mrm{Tr}^{m+1}_m\qquad \text{for}\ m\ge1
\]
write $\mrm{Tr}=\mrm{Tr}^{m+1}_m$ and factor it  as $\mrm{Tr}=\mrm{Tr}^2\circ\mrm{Tr}^1$ where $\mrm{Tr}^1$ averages over $(K_{m}\cap K_{m+1}^\circ)/K_{{m+1}}$ and $\mrm{Tr}^2$ averages over $K_{m}/(K_{m}\cap K_{m+1}^\circ)$.
The explicit coset representatives given in the proof of Proposition \ref{Hecke-action on J} allows one to check that 
\[
\mrm{Tr}^1\circ ({_{m+1}}U_\varpi)=({_{m+1}}U_\varpi)\circ \mrm{Tr}^1,\qquad \mrm{Tr}^2\circ ({_{m+1}}U_\varpi)=({_{m}}U_\varpi)\circ \mrm{Tr}^2.
\]
\end{proof}

\begin{corollary}\label{Hecke-action on limit}
The projective limit
\[
\varprojlim_m\ \cal{A}_{w,\Sigma}\big(K_{m}^\circ;\cal{H}_w^\Box,(J^\Box_\Sigma)^{I_m}\big)
\]
inherits the action of $\frak{h}_\Sigma=\varprojlim_m\frak{h}_{\Sigma,m}$.
\end{corollary}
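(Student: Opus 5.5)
The corollary follows formally from the preceding lemma; the plan is to define the action levelwise and check compatibility with the transition maps. An element of $\frak{h}_\Sigma=\varprojlim_m\frak{h}_{\Sigma,m}$ is a compatible sequence $T=(T_m)_{m\ge1}$ with $T_m\in\frak{h}_{\Sigma,m}$, compatible under the homomorphisms $\frak{h}_{\Sigma,m+1}\to\frak{h}_{\Sigma,m}$ of Remark \ref{abstract compatibility}. An element of the projective limit is a sequence $\theta=(\theta_m)_{m\ge1}$ with $\theta_m\in\cal{A}_{w,\Sigma}(K_m^\circ;\cal{H}_w^\Box,(J_\Sigma^\Box)^{I_m})$ and $\mrm{Tr}^{m+1}_m(\theta_{m+1})=\theta_m$. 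I will define
\[
T.\theta:=(T_m\theta_m)_{m\ge1},
\]
where each $T_m$ acts through the left $\frak{h}_{\Sigma,m}$-action on automorphic functions of level $K_m^\circ$ described just before the lemma.

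First, $T_m\theta_m$ must again lie in $\cal{A}_{w,\Sigma}(K_m^\circ;\cal{H}_w^\Box,(J_\Sigma^\Box)^{I_m})$, so that the action is well defined at each level. For generators away from $\Sigma$ this is the observation of Section \ref{Hecke on aut forms}. For the diamond operators and the ${_m}U_\varpi$ with $\p\in\Sigma$, I would check that the defining double cosets are stable under conjugation by the $\gamma_a$, which normalize $K_m$. The transformation rule under $K_m^\circ$ then follows from a reindexing of coset representatives, exactly as in the proof of Lemma \ref{intro: trace map}. I take this as implicit in the assertion that $\frak{h}_{\Sigma,m}$ acts at level $m$.

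Next comes compatibility. By the preceding lemma, $\mrm{Tr}^{m+1}_m$ intertwines the $\frak{h}_{\Sigma,m+1}$-action on the source with the $\frak{h}_{\Sigma,m}$-action on the target via the map of Remark \ref{abstract compatibility}. Hence
\[
\mrm{Tr}^{m+1}_m(T_{m+1}\theta_{m+1})=\overline{T_{m+1}}\,\mrm{Tr}^{m+1}_m(\theta_{m+1})=T_m\theta_m,
\]
using that $T_{m+1}\mapsto T_m$ and that $\theta$ is trace-compatible. So $T.\theta$ again lies in the projective limit.

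Finally, the axioms of a left action (additivity, $(TT').\theta=T.(T'.\theta)$, unit) hold because they hold levelwise. The only step needing care is the levelwise well-definedness at the primes in $\Sigma$, since the intertwining itself is supplied by the previous lemma.
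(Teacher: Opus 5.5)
Your proposal is correct and is essentially the paper's argument: the paper gives no separate proof, treating the corollary as immediate from the preceding lemma that $\mrm{Tr}^{m+1}_m$ intertwines the $\frak{h}_{\Sigma,m+1}$-action with the $\frak{h}_{\Sigma,m}$-action via the maps of Remark~\ref{abstract compatibility}. Your levelwise definition $T.\theta=(T_m\theta_m)_m$ combined with that intertwining is exactly this deduction, and your extra check of levelwise well-definedness (the double cosets are stable under conjugation by the $\gamma_a$, which normalize $K_m$) is a correct supplement to what the paper takes for granted.
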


 Let $\lambda\colon\frak{h}_{\Sigma,m}\to \bb{T}_\Sigma\otimes_{\Lambda_\Sigma}\Lambda_{\Sigma,m}$  be the homomorphism of $\Lambda_{\Sigma,m}$-algebras described by
 \[\lambda(T)=\begin{cases}
     \mbf{T}_\q \text{ (resp. }\mbf{U}_\q\text{) }&\text{if }\  T=T_\q \text{  (resp. }U_\q\text{)},\\
    \mbf{U}_\varpi&\text{if }\ T={_{m}}U_\varpi.\\
\end{cases}
\]

  \begin{lemma}
   The action of $\frak{h}_{\Sigma,m}$ on $\phi_m^\Box$ is via the homomorphism $\lambda\colon\frak{h}_{\Sigma,m}\to \bb{T}_\Sigma\otimes_{\Lambda_\Sigma}\Lambda_{\Sigma,m}$.
  \end{lemma}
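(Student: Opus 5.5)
The plan is to check, generator by generator, that the left action of $T\in\frak{h}_{\Sigma,m}$ on the automorphic function $\phi_m^\Box$ equals the right action of $\lambda(T)$ on its coefficients. That is, we show
\[
(T\phi_m^\Box)(b)([z]) \;=\; \phi_m^\Box(b)([z]).\lambda(T)\qquad\forall\ b\in G(\A^{w,\infty}),\ z\in\cal{H}_w^\Box .
\]
By definition $\phi_m^\Box(b)([z])=j_m\circ\varphi_m([z,b]_m)$. For $T=[K_mgK_m]$ with $K_mgK_m=\coprod_j g_jK_m$, the left side is $\sum_j j_m\circ\varphi_m([z,bg_j]_m)$. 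By the explicit formula for Hecke correspondences on non-cuspidal points, this is
\[
j_m\circ\varphi_m\Big(\sum_j [z,bg_j]_m\Big)=j_m\circ\varphi_m\big(T[z,b]_m\big).
\]

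The first key step is that $\varphi_m$ commutes with Hecke correspondences, viewed as acting on divisors of $X_m$ and covariantly on $J_m$. Indeed, $\varphi_m$ is $(T_\frak{l}-\mrm{N}\frak{l}-1)$ applied to the divisor $[x]$, followed by scaling by a unit. The abstract Hecke algebra generated by double cosets for $K_m$ acts on divisors, and the operators in question commute with $T_\frak{l}$ in that ring, since $\frak{l}\nmid\frak{f}_A p$ and the Hecke ring away from the level is central in the relevant sense. Hence
\[
\varphi_m(T[x]) = T\cdot \varphi_m(x)\quad\text{in } J_m\otimes\Z_p,
\]
where on the right $T$ acts through its image in $\bb{T}_{\Sigma,m}$.

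The second step is to transport this through $j_m$. We must show that $j_m\colon J_m^\Box\otimes\Z_p\to J_\Sigma^\Box$ intertwines the action of $T$ on $J_m$ with the right action of $\lambda(T)\in\bb{T}_\Sigma\otimes_{\Lambda_\Sigma}\Lambda_{\Sigma,m}$ on $(J_\Sigma^\Box)^{I_m}$. This is exactly the compatibility of Proposition \ref{prop: compatibility Hecke}: $\pi_1^*$ intertwines $T_\q$, $U_\q$ and the diamond operators at levels $m$ and $m+1$, and it intertwines ${_{m}}U_\varpi$ with ${_{m+1}}U_\varpi$. Iterating, the image of $J_m$ in the direct limit is acted on by $\mbf{U}_\varpi$ as ${_{m}}U_\varpi$, and similarly for the other generators. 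For the diamond operators, we also check that the action of $\langle a\rangle_m$ on $\phi_m^\Box$, namely $\theta\mapsto\theta(-\gamma_a)$, matches \eqref{def: module structure on J}. This follows from the automorphy property with $k=\gamma_a\in K_m^\circ$. Since the $I_m$-invariant part is a $\Lambda_{\Sigma,m}$-module, this is consistent with the isomorphism \eqref{Iwasawa algebra}.

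The main obstacle is the $\Sigma$-adic operators ${_{m}}U_\varpi$. Here one must make sure that the left-coset decomposition $\coprod_a\widehat{\sigma}_{m,a}K_m$ used in the automorphic action is the same decomposition that defines the covariant action on $J_m$, with the same normalization ($[\cdot g]$ followed by $\mrm{pr}'$). One must also make sure that the direct limit does not introduce a shift in $m$, i.e.\ that ${_{m+1}}U_\varpi\circ\pi_1^*=\pi_1^*\circ{_{m}}U_\varpi$ holds on the nose. The latter is the explicit matrix identity already verified in the proof of Proposition \ref{prop: compatibility Hecke}, so I would cite it directly rather than recompute. Everything else reduces to the definitions.
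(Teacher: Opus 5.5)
Your proposal is correct and follows the paper's proof. The paper only records, ``by direct inspection,'' the identities $T_\q.\phi_m^\Box=\phi_m^\Box.\mbf{T}_\q$, $U_\q.\phi_m^\Box=\phi_m^\Box.\mbf{U}_\q$, ${_{m}}U_\varpi.\phi_m^\Box=\phi_m^\Box.\mbf{U}_\varpi$ and $\langle a\rangle_m.\phi_m^\Box=\phi_m^\Box.\langle a\rangle_m$, and your steps spell out that inspection: the coset formula for correspondences, Hecke-equivariance of $\varphi_m$ via commutation with $T_{\frak{l}}$, Proposition~\ref{prop: compatibility Hecke} for $j_m$, and automorphy for the diamond operators. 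One small correction: the paper assumes only $\frak{l}\nmid\frak{f}_A$, not $\frak{l}\nmid\frak{f}_A p$, and that is enough, because every non-spherical generator is supported at primes dividing $\frak{f}_A$ and spherical operators commute with one another.
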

  \begin{proof}
     A direct inspection gives the equalities
  \begin{equation}\label{explict Hecke action}
T_\q.\phi_m^\Box=\phi_m^\Box.\mbf{T}_\q,\qquad U_\q.\phi_m^\Box=\phi_m^\Box.\mbf{U}_\q,\qquad{_{m}}U_\varpi.\phi_m^\Box=\phi_m^\Box.\mbf{U}_\varpi.
    \end{equation}
    Furthermore, one sees that
    \[
    \langle a\rangle_m.\phi_m^\Box=\phi_m^\Box.\langle a\rangle_m\qquad \forall\ a\in\cal{O}_\Sigma^\times.
    \]
  \end{proof}

\subsection{Measure-valued automorphic functions}

Shapiro's lemma suggests to interpret the projective limit appearing in Corollary \ref{Hecke-action on limit} in terms of measure-valued automorphic functions. Consider the $p$-adic domain
\[
\cal{X}_\Sigma:=\varprojlim_m K_0/K_m.
\]
The group $K_0$ acts on $\cal{X}_\Sigma$ by left multiplication. Moreover, $\cal{X}_\Sigma$ admits a right $\cal{O}_\Sigma^\times$-action by 
\begin{equation}\label{Lambda-structure on measures}
[\gamma]\hspace{0.7mm}\bfcdot\hspace{1mm} a:=[\gamma\cdot\gamma_a]\qquad\gamma_a=\begin{pmatrix}
    1&0\\0&a
\end{pmatrix}.
\end{equation}
It follows that the space of $J_\Sigma^\Box$-valued measures on $\cal{X}_\Sigma$ has two $\Lambda_\Sigma$-structures: one inherited from the coefficients and the other arising from the $\cal{O}_\Sigma^\times$-action \eqref{Lambda-structure on measures}. For applications to families of modular forms it is necessary to consider only those measures for which the two $\Lambda_\Sigma$-structures coincide.
\begin{definition}
  Let $\cal{M}^\Box_\Sigma:=\cal{M}(\cal{X}_\Sigma, J_\Sigma^\Box)$ be the space of $J_\Sigma^\Box$-valued measures $\mu$ on $\cal{X}_\Sigma$ satisfying 
  \begin{equation}\label{def: equiv prop}
\mu(U)\llangle a\rrangle=\mu(U\hspace{0.5mm}\bfcdot\hspace{1mm} a)\qquad\forall\ U\subseteq\cal{X}_\Sigma\hspace{3mm}\text{compact open},\quad\forall \ a \in\cal{O}_\Sigma^\times.
\end{equation}
The $\Z_p$-module $\cal{M}^\Box_\Sigma$ is endowed with the left $(K_0)_{\Sigma}$-action given by the formula
\[
(k_\Sigma.\mu)(-):=\mu(k_\Sigma^{-1}-)\qquad \forall\ k_\Sigma\in (K_0)_{\Sigma}.
\]  
\end{definition}

\smallskip
For $m\ge1$ let $\cal{X}_{m}\subseteq \cal{X}_\Sigma$ represent the image of  $K_m$ in $\cal{X}_\Sigma$. The morphism of $\bb{T}_\Sigma$-modules
\[
\mrm{sp}_m\colon \cal{M}^\Box_\Sigma\longrightarrow (J_\Sigma^\Box)^{I_m},\qquad \mrm{sp}_m(\mu)=\mu(\cal{X}_{m})
\]
 is $(K_m^\circ)_{\Sigma}$-equivariant. Hence, it induces a map between spaces of automorphic functions
\[
(\mrm{sp}_m)_*\colon\cal{A}_{w,\Sigma}\big(K_0; \cal{H}_w^\Box, \cal{M}^\Box_\Sigma\big)\longrightarrow \cal{A}_{w,\Sigma}\big(K_m^\circ; \cal{H}_w^\Box, (J_\Sigma^\Box)^{I_m}\big).
\]

\begin{proposition}\label{lemma: iso-specialization}
    The maps $\{(\mrm{sp}_m)_*\}_{m\ge1}$ determine an isomorphism of $\bb{T}_\Sigma$-modules
    \[
    \mrm{sp}_\infty\colon \cal{A}_{w,\Sigma}\big(K_0; \cal{H}_w^\Box, \cal{M}^\Box_\Sigma\big)\overset{\sim}{\longrightarrow} \varprojlim_m\ \cal{A}_{w,\Sigma}\big(K_m^\circ; \cal{H}_w^\Box, (J_\Sigma^\Box)^{I_m}\big).
    \]
\end{proposition}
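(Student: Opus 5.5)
The plan is to prove the statement by a Shapiro-type argument, carried out pointwise in $b\in G(\A^{w,\infty})$ and $z\in\cal{H}_w^\Box$. The strategy is to first identify $\cal{M}^\Box_\Sigma$ with a projective limit of the coefficient modules $(J_\Sigma^\Box)^{I_m}$, and then promote this identification to automorphic functions.

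\emph{Step 1: describe a measure by its values on translates of $\cal{X}_m$.} The compact opens $k\cal{X}_m=[kK_m]$, for $k\in K_0$ and $m\ge1$, form a basis of $\cal{X}_\Sigma$. A measure $\mu$ is therefore the same as a finitely additive function on these sets, i.e.\ a compatible family of functions
\[
\mu_m\colon K_0/K_m\to J_\Sigma^\Box,\qquad \mu_m(kK_m)=\sum_{\xi\in K_m/K_{m+1}}\mu_{m+1}(k\xi K_{m+1}).
\]
Each value is determined by a specialization, since
\[
\mu(k\cal{X}_m)=\mrm{sp}_m(k^{-1}.\mu).
\]
The equivariance \eqref{def: equiv prop} says
\[
\mu(k\gamma_a\cal{X}_m)=\mu(k\cal{X}_m)\llangle a\rrangle .
\]
Since $\gamma_a\in I_m$ whenever $a\equiv1 \pmod{p_\Sigma^m}$, and $\gamma_a$ fixes $\cal{X}_m$ in that case, this forces $\mrm{sp}_m(\mu)\in (J_\Sigma^\Box)^{I_m}$. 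It also shows that the right $\cal{O}_\Sigma^\times$-action on $K_0/K_m$ matches the $(K_m^\circ)_\Sigma$-action \eqref{def: module structure on J} on the coefficients.

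\emph{Step 2: define the inverse of $\mrm{sp}_\infty$.} Take a compatible family $(\theta_m)_m$ in the projective limit. For $b\in G(\A^{w,\infty})$, $z\in\cal{H}_w^\Box$ and $k\in K_0$, set
\[
\mu_{b,z}(k\cal{X}_m):=\theta_m(bk)(z).
\]
The checks are as follows.
\begin{itemize}
\item[(i)] \emph{Well-definedness in $k$.} Replacing $k$ by $kk'$ with $k'\in K_m$ leaves the value unchanged, because $\theta_m$ is $K_m$-invariant: for $k'\in K_m$ the element $\langle k'\rangle_m$ is trivial.
\item[(ii)] \emph{Finite additivity.} This is exactly the trace compatibility
\[
\theta_m(bk)=\mrm{Tr}^{m+1}_m(\theta_{m+1})(bk)=\sum_{\xi\in K_m/K_{m+1}}\theta_{m+1}(bk\xi),
\]
so $\mu_{b,z}$ is a $J_\Sigma^\Box$-valued measure.
\item[(iii)] \emph{Equivariance \eqref{def: equiv prop}.} We have $\theta_m(bk\gamma_a)=\gamma_a^{-1}.\theta_m(bk)=\theta_m(bk)\langle a\rangle_m$. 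Since $\gamma_a$ normalizes $K_m$, one gets $k\cal{X}_m\bfcdot a=k\gamma_a\cal{X}_m$, and the identity $\mu(U)\llangle a\rrangle=\mu(U\bfcdot a)$ follows on basic opens.
\item[(iv)] \emph{Automorphy.} Define $\Theta(b)(z):=\mu_{b,z}$. For $k_0\in K_0$ we have $\mu_{bk_0,z}(U)=\mu_{b,z}(k_0U)$, i.e.\ $\Theta(bk_0)=k_{0,\Sigma}^{-1}.\Theta(b)$. The $G(F)$-equivariance is inherited directly from the $\theta_m$. Hence $\Theta\in\cal{A}_{w,\Sigma}(K_0;\cal{H}_w^\Box,\cal{M}_\Sigma^\Box)$.
\end{itemize}

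\emph{Step 3: conclude.} First, $(\mrm{sp}_m)_*\Theta=\theta_m$ by taking $k=1$, so $\Theta$ is a preimage of $(\theta_m)_m$. Conversely, any $\Theta$ is recovered from its specializations by the formula $\mu(k\cal{X}_m)=\mrm{sp}_m(k^{-1}.\mu)$ of Step 1, which gives injectivity. It remains to see that $\mrm{sp}_\infty$ is well defined, i.e.\ lands in the projective limit. The specializations are trace-compatible because $\cal{X}_m=\coprod_{\xi}\xi\cal{X}_{m+1}$. Finally, $\bb{T}_\Sigma$-linearity is immediate: $\mrm{sp}_m$ is $\bb{T}_\Sigma$-linear, since the action on measures is through the coefficients.

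I expect the main obstacle to be the compatibility of group actions in (iii) and (iv). One must check that the left $(K_m^\circ)_\Sigma$-action on $(J_\Sigma^\Box)^{I_m}$, defined via diamond operators with an inverse, matches the right $\cal{O}_\Sigma^\times$-action on $\cal{X}_\Sigma$ and the left $K_0$-translation on measures. This amounts to careful sign and side bookkeeping using the isomorphism \eqref{explict description diamond}. A secondary point is to confirm that finitely additive functions on the basis $\{k\cal{X}_m\}$ genuinely define elements of $\mrm{Hom}(\mrm{LC}(\cal{X}_\Sigma,\Z_p),J_\Sigma^\Box)$; this is standard because every locally constant function on the profinite space $\cal{X}_\Sigma$ factors through some finite quotient $K_0/K_m$.
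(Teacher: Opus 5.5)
Your proposal is correct and is essentially the paper's proof. The inverse of $\mathrm{sp}_\infty$ is built by the same formula $\theta(b)(z)(\xi\mathcal{X}_m):=\theta_m(b\xi)(z)$: trace compatibility gives finite additivity, the identity $\gamma_a\mathcal{X}_m=\mathcal{X}_m\bfcdot a$ gives the equivariance defining $\mathcal{M}^\Box_\Sigma$, and injectivity follows from $[(\mathrm{sp}_m)_*\theta](b\xi)(z)=\theta(b)(z)(\xi\mathcal{X}_m)$. Your extra checks (independence of the coset representative $k$, and that the specializations are trace-compatible) are details the paper leaves implicit.
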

\begin{proof}
Consider the $(K_0)_{\Sigma}$-equivariant injection
\[
\cal{M}^\Box_\Sigma\longhookrightarrow\varprojlim_m\ J_\Sigma^\Box\big[K_{0}/K_{m}\big],\qquad\mu\mapsto \left\{\sum_{\xi\in K_{0}/K_{m}}\mu\big(\xi\cdot\cal{X}_{m}\big)\cdot[\xi]\right\}_{m\ge1}
\]
where the projective limit is computed with respect to the quotient maps $K_0/K_{m+1}\to K_0/K_{m}$, and the $(K_0)_{\Sigma}$-action on group rings is given by the left multiplication of group-like elements. It follows that any element $\theta\in \cal{A}_{w,\Sigma}(K_0; \cal{H}_w^\Box, \cal{M}^\Box_\Sigma)$  is determined by the collection 
\[
\left\{\theta(b)(z)\big(\xi\cdot\cal{X}_{m}\big)\ \bigg\lvert\ b\in G(\A^{w,\infty}),\ z\in\cal{H}_w^\Box,\ \xi\in K_{0},\ m\ge1\right\}\subseteq J_\Sigma^\Box.
\]
The injectivity of $\mrm{sp}_\infty$ follows from the equality
\[
[(\mrm{sp}_m)_*\theta](b\xi)(z)=\theta(b)(z)\big(\xi\cdot \cal{X}_{m}\big),
\]
a direct consequence of the fact that $\theta$ has level $K_0$. 

We prove surjectivity by explicitly constructing the preimage of a trace compatible family $\{\theta_m\}_{m\ge1}$ of automorphic functions. We claim that the expression
\begin{equation}\label{expression-reconstruction}
\theta(b)(z)(\xi\cal{X}_m):=\theta_m(b\xi)(z)
\end{equation}
describes an element $\theta\in \cal{A}_{w,\Sigma}\big(K_0; \cal{H}_w^\Box, \cal{M}^\Box_\Sigma\big)$ such that $(\mrm{sp}_m)_*\theta=\theta_m$ for every $m\ge1$. The trace compatibility of the family $\{\theta_m\}_{m\ge1}$ implies that for every fixed $b\in G(\A^{w,\infty})$, $z\in\cal{H}_w^\Box$, the function $\theta(b)(z)(-)$ -- defined in equation \eqref{expression-reconstruction} -- determines a $J_\Sigma^\Box$-valued measure on $\cal{X}_\Sigma$. Note that the  equality of the sets 
\[
\gamma_a\cal{X}_m=\cal{X}_m\hspace{1mm}\bfcdot\hspace{1mm} a.
\]
holds for all $m\ge1$, and all  $a\in\cal{O}_\Sigma^\times$. Then, one deduces $\theta(b)(z)(-)\in\cal{M}^\Box_\Sigma$ by computing
\[\begin{split}
\theta(b)(z)(\xi\cal{X}_m\hspace{1mm}\bfcdot\hspace{1mm} a)&=\gamma_a^{-1}\theta_m(b\xi)(z)\\
&=\theta_m(b\xi)(z)\llangle a\rrangle\\
&=\theta(b)(z)(\xi\cal{X}_m)\llangle a\rrangle.
\end{split}\]
Finally, the fact that
\[
\theta(\delta^w\cdot b\cdot k)=k^{-1}_\Sigma.\theta(b).\delta_w^{-1}\qquad\forall\ \delta\in G(F),\ b\in G(\A^{w,\infty}),\ k\in K_{0},
\]
is a direct consequence of the analogous property of each $\theta_m$.
\end{proof}

\begin{corollary}\label{distinguished autfun}
    The automorphic function $\Phi_\Sigma^\Box\in \cal{A}_{w,\Sigma}(K_0; \cal{H}_w^\Box, \cal{M}^\Box_\Sigma)$ corresponding to the trace-compatible collection $\{\phi_m^\Box\}_{m\ge1}$ is explicitly given by 
    \[
    \Phi_\Sigma^\Box(b)(z)(\xi\cdot\cal{X}_m)=j_m\circ\varphi_m\big([z, b\xi]_{K_m}\big)\qquad\forall\ b\in G(\A^{w,\infty}),\ z\in\cal{H}_w^\Box,\ \xi\in K_0.
    \]
\end{corollary}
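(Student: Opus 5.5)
The corollary follows by unwinding the proof of Proposition \ref{lemma: iso-specialization}. First, the collection $\{\phi_m^\Box\}_{m\ge1}$ is trace-compatible: $\mrm{Tr}^{m+1}_m(\phi_{m+1}^\Box)=\phi_m^\Box$. Granting this, $\{\phi_m^\Box\}_m$ defines an element of $\varprojlim_m \cal{A}_{w,\Sigma}(K_m^\circ;\cal{H}_w^\Box,(J_\Sigma^\Box)^{I_m})$. Since $\mrm{sp}_\infty$ is an isomorphism of $\bb{T}_\Sigma$-modules, there is a unique $\Phi_\Sigma^\Box\in\cal{A}_{w,\Sigma}(K_0;\cal{H}_w^\Box,\cal{M}_\Sigma^\Box)$ with $(\mrm{sp}_m)_*\Phi_\Sigma^\Box=\phi_m^\Box$ for all $m\ge1$.

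The explicit formula comes from the reconstruction \eqref{expression-reconstruction} in the surjectivity part of that proof. The preimage of a compatible family $\{\theta_m\}$ is given by $\theta(b)(z)(\xi\cal{X}_m)=\theta_m(b\xi)(z)$. Injectivity guarantees that this preimage coincides with $\Phi_\Sigma^\Box$; equivalently, one can use the identity $[(\mrm{sp}_m)_*\theta](b\xi)(z)=\theta(b)(z)(\xi\cal{X}_m)$. Applying it with $\theta_m=\phi_m^\Box$ and the defining formula of $\phi_m^\Box$ gives
\[
\Phi_\Sigma^\Box(b)(z)(\xi\cal{X}_m)=\phi_m^\Box(b\xi)(z)=j_m\circ\varphi_m\big([z,b\xi]_{K_m}\big).
\]
This is the claimed expression. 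It is well defined in $\xi\cal{X}_m$ because $[z,b\xi k]_{K_m}=[z,b\xi]_{K_m}$ for $k\in K_m$.

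The remaining substantive step is trace-compatibility. Because $\xi$ ranges over $K_m/K_{m+1}$, which covers $X_{m+1}\to X_m$, we have
\[
\mrm{Tr}^{m+1}_m(\phi_{m+1}^\Box)(b)(z)=\sum_{\xi\in K_m/K_{m+1}} j_{m+1}\varphi_{m+1}\big([z,b\xi]_{m+1}\big)=j_{m+1}\varphi_{m+1}\big(\pi_1^*[z,b]_m\big),
\]
where $\pi_1^*$ denotes the pullback of the divisor $[z,b]_m$ along $\pi_1$. Here one must check that $\pi_1$ has degree $|K_m/K_{m+1}|$ with no ramification at these points, which holds modulo center since $K_m$ is neat enough; otherwise one uses the double coset formula for $[K_mK_{m+1}]$. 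The compatibility relation \eqref{compatibility} then gives $\varphi_{m+1}\circ\pi_1^*=\pi_1^*\circ\varphi_m$ (the operator $T_\frak{l}$ commutes with pullback). Finally $j_{m+1}\circ\pi_1^*=j_m$ by definition of the direct limit. Together these yield $\phi_m^\Box(b)(z)$.

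The main obstacle is this identification of the trace with the pullback of divisors. The left-coset convention of the Hecke correspondence $[K_m\cdot 1\cdot K_{m+1}]$ has to match the right-translation sum exactly, and any stabilizer issues must be controlled.
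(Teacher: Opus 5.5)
Your proposal is correct and matches the paper's implicit argument. The formula is the reconstruction \eqref{expression-reconstruction} from the proof of Proposition~\ref{lemma: iso-specialization}, applied with $\theta_m=\phi_m^\Box$, and trace-compatibility follows from \eqref{compatibility} exactly as you describe.

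The stabilizer worry you raise does not actually arise, and no neatness of $K_m$ is needed. In the paper's normalization, the correspondence $[K_m\cdot 1\cdot K_{m+1}]$ (whose second projection is the identity) is by definition $[x,b]_{K_m}\mapsto\sum_{\xi\in K_m/K_{m+1}}[x,b\xi]_{K_{m+1}}$, which is precisely $\pi_1^*$ on divisors.
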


\subsubsection{Interlude on compatibilities.}
Let $\Sigma\subseteq\Sigma'$ be two subsets of $\cal{S}$. Since $\cal{X}_{\Sigma'}=\cal{X}_{\Sigma'\setminus\Sigma}\times\cal{X}_\Sigma$, the projection map $\pi_{\Sigma'/\Sigma}\colon \cal{X}_{\Sigma'}\longrightarrow\cal{X}_\Sigma$ has compact fibers. Moreover, the compatibility of the level structures provide a $D_\Sigma$-equivariant map $\iota_{\Sigma/{\Sigma'}}\colon J_\Sigma^\Box\longrightarrow  J_{\Sigma'}^\Box$. The morphisms of left $\Z_p[(K_0)_\Sigma]$-modules
\[
(\pi_{\Sigma'/\Sigma})_*\colon\cal{M}_{\Sigma'}^\Box\to\cal{M}(\cal{X}_\Sigma,J_{\Sigma'}^\Box),\qquad (\iota_{\Sigma/{\Sigma'}})_*\colon\cal{M}_{\Sigma}^\Box\to\cal{M}(\cal{X}_\Sigma,J_{\Sigma'}^\Box)
\]
allows us to consider
\[\xymatrix{
\cal{A}_{w,\Sigma'}\big(K_0; \cal{H}_w^\Box,\cal{M}_{\Sigma'}^\Box\big)\ar[rr]^-{(\pi_{\Sigma'/\Sigma})_*}&& \cal{A}_{w,\Sigma}\big(K_0; \cal{H}_w^\Box,\cal{M}(\cal{X}_\Sigma,J_{\Sigma'}^\Box)\big)\\
&&\cal{A}_{w,\Sigma}\big(K_0; \cal{H}_w^\Box,\cal{M}_{\Sigma}^\Box\big)\ar[u]^-{(\iota_{\Sigma/{\Sigma'}})_*}.
}\]
From the definitions it is not hard to see that
    \begin{equation}\label{inter-compatibilities}
   (\pi_{\Sigma'/\Sigma})_*\Phi_{\Sigma'}^\Box= (\iota_{\Sigma/{\Sigma'}})_*\Phi_\Sigma^\Box.
    \end{equation}

\subsection{Semigroup action on $p$-adic domains}\label{Semigroup action on p-adic domains}
The goal of this section and the next is the definition of an action of Hecke operators at the primes in  $\Sigma$ on the space of automorphic functions 
\[
\cal{A}_{w,\Sigma}\big(K_0;\cal{H}_w^\Box,\cal{M}^\Box_\Sigma\big).
\]
We follow the approach of Ash--Stevens \cite{Ash-Stevens}, similarly to \cite{LongoBala}, with particular care taken to track the dependencies arising from the choice of uniformizers.    Recall the $p$-adic space $\cal{X}_\Sigma=\varprojlim_m K_0/K_m$. By definition 
\[
K_m=K_0^\Sigma\cdot \prod_{\p\in\Sigma}K_{\p^m}\qquad\forall\ m\ge0,
\]
therefore there is a product decomposition
\begin{equation}\label{product decomposition}
\cal{X}_\Sigma=\prod_{\p\in\Sigma}\cal{X}_{\p}\qquad \text{where}\qquad\cal{X}_{\p}=K_{\p^0}/K_{\p^\infty}\quad\text{and} \quad K_{\p^\infty}=\cap_mK_{\p^m}.
\end{equation}
If we denote by $(\cal{O}_\p^2)'=(\cal{O}_\p^2)\setminus\p(\cal{O}_\p^2)$ the set of primitive vectors, the space $\cal{X}_{\p}$ can be described more concretely through the identification
    \begin{equation}\label{iso1}
        \cal{X}_{\p}\overset{\sim}{\longrightarrow}\cal{O}_\p^\times\backslash\Big[(\cal{O}_\p^2)'\times\cal{O}_\p^\times\Big],\qquad \gamma\mapsto \left[\gamma\begin{pmatrix}
        1\\0
    \end{pmatrix},\det\gamma\right].
    \end{equation}
\begin{remark}\label{explict-right-action}
    Under \eqref{iso1}, the right $\cal{O}_\p^\times$-action on $\cal{X}_\p$ introduced in equation \eqref{Lambda-structure on measures} is given by the formula
    \[
    \left[\begin{pmatrix}
        x\\y
    \end{pmatrix},z\right]\hspace{0.3mm}\bfcdot\hspace{1mm} a=\left[\begin{pmatrix}
        x\\y
    \end{pmatrix},z\cdot a\right]\qquad\forall\ a\in\cal{O}_\p^\times.
    \]
\end{remark}
    Let $N_\p\subset B_\p\subset  G_\p$ denote the inclusion of the subgroup $N_\p$ generated by upper unipotent matrices into the upper triangular Borel subgroup $B_\p$ of $G_\p$. We consider 
    \[
    \cal{Y}_\p:=G_\p/N_\p\qquad\text{and}\qquad \bb{P}^1(F_\p)\cong G_\p/B_\p.
    \]
    There is a natural surjection $ \cal{Y}_\p\twoheadrightarrow \bb{P}^1(F_\p)$ and an inclusion $\cal{X}_{\p}\subseteq \cal{Y}_\p$. If $(F_\p^2)'=F_\p^2\setminus \{(0,0)\}$, the following concrete description 
    \begin{equation}\label{iso2}
\cal{Y}_\p\overset{\sim}{\longrightarrow}F_\p^\times\backslash\Big[(F_\p^2)'\times F_\p^\times\Big],\qquad \gamma\mapsto \left[\gamma\begin{pmatrix}
        1\\0
    \end{pmatrix},\det\gamma\right]
    \end{equation}
is compatible with \eqref{iso1}. We set 
\[
\cal{Y}_\Sigma=\prod_{\p\in\Sigma}\cal{Y}_\p.
\]

\subsubsection{Extending the $K_{\p^0}$-action.}\label{action-extension}
Fix a prime $\p\in\Sigma$. For a uniformizer $\varpi\in \cal{O}_\p$ we let the matrix $\eta_\varpi=\begin{pmatrix}
    1&0\\0&\varpi
\end{pmatrix}$ act on $\cal{Y}_\p$ by conjugation 
\begin{equation}\label{pi-action}
\eta_\varpi\cdot[\gamma]:=[\eta_\varpi\cdot \gamma \cdot\eta_\varpi^{-1}]\qquad \forall\ \gamma\in G_\p.
\end{equation}
Under the identification \eqref{iso2} the action is given by the formula
\begin{equation}\label{explict-action}
\eta_\varpi\cdot\left[\begin{pmatrix}
    x\\y
\end{pmatrix},z\right]=\left[\begin{pmatrix}
    x\\\varpi y
\end{pmatrix},z\right].
\end{equation}
\begin{remark}\label{stability}
   The set $\cal{X}_{\p}$ is not stable under the action of $\eta_\varpi$, i.e., $\eta_\varpi\cdot\cal{X}_{\p}\not\subseteq\cal{X}_\p$. However, its subset $\cal{X}_{\p^m}^\circ:=K_{\p^m}^\circ/K_{\p^\infty}$ satisfies 
   \[
\eta_\varpi\cdot\cal{X}^\circ_{\p^m}=\cal{X}_{\p^{m+1}}^\circ\subset\cal{X}^\circ_{\p^m}\qquad\forall\ m\ge1.
\]
\end{remark}
Consider the function $f_\p\colon \cal{Y}_\p\rightarrow \Z$ defined by 
    \[
    \left[\begin{pmatrix}
    x\\y
\end{pmatrix},z\right]\mapsto \mrm{min}\left\{\mrm{ord}\left(\frac{x^2}{z}\right), \mrm{ord}\left(\frac{y^2}{z}\right)\right\}.
    \]
  \begin{lemma}\label{ord-function}
      The function $f_\p\colon \cal{Y}_\p\rightarrow \Z$ satisfies:
      \begin{itemize}
          \item[$\bfcdot$] $f_\p^{-1}(0)=\cal{X}_{\p}$,
          \item[$\bfcdot$] $f_\p(\eta_\varpi\cdot\gamma)\ge f(\gamma)$ for all $\gamma\in\cal{Y}_\p$,
          \item[$\bfcdot$]  and $f_\p(\kappa \cdot\gamma)=f_\p(\gamma)$ for all $\kappa\in K_{\p^0}$ and $\gamma\in\cal{Y}_\p$.
      \end{itemize} 
  \end{lemma}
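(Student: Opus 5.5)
The plan is to work entirely in the coordinates \eqref{iso2}, writing points of $\cal{Y}_\p$ as classes $[(x,y)^t,z]$ modulo the diagonal scaling $\lambda\cdot[(x,y)^t,z]=[(\lambda x,\lambda y)^t,\lambda^2 z]$. This is the $F_\p^\times$-action: indeed $\gamma\mapsto\gamma\,\mrm{diag}(\lambda,\lambda^{-1}\cdot\lambda^2)$ changes the first column by $\lambda$ and the determinant by $\lambda^2$, consistent with $\cal{Y}_\p=G_\p/N_\p$ and the center being quotiented. First I check that $f_\p$ is well defined. Under the scaling, $\mrm{ord}(x^2/z)$ and $\mrm{ord}(y^2/z)$ are both unchanged, so the minimum is too. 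This homogeneity is exactly why the invariant is built from $x^2/z$ and $y^2/z$.

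For the first bullet, suppose $f_\p(\gamma)=0$. Write $\mrm{ord}(z)=e$.
\begin{itemize}
\item If $e$ is even, rescale by $\lambda=\varpi^{-e/2}$ to get $z\in\cal{O}_\p^\times$. Then $f_\p=2\min(\mrm{ord}\,x,\mrm{ord}\,y)$, so $f_\p=0$ means $(x,y)$ is primitive. By \eqref{iso1} the class lies in $\cal{X}_\p$.
\item If $e$ is odd, $f_\p$ is odd and hence nonzero, so such points are not in $f_\p^{-1}(0)$.
\end{itemize}
Conversely, every element of $\cal{X}_\p$ has a representative with $(x,y)$ primitive and $z$ a unit, so $f_\p=0$ there. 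This requires $\cal{X}_\p\hookrightarrow\cal{Y}_\p$ to be compatible with \eqref{iso1}, which is stated in the text. The one point to double-check is injectivity: if two representatives with primitive vector and unit determinant differ by $\lambda\in F_\p^\times$, then $\lambda$ must be a unit, recovering the $\cal{O}_\p^\times$-quotient in \eqref{iso1}.

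For the second bullet, I use \eqref{explict-action}: $\eta_\varpi$ sends $[(x,y)^t,z]$ to $[(x,\varpi y)^t,z]$. Then $\mrm{ord}(x^2/z)$ is unchanged and $\mrm{ord}(\varpi^2y^2/z)=\mrm{ord}(y^2/z)+2$, so the minimum does not decrease.

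For the third bullet, $\kappa\in K_{\p^0}$ acts by left multiplication. In coordinates this sends $(x,y)^t$ to $\kappa(x,y)^t$ and $z$ to $\det(\kappa)z$, with $\det\kappa\in\cal{O}_\p^\times$. Since $\kappa\in\GL_2(\cal{O}_\p)$ preserves $\min(\mrm{ord}\,x,\mrm{ord}\,y)$ and $\mrm{ord}(z)$ is unchanged, the observation $f_\p=2\min(\mrm{ord}\,x,\mrm{ord}\,y)-\mrm{ord}(z)$ gives invariance. One subtlety is that $G_\p=\PGL_2$, so $\kappa$ is a coset of a matrix; choosing a representative in $\GL_2(\cal{O}_\p)$ is harmless by the scaling invariance established first.

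The only genuinely delicate step is the bookkeeping of the identification \eqref{iso2} with the quotient by $F_\p^\times$ (the weight $\lambda^2$ on $z$) inside $\PGL_2$. Once that is fixed, all three bullets reduce to the identity $f_\p=2\min(\mrm{ord}\,x,\mrm{ord}\,y)-\mrm{ord}(z)$.
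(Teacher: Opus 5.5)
Your proof is correct and takes the same route as the paper: all three bullets are read off from the coordinates \eqref{iso2} and the formula \eqref{explict-action}, and your identity $f_\p=2\min(\mrm{ord}\,x,\mrm{ord}\,y)-\mrm{ord}(z)$ simply makes explicit what the paper calls ``direct consequences of the definition.'' For the third bullet, the paper proves only $f_\p(\kappa\cdot\gamma)\ge f_\p(\gamma)$ and obtains equality by applying this to $\kappa^{-1}$; that is exactly why $\GL_2(\cal{O}_\p)$ preserves $\min(\mrm{ord}\,x,\mrm{ord}\,y)$ in your argument.
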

\begin{proof}
The first two claims are direct consequences of the definition of the function $f_\p\colon \cal{Y}_\p\rightarrow \Z$ and the description of the $\eta_\varpi$-action given in \eqref{explict-action}. For the last claim, it is easy to check that 
\begin{equation}\label{easy-inequality}
f_\p(\kappa \cdot\gamma)\ge f_\p(\gamma)\qquad \forall\ \kappa\in K_{\p^0},\ \forall\ \gamma\in\cal{Y}_\p,
\end{equation}
and the reversed inequality is obtained from \eqref{easy-inequality} as follows 
\[
f_\p(\gamma)=f_\p(\kappa^{-1}\cdot(\kappa\cdot \gamma))\ge f_\p(\kappa\cdot\gamma).
\]
\end{proof}

Let $\Delta^\varpi_\p$ denote \emph{free semigroup}  generated by the set  $K_{\p^0}\cup\{\eta_\varpi\}$ and consider the morphism of semigroups $\mrm{deg}\colon \Delta^\varpi_\p\to\bb{N}$ defined by
\[
\deg(\eta_\varpi)=1,\qquad \deg(k)=0\qquad \forall\ k\in K_{\p^0}.
\]
We let $\Delta^\varpi_\p$ act on $\cal{Y}_\p$ by declaring that $K_{\p^0}$ acts by left multiplication and $\eta_\varpi$ acts by \eqref{pi-action}. If $g\in G_\p$ denotes the image of $\widetilde{g}\in \Delta_\p^\varpi$, the action is explicitly given by the formula
\begin{equation}\label{explicit-action-semigroup}
\widetilde{g}\cdot[\gamma]=[g\cdot\gamma\cdot\eta_\varpi^{-\mrm{deg}(\widetilde{g})}]\qquad\forall\ [\gamma]\in\cal{Y}_\p,
\end{equation}
which -- through the identification \eqref{iso2} -- becomes
\begin{equation}\label{explicit-action-semigroup-in-coordinates}
\widetilde{g}\cdot\left[\begin{pmatrix}
    x_\gamma\\y_\gamma
\end{pmatrix},z_\gamma\right]=\left[g\begin{pmatrix}
    x_\gamma\\ y_\gamma
\end{pmatrix},\ \frac{\det(g)}{\varpi^{\mrm{deg}(\widetilde{g})}}\cdot z_\gamma\right].
\end{equation}
 \begin{remark}\label{rmk: observation on action}
The $\Delta^\varpi_\p$-action on $\cal{Y}_\p$ does not induce an action of the image of $\Delta^\varpi_\p$ in $G_\p$. A counter-example is found by looking at the two sides of the equality
    \[
    \varpi\begin{pmatrix}
        0&1\\1&0
    \end{pmatrix}=\begin{pmatrix}
        1&0\\0&\varpi
    \end{pmatrix}\begin{pmatrix}
        0&1\\1&0
    \end{pmatrix}\begin{pmatrix}
        1&0\\0&\varpi
    \end{pmatrix}.
    \]
    However, from the explicit description of the objects involved it is easy to see that the natural projection $\cal{Y}_\p\twoheadrightarrow \bb{P}^1(F_\p)$ intertwines the action of $\Delta^\varpi_\p$ on $\cal{Y}_\p$ with the action of its image in $G_\p$ on $\bb{P}^1(F_\p)$.
\end{remark}

\begin{lemma}\label{when-actions-coincide}
   Let $\widetilde{g}_1,\widetilde{g}_2\in S_\p^\varpi$. If $\mrm{deg}(\widetilde{g}_1)=\mrm{deg}(\widetilde{g}_2)$ and $\widetilde{g}_1,\widetilde{g}_2$ have the same image in $G_\p$, then 
   \[
   \widetilde{g}_1\cdot[\gamma]=\widetilde{g}_2\cdot[\gamma]\qquad\forall\ [\gamma]\in\cal{Y}_\p.
   \]
\end{lemma}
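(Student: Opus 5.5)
The statement follows from the explicit formula \eqref{explicit-action-semigroup}, once we check that this formula really describes the $\Delta^\varpi_\p$-action. (Here $S_\p^\varpi$ is read as the semigroup $\Delta^\varpi_\p$.) The plan is to show, by induction on the word length of $\widetilde{g}$ in the free generators $K_{\p^0}\cup\{\eta_\varpi\}$, that
\[
\widetilde{g}\cdot[\gamma]=[g\cdot\gamma\cdot\eta_\varpi^{-\deg(\widetilde{g})}]\qquad\forall\ [\gamma]\in\cal{Y}_\p,
\]
where $g$ is the image of $\widetilde{g}$ in $G_\p$. The right-hand side depends only on the pair $(g,\deg(\widetilde{g}))$, so the lemma is immediate from it. We must also check that the formula is well defined on $\cal{Y}_\p=G_\p/N_\p$. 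Replacing $\gamma$ by $\gamma n$ with $n\in N_\p$ changes $g\gamma\eta_\varpi^{-d}$ by right multiplication by $\eta_\varpi^{d}n\eta_\varpi^{-d}$. This element again lies in $N_\p$, because $\eta_\varpi$ normalizes the upper unipotent subgroup: conjugation scales the upper-right entry by $\varpi^{\mp1}$.

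For the induction, the base cases are the generators. If $\widetilde{g}=k\in K_{\p^0}$, then $\deg=0$ and the action is left multiplication, which matches the formula. If $\widetilde{g}=\eta_\varpi$, then $\deg=1$ and the action is conjugation $[\eta_\varpi\gamma\eta_\varpi^{-1}]$ as in \eqref{pi-action}, which also matches. For the inductive step, write $\widetilde{g}=\widetilde{s}\,\widetilde{h}$ with $\widetilde{s}$ a generator. Since $\deg$ is a semigroup morphism, and using the induction hypothesis on $\widetilde{h}$,
\[
\widetilde{s}\cdot(\widetilde{h}\cdot[\gamma])=[s\,h\gamma\eta_\varpi^{-\deg \widetilde{h}}\,\eta_\varpi^{-\deg\widetilde{s}}]=[g\gamma\eta_\varpi^{-\deg\widetilde{g}}].
\]
The action on $\cal{Y}_\p$ is defined on the free semigroup by specifying it on generators, so the element $\widetilde{s}\cdot(\widetilde{h}\cdot[\gamma])$ is exactly $\widetilde{g}\cdot[\gamma]$.

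Alternatively, one can work in the coordinates of \eqref{iso2} through \eqref{explicit-action-semigroup-in-coordinates}. The data $(g,\deg\widetilde{g})$ determine $g\binom{x}{y}$ and $\det(g)\varpi^{-\deg\widetilde{g}}z$. One caveat: in $G_\p=\PGL_2$ the image $g$ is only a class modulo scalars. So it must be checked that a scalar $\lambda$ acts trivially on $[\binom{x}{y},z]$. It sends the pair to $(\lambda x,\lambda y,\lambda^2 z)$, and this is the same class under the $F_\p^\times$-quotient, as long as that quotient is read with $t$ acting by $(tx,ty,t^2z)$. This matches \eqref{iso2}, since $\det(\gamma t)=t^2\det\gamma$.

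The main subtlety is this scalar ambiguity, together with the well-definedness modulo $N_\p$. Both are routine, and no step is a real obstacle. Remark \ref{rmk: observation on action} shows why the degree hypothesis is needed: there the images agree but the degrees differ, by $2$ versus $0$.
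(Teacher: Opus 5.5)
Your proposal is correct and takes the paper's approach: the paper's proof just says the lemma follows directly from the explicit formula \eqref{explicit-action-semigroup}. Your induction on word length, with the checks modulo $N_\p$ and modulo scalars, fills in the verification that this formula really describes the $\Delta^\varpi_\p$-action, which the paper leaves implicit.
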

\begin{proof}
    It follows directly from \eqref{explicit-action-semigroup}.
\end{proof}
\begin{remark}\label{observation-on-action}
   It is possible to define an action of any  $x\in \big[K_{\p^0}\cdot \eta_\varpi \cdot K_{\p^0}\big]$ on $\cal{Y}_\p$ by setting 
    \[
    x\cdot_\varpi[\gamma]:=[x\cdot \gamma\cdot\eta_\varpi^{-1}]\qquad \forall\ [\gamma]\in\cal{Y}_\p. 
    \]
    Note that the double coset $\big[K_{\p^0}\cdot \eta_\varpi \cdot K_{\p^0}\big]$ does not depend on the uniformizer $\varpi$ and that the different actions are related by the following formula 
    \begin{equation}\label{dependence on uniformizer of the action}
     x\cdot_\varpi[\gamma]= x\cdot_{a\varpi}[\gamma]\hspace{1mm}\bfcdot\hspace{1mm} a\qquad \forall\ a\in\cal{O}_\p^\times.
    \end{equation}
    \end{remark}

\begin{lemma}\label{Ash-Stevens}
    Let $\widetilde{\cal{Y}}_\p\subseteq \cal{Y}_\p$ denote the smallest subset containing $\cal{X}_{\p}$ and stable under $\Delta^\varpi_\p$. Then $\widetilde{\cal{Y}}_\p\setminus\cal{X}_{\p}$ is also stable under $\Delta^\varpi_\p$. 
\end{lemma}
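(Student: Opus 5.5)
The plan is to use the integer-valued function $f_\p\colon\cal{Y}_\p\to\Z$ of Lemma \ref{ord-function} as a monotone invariant for the $\Delta^\varpi_\p$-action. The idea is that $\cal{X}_\p$ is exactly the zero level set of $f_\p$, and the semigroup can only increase $f_\p$. Hence no element of $\widetilde{\cal{Y}}_\p$ lying strictly above level zero can be moved back into $\cal{X}_\p$.

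First I make $\widetilde{\cal{Y}}_\p$ explicit. The set
\[
\Delta^\varpi_\p\cdot\cal{X}_\p=\big\{\widetilde{g}\cdot x \ :\ \widetilde{g}\in\Delta^\varpi_\p,\ x\in\cal{X}_\p\big\},
\]
where the empty word acts as the identity, contains $\cal{X}_\p$. It is stable under $\Delta^\varpi_\p$ because the latter acts through a semigroup action. It is contained in any stable subset containing $\cal{X}_\p$. Hence $\widetilde{\cal{Y}}_\p=\Delta^\varpi_\p\cdot\cal{X}_\p$.

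Next I show that $f_\p\ge0$ on $\widetilde{\cal{Y}}_\p$, by induction on the word length of $\widetilde{g}$ in the free generators $K_{\p^0}\cup\{\eta_\varpi\}$. For the empty word, $f_\p(x)=0$ for $x\in\cal{X}_\p$ by the first bullet of Lemma \ref{ord-function}. For the inductive step, applying a generator does not decrease $f_\p$. For $k\in K_{\p^0}$ this is the third bullet, which gives invariance. For $\eta_\varpi$ it is the second bullet. Concretely, by \eqref{explict-action} the action of $\eta_\varpi$ replaces $y$ by $\varpi y$ and leaves $x$ and $z$ unchanged, so the minimum of the two valuations can only grow. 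More generally, this argument shows $f_\p(\widetilde{g}\cdot\gamma)\ge f_\p(\gamma)$ for every $\widetilde{g}\in\Delta^\varpi_\p$ and every $\gamma\in\cal{Y}_\p$.

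Finally, by the first bullet of Lemma \ref{ord-function} together with the previous step,
\[
\widetilde{\cal{Y}}_\p\setminus\cal{X}_\p=\widetilde{\cal{Y}}_\p\cap f_\p^{-1}(\Z_{\ge1}).
\]
Both sets on the right are stable under $\Delta^\varpi_\p$. The first is stable by construction. The second is stable by the monotonicity $f_\p(\widetilde{g}\cdot\gamma)\ge f_\p(\gamma)\ge1$. Hence their intersection is stable, which proves the claim.

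The only point requiring care is the monotonicity of $f_\p$, in particular its invariance under $K_{\p^0}$, but this is already supplied by Lemma \ref{ord-function}, so I expect no real obstacle. Note that the statement uses the free semigroup and not its image in $G_\p$, which by Remark \ref{rmk: observation on action} does not act. This is why the induction runs over words in the generators.
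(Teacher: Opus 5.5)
Your proposal is correct and follows the paper's own proof, which cites Ash--Stevens (Lemma 3.1) and uses Lemma \ref{ord-function} in the same way: to get $f_\p\ge0$ on $\widetilde{\cal{Y}}_\p$, to identify $\widetilde{\cal{Y}}_\p\setminus\cal{X}_\p$ with $\{f_\p>0\}\cap\widetilde{\cal{Y}}_\p$, and to deduce stability from the monotonicity of $f_\p$. You additionally spell out the description $\widetilde{\cal{Y}}_\p=\Delta^\varpi_\p\cdot\cal{X}_\p$ and the induction on word length, both of which the paper leaves implicit.
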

\begin{proof}
    This is \cite[Lemma 3.1]{Ash-Stevens}. 
By Lemma \ref{ord-function} and the definition of $\widetilde{\cal{Y}}_\p$ we have 
    \[
    f_\p(\gamma)\ge0 \quad \forall\ \gamma\in\widetilde{\cal{Y}}_\p,\qquad\text{and}\qquad
    \widetilde{\cal{Y}}_\p\setminus\cal{X}_{\p}=\big\{\gamma\in\widetilde{\cal{Y}}_\p\ \big\lvert\ f_\p(\gamma)>0\big\}.
    \]
    The stability of $\widetilde{\cal{Y}}_\p\setminus\cal{X}_{\p}$ under $\Delta^\varpi_\p$ is then clear. 
\end{proof}

\subsection{Semigroup action on measures}\label{action on distribution}
Fix a uniformizer $\varpi_\p\in \cal{O}_\p$ for every $\p\in\Sigma$. Lemma \ref{Ash-Stevens} provides sets $\{\widetilde{\cal{Y}}_\p\}_{\p\in\Sigma}$ which are stable under the action of the free semigroups $\{\Delta^{\varpi_\p}_\p\}_{\p\in\Sigma}$. Moreover, the product $\widetilde{\cal{Y}}_\Sigma=\prod_{\p\in\Sigma}\widetilde{\cal{Y}}_\p$ satisfies $\cal{X}_\Sigma\subset \widetilde{\cal{Y}}_\Sigma\subset \cal{Y}_\Sigma$ and is stable under the natural action of the product $\Delta^\varpi_\Sigma:=\prod_{\p\in\Sigma}\Delta_\p^{\varpi_\p}$. \begin{definition}
  Let $\widetilde{\cal{M}}^\Box_\Sigma:=\cal{M}(\widetilde{\cal{Y}}_\Sigma, J_\Sigma^\Box)$ be the space of $J_\Sigma^\Box$-valued measures $\mu$ on $\widetilde{\cal{Y}}_\Sigma$ satisfying 
  \[
\mu(V)\llangle a\rrangle=\mu(V\hspace{0.5mm}\bfcdot\hspace{1mm} a)\qquad\forall\ V\subseteq\widetilde{\cal{Y}}_\Sigma\hspace{3mm}\text{compact open},\quad\forall \ a \in\cal{O}_\Sigma^\times.
\]
The $\Z_p$-module $\widetilde{\cal{M}}^\Box_\Sigma$ is endowed with the left $(K_0)_{\Sigma}$-action given by the formula
\[
(k_\Sigma.\mu)(-):=\mu(k_\Sigma^{-1}-)\qquad \forall\ k_\Sigma\in (K_0)_{\Sigma}.
\]  
\end{definition}

\bigskip
The spaces of measures $\cal{M}^\Box_\Sigma$ and $\widetilde{\cal{M}}^\Box_\Sigma$ are related by the $(K_0)_{\Sigma}$-equivariant morphisms
\[\begin{split}
&\iota\colon \cal{M}^\Box_\Sigma\hookrightarrow \widetilde{\cal{M}}^\Box_\Sigma,\qquad (\iota\mu)(-):=\mu(-\cap\cal{X}_\Sigma), \\
&p\colon\widetilde{\cal{M}}^\Box_\Sigma\to\cal{M}^\Box_\Sigma,\qquad (p\mu)(-):=\mu(-)
\end{split}\]
which satisfy $p\circ \iota= \mrm{id}_{\cal{M}^\Box_\Sigma}$.
Furthermore, the semigroup action of $\Delta^\varpi_\Sigma$ on  $\widetilde{\cal{M}}^\Box_\Sigma$ is described by: 

if $\widetilde{g}\in \Delta^\varpi_\Sigma$ and $\mu\in \widetilde{\cal{M}}^\Box_\Sigma$, then
\begin{equation}\label{def: action distributions}
(\widetilde{g}.\mu)(V):=\mu\big(\widetilde{g}^{-1}V\cap \widetilde{\cal{Y}}_\Sigma\big)\qquad \forall\ V\subseteq\widetilde{\cal{Y}}_\Sigma\hspace{3mm}\text{compact open}.
\end{equation}

\begin{remark}\label{dependence on uniformizers action on distributions}
  Fix $\p\in\Sigma$, $\varpi\in\cal{O}_\p$ a uniformizer, and let $x\in \big[K_{\p^0}\cdot \eta_\varpi \cdot K_{\p^0}\big]$.  We have
  \[
  (x\cdot_{a\varpi}\mu)(-)=(x\cdot_{\varpi}\mu)(-)\llangle a\rrangle\qquad\forall\ a \in\cal{O}_\p^\times
  \]
  because equation \eqref{dependence on uniformizer of the action} implies that for any open compact $V\subseteq \widetilde{\cal{Y}}_\Sigma$ we have 
  \[
 (x^{-1}\cdot_{a\varpi}V)\cap\widetilde{\cal{Y}}_\Sigma=(x^{-1}\cdot_{\varpi}V\hspace{1mm}\bfcdot \hspace{1mm}a)\cap\widetilde{\cal{Y}}_\Sigma=\big((x^{-1}\cdot_{\varpi}V)\cap\widetilde{\cal{Y}}_\Sigma\big)\hspace{1mm}\bfcdot \hspace{1mm}a.
\]
\end{remark}

\begin{lemma}\label{stable-kernel}
    The kernel of $p\colon\widetilde{\cal{M}}^\Box_\Sigma\to\cal{M}^\Box_\Sigma$ is stable under the action of $\Delta^\varpi_\Sigma$.
\end{lemma}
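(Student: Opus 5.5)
Since $p\mu$ is simply the restriction of $\mu$ to compact opens of $\cal{X}_\Sigma$, the kernel of $p$ consists of those $\mu\in\widetilde{\cal{M}}^\Box_\Sigma$ with $\mu(U)=0$ for every compact open $U\subseteq\cal{X}_\Sigma$. Equivalently, these are the measures supported on the complement $\widetilde{\cal{Y}}_\Sigma\setminus\cal{X}_\Sigma$. The plan is to show that the action \eqref{def: action distributions} preserves this support condition, using the set-theoretic stability of Lemma \ref{Ash-Stevens}.

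\emph{Reduction to generators.} Because the action on measures is a semigroup action, it suffices to check the claim on generators of $\Delta^\varpi_\Sigma=\prod_{\p\in\Sigma}\Delta_\p^{\varpi_\p}$. These are the elements of $(K_0)_\Sigma$ and the elements $\eta_{\varpi_\p}$, each placed in a single coordinate $\p$. For $\widetilde{g}$ a generator and $\mu\in\ker p$, fix a compact open $U\subseteq\cal{X}_\Sigma$; we must show
\[
\mu\big(\widetilde{g}^{-1}U\cap\widetilde{\cal{Y}}_\Sigma\big)=0,
\]
where $\widetilde{g}^{-1}U$ denotes the preimage of $U$ under $\widetilde{g}$.

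\emph{Key step.} We claim that $\widetilde{g}^{-1}U\cap\widetilde{\cal{Y}}_\Sigma\subseteq\cal{X}_\Sigma$. Take $y=(y_\q)_\q\in\widetilde{\cal{Y}}_\Sigma$ with $\widetilde{g}y\in U\subseteq\cal{X}_\Sigma$. The generator $\widetilde{g}$ acts coordinatewise, and the action is trivial in the coordinates where $\widetilde{g}$ has no component. If some $y_\q\in\widetilde{\cal{Y}}_\q\setminus\cal{X}_\q$, then by Lemma \ref{Ash-Stevens} we get $(\widetilde{g}y)_\q\in\widetilde{\cal{Y}}_\q\setminus\cal{X}_\q$, contradicting $\widetilde{g}y\in\cal{X}_\Sigma=\prod_\q\cal{X}_\q$. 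Hence $y\in\cal{X}_\Sigma$. The set $\widetilde{g}^{-1}U\cap\widetilde{\cal{Y}}_\Sigma$ is compact open, since the action is continuous and $\cal{X}_\Sigma$ is open in $\widetilde{\cal{Y}}_\Sigma$; this is needed anyway for \eqref{def: action distributions} to make sense. So it is a compact open subset of $\cal{X}_\Sigma$, and $\mu$ vanishes on it.

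\emph{Remaining check.} One also needs that $\widetilde{g}.\mu$ still satisfies the equivariance defining $\widetilde{\cal{M}}^\Box_\Sigma$, but that is part of the definition of the action and not of this lemma. The only potential subtlety is the product structure: if some component of $y$ fails to lie in $\cal{X}_\q$, we need the corresponding component of $\widetilde{g}y$ to fail as well. This is exactly the per-prime stability in Lemma \ref{Ash-Stevens}, which itself rests on $f_\p$ being nondecreasing under the generators (Lemma \ref{ord-function}). So I expect no serious obstacle beyond recording that the action is componentwise.
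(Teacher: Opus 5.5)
Your proof is correct and is essentially the paper's: both reduce to the inclusion $(\widetilde{g}^{-1}\cdot U)\cap\widetilde{\cal{Y}}_\Sigma\subseteq\cal{X}_\Sigma$ for compact open $U\subseteq\cal{X}_\Sigma$, which follows componentwise from the stability of $\widetilde{\cal{Y}}_\p\setminus\cal{X}_\p$ in Lemma \ref{Ash-Stevens}. Your reduction to generators is harmless but unnecessary, since that lemma already gives stability under every element of $\Delta^{\varpi_\p}_\p$, so the same argument applies directly to any $\widetilde{g}\in\Delta^\varpi_\Sigma$.
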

\begin{proof}
    Let $\mu\in\widetilde{\cal{M}}^\Box_\Sigma$ such that $p(\mu)=0$, that is 
    \[
    \mu(U)=0\qquad \forall\ U\subseteq \cal{X}_\Sigma\hspace{3mm} \text{open compact}.
    \]
    We want to prove that $p(\widetilde{g}.\mu)=0$ for every $\widetilde{g}\in \Delta^\varpi_\Sigma$.  Let's consider a subset $U\subseteq \cal{X}_\Sigma$. Since
    \[
    \widetilde{\cal{Y}}_\Sigma=\prod_{\p\in\Sigma}\ \left(\cal{X}_{\p}\ \coprod\ (\widetilde{\cal{Y}}_\p\setminus\cal{X}_{\p})\right)
    \]
    and since Lemma \ref{Ash-Stevens} provides the inclusion
    \[
\widetilde{g}_\p\cdot(\widetilde{\cal{Y}}_\p\setminus\cal{X}_{\p})\subseteq (\widetilde{\cal{Y}}_\p\setminus\cal{X}_{\p})\qquad \forall\ \widetilde{g}_\p\in \Delta^{\varpi_\p}_\p,
    \]
    it follows that
    $U\cap \big(\widetilde{g}\cdot \widetilde{\cal{Y}}_\Sigma\big)\subseteq \widetilde{g}\cdot \cal{X}_\Sigma$, or equivalently that $\big(\widetilde{g}^{-1}\cdot U\big)\cap \widetilde{\cal{Y}}_\Sigma\subseteq\cal{X}_\Sigma$.  We deduce that $p(\widetilde{g}.\mu)=0$ as required.
\end{proof}

\begin{corollary}\label{cor: action on Distributions}
   There is an action of the semigroup $\Delta^\varpi_\Sigma$ on $\cal{M}^\Box_\Sigma$ described by 
\[
\widetilde{g}\star\mu:=p(\widetilde{g}.\iota(\mu))\qquad \forall\ \widetilde{g}\in \Delta^\varpi_\Sigma,\ \ \forall\ \mu\in\cal{M}^\Box_\Sigma.
\]
Moreover, $\widetilde{g}\star\mu$ is supported on $\widetilde{g}(\cal{X}_\Sigma)\cap\cal{X}_\Sigma$.
\end{corollary}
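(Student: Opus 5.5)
The plan is to show that $\star$ is an action in two steps, and then to read off the support.

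First, the well-definedness of $\widetilde{g}\star\mu=p(\widetilde{g}.\iota(\mu))$ is immediate, since $\iota$, $p$ and the action \eqref{def: action distributions} on $\widetilde{\cal{M}}^\Box_\Sigma$ are all defined. I will also check that $\widetilde{g}.\mu$ again satisfies the $\cal{O}_\Sigma^\times$-equivariance. This holds because the action \eqref{explicit-action-semigroup} is by left multiplication and conjugation by $\eta_\varpi$, which commute with right multiplication by $\gamma_a$. Indeed, $\eta_\varpi$ and $\gamma_a$ are both diagonal, so $\widetilde{g}^{-1}(V\bfcdot a)=(\widetilde{g}^{-1}V)\bfcdot a$, and $\widetilde{\cal{Y}}_\Sigma$ is stable under $\bfcdot\,a$. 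For this last point I will note that $\cal{X}_\Sigma$ is stable and that $\bfcdot\,a$ commutes with $\Delta^\varpi_\Sigma$, so the smallest stable set containing $\cal{X}_\Sigma$ is preserved.

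Second, to get an action, I will verify $\widetilde{g}_1\star(\widetilde{g}_2\star\mu)=(\widetilde{g}_1\widetilde{g}_2)\star\mu$. The semigroup action on $\widetilde{\cal{M}}^\Box_\Sigma$ is an action, because $\widetilde{g}$ acts on $\widetilde{\cal{Y}}_\Sigma$ by a semigroup action and $(\widetilde{g}_1\widetilde{g}_2)^{-1}V\cap\widetilde{\cal{Y}}_\Sigma=\widetilde{g}_2^{-1}(\widetilde{g}_1^{-1}V\cap\widetilde{\cal{Y}}_\Sigma)\cap\widetilde{\cal{Y}}_\Sigma$ by the stability of $\widetilde{\cal{Y}}_\Sigma$. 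It then remains to show that $\iota\circ p$ differs from the identity only by elements of $\ker p$, i.e. $\nu-\iota p(\nu)\in\ker(p)$ for every $\nu\in\widetilde{\cal{M}}^\Box_\Sigma$. This follows from $p\circ\iota=\mrm{id}$. Lemma \ref{stable-kernel} then gives $p(\widetilde{g}_1.\nu)=p(\widetilde{g}_1.\iota p(\nu))$, and applying this with $\nu=\widetilde{g}_2.\iota(\mu)$ yields the claim. Hence $\ker p$ is a $\Delta^\varpi_\Sigma$-submodule, and $\star$ is the induced action on the quotient $\widetilde{\cal{M}}^\Box_\Sigma/\ker p\cong\cal{M}^\Box_\Sigma$. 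The same argument also gives $\mrm{id}\star\mu=\mu$ for the identity, where the identity is the empty word.

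Third, for the support claim, take $U\subseteq\cal{X}_\Sigma$ compact open disjoint from $\widetilde{g}(\cal{X}_\Sigma)$. Then $(\widetilde{g}\star\mu)(U)=\mu(\widetilde{g}^{-1}U\cap\widetilde{\cal{Y}}_\Sigma\cap\cal{X}_\Sigma)$. Here $\widetilde{g}^{-1}U$ means the preimage, and it contains no point of $\cal{X}_\Sigma$ mapping into $U$, so the quantity is $\mu(\emptyset)=0$. Thus the measure is supported on $\widetilde{g}(\cal{X}_\Sigma)\cap\cal{X}_\Sigma$.

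The main care point is interpreting $\widetilde{g}^{-1}V$ as a preimage, since $\eta_\varpi$ need not be invertible on $\widetilde{\cal{Y}}_\Sigma$. I also need to check that $\widetilde{g}$ acts continuously and properly enough that preimages of compact opens are compact open in $\widetilde{\cal{Y}}_\Sigma$. This follows from the explicit formulas \eqref{explicit-action-semigroup-in-coordinates}, which show that $\widetilde{g}$ is a homeomorphism of $\cal{Y}_\Sigma$.
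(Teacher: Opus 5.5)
Your proof is correct and follows the same route as the paper. The key step is that $\nu-\iota p(\nu)\in\ker p$ because $p\circ\iota=\mathrm{id}$, so Lemma~\ref{stable-kernel} gives $p(\widetilde{g}_1.\widetilde{g}_2.\iota(\mu))=p(\widetilde{g}_1.\iota(\widetilde{g}_2\star\mu))$, and the support statement is read off from the definitions of $\iota$ and of the action on $\widetilde{\mathcal{M}}^\Box_\Sigma$. Your extra checks (associativity on $\widetilde{\mathcal{M}}^\Box_\Sigma$ via stability of $\widetilde{\mathcal{Y}}_\Sigma$, and preservation of the $\mathcal{O}_\Sigma^\times$-equivariance) just make explicit what the paper leaves implicit.
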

\begin{proof}
   Let $\mu\in\cal{M}^\Box_\Sigma$.  To prove the first claim we need to check that 
    \[
    (\widetilde{g}\cdot \widetilde{h})\star\mu=\widetilde{g}\star(\widetilde{h}\star\mu)\qquad \forall\ \widetilde{g},\widetilde{h}\in \Delta^\varpi_\Sigma.
    \]
Thanks to Lemma \ref{stable-kernel} it is enough to verify that
    \[
   p\big(\widetilde{h}.\iota(\mu)-\iota(\widetilde{h}\star\mu)\big)=\widetilde{h}\star\mu-(p\circ \iota)(\widetilde{h}\star\mu)=0.
    \]
  The second claim follows directly from the definitions.
\end{proof}

\begin{corollary}\label{Hecke-action of families}
  The space of automorphic functions $\cal{A}_{w,\Sigma}(K_0;\cal{H}_w^\Box,\cal{M}^\Box_\Sigma)$
admits an action of the Hecke operators 
\[
\big\{T_\q\ :\ \q\nmid\frak{f}_A\big\}\cup \big\{U_\q\ :\ \q\mid \n^{\mbox{\tiny $+$}}\cdot p_{\cal{S}\setminus\Sigma}\big\}\cup\big\{T_{\varpi}\ :\ \p\in\Sigma,\ \varpi\in\cal{O}_\p\ \text{uniformizer} \big\}.
\] 
\end{corollary}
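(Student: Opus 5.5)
The plan is to treat the two kinds of operators separately. Operators away from $\Sigma$ are handled by Section \ref{Hecke on aut forms}. The operators $T_{\varpi}$ at $\p\in\Sigma$ come from the $\star$-action of the semigroup $\Delta^\varpi_\Sigma$ on the coefficients $\cal{M}^\Box_\Sigma$ (Corollary \ref{cor: action on Distributions}).

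For $T_\q$ with $\q\nmid\frak{f}_A$ and $U_\q$ with $\q\mid\n^{\mbox{\tiny $+$}}\cdot p_{\cal{S}\setminus\Sigma}$, the double coset $K_0gK_0$ has representatives $g_j$ differing from the identity only at $\q\notin\Sigma$. Hence $(g_j)_\Sigma=1\in K_\Sigma$, and I set $(T_g\theta)(b)=\sum_j\theta(bg_j)$. I will check directly that the result is independent of the choice of representatives and again lies in $\cal{A}_{w,\Sigma}(K_0;\cal{H}^\Box_w,\cal{M}^\Box_\Sigma)$, exactly as in Section \ref{Hecke on aut forms}.

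For $\p\in\Sigma$ and a uniformizer $\varpi\in\cal{O}_\p$, I write $[K_0\eta_\varpi K_0]=\coprod_{j}x_jK_0$ with $x_j\in[K_{\p^0}\eta_\varpi K_{\p^0}]$; the $\sigma_a$ of \eqref{cosets-decompositions} will do. I then define
\[
(T_\varpi\theta)(b):=\sum_j x_j\star_\varpi\theta(bx_j).
\]
Here $x_j\star_\varpi\mu$ is the action of the element of $\Delta^\varpi_\p$ lifting $x_j$ with degree one, for instance $x_j=k\eta_\varpi k'$. By Lemma \ref{when-actions-coincide}, together with Remark \ref{observation-on-action}, this depends only on $x_j$ and $\varpi$: it equals $p(\mu(x_j^{-1}\cdot_\varpi(-)\cap\widetilde{\cal{Y}}_\Sigma))$. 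Applying $\star$ to measure-valued coefficients is legitimate because $\star$ is $\Z_p$-linear and preserves the equivariance \eqref{def: equiv prop}. This last point holds because the $\Delta^\varpi$-action on $\cal{Y}_\p$ commutes with the right $\cal{O}_\p^\times$-action: formula \eqref{explicit-action-semigroup-in-coordinates} only multiplies $z$ by a scalar.

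The main verifications are two. First, independence of the coset representatives: if $x_j$ is replaced by $x_jk$ with $k\in K_0$, then
\[
x_jk\star\theta(bx_jk)=x_jk\star k^{-1}_\Sigma.\theta(bx_j)=x_j\star\theta(bx_j).
\]
This uses that $\star$ restricted to degree-zero elements $K_{\p^0}$ is the given $(K_0)_\Sigma$-action, and that $\star$ is a semigroup action (Corollary \ref{cor: action on Distributions}), applied to the words $x_jk$ and $k^{-1}$ in $\Delta^\varpi_\p$. Second, automorphy: for $\delta\in G(F)$, $b$, and $k\in K_0$, left multiplication by $k$ permutes the cosets $x_jK_0$, so $kx_j=x_{\sigma(j)}k_j$ with $k_j\in K_0$. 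Then
\[
x_{\sigma(j)}\star\theta(bkx_{\sigma(j)})=x_{\sigma(j)}\star k_j.\theta(bx_j)=k.(x_j\star\theta(bx_j)),
\]
where the last equality again uses that $\star$ is an action together with Lemma \ref{when-actions-coincide}, since both words have degree one and the same image $kx_j=x_{\sigma(j)}k_j$ in $G_\p$. The $G(F)$-equivariance is immediate, because $\star$ acts on the $\cal{M}$-coefficient while $\delta_w$ acts on $\cal{H}^\Box_w$, and these commute.

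The step I expect to be the main obstacle is this bookkeeping with the semigroup $\Delta^\varpi_\p$. As Remark \ref{rmk: observation on action} warns, the action does not factor through $G_\p$. One therefore has to check at each step that the words compared have equal degree and equal image in $G_\p$, so that Lemma \ref{when-actions-coincide} applies, and that passing through $\widetilde{\cal{Y}}_\Sigma$ and projecting with $p$ is compatible; that compatibility is Lemma \ref{stable-kernel}. Finally, Remark \ref{dependence on uniformizers action on distributions} shows $T_{a\varpi}=T_\varpi\cdot\llangle a\rrangle$, which records the dependence on $\varpi$ and is why the statement indexes the operators by uniformizers.
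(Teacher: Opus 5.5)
Your proposal is correct and follows the paper's own proof. Away from $\Sigma$ you use the construction of Section~\ref{Hecke on aut forms}. At $\p\in\Sigma$ you let $[K_0\eta_\varpi K_0]$ act on $\cal{M}^\Box_\Sigma$ through Remark~\ref{observation-on-action} and the $\star$-action of Corollary~\ref{cor: action on Distributions}; your checks of independence of representatives and of automorphy are the details the paper leaves implicit. One small slip: in the automorphy step, $\theta(bkx_{\sigma(j)})$ should read $\theta(bk^{-1}x_{\sigma(j)})$, which gives $(T_\varpi\theta)(bk^{-1})=k.(T_\varpi\theta)(b)$, as required.
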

\begin{proof}
The only claim that needs explaining is the existence of the action of $T_{\varpi}$ for $\varpi\in\cal{O}_\p$ a  uniformizer and $\p\in\Sigma$.  As noted in Section \ref{Hecke on aut forms}, it suffices to describe an action of elements of $[K_0\cdot \eta_\varpi \cdot K_0]$ on $\cal{M}^\Box_\Sigma$. For any fixed uniformizer, Remark \ref{observation-on-action} and Corollary \ref{cor: action on Distributions} allows us to finish the proof.
\end{proof}

\begin{remark}
Since the double coset $[K_0\cdot \eta_\varpi \cdot K_0]$ is independent of the choice of $\varpi\in\cal{O}_\p$, the dependence of the operator $T_\varpi$ on the uniformizer arises solely from the action \eqref{pi-action}. Remark \ref{dependence on uniformizers action on distributions} then implies
    \[
    T_{a\varpi}(-)=T_{\varpi}(-)\llangle a\rrangle\qquad \forall\ a\in\cal{O}_\p^\times.
    \]
\end{remark}

\subsection{Intertwining of Hecke operators}
The objective of this section is show that the isomorphism of $\bb{T}_\Sigma$-modules
 \[
    \mrm{sp}_\infty\colon \cal{A}_{w,\Sigma}\big(K_0; \cal{H}_w^\Box, \cal{M}^\Box_\Sigma\big)\overset{\sim}{\longrightarrow} \varprojlim_m\ \cal{A}_{w,\Sigma}\big(K_m^\circ; \cal{H}_w^\Box, (J^\Box_\Sigma)^{I_m}\big).
    \]
arising from Proposition \ref{lemma: iso-specialization} intertwines the Hecke action on its domain (Corollary \ref{Hecke-action of families}) with that on its codomain (Corollary \ref{Hecke-action on limit}). As a corollary we will promote \eqref{explict Hecke action} to a description of the action of the Hecke operators on the distinguished automorphic function (see Corollary \ref{distinguished autfun})
\[
\Phi_\Sigma^\Box\in \cal{A}_{w,\Sigma}(K_0;\cal{H}_w^\Box,\cal{M}^\Box_\Sigma).
\]

\subsubsection{Set up.}
For $m\ge0$ consider the subsets $\cal{X}_m\subseteq \cal{X}_m^\circ$ of $\cal{X}_\Sigma$ given by
\[
\cal{X}_m:=K_m/K_{\infty},\qquad \cal{X}_{m}^\circ:=K_{m}^\circ/K_{\infty}
\]
where $K_\infty=\cap_mK_m.$ Note that $\cal{X}_0=\cal{X}_0^\circ=\cal{X}_\Sigma$. 
\begin{definition}
  For any $m\ge0$ let $\cal{M}^\Box_{\Sigma,m}:=\cal{M}(\cal{X}_m^\circ, J_\Sigma^\Box)$ denote the space of $J_\Sigma^\Box$-valued measures $\mu$ on $\cal{X}_m^\circ$ satisfying 
  \[
\mu(U)\llangle a\rrangle=\mu(U\hspace{0.5mm}\bfcdot\hspace{1mm} a)\qquad\forall\ U\subseteq\cal{X}_m^\circ\hspace{3mm}\text{compact open},\quad\forall \ a \in\cal{O}_\Sigma^\times.
\]
The $\Z_p$-module $\cal{M}^\Box_{\Sigma,m}$ is endowed with the left $(K_m^\circ)_{\Sigma}$-action given by the formula
\[
(k_\Sigma.\mu)(-):=\mu(k_\Sigma^{-1}-)\qquad \forall\ k_\Sigma\in (K_m^\circ)_{\Sigma}.
\]  
\end{definition}
By Remark \ref{stability}, for every $m\ge1$ the space of automorphic functions
\[
\cal{A}_{w,\Sigma}(K_{m}^\circ;\cal{H}_w^{\Box},\cal{M}_{\Sigma,m}^{\Box})
\]
admits an action of the Hecke operators 
\[
\big\{T_\q\ :\ \q\nmid\frak{f}_A\big\}\cup\big\{U_\q\ :\ \frak{q}\mid \n^{\mbox{\tiny $+$}}\cdot p_{\cal{S}\setminus\Sigma}\big\}\cup\big\{{_{m}}U_{\varpi}\ :\ \p\in\Sigma,\ \varpi\in\cal{O}_\p\ \text{uniformizer} \big\}.
\]
To simplify the comparison with other references,
for any $\Phi\in \cal{A}_{w,\Sigma}(K_{m}^\circ;\cal{H}_w^{\Box},\cal{M}_{\Sigma,m}^{\Box})$ and some fixed $\p\in\Sigma$ we describe the automorphic function ${_{m}}U_{\varpi}\Phi$  by the formula
    \begin{equation}\label{formula m_U_p-action}
    \int_{\cal{X}_{m}^\circ} f(t)\hspace{1.5mm}\mrm{d}({_{m}}U_\varpi.\Phi(b)[z])(t)=\sum_{a\in\kappa_\p}\int_{\cal{X}_{m}^\circ}f(\widehat{\sigma}_{m,a}\cdot t)\hspace{1.5mm}\mrm{d}(\Phi(b\widehat{\sigma}_{m,a})[z])(t)
    \end{equation}
valid for any $b\in G(\A^{w,\infty}), z\in\cal{H}_w^{\Box}$, and any locally constant function $f\in \mrm{LC}(\cal{X}_m^\circ,\Z_p)$. The key observation to obtain \eqref{formula m_U_p-action} is the equality 
\[
\widehat{\sigma}_{m,a}^{-1}\big(\cal{X}_{m}^\circ\big)\cap \cal{X}_{m}^\circ=\cal{X}_{m}^\circ
\]
which holds for any $m\ge1$ -- see equation \eqref{cosets-decompositions} for the definition of $\widehat{\sigma}_{m,a}\in G_\p$.

\bigskip
\medskip
For any $0\le j\le k$ the inclusion $\cal{X}_k^\circ\subseteq\cal{X}_j^\circ$ determines the $(K_{k}^\circ)_\Sigma$-equivariant restriction of measures $\mrm{res}^j_k\colon \cal{M}_{\Sigma,j}^{\Box}\rightarrow \cal{M}_{\Sigma,k}^{\Box}$ and the associated function
\[
    (\mrm{res}^j_{k})_*\colon \cal{A}_{w,\Sigma}\big(K_{j}^\circ;\cal{H}_w^{\Box},\cal{M}_{\Sigma,j}^{\Box}\big)\longrightarrow\cal{A}_{w,\Sigma}\big(K_{k}^\circ;\cal{H}_w^{\Box},\cal{M}_{\Sigma,k}^{\Box}\big).
    \]
To study how the restriction maps $(\mrm{res}^j_{k})_*$ intertwine the Hecke action it is convenient to note that -- similarly to \eqref{product decomposition} -- there is a product decomposition
\begin{equation}\label{product decomposition level m}
\cal{X}_{m}^\circ=\prod_{\p\in\Sigma}\cal{X}_{\p^m}^\circ\qquad \text{where}\qquad \cal{X}_{\p^m}^\circ=K_{\p^m}^\circ/K_{\p^\infty}.
\end{equation}

\begin{proposition}\label{restriction - intertwining}
  For every $m\ge0$, the map induced by restriction of measures
    \[
    (\mrm{res}^m_{m+1})_*\colon \cal{A}_{w,\Sigma}\big(K_{m}^\circ;\cal{H}_w^{\Box},\cal{M}_{\Sigma,m}^{\Box}\big)\overset{{\sim}}{\longrightarrow}\cal{A}_{w,\Sigma}\big(K_{m+1}^\circ;\cal{H}_w^{\Box},\cal{M}_{\Sigma,m+1}^{\Box}\big)
    \]
    is an isomorphism commuting with the Hecke operators away from $\Sigma$. Moreover, for every $\p\in\Sigma$ and uniformizer $\varpi\in\cal{O}_\p$, it intertwines $T_\varpi$ with ${_{1}}U_\varpi$ when $m=0$, and  ${_{m}}U_\varpi$ with ${_{m+1}}U_\varpi$ when $m\ge1$.
\end{proposition}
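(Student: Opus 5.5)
We prove the isomorphism by writing down an explicit inverse, then check the Hecke compatibilities via coset representatives. The key geometric input is Remark \ref{stability}: for every $\p\in\Sigma$ one has $\eta_{\varpi}\cdot\cal{X}^\circ_{\p^m}=\cal{X}^\circ_{\p^{m+1}}$ when $m\ge1$. For $m=0$ we use instead the decomposition of $\cal{X}_\p=\cal{X}^\circ_{\p^0}$ into $K_{\p^0}$-translates of $\cal{X}^\circ_{\p^1}$, indexed by $K_{\p^0}/K^\circ_{\p^1}\cong\bb{P}^1(\kappa_\p)$. Together with the product decompositions \eqref{product decomposition level m}, this gives
\[
\cal{X}_m^\circ=\coprod_{\xi\in K_m^\circ/K_{m+1}^\circ}\xi\cdot\cal{X}^\circ_{m+1}.
\]
The index set here is $\prod_{\p\in\Sigma}\bb{P}^1(\kappa_\p)$ when $m=0$. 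When $m\ge1$ it is $\prod_{\p\in\Sigma}\kappa_\p$, with representatives lower unipotent matrices $\begin{pmatrix}1&0\\ a\varpi^m&1\end{pmatrix}$, as in the proof of Proposition \ref{prop: compatibility Hecke}. The diamond equivariance condition on measures is preserved by each translation, because $\gamma_a$ normalizes all the groups involved, and $\gamma_a\cal{X}^\circ_{m+1}=\cal{X}^\circ_{m+1}\bfcdot a$.

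\textbf{Inverse map.} Given $\Theta\in\cal{A}_{w,\Sigma}(K_{m+1}^\circ;\cal{H}_w^\Box,\cal{M}^\Box_{\Sigma,m+1})$, define a measure on $\cal{X}_m^\circ$ piece by piece: on the piece $\xi\cdot\cal{X}^\circ_{m+1}$ set
\[
\Theta'(b)[z](\xi U):=\Theta(b\xi)[z](U)\qquad\text{for } U\subseteq\cal{X}^\circ_{m+1}.
\]
This is the same Shapiro-type reconstruction as \eqref{expression-reconstruction} in Proposition \ref{lemma: iso-specialization}. Three things must be checked.

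First, $\Theta'$ is well defined, i.e.\ independent of the representative $\xi$. This follows from the automorphy of $\Theta$ under $K_{m+1}^\circ$: $\Theta(b\xi k)=k_\Sigma^{-1}.\Theta(b\xi)$ and $(k^{-1}.\mu)(U)=\mu(kU)$.

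Second, $\Theta'$ is $K_m^\circ$-automorphic. Right multiplication by $k\in K_m^\circ$ permutes the cosets, so this is a direct computation.

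Third, $(\mrm{res}^m_{m+1})_*\Theta'=\Theta$, which is immediate by taking $\xi=1$. Injectivity of $(\mrm{res}^m_{m+1})_*$ follows from the same identity read backwards: for $\Phi$ of level $K_m^\circ$ we have $\Phi(b)[z](\xi U)=\Phi(b\xi)[z](U)$, so $\Phi$ is determined by its restrictions.

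\textbf{Hecke operators away from $\Sigma$.} Their coset representatives are trivial at $\Sigma$, so they commute with restriction formally.

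\textbf{Operators at $\p\in\Sigma$.} This is the main computation. For $m\ge1$ we use formula \eqref{formula m_U_p-action}, with representatives $\widehat{\sigma}_{m,a}$ of ${_{m}}U_\varpi$ and $\widehat{\sigma}_{m+1,a}$ of ${_{m+1}}U_\varpi$. Take $f$ supported on $\cal{X}^\circ_{m+1}$ and compute ${_{m}}U_\varpi\Phi$. We split each $\widehat{\sigma}_{m,a}$ using the matrix identity from the proof of Proposition \ref{prop: compatibility Hecke},
\[
\begin{pmatrix}1&0\\ \varpi^{m}a&\varpi\end{pmatrix}\begin{pmatrix}1&0\\ \varpi^ma'&1\end{pmatrix}=\begin{pmatrix}1&0\\ \varpi^ma&1\end{pmatrix}\begin{pmatrix}1&0\\ \varpi^{m+1}a'&\varpi\end{pmatrix},
\]
together with the observation that $\widehat{\sigma}_{m,a}\cdot t$ lands in $\cal{X}^\circ_{m+1}$ only for the appropriate representatives. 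We then rewrite the sum over $a\in\kappa_\p$ of integrals over $\cal{X}_m^\circ$ as the corresponding sum for ${_{m+1}}U_\varpi$ applied to the restriction.

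For $m=0$ the operator $T_\varpi$ is defined through the $\star$-action of Corollary \ref{cor: action on Distributions}, using the coset decomposition of $[K_0\eta_\varpi K_0]$ indexed by $\bb{P}^1(\kappa_\p)$. By Corollary \ref{cor: action on Distributions}, $\widetilde g\star\mu$ is supported on $\widetilde g(\cal{X}_\Sigma)\cap\cal{X}_\Sigma$. Restricting to $\cal{X}^\circ_{\p^1}$ kills exactly the $\bar a=\infty$ term $\sigma_\infty$, since $\sigma_\infty\cdot_\varpi$ moves $\cal{X}_\p$ away from $\cal{X}^\circ_{\p^1}$. After the change of representatives $\sigma_a\leftrightarrow\widehat{\sigma}_{1,a}$ modulo $K^\circ$, only the terms of ${_{1}}U_\varpi$ survive.

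I expect the main obstacle to be the bookkeeping in this last step. One has to keep track of the twist by $\eta_\varpi^{-\deg}$ in \eqref{explicit-action-semigroup} and the resulting diamond factors $\llangle\cdot\rrangle$, and check that the $\bb{P}^1(\kappa_\p)$-indexed decomposition of $T_\varpi$ collapses correctly onto the $\kappa_\p$-indexed decomposition of ${_{1}}U_\varpi$ with exactly matching uniformizer conventions. The first thing I would try is to verify this in the coordinates \eqref{iso2}, one prime at a time, using \eqref{explicit-action-semigroup-in-coordinates}.
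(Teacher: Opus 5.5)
You have the surviving term backwards in the $m=0$ step, and as written that step fails. The rest is fine. Your inverse $\Theta'(b)[z](\xi U)=\Theta(b\xi)[z](U)$ on $\mathcal{X}^\circ_m=\coprod_\xi\xi\,\mathcal{X}^\circ_{m+1}$, and your treatment of the operators away from $\Sigma$, agree with the paper.

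The $m\ge1$ step works once you say which term survives. The set $\widehat\sigma_{m,a}\mathcal{X}^\circ_{\p^m}$ consists of the classes $[(x,y),z]$ with $y\equiv a\varpi^m x\pmod{\p^{m+1}}$, so on $\mathcal{X}^\circ_{\p^{m+1}}$ only $\bar a=0$ contributes. Your matrix identity with $a=0$ reads $\eta_\varpi u_{m,a'}=\widehat\sigma_{m+1,a'}$ for $u_{m,a'}=\left(\begin{smallmatrix}1&0\\ a'\varpi^m&1\end{smallmatrix}\right)$, and it unfolds that single term into ${_{m+1}}U_\varpi$.

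The error at $m=0$ is as follows. In the coordinates \eqref{iso2}, $\mathcal{X}^\circ_{\p^1}=\{[(x,y),z]: y\in\p\}$.
\begin{itemize}
\item The element $\sigma_\infty=\eta_\varpi$ acts by $[(x,y),z]\mapsto[(x,\varpi y),z]$, so $\sigma_\infty(\mathcal{X}_\p)\cap\mathcal{X}_\p=\mathcal{X}^\circ_{\p^1}$. Remark~\ref{stability}, which you cite, already shows $\eta_\varpi$ does not push $\mathcal{X}_\p$ off $\mathcal{X}^\circ_{\p^1}$.
\item For $\bar a\in\kappa_\p$, $\sigma_a$ acts by $[(x,y),z]\mapsto[(\varpi x+ay,y),z]$. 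This is primitive only if $y\in\mathcal{O}_\p^\times$, so $\sigma_a(\mathcal{X}_\p)\cap\mathcal{X}_\p\subseteq\mathcal{X}_\p\setminus\mathcal{X}^\circ_{\p^1}$.
\end{itemize}
By the support statement of Corollary~\ref{cor: action on Distributions}, restriction to $\mathcal{X}^\circ_{\p^1}$ kills every finite $\sigma_a$-term and keeps only the $\sigma_\infty$-term.

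There is also no matching $\sigma_a\leftrightarrow\widehat\sigma_{1,a}$ modulo $K^\circ$. We have $\widehat\sigma_{1,a}=\eta_\varpi u_a$ with $u_a=\left(\begin{smallmatrix}1&0\\ a&1\end{smallmatrix}\right)\in K_{\p^0}$, so every coset $\widehat\sigma_{1,a}K^\circ_{\p^1}$ lies in the single coset $\sigma_\infty K_{\p^0}$. Followed literally, your plan (drop $\sigma_\infty$, keep the $\sigma_a$) gives $0$ on $\mathcal{X}^\circ_{\p^1}$, not ${_1}U_\varpi\Phi'$.

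The missing idea is that the one surviving term unfolds into the $|\kappa_\p|$ terms of ${_1}U_\varpi$. For compact open $V\subseteq\mathcal{X}^\circ_{\p^1}$ (times the other components),
\[
\mathrm{res}^0_1(T_\varpi\Phi)(b)[z](V)=\Phi(b\eta_\varpi)[z]\big(\eta_\varpi^{-1}V\cap\mathcal{X}_\p\big),
\]
and
\[
\eta_\varpi^{-1}\big(\mathcal{X}^\circ_{\p^1}\big)\cap\mathcal{X}_\p=\big\{[(x,y),z]:\ x\in\mathcal{O}_\p^\times\big\}=\coprod_{\bar a\in\kappa_\p}u_a\,\mathcal{X}^\circ_{\p^1}.
\]
On each piece, $K_0$-automorphy gives $\Phi(b\eta_\varpi)[z](u_aW)=\Phi(b\eta_\varpi u_a)[z](W)$. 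Moreover $\eta_\varpi u_a=\widehat\sigma_{1,a}$ with the same image in $G_\p$ and the same degree, so Lemma~\ref{when-actions-coincide} identifies the two semigroup actions. Hence the term equals
\[
\sum_{\bar a\in\kappa_\p}\Phi'(b\widehat\sigma_{1,a})[z]\big(\widehat\sigma_{1,a}^{-1}V\cap\mathcal{X}^\circ_{\p^1}\big),\qquad \Phi'=\mathrm{res}^0_1\Phi,
\]
which is $({_1}U_\varpi\Phi')(b)[z](V)$ by \eqref{formula m_U_p-action}.

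This is exactly what the paper's ingredients $\sigma_\infty(\mathcal{X}_{\p^0})\cap\mathcal{X}_{\p^0}=\mathcal{X}^\circ_{\p^1}$ and $\sigma_a(\mathcal{X}_{\p^0})\cap\mathcal{X}_{\p^0}\subseteq\mathcal{X}_{\p^0}\setminus\mathcal{X}^\circ_{\p^1}$ encode. Testing only on $\mathcal{X}^\circ_{\p^1}$ is legitimate, and it spares you checking the identity on $\mathcal{X}_{\p^0}\setminus\mathcal{X}^\circ_{\p^1}$, as the paper's formulation $T_\varpi\Phi=\widetilde{{_1}U_\varpi\Phi'}$ does.

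Finally, $\det u_a=1$ and a single $\varpi$ is used throughout, so no diamond factors $\llangle\cdot\rrangle$ appear. The bookkeeping you flagged as the main obstacle is not where the difficulty lies.
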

\begin{proof}
The product structures \eqref{product decomposition level m} and the corresponding factorization of measures \eqref{factorization distributions} allows one to work one prime at the time and essentially reduces the proof to the content of \cite[Appendix II]{MokHeegner}. We sketch the main steps of the argument for the convenience of the reader. 

Let $m\ge0$. To see that $(\mrm{res}^m_{m+1})_*$ is an isomorphism, take $\Phi\in \cal{A}_{w,\Sigma}(K_{m}^\circ;\cal{H}_w^{\Box},\cal{M}_{\Sigma,m}^{\Box})$ and write 
\[
\cal{X}_{m}^\circ=\coprod_{g\in K_m^\circ/K^\circ_{m+1}}\big(g\cdot \cal{X}_{{m+1}}^\circ\big).
\]
For any $b\in G(\A^{w,\infty})$, $z\in\cal{H}_w^{\Box}$ then  one computes that
\begin{equation}\label{the formula}
\int_{\cal{X}_{m}^\circ} f(t)\hspace{1.5mm}\mrm{d}(\Phi(b)[z])(t)=\sum_{g\in K_m^\circ/K^\circ_{m+1}}\int_{\cal{X}_{m+1}^\circ}f(g\cdot t)\hspace{1.5mm}\mrm{d}(\Phi(bg)[z])(t)
\end{equation}
using \eqref{def: action distributions} and the defining properties of $\Phi$. It follows that $(\mrm{res}^m_{m+1})_*$ is injective because $\Phi$ turns out to be completely determined by $ (\mrm{res}^m_{m+1})_*\Phi$. Moreover, surjectivity is obtained by observing that the expression \eqref{the formula} can be used to construct a lift $\widetilde{\hspace{1mm}\Phi'}$ for any $\Phi'\in \cal{A}_{w,\Sigma}(K_{m+1}^\circ;\cal{H}_w^{\Box},\cal{M}_{\Sigma,m+1}^{\Box})$.

It is clear that $ (\mrm{res}^m_{m+1})_*$ commutes with the Hecke operators away from $\Sigma$. To prove that $ (\mrm{res}^m_{m+1})_*$ intertwines the action of the Hecke operators at $\p\in\Sigma$, one sets $\Phi':=(\mrm{res}^m_{m+1})_*\Phi$ and checks that for any $\p\in\Sigma$
\[
T_\varpi.\Phi=\widetilde{{_{1}}U_\varpi.\Phi'}\quad\text{if}\hspace{2mm}  m=0,\qquad\text{and}\qquad {_{m}}U_\varpi.\Phi=\widetilde{{_{m+1}}U_\varpi.\Phi'}\quad\text{if}\hspace{2mm}  m\ge1.
\]
 As mentioned before, one is allowed to focus only on distributions over the local pieces $\cal{X}_{\p^{m+1}}^\circ\subset\cal{X}_{\p^m}^\circ$, so that the details of the computation can be extracted from the proof of \cite[Proposition 8.2]{MokHeegner}. The calculations rely on three main ingredients:
 \begin{itemize}
     \item [$\bfcdot$] explicit coset representatives
 \[
  \cal{X}_{\p^0}\setminus \cal{X}_{\p^1}^\circ=\coprod_{a\in k_\p}\begin{pmatrix}
        a&1\\-1&0
\end{pmatrix}\cal{X}_{\p^1}^\circ,\qquad  \cal{X}_{\p^m}^\circ\setminus \cal{X}_{\p^{m+1}}^\circ=\coprod_{a\in k_\p}\begin{pmatrix}
        1&0\\a\varpi^m&1
    \end{pmatrix}\cal{X}_{\p^{m+1}}^\circ\quad\text{for}\hspace{2mm} m\ge1,
 \]
      \item [$\bfcdot$] the inclusions
\[
\sigma_\infty(\cal{X}_{\p^0})\cap\cal{X}_{\p^0}=\cal{X}_{\p^1}^\circ\qquad\text{and}\qquad\sigma_a(\cal{X}_{\p^0})\cap\cal{X}_{\p^0}\subseteq\cal{X}_{\p^0}\setminus \cal{X}_{\p^1}^\circ\qquad\forall\ a\not=\infty,
\]
where $\{\sigma_a\ \lvert \ \bar{a}\in\bb{P}^1(\kappa_\p)\}$ are the coset representatives for $T_\p$ given in $\eqref{cosets-decompositions}$,
       \item [$\bfcdot$] and the equality
\begin{equation}\label{decomposition domain of integration}
 \cal{X}_{\p^m}^\circ=\coprod_{a\in k_\p}\widehat{\sigma}_{m,a}\big(\cal{X}_{\p^{m}}^\circ\big)\qquad \text{for}\hspace{2mm} m\ge1,
\end{equation}
where $\{\widehat{\sigma}_{m,a}\ \lvert \ \bar{a}\in \kappa_\p\}$ are the coset representatives for ${_{m}}U_\varpi$ given in $\eqref{cosets-decompositions}$. 
 \end{itemize}
\end{proof}

Consider the $K_m^\circ$-equivariant function 
\[
\mrm{ev}_{m}\colon\cal{M}_{\Sigma,m}^{\Box}\longrightarrow (J_\Sigma^{\Box})^{I_m},\qquad \mu\mapsto \mu({\cal{X}_{m}}).
\]

\begin{lemma}\label{specialization - intertwining}
    For $m\ge1$ the morphism
    \[
    (\mrm{ev}_{m})_*\colon \cal{A}_{w,\Sigma}\big(K_{m}^\circ;\cal{H}_w^{\Box},\cal{M}_{\Sigma,m}^{\Box}\big)\longrightarrow \cal{A}_{w,\Sigma}\big(K_{m}^\circ;\cal{H}_w^{\Box},(J_\Sigma^{\Box})^{I_m}\big)
    \]
    commutes with the Hecke operators away from $\Sigma$. Moreover, for every $\p\in\Sigma$ and every uniformizer $\varpi\in\cal{O}_\p$ it intertwines ${_{m}}U_\varpi$ with ${_{m}}U_{\varpi}$.
\end{lemma}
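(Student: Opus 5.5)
Both claims come down to checking the definitions against the explicit coset decompositions \eqref{cosets-decompositions}. First I check that $(\mrm{ev}_m)_*$ is well defined. The map $\mrm{ev}_m(\mu)=\mu(\cal{X}_m)$ is $(K_m^\circ)_\Sigma$-equivariant. Indeed, for $k\in K_m^\circ$ write $kK_m=\gamma_aK_m$. Then $k^{-1}\cal{X}_m=\gamma_a^{-1}\cal{X}_m=\cal{X}_m\bfcdot a^{-1}$. The equivariance property of $\mu\in\cal{M}^\Box_{\Sigma,m}$ gives $\mu(k^{-1}\cal{X}_m)=\mu(\cal{X}_m)\llangle a^{-1}\rrangle$, which is exactly $k.\mu(\cal{X}_m)$ for the action \eqref{def: module structure on J}. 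The value lies in $(J_\Sigma^\Box)^{I_m}$ because $\cal{X}_m\bfcdot a=\cal{X}_m$ for $a\in 1+p_\Sigma^m\cal{O}_\Sigma$. Composing with $\mrm{ev}_m$ therefore preserves the automorphy condition.

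For a Hecke operator $T_g$ with $g$ away from $\Sigma$, the representatives $g_j$ have $\Sigma$-component $1$, or more generally in $(K_m)_\Sigma$. Both Hecke actions are then given by the formula of Section \ref{Hecke on aut forms}, $\theta\mapsto\sum_j (g_j)_\Sigma.\theta(-g_j)$. Since $\mrm{ev}_m$ is equivariant and $\Z_p$-linear, it commutes with this formula. The diamond operators are handled the same way.

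For $\p\in\Sigma$ I use \eqref{formula m_U_p-action} with $f=\mathbbm{1}_{\cal{X}_m}$:
\[
({_{m}}U_\varpi.\Phi)(b)[z](\cal{X}_m)=\sum_{\bar a\in\kappa_\p}\Phi(b\widehat{\sigma}_{m,a})[z]\big(\widehat{\sigma}_{m,a}^{-1}\cal{X}_m\cap\cal{X}_m^\circ\big).
\]
Here $\widehat{\sigma}_{m,a}$ acts through the semigroup action $t\mapsto[\widehat\sigma_{m,a}t\eta_\varpi^{-1}]$. The key computation is to show that $\widehat{\sigma}_{m,a}^{-1}\cal{X}_m\cap\cal{X}_m^\circ=\cal{X}_m$. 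Equivalently, $\widehat{\sigma}_{m,a}\cdot\cal{X}_m\subseteq\cal{X}_m$, and the preimage inside $\cal{X}_m^\circ$ is exactly $\cal{X}_m$. I will do this locally at $\p$ using the coordinates \eqref{iso1}. There $\cal{X}_{\p^m}$ corresponds to classes $[(x,y),z]$ with $y\equiv0\pmod{\p^m}$ and $z x^{-2}\equiv1\pmod{\p^m}$, or the analogous condition from $a^{-1}d\equiv1$. By \eqref{explicit-action-semigroup-in-coordinates}, $\widehat\sigma_{m,a}$ sends $[(x,y),z]$ to $[(x,\ a\varpi^mx+\varpi y),\ z]$, since $\det\widehat\sigma_{m,a}/\varpi=1$. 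This map preserves both conditions exactly, so the claimed equality holds.

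Consequently the right-hand side equals $\sum_a\mrm{ev}_m(\Phi(b\widehat{\sigma}_{m,a})[z])$. This is precisely the action of ${_{m}}U_\varpi=[K_m\eta_\varpi K_m]=\coprod_a\widehat\sigma_{m,a}K_m$ on $(\mrm{ev}_m)_*\Phi$ in the codomain, whose definition is $\theta\mapsto\sum_j\theta(-g_j)$.

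The main obstacle is the bookkeeping in the coordinate check: I must confirm that the determinant normalization by $\varpi^{\deg}$ makes the $z$-coordinate, and hence the diamond component, unchanged. Otherwise an extra factor $\llangle\cdot\rrangle$ would appear, compatible with the uniformizer-dependence in Remark \ref{rmk: dependence on uniformizer}. I also need to check that the two actions use matching conventions for the left coset decomposition at level $K_m$ versus $K_m^\circ$.
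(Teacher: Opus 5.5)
Your proof is correct and follows the paper's own argument. Like the paper, you apply \eqref{formula m_U_p-action} to the indicator $\mathbbm{1}_{\cal{X}_m}$, match the result with the codomain action via $K_m\eta_\varpi K_m=\coprod_{a}\widehat{\sigma}_{m,a}K_m$, and so reduce the claim at $\p\in\Sigma$ to the identity $\widehat{\sigma}_{m,a}^{-1}(\cal{X}_{\p^m})\cap\cal{X}_{\p^m}^\circ=\cal{X}_{\p^m}$. The paper only asserts this identity. Your coordinate computation in \eqref{iso1} supplies the check: $\widehat{\sigma}_{m,a}$ sends $[(x,y),z]$ to $[(x,a\varpi^m x+\varpi y),z]$, which preserves $y\equiv 0$ and $z/x^2$ modulo $\p^m$. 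So the bookkeeping obstacle you flag is already settled.
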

\begin{proof}
   Reasoning as in the proof of Proposition \ref{restriction - intertwining}, the claim reduces to the equality
    \[
\widehat{\sigma}_{m,a}^{-1}\big(\cal{X}_{\p^m}\big)\cap\cal{X}_{\p^m}^\circ=\cal{X}_{\p^m}\qquad\forall\ a\in\kappa_\p
    \]
    where $\{\widehat{\sigma}_{m,a}\ \lvert \ \bar{a}\in \kappa_\p\}$ are the coset representatives for ${_{m}}U_\varpi$ given in $\eqref{cosets-decompositions}$.
\end{proof}

\subsection{Upshot}
\begin{corollary}\label{distributions-projlim}
    The  isomorphism of $\bb{T}_\Sigma$-modules
    \[
    \mrm{sp}_\infty\colon \cal{A}_{w,\Sigma}\big(K_{0};\cal{H}_w^{\Box},\cal{M}^{\Box}_\Sigma\big)\overset{\sim}{\longrightarrow} \varprojlim_m\ \cal{A}_{w,\Sigma}\big(K_{m}^\circ;\cal{H}_w^{\Box},(J_\Sigma^{\Box})^{I_m}\big)
    \]
    commutes with the Hecke operators away from $\Sigma$. Moreover, for every $\p\in\Sigma$ and uniformizer $\varpi\in\cal{O}_\p$ it intertwines $T_\varpi$ with $U_{\varpi}$.
\end{corollary}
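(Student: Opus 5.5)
The plan is to factor each specialization map $(\mrm{sp}_m)_*$ through the intermediate spaces of measures on the shrinking domains $\cal{X}_m^\circ$, and then chain together the intertwining statements already established. Since $\cal{X}_m\subseteq\cal{X}_m^\circ$, the definitions give
\[
\mrm{sp}_m=\mrm{ev}_m\circ\mrm{res}^{m-1}_{m}\circ\cdots\circ\mrm{res}^0_1\qquad\text{on}\ \cal{M}^\Box_\Sigma=\cal{M}^\Box_{\Sigma,0}.
\]
Hence on automorphic functions
\[
(\mrm{sp}_m)_*=(\mrm{ev}_m)_*\circ(\mrm{res}^{m-1}_m)_*\circ\cdots\circ(\mrm{res}^0_1)_*.
\]
I would first verify this identity carefully, including the compatibility of the $(K_j^\circ)_\Sigma$-actions used at each stage, so that each map is a map between the appropriate spaces $\cal{A}_{w,\Sigma}(K_j^\circ;\cal{H}_w^\Box,\cal{M}^\Box_{\Sigma,j})$. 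This is essentially immediate, because restriction of measures commutes with the left actions.

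Next I would track Hecke operators through the chain. For operators away from $\Sigma$ there is nothing to do: Proposition \ref{restriction - intertwining} and Lemma \ref{specialization - intertwining} each say that the relevant map commutes with them, so every $(\mrm{sp}_m)_*$ does, and hence so does $\mrm{sp}_\infty$. This uses that the $\frak{h}_\Sigma$-action on the projective limit in Corollary \ref{Hecke-action on limit} is defined levelwise. For $\p\in\Sigma$ and a fixed uniformizer $\varpi$, Proposition \ref{restriction - intertwining} carries $T_\varpi$ to ${_{1}}U_\varpi$ under $(\mrm{res}^0_1)_*$, and then ${_{j}}U_\varpi$ to ${_{j+1}}U_\varpi$ under each subsequent restriction. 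Finally, Lemma \ref{specialization - intertwining} carries ${_{m}}U_\varpi$ to ${_{m}}U_\varpi$. Composing these gives
\[
(\mrm{sp}_m)_*(T_\varpi\Phi)={_{m}}U_\varpi\,(\mrm{sp}_m)_*\Phi\qquad\text{for all } m\ge1.
\]

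The last step is to pass to the limit. By Remark \ref{abstract compatibility} and the trace compatibility lemma, the family $({_{m}}U_\varpi)_m$ is exactly the element $U_\varpi\in\frak{h}_\Sigma$ acting on $\varprojlim_m$. The levelwise identities therefore assemble into $\mrm{sp}_\infty\circ T_\varpi=U_\varpi\circ\mrm{sp}_\infty$.

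I expect the main point requiring care to be the dependence on $\varpi$. The action of $T_\varpi$ on $\cal{M}^\Box_\Sigma$ (Corollary \ref{cor: action on Distributions}) and the operators ${_{m}}U_\varpi$ on $\cal{M}^\Box_{\Sigma,m}$ must be defined with the same uniformizer. I would check this by confirming that the $\eta_\varpi$-conjugation action of \eqref{pi-action} is the one implicitly used in \eqref{formula m_U_p-action}, and that the relation $T_{a\varpi}=T_\varpi\llangle a\rrangle$ matches ${_{m}}U_{a\varpi}={_{m}}U_\varpi\langle a\rangle_m$ from Remark \ref{rmk: dependence on uniformizer}. This ensures that the statement is consistent for all choices of uniformizer, not only for one.
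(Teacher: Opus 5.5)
Your proposal is correct and follows essentially the same argument as the paper. Like the paper, you write $\mathrm{sp}_m=\mathrm{ev}_m\circ\mathrm{res}^0_m$, with $\mathrm{res}^0_m$ made explicit as the composite of the consecutive restrictions that Proposition \ref{restriction - intertwining} actually covers; you apply that proposition together with Lemma \ref{specialization - intertwining} at each level $m$; and you pass to the limit via Proposition \ref{lemma: iso-specialization} and Corollary \ref{Hecke-action on limit}. Your closing check on uniformizers is harmless but not needed, since the intertwining results already hold for each fixed $\varpi$ separately.
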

\begin{proof}
Since $\mrm{sp}_m=\mrm{ev}_m\circ\mrm{res}_m^0$, Proposition \ref{restriction - intertwining} and Lemma \ref{specialization - intertwining} imply that 
for $m\ge1$ the specialization map
    \[
    (\mrm{sp}_m)_*\colon \cal{A}_{w,\Sigma}\big(K_{0};\cal{H}_w^{\Box},\cal{M}^{\Box}_\Sigma\big)\longrightarrow \cal{A}_{w,\Sigma}\big(K_{m}^\circ;\cal{H}_w^{\Box}, (J_\Sigma^{\Box})^{I_m}\big)
    \]
   commutes with the Hecke operators away from $\Sigma$. Moreover, for every $\p\in\Sigma$ and uniformizer $\varpi\in\cal{O}_\p$ it intertwines $T_\varpi$ with ${_{m}}U_{\varpi}$. The claim then follows from Proposition \ref{lemma: iso-specialization} and Corollary \ref{Hecke-action on limit}.
\end{proof}

\begin{corollary}\label{Hecke action on special form}
    We have 
    \[
    T_\q.\Phi_\Sigma^\Box=\Phi_\Sigma^\Box.\mbf{T}_\q,\qquad U_\q.\Phi_\Sigma^\Box=\Phi_\Sigma^\Box.\mbf{U}_\q,\qquad T_\varpi.\Phi_\Sigma^\Box=\Phi_\Sigma^\Box.\mbf{U}_{\varpi}.
    \]
\end{corollary}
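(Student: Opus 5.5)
The plan is to move the computation to finite level through the isomorphism $\mrm{sp}_\infty$, where the Hecke action on $\phi_m^\Box$ is already known.

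First, by Corollary \ref{distinguished autfun}, $\Phi_\Sigma^\Box$ is the unique element of $\cal{A}_{w,\Sigma}(K_0;\cal{H}_w^\Box,\cal{M}^\Box_\Sigma)$ with $\mrm{sp}_\infty(\Phi_\Sigma^\Box)=\phi_\infty^\Box=(\phi_m^\Box)_{m\ge1}$. Since $\mrm{sp}_\infty$ is an isomorphism of $\bb{T}_\Sigma$-modules (Proposition \ref{lemma: iso-specialization}), it commutes with the right action on coefficients. Hence for any Hecke operator $T$ with coefficient counterpart $\mbf{T}\in\bb{T}_\Sigma$, we have
\[
\mrm{sp}_\infty(\Phi_\Sigma^\Box.\mbf{T})=(\phi_m^\Box.\mbf{T})_{m\ge1}.
\]

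Second, by Corollary \ref{distributions-projlim}, $\mrm{sp}_\infty$ intertwines the left Hecke actions. It commutes with $T_\q$ and $U_\q$ away from $\Sigma$, and it sends $T_\varpi$ on the domain to the operator on the projective limit given by $({_{m}}U_\varpi)_m$, as in Corollary \ref{Hecke-action on limit}. Therefore
\[
\mrm{sp}_\infty(T_\varpi.\Phi_\Sigma^\Box)=({_{m}}U_\varpi.\phi_m^\Box)_{m\ge1},
\]
and similarly for $T_\q$ and $U_\q$.

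Third, the finite-level identities \eqref{explict Hecke action} give ${_{m}}U_\varpi.\phi_m^\Box=\phi_m^\Box.\mbf{U}_\varpi$, $T_\q.\phi_m^\Box=\phi_m^\Box.\mbf{T}_\q$ and $U_\q.\phi_m^\Box=\phi_m^\Box.\mbf{U}_\q$ for every $m\ge1$. Comparing with the first step, both sides of each claimed equality have the same image under $\mrm{sp}_\infty$. Injectivity of $\mrm{sp}_\infty$ then yields the three identities.

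The only point needing care is consistency of normalizations. The element $\mbf{U}_\varpi\in\bb{T}_\Sigma$ must be exactly the compatible family $({_{m}}U_\varpi)_m$ acting on $(J^\Box_\Sigma)^{I_m}$ through the $\bb{T}_\Sigma\otimes\Lambda_{\Sigma,m}$-structure, and the right $\bb{T}_\Sigma$-structure on the limit must agree with the one on measures. This follows from Proposition \ref{prop: compatibility Hecke} and the definition of $\lambda$. Both sides depend on $\varpi$ in the same way, through $\llangle a\rrangle$, so no choices need to be matched.
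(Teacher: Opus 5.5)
Your proposal is correct and follows essentially the same route as the paper, which deduces the three identities directly from Corollary \ref{distributions-projlim}, Corollary \ref{Hecke-action on limit}, and the finite-level identities \eqref{explict Hecke action}. You simply spell out the steps the paper leaves implicit: $\mrm{sp}_\infty$ is $\bb{T}_\Sigma$-linear on coefficients, it intertwines the left Hecke actions, and it is injective.
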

\begin{proof}
    It follows from Corollary \ref{distributions-projlim}, Corollary \ref{Hecke-action on limit}, and equation \eqref{explict Hecke action}.
\end{proof}


\section{Construction}\label{Section: construction}
As in Section \ref{Intro: pHc}, we have fixed an incoherent set $\cal{S}$ for the triple $(A_{/F},\hspace{.5mm} E/F,\hspace{.5mm}p)$, a coherent set  $S=\cal{S}\cup\{w\}$ for the quadruple $(A_{/F},\hspace{.5mm} E/F,\hspace{.5mm}p,\hspace{.5mm}\cal{S})$, and an embedding $\psi\colon T\hookrightarrow G$ of $F$-algebraic groups. 
 For every $v\in S$, we consider the $G_v$-space $\cal{H}_v=\bb{P}^1(E_v)\setminus\bb{P}^1(F_v)$ with twisted $G_v$-action (see \eqref{twisting character}).
We are interested in the $G(F)$-module $\Z_p\big[\cal{H}_S^\Box\big]$ where 
\[
\cal{H}_{S}^\Box:=\cal{H}_\cal{S}\times \cal{H}_w^\Box,\qquad \cal{H}_\cal{S}:=\prod_{\p\in\cal{S}}\cal{H}_{\p},
\]
and denote elements of $\cal{H}_S^\Box$ by $\tau_{S}=(\tau_{\cal{S}},\tau_w)$. For any subset $\Sigma\subseteq S$, consider the homomorphism 
\begin{equation}\label{def: twist Sigma character}
\chi_\Sigma\colon G_\Sigma\longrightarrow G(E_\Sigma/F_\Sigma),\qquad \chi_\Sigma(g)=(\chi_v(g))_{v\in \Sigma},
\end{equation}
where $G(E_\Sigma/F_\Sigma)=\prod_{v\in \Sigma}\mrm{Gal}(E_v/F_v)$.
\begin{remark}
   It is useful to have an explicit notation for the twisted action on $\cal{H}_\Sigma$. We write
   \[
   g.\tau=g(\tau)^{\chi_\Sigma(g)}\qquad\forall\ g\in G_\Sigma,\ \tau\in\cal{H}_\Sigma,
   \] 
   where $g(\tau)=\frac{a\tau+b}{c\tau+d}$ if $g=\begin{pmatrix}
       a&b\\c&d
   \end{pmatrix}$.
\end{remark}

The main contribution of this section is the construction an element 
\[
\varphi^\Box_A\hspace{1mm}\in\hspace{1mm} \mrm{H}^0\left(X_G^{S}(\frak{f});\ \Z_p[\cal{H}_{S}^\Box],\ A(E_{\otimes,\cal{S}})\otimes_{\Q_p}\mrm{H}^1(E^\Box, V_p(A))\right)_{\pi_A}
\]
using the automorphic function $\Phi_\cal{S}^\Box\in \cal{A}_{w,\cal{S}}(K_0;\cal{H}_w^\Box,\cal{M}^\Box_\cal{S})$. Here the subscript $\pi_A$ serves to denote the $\pi_A$-isotypic subspace with respect to the ``good'' Hecke operators.

\subsection{Data associated to points of $p$-adic symmetric spaces}\label{Associated data}
  Fix a  uniformizer $\varpi_\p\in \cal{O}_\p$ for every $\p\in\cal{S}$. Cartan's decomposition
  \[
  G_\p=\coprod_{m\in\bb{N}}K_{\p^0}\cdot \eta_\varpi^m\cdot K_{\p^0}
  \]
  corresponds to the partition of the vertices $\cal{V}_\p$ of the Bruhat-Tits tree $\cal{T}_\frak{p}$ for $G_\p$ obtained by organizing them in terms of their distance from the standard vertex $\mrm{v}_\circ=[\cal{O}_\p^2]$.
  Consider a point $\tau\in\cal{H}_\p$. The associated vertex $\mrm{v}=\mrm{red}(\tau)\in\cal{V}_\p$ corresponds to an element
\[
\widetilde{g}_{\mrm{v}}\in \Big(K_{\p^0}\cdot \eta_\varpi^{m_{\mrm{v}}}\cdot K_{\p^0}\Big)/K_{\p^0}
\]
where $m_{\mrm{v}}=\mrm{dist}(\mrm{v},\mrm{v}_\circ)$.
 By a small abuse of notation, we also write $\widetilde{g}_{\mrm{v}}\in K_{\p^0}\cdot \eta_\varpi^{m_{\mrm{v}}}\cdot K_{\p^0}$ for a lift  and denote by $g_{\mrm{v}}$ its image in $G_\p$. Define  
\[
\bb{X}_{\mrm{v}}:=\widetilde{g}_{\mrm{v}}\cdot  \cal{X}_{\p^{0}},
\]
a compact open subset of $\widetilde{\cal{Y}}_\p$ (see Lemma \ref{Ash-Stevens} for the definition of $\widetilde{\cal{Y}}_\p$).  
\begin{remark}
    Following \eqref{dependence on uniformizer of the action}, for any double-coset element $x\in K_{\p^0}\cdot \eta_\varpi^m\cdot K_{\p^0}$ one finds that
    \begin{equation}\label{rmk: different acts}
    x\cdot_{\varpi}V=x\cdot_{a\varpi} V\bfcdot\hspace{1mm} a^m.
    \end{equation}
    As $\cal{X}_{\p^{0}}$ is stable under the action of $\cal{O}_\p^\times$, the set $\bb{X}_{\mrm{v}}$ does not depend on the choice of $\varpi\in \p\cal{O}_\p$.
\end{remark}
It is convenient to describe $\bb{X}_{\mrm{v}}$ in terms of a lattice in $F_\p^2$. If  $L_{\mrm{v}}:=g_{\mrm{v}}\cdot \cal{O}_\p^2$, and $L_{\mrm{v}}'=L_{\mrm{v}}\setminus \p L_{\mrm{v}}$,  the identification \eqref{iso2} gives
    \begin{equation}\label{domain in terms of lattice}
    \bb{X}_{\mrm{v}}\overset{\sim}{\longrightarrow}\cal{O}_\p^\times\backslash\big[L_{\mrm{v}}'\times\cal{O}_\p^\times\big].
    \end{equation}

\begin{definition}\label{Domain of integration}{(Domain of integration)}
    For $\tau_\cal{S}=(\tau_\p)_\p\in \cal{H}_\cal{S}$ let $\mrm{v}_\cal{S}=(\mrm{red}(\tau_\p))_\p$ the corresponding tuple of vertices. We define a compact open subset of $\widetilde{\cal{Y}}=\prod_{\p\in\cal{S}}\widetilde{\cal{Y}}_\p$ by setting
    \[
\bb{X}_{\mrm{v}_\cal{S}}:=\prod_{\p\in\cal{S}}\bb{X}_{\mrm{v}_\p}.
    \]
\end{definition}

Recall the identification \eqref{iso2} 
    \[
\cal{Y}_\p\overset{\sim}{\longrightarrow}F_\p^\times\backslash\Big[(F_\p^2)'\times F_\p^\times\Big],\qquad \gamma\mapsto \left[\begin{pmatrix}
        x_\gamma\\y_\gamma
    \end{pmatrix},z_\gamma\right]
\]
where $(F_\p^2)'=F_\p^2\setminus \{(0,0)\}$. Then, for any point  $\tau\in\cal{H}_\p$ we consider the continuous function $f_{\tau,\varpi}\colon \cal{Y}_\p\to E_\p$ described by the formula
\[
f_{\tau,\varpi}(\gamma)=\frac{1}{2}\log_{\varpi}\left(\frac{(x_\gamma-\tau y_\gamma)^2}{z_\gamma}\right)
\]
where $\log_{\varpi}$ denotes the $p$-adic logarithm which satisfies $\log_{\varpi}(\varpi)=0$. 
\begin{remark}\label{rmk: log as derivative}
  Any $x\in E_\p^\times$ can be written uniquely as
   \[
   x=\varpi^{\mrm{ord}_\p(x)}\cdot \zeta_x\cdot \langle x\rangle_{\varpi}
   \]
   where $\zeta_x$ is a root of unity and $\langle x\rangle_{\varpi}\equiv 1\pmod{\varpi}$. Thus,
   we can write  $\log_{\varpi}(x)=\log_p (\langle x\rangle_{\varpi})$ for any $x\in E_\p^\times$.
  For later purposes, it is useful to note that the function
   \[
   F_{\tau,\varpi}\colon\cal{Y}_\p\longrightarrow E_\p\llbracket X_\p\rrbracket,\qquad     \gamma\mapsto \frac{1}{2}\left\langle \frac{(x_\gamma-\tau y_\gamma)^2}{z_\gamma}\right\rangle_\varpi^{X_\p}
   \]
  satisfies (see Remark \ref{explict-right-action})
   \[
    F_{\tau,\varpi}(-\hspace{0.5mm}\bfcdot\hspace{1mm} a_\p^{-1})=a_\p^{X_\p}\cdot F_{\tau,\varpi}(-)\qquad\forall\ a_\p\in\cal{O}_\p^\times.
   \]
Moreover, the function $f_{\tau,\varpi}$ can be expressed as
    \[
    f_{\tau,\varpi}=\partial_{\p}F_{\tau,\varpi}\big\lvert_{0}.
    \]
\end{remark}

\begin{definition}\label{Integrand function}{(Integrand function)}
    For any $\tau_\cal{S}=(\tau_\p)_\p\in \cal{H}_\cal{S}$ we define the continuous function
    \[
    f_{\tau_\cal{S},\varpi}\colon \cal{Y}_\cal{S}\longrightarrow E_{\otimes,\cal{S}},\qquad \gamma=(\gamma_\p)_\p\mapsto\otimes_{\p\in\cal{S}}f_{\tau_\p,\varpi}(\gamma_\p).
    \]
\end{definition}
\begin{remark}\label{rmk: log as derivative II}
Consider the finite dimensional $\Q_p$-algebra
\[
R_{\otimes,\cal{S}}:=\frac{E_{\otimes,\cal{S}}\llbracket\underline{X}_\cal{S}\rrbracket}{(X_\p^2\ \lvert\ \p\in\cal{S})}.
\]
    By considering the inclusions $E_\p\llbracket X_\p\rrbracket\hookrightarrow E_{\otimes,\cal{S}}\llbracket \underline{X}_\cal{S}\rrbracket$ we can form the function
    \[
   F_{\tau_\cal{S},\varpi}\colon\cal{Y}_\cal{S}\longrightarrow R_{\otimes,\cal{S}},\qquad     \gamma\mapsto \prod_{\p\in\cal{S}}F_{\tau_\p,\varpi}(\gamma_\p).
   \]
   which satisfies
    \[
    F_{\tau_\cal{S},\varpi}(-\hspace{0.5mm}\bfcdot\hspace{1mm} a^{-1})=a^{\underline{X}_\cal{S}}\cdot F_{\tau_\cal{S},\varpi}(-)\qquad\forall\ a\in\cal{O}_\cal{S}^\times.
   \]
   and, setting $\partial_\cal{S}=\prod_{\p\in\cal{S}}\partial_\p$,
   \[
   f_{\tau_\cal{S},\varpi}=\partial_{\cal{S}}F_{\tau_\cal{S},\varpi}\big\lvert_{\underline{0}}.
   \]
\end{remark}

For $\p\in\cal{S}$ we denote by  $\cal{Y}^\p$ the product $\prod_{\q\in\cal{S}\setminus\{\p\}}\cal{Y}_\q$ and by $\gamma\mapsto\gamma^\p$ projection map $\cal{Y}_\cal{S}\to\cal{Y}^\p$. For later application we highlight the following property: 
\begin{lemma}\label{constant components}
    Let $\lambda_\cal{S}\in\cal{H}_\cal{S}$, $\delta\in G(F)$, and $\widetilde{g},\widetilde{h}\in \Delta^\varpi_\cal{S}$ such that $g=\delta h$ in $G_\cal{S}$. Choose an ordering $\{\p_1,\dots,\p_{\lvert\cal{S}\rvert}\}$ of $\cal{S}$, then the difference 
    \[
    f_{\delta(\lambda_\cal{S}),\varpi}\big(\widetilde{g}\cdot \gamma\big)-f_{\lambda_\cal{S},\varpi}\big(\widetilde{h}\cdot\gamma\big)
    \]
    can be written as 
    \[
    \sum_{j=1}^{\lvert\cal{S}\rvert}\Big(\underset{k<j}{\otimes}f_{\lambda_{\p_k},\varpi}\big(\widetilde{h}_{\p_k}\cdot \gamma_{\p_k}\big)\Big)
    \otimes C_j \otimes\Big(\underset{k>j}{\otimes}f_{\delta(\lambda_{\p_k}),\varpi}\big(\widetilde{g}_{\p_k}\cdot \gamma_{\p_k}\big)\Big)
    \]
    where $C_j=C_j\big(\delta, \varpi_{\p_j}, \lambda_{\p_j}, \widetilde{g}_{\p_j},\widetilde{h}_{\p_{j}}\big)$ is a constant.
\end{lemma}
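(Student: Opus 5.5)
The plan is to reduce the statement to a one-prime computation and then telescope. The one-prime claim is: for each $\p\in\cal{S}$,
\[
f_{\delta(\lambda_\p),\varpi}(\widetilde{g}_\p\cdot\gamma_\p)-f_{\lambda_\p,\varpi}(\widetilde{h}_\p\cdot\gamma_\p)=C\big(\delta,\varpi_\p,\lambda_\p,\widetilde{g}_\p,\widetilde{h}_\p\big),
\]
a constant independent of $\gamma_\p\in\cal{Y}_\p$. Granting this, write $a_k:=f_{\lambda_{\p_k},\varpi}(\widetilde{h}_{\p_k}\cdot\gamma_{\p_k})$ and $b_k:=f_{\delta(\lambda_{\p_k}),\varpi}(\widetilde{g}_{\p_k}\cdot\gamma_{\p_k})$, so that $b_k=a_k+C_k$. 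The difference in the statement is $\otimes_k b_k-\otimes_k a_k$. The standard telescoping identity
\[
\otimes_k b_k-\otimes_k a_k=\sum_j\Big(\otimes_{k<j}a_k\Big)\otimes(b_j-a_j)\otimes\Big(\otimes_{k>j}b_k\Big)
\]
holds by multilinearity of $\otimes$, and it gives exactly the claimed shape with $C_j=b_j-a_j$.

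For the one-prime claim, use the explicit formula \eqref{explicit-action-semigroup-in-coordinates}. Write $g=\delta h$ in $G_\p$ and lift to $\GL_2(F_\p)$, with $g=c\,\delta h$ for a scalar $c\in F_\p^\times$ coming from the center. Set $(x',y')^t:=h(x,y)^t$. Then $\widetilde{h}\cdot[(x,y),z]=[(x',y'),\det(h)\varpi^{-\deg\widetilde h}z]$, and $\widetilde{g}\cdot[(x,y),z]=[c\,\delta(x',y')^t,\ c^2\det(\delta)\det(h)\varpi^{-\deg\widetilde g}z]$.

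Next, for $\delta=\begin{pmatrix}a&b\\ c'&d\end{pmatrix}$, the elementary identity
\[
(a x'+b y')-\delta(\lambda)(c'x'+dy')=\frac{\det\delta}{c'\lambda+d}\,(x'-\lambda y')
\]
holds for the untwisted Möbius action. If $\chi_\p(\delta)$ is nontrivial, $\delta(\lambda)$ is the Galois conjugate of the Möbius image. In that case conjugate the identity by $\sigma_\p$, noting that $x',y'\in F_\p$, so the same relation holds with $\overline{\lambda}$. Then $(x'-\overline\lambda y')^2=\sigma_\p\big((x'-\lambda y')^2\big)$, and $\log_\varpi\circ\sigma_\p=\sigma_\p\circ\log_\varpi$ because $\varpi\in F_\p$.

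Substituting, the quantity $(x-\tau y)^2/z$ for the $\widetilde g$-point equals the same quantity for the $\widetilde h$-point (or its $\sigma_\p$-conjugate) multiplied by a factor depending only on $\delta,\lambda,\varpi,\deg\widetilde g,\deg\widetilde h$. In this factor the $c^2$ cancels and $\det\delta$ appears squared over $\det\delta$.

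The main obstacle is the twisted case. Here the two logarithms differ by $\sigma_\p$ applied to one of them, not by an additive constant. I would try to resolve it by observing that the relevant points satisfy $x'-\overline\lambda y'=\overline{(x'-\lambda y')}$, so that $\log_\varpi((x'-\lambda y')^2/z)$ and its conjugate differ by something computable. Failing that, I would interpret $f$ through the twisted action, noting that $\delta(\lambda)$ is defined to make $\lambda\mapsto f_{\lambda,\varpi}$ compatible. In the latter approach the convention $\chi_\p(\delta)=\sigma_\p^{\mrm{ord}\det\delta}$ forces the factor $\det\delta$ to have odd valuation exactly when conjugation occurs, and the check is that this is absorbed into the constant up to the $\varpi$-normalisation of $\log_\varpi$.

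Once the multiplicative factor $u$ is identified, $\log_\varpi$ turns it into the additive constant $C_j=\tfrac12\log_\varpi(u)$.
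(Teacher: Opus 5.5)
Your argument is exactly the paper's proof: you telescope to a single prime, then use $(1,\,-\delta(\lambda))\,\delta=\frac{\det\delta}{c_\delta\lambda+d_\delta}(1,\,-\lambda)$ to show the two quantities $(x-\cdot\,y)^2/z$ differ by the factor $\frac{\det(\delta)}{(c_\delta\lambda+d_\delta)^2}\varpi^{\deg(\widetilde g)-\deg(\widetilde h)}$, so that $C_j$ is half its $\log_\varpi$. The ``twisted case'' you flag as the main obstacle does not arise, because in the paper $\delta(\lambda)$ is the untwisted M\"obius action (the twisted one is written $\delta.\lambda$, and the Galois twist is absorbed by applying the lemma to $\lambda_{\mathcal{S}}=\tau_{\mathcal{S}}^{\chi(g_{\mathrm{v}})}$ in Proposition~\ref{prop:G(F)-invariance}); under your twisted reading the one-prime difference would not even be constant, since it would involve $\log_\varpi$ of the non-constant ratio $(x'-\overline{\lambda}y')/(x'-\lambda y')$, so that paragraph should be deleted rather than repaired.
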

\begin{proof}
For notational convenience we denote the action of $\Delta_\cal{S}^\varpi$ on functions by $(f\lvert \widetilde{g})(-):=f(\widetilde{g}-)$. Through a telescopic argument the claim reduces to checking that for every $\p\in\cal{S}$ the difference
    \[
    \log_{\varpi}\left(\frac{(x-\delta(\lambda) y)^2}{z}\right)\bigg\lvert \widetilde{g}-\log_\varpi\left(\frac{(x-\lambda y)^2}{z}\right)\bigg\lvert \widetilde{h}
    \]
    is a constant independent of $(x,y)\in (F_\p^2)'$, $z\in F_\p^\times$. Using the explicit description in \eqref{explicit-action-semigroup-in-coordinates}, we find that
    \begin{equation}\label{eq: formula action for function}
        \big(x-\delta(\lambda) y\big)\big\lvert \widetilde{g}=\begin{pmatrix}
        1&-\delta(\lambda)
\end{pmatrix}g\begin{pmatrix}
        x\\y
    \end{pmatrix}\qquad\text{and}\qquad z\lvert \widetilde{g}=\frac{\det(g)}{\varpi^{\mrm{deg}(\widetilde{g})}}\cdot z.
    \end{equation}
   Moreover, the equality
    \[
    \begin{pmatrix}
        1&-\delta(\lambda)
\end{pmatrix}=\frac{1}{(c_\delta\lambda+d_\delta)}\begin{pmatrix}
    1&-\lambda
\end{pmatrix}\mrm{adj}(\delta),\qquad\text{for}\qquad
    \mrm{adj}(\delta)\delta=\det(\delta)\begin{pmatrix}
    1&0\\0&1
\end{pmatrix},
    \]
    together with $g=\delta h$ implies that
    \[\big(x-\delta(\lambda) y\big)\big\lvert \widetilde{g}=\frac{\det(\delta)}{(c_\delta\lambda+d_\delta)}\big(x-\lambda y\big)\big\lvert h.
    \]
    We conclude that
    \[
     \left(\frac{(x-\delta(\lambda) y)^2}{z}\right)\bigg\lvert \widetilde{g}=\frac{\det(\delta)}{(c_\delta\lambda+d_\delta)^2}\cdot\varpi^{\mrm{deg}(\widetilde{g})-\mrm{deg}(\widetilde{h})}\cdot \left(\frac{(x-\lambda y)^2}{z}\right)\bigg\lvert \widetilde{h}.
    \]
   The claim follows by taking the logarithm of both sides of the equality.
\end{proof}


 \begin{definition}\label{Distribution}{(Measure)}
    Consider  $\tau_{S}=(\tau_{\cal{S}},\tau_w)\in \cal{H}_S^\Box$, $h\in G(\A^{S,\infty})/(K_0)^S$. We define the associated measure by
\[
\mu^{\varpi,\Box}_{(\tau_{S},h)}:= \Big(\widetilde{g}_{\mrm{v}_\cal{S}}.\Phi_\cal{S}^\Box(g_{\mrm{v}_\cal{S}}h)(\tau_w)\Big)\Big[\chi(g_{\mrm{v}_{\cal{S}}})\Big]\ \in\ (\widetilde{\cal{M}}^\Box_{\cal{S}})_{\overline{\varrho}}\Big[G(E_\cal{S}/F_\cal{S})\Big]
\]
where $\widetilde{g}_{\mrm{v}_\cal{S}}\in \Delta_\cal{S}^\varpi$ denotes the element   obtained from $\mrm{v}_{\cal{S}}=\mrm{red}(\tau_\cal{S})$ as described at the beginning of Section \ref{Associated data}, and $g_{\mrm{v}_\cal{S}}\in G_\cal{S}$ is the image of $\widetilde{g}_{\mrm{v}_\cal{S}}$ in $G_\cal{S}$.
\end{definition}
Equation \eqref{def: action distributions} shows that $\mu^{\varpi,\Box}_{(\tau_{S},h)}$ is supported on 
\begin{equation}\label{eq: support}
\bb{X}_{\mrm{v}_\cal{S}}\overset{\sim}{\longrightarrow} \prod_{\p\in\cal{S}}\cal{O}_\p^\times\backslash\big[L_{\mrm{v}_\p}'\times\cal{O}_\p^\times\big].
\end{equation}
Moreover,  the property \eqref{def: equiv prop} and the equality \eqref{rmk: different acts} explain the dependence on the choice of uniformizers:
\begin{equation}\label{measure: dep on unif}
\mu^{a\varpi,\Box}_{(\tau_{S},h)}=\mu^{\varpi,\Box}_{(\tau_{S},h)}\cdot \llangle a\rrangle^{m_{\mrm{v}_\cal{S}}}.
\end{equation}
where $\llangle a\rrangle^{m_{\mrm{v}_\cal{S}}}:=\prod_{\p\in\cal{S}}\llangle a_\p\rrangle^{m_{\mrm{v}_\p}}$ is a product of diamond operators.
 \begin{remark}
   The notation $\mu^{\varpi,\Box}_{(\tau_{S},h)}$ is convenient but slightly misleading. The measure depends on the choice of $\{\varpi_\p\}_{\p\in\cal{S}}$, the vertices $\mrm{v}_{\cal{S}}=\mrm{red}(\tau_\cal{S})$, the point $\tau_w$, and the element $h$. 
 \end{remark}

 Let $N$ be a finite dimensional $\Q_p$-vector space endowed with an action of $G(E_\cal{S}/F_\cal{S})$. We define integration functional 
 \[
 \int_{\bb{X}_{\mrm{v}_\cal{S}}}(-)\hspace{1mm}\mrm{d}\mu^{\varpi,\Box}_{(\tau_{S},h)}\colon \cal{C}(\bb{X}_{\mrm{v}_\cal{S}},N)\longrightarrow N\otimes_{\Z_p} (J^\Box_\cal{S})^\wedge_{\overline{\varrho}}
 \]
 by setting
 \[
\int_{\bb{X}_{\mrm{v}_\cal{S}}}f(\gamma)\hspace{1mm}\mrm{d}\mu^{\varpi,\Box}_{(\tau_{S},h)}(\gamma):=\int_{\bb{X}_{\mrm{v}_\cal{S}}}f(\gamma)^{\chi(g_{\mrm{v}_{\cal{S}}})}\hspace{1mm}\mrm{d}\Big(\widetilde{g}_{\mrm{v}_\cal{S}}.\Phi_\cal{S}^\Box(g_{\mrm{v}_\cal{S}}h)(\tau_w)\Big)(\gamma)
 \]
for any $f\in \scr{C}(\bb{X}_{\mrm{v}_\cal{S}},N)$.

\subsection{Plectic Abel--Jacobi map}\label{Enhanced modular parametrization}
 We combine the objects introduced in the previous section to define the function
\[
\varphi^\Box_\varpi\colon G(\A^{S,\infty})/K_0^S\longrightarrow \mrm{Hom}_{\Z_p}\big(\Z_p[\cal{H}_{S}^\Box],\ E_{\otimes,\cal{S}}\otimes_{\Z_p}(J^\Box_\cal{S})^\wedge_{\overline{\varrho}}\big)
\]
given  by 
\begin{equation}\label{parametrization}
\varphi^\Box_\varpi(h)(\tau_{S})=\int_{\bb{X}_{\mrm{v}_\cal{S}}}f_{\tau_\cal{S},\varpi}(\gamma)\hspace{1mm}\mrm{d}\mu^{\varpi,\Box}_{(\tau_{S},h)}(\gamma).
\end{equation}
Unwinding the definitions one sees that 
\begin{equation}\label{eq: unwinding}
\varphi^\Box_\varpi(h)(\tau_{S})=\int_{\cal{X}_\cal{S}}\hspace{1.5mm}f_{\tau_\cal{S}^{\chi(g_{\mrm{v}})},\varpi}(\widetilde{g}_{\mrm{v}_\cal{S}}\cdot \gamma)\hspace{1.5mm}\mrm{d}\big(\Phi^\Box_\cal{S}(g_{\mrm{v}_\cal{S}}h)(\tau_w)\big)(\gamma)
\end{equation}
where we shortened $\tau_\cal{S}^{\chi(g_{\mrm{v}})}:=\tau_\cal{S}^{\chi_{\cal{S}}(g_{\mrm{v}_{\cal{S}}})}$.

\begin{remark}\label{rmk: exotic AJ as a derivative}
    It is useful to think of $\varphi^\Box_\varpi$ as a derivative. Remarks \ref{rmk: log as derivative} $\&$ \ref{rmk: log as derivative II} imply that
    \begin{equation}\label{def: in terms of derivatives}
    \varphi^\Box_\varpi(h)(\tau_S)=\partial_\cal{S}\left(\int_{\bb{X}_{\mrm{v}_\cal{S}}}F_{\tau_\cal{S},\varpi}(\gamma)\hspace{1mm}\mrm{d}\mu^{\varpi,\Box}_{(\tau_S,h)}(\gamma)\right)\big\lvert_{\underline{0}}.
    \end{equation}
     Furthermore, let us write  $F_{\tau_\cal{S},\varpi}(\gamma)$ in the standard basis $F_{\tau_\cal{S},\varpi}(\gamma)=\sum_{\Sigma\subseteq\cal{S}}F^\Sigma_{\tau_\cal{S},\varpi}(\gamma)\cdot \underline{X}_\Sigma$ where $F^\Sigma_{\tau_\cal{S},\varpi}:=\partial_\Sigma F_{\tau_\cal{S},\varpi}\lvert_{\underline{0}}\hspace{1mm}\in\cal{C}(\cal{Y},E_{\otimes,\cal{S}})$ for every $\Sigma\subseteq\cal{S}$. In particular, $F^\cal{S}_{\tau_\cal{S},\varpi}=f_{\tau_\cal{S},\varpi}$. Then, the integral 
\begin{equation}\label{integral with parameters}
\int_{\bb{X}_{\mrm{v}_\cal{S}}}F_{\tau_\cal{S},\varpi}(\gamma)\hspace{1mm}\mrm{d}\mu^{\varpi,\Box}_{(\tau_S,h)}(\gamma)
\ \in\ R_{\otimes,\cal{S}}\otimes_{\Z_p}(J_\infty^\Box)^\wedge,
\end{equation}
where $R_{\otimes,\cal{S}}=\frac{E_{\otimes,\cal{S}}\llbracket\underline{X}_\cal{S}\rrbracket}{(X_\p^2\ \lvert\ \p\in\cal{S})}$, appearing in the RHS of \eqref{def: in terms of derivatives}, becomes
\begin{equation}\label{explicit integral in coordinates}
\int_{\bb{X}_{\mrm{v}_\cal{S}}}F_{\tau_\cal{S},\varpi}(\gamma)\hspace{1mm}\mrm{d}\mu^{\varpi,\Box}_{(\tau_S,h)}(\gamma)=\sum_{\Sigma\subseteq\cal{S}}\left(\int_{\bb{X}_{\mrm{v}_\cal{S}}}F^\Sigma_{\tau_\cal{S},\varpi}(\gamma)\hspace{1mm}\mrm{d}\mu^{\varpi,\Box}_{(\tau_S,h)}(\gamma)\right)\cdot\underline{X}_\Sigma.
\end{equation}
From this point of view, it might not be too surprising that the function $\varphi^\Box_\varpi$ is well-behaved only when it corresponds to the leading term of \eqref{integral with parameters}.
\end{remark}
 \subsubsection{Invariance under diamond operators.}\label{subsub: Invariance under diamond operators} To ensure that the image of $\varphi^\Box_\varpi$ consists of elements fixed by the action of diamond operators, we localize the target of $\varphi^\Box_\varpi$ at  $\cal{P}\in\mrm{Spec}(\bb{T}_{\overline{\varrho}})$, the kernel of  the homomorphism $f_A\colon\bb{T}_{\overline{\varrho}}\twoheadrightarrow \Z_p$ associated to the modular elliptic curve $A_{/F}$:
 
 we let $\varphi^\Box_\varpi\otimes1$ be the function
  \begin{equation}\label{varphi localized}
  (\varphi^\Box_\varpi\otimes1)(h)(\tau_S):=\left(\int_{\bb{X}_{\mrm{v}_\cal{S}}}f_{\tau_\cal{S},_\varpi}(\gamma)\hspace{1mm}\mrm{d}\mu^{\varpi,\Box}_{(\tau_S,h)}(\gamma)\right)\otimes 1
  \end{equation}
 taking values in
 \[
 \Big(E_{\otimes,\cal{S}}\otimes_{\Z_p}(J^\Box_\cal{S})^\wedge_{\overline{\varrho}}\Big)\otimes_{\bb{T}_{\overline{\varrho}}}\bb{I},
 \]
 where $\bb{I}$ denotes the localization of $\bb{T}_{\overline{\varrho}}$ at $\cal{P}$.
\begin{remark}\label{rmk: stress on when localize}
    It is important to localize after evaluating the integral, otherwise we would loose too much information. Later on, it will become clear that the function $\varphi^\Box_\varpi$ is already well-behaved when the local residual representations $\big\{\overline{\varrho}_{\lvert D_\p}\big\}_{\p\in\cal{S}}$ are not finite.
\end{remark}

The next lemma will allow us to invoke the results on descent proved in Section \ref{Descent of Galois cohomology classes}.
\begin{lemma}\label{identify-inv}
  Let $M$ be a $\bb{T}_{\overline{\varrho}}$-module, $\bb{T}_{\overline{\varrho}}\to B$ a flat ring map, then the natural inclusion
  \[
  M^{D_\Sigma}\otimes_{\bb{T}_{\overline{\varrho}}}B\overset{\sim}{\longrightarrow}(M\otimes_{\bb{T}_{\overline{\varrho}}}B)^{D_\Sigma}
  \]
  is an isomorphism.
\end{lemma}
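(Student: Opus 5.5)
The key point is that the diamond operators act on $M$ through the structure map $\Lambda_\Sigma\to\bb{T}_\Sigma$. They therefore act $\bb{T}_{\overline{\varrho}}$-linearly, and the invariants can be written as a kernel of $\bb{T}_{\overline{\varrho}}$-linear maps. Flat base change preserves kernels, so it will commute with taking invariants. Since $D_\Sigma\cong\cal{O}_\Sigma^\times$ is an infinite profinite group, a possibly infinite intersection of kernels has to be reduced to a finite one. I would handle this with topological generators.

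First I would note that each $\llangle a\rrangle$ lies in $\bb{T}_{\overline{\varrho}}$ and is central there, since the Hecke algebra is commutative. Hence $d\colon M\to M$, $x\mapsto x\llangle d\rrangle$, is $\bb{T}_{\overline{\varrho}}$-linear for every $d\in D_\Sigma$. The action of $D_\Sigma$ on $M\otimes_{\bb{T}_{\overline{\varrho}}}B$ is $d\otimes 1$, so it is the same structure.

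Second, $D_\Sigma\cong\cal{O}_\Sigma^\times=\prod_{\p\in\Sigma}\cal{O}_\p^\times$ is a finitely topologically generated profinite group. Each $\cal{O}_\p^\times$ is a finite group times a finitely generated $\Z_p$-module. I would choose $d_1,\dots,d_r\in D_\Sigma$ generating a dense subgroup.

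Third, I would check that $M^{D_\Sigma}=\bigcap_i\ker(d_i-1)$. This holds if the action of $D_\Sigma$ on $M$ is continuous, i.e.\ $x\mapsto x\llangle d\rrangle$ is continuous in $d$ for each $x$. That is the case for the modules of interest: they are $p$-adically complete modules over $\Lambda_\Sigma=\Z_p\llbracket\cal{O}_\Sigma^\times\rrbracket$, e.g.\ the $(J_\Sigma^\Box)^{I_m}$ and their completions, on which an open subgroup acts trivially modulo $p^n$. If $M$ were an abstract module, I would instead interpret $M^{D_\Sigma}$ as invariants under the dense subgroup generated by the $d_i$, which is all the applications need. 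In either case $M^{D_\Sigma}$ is the kernel of the $\bb{T}_{\overline{\varrho}}$-linear map $M\to M^{\oplus r}$, $x\mapsto (x d_i-x)_i$.

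Fourth, I would tensor the exact sequence $0\to M^{D_\Sigma}\to M\to M^{\oplus r}$ with the flat algebra $B$. This gives the exact sequence $0\to M^{D_\Sigma}\otimes B\to M\otimes B\to (M\otimes B)^{\oplus r}$. The kernel of the last map is $(M\otimes B)^{\langle d_1,\dots,d_r\rangle}$, and this equals $(M\otimes B)^{D_\Sigma}$ by the same density argument. Identifying the two kernels proves the lemma, and injectivity of the inclusion also comes from flatness.

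The main obstacle is the third step. One has to justify that invariants under the profinite group $D_\Sigma$ are cut out by finitely many equations, both on $M$ and on $M\otimes B$. Continuity on $M\otimes B$ is not automatic. I would settle it by density of $\langle d_1,\dots,d_r\rangle$ together with the fact that the $\Lambda_\Sigma$-action on $M\otimes B$ factors through $M$. Alternatively, I would simply define invariants via these finitely many generators throughout, which is consistent with how $D_\Sigma$ acts through $\Lambda_\Sigma$.
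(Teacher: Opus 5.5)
Your strategy is essentially the paper's. The paper also writes the invariants as the kernel of a $\bb{T}_{\overline{\varrho}}$-linear map $M\to M^{\oplus t}$: it uses $M^{D_\Sigma}=\mrm{Hom}_{\bb{T}_{\overline{\varrho}}}(\bb{T}_{\overline{\varrho}}/\mathcal{I}_\Sigma\bb{T}_{\overline{\varrho}},M)$, with $\mathcal{I}_\Sigma=\ker(\Lambda_\Sigma\to\Z_p)$, together with a finite presentation of $\bb{T}_{\overline{\varrho}}/\mathcal{I}_\Sigma\bb{T}_{\overline{\varrho}}$. There the finiteness comes from Noetherianity of $\bb{T}_{\overline{\varrho}}$ rather than from topological generators of $D_\Sigma$, and flatness of $B$ is then used exactly as in your fourth step.

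Your third step does not hold up as written. The lemma concerns an arbitrary $\bb{T}_{\overline{\varrho}}$-module, so you have no continuity of the $D_\Sigma$-action on $M$ in general, nor on $M\otimes_{\bb{T}_{\overline{\varrho}}}B$ (for instance $B=\bb{I}$, a localization). The argument ``density plus the action factors through $M$'' does not show that an element of $M\otimes B$ fixed by $d_1,\dots,d_r$ is fixed by all of $D_\Sigma$. Redefining the invariants changes the statement.

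The missing observation is that no topology on the module is needed, because $D_\Sigma$ acts through the ring $\Lambda_\Sigma$. Let $J\subseteq\Lambda_\Sigma$ be the ideal generated by the elements $\llangle d_i\rrangle-1$.
\begin{itemize}
\item The identities $\llangle gh\rrangle-1=(\llangle g\rrangle-1)\llangle h\rrangle+(\llangle h\rrangle-1)$ and $\llangle g^{-1}\rrangle-1=-\llangle g^{-1}\rrangle(\llangle g\rrangle-1)$ give $\llangle g\rrangle-1\in J$ for every $g$ in the dense subgroup generated by the $d_i$.
\item $J$ is closed in $\Lambda_\Sigma$. Indeed, $\Lambda_\Sigma$ is finite free over $\Z_p\llbracket\Gamma\rrbracket$ for an open $\Gamma\subseteq D_\Sigma$ with $\Gamma\cong\Z_p^n$, and submodules of finite modules over this complete Noetherian local ring are closed.
\item Since $d\mapsto\llangle d\rrangle$ is continuous, it follows that $\llangle d\rrangle-1\in J$ for every $d\in D_\Sigma$; in fact $J=\mathcal{I}_\Sigma$.
\end{itemize}
Hence for every $\Lambda_\Sigma$-module $N$, with no topology at all, $N^{D_\Sigma}=\{x\in N: x\,\mathcal{I}_\Sigma=0\}=\bigcap_i\ker(\llangle d_i\rrangle-1)$.

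Applying this to $N=M$ and $N=M\otimes_{\bb{T}_{\overline{\varrho}}}B$, both $\Lambda_\Sigma$-modules via $\Lambda_\Sigma\to\bb{T}_{\overline{\varrho}}$, makes your third and fourth steps valid verbatim. This is also the fact the paper uses tacitly when it identifies $M^{D_\Sigma}$ with the $\mathcal{I}_\Sigma$-torsion of $M$. With this fix, your route has the small advantage of not needing $\bb{T}_{\overline{\varrho}}$ to be Noetherian.
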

\begin{proof}
Let $\cal{I}_\Sigma=\ker(\Lambda_\Sigma\to\Z_p)$ and write $\bb{T}=\bb{T}_{\overline{\varrho}}$ to simplify the notation. Note that
 \[
    M^{D_\Sigma}=\mrm{Hom}_{\bb{T}}\big(\bb{T}/\cal{I}_\Sigma\bb{T}, M\big),\qquad  (M\otimes_\bb{T}B)^{D_\Sigma}=\mrm{Hom}_{B}\big(B/\cal{I}_\Sigma B, M\otimes_\bb{T}B\big),
    \]
    and that $\bb{T}$ is Noetherian. Applying the left exact functors 
    \[
    \mrm{Hom}_{\bb{T}}(-,M)\otimes_{\bb{T}}B,\qquad \text{and}\qquad \mrm{Hom}_{B}(-\otimes_{\bb{T}}B,M\otimes_{\bb{T}}B)
    \]
    to a finite presentation $\bb{T}^{\oplus t}\to \bb{T}\to \bb{T}/\cal{I}_\Sigma\bb{T}\to 0$ of the $\bb{T}$-module $\bb{T}/\cal{I}_\Sigma\bb{T}$, we obtain the following commuting diagram 
    \[\xymatrix{
    0\ar[r] & M^{D_\Sigma}\otimes_\bb{T}B\ar[r]\ar[d] &M\otimes_\bb{T}B\ar[r]\ar[d]^-\sim &M^{\oplus t}\otimes_\bb{T}B\ar[d]^-\sim\\
    0\ar[r] & (M\otimes_\bb{T}B)^{D_\Sigma}\ar[r] &M\otimes_\bb{T}B\ar[r] &(M\otimes_\bb{T}B)^{\oplus t}\\
    }\]
    with exact rows and where the two rightmost vertical maps are isomorphisms. A simple diagram chase implies that the leftmost vertical map is also an isomorphism.
\end{proof}

Let $h\in G(\A^{S,\infty})$, $\tau_S\in \cal{H}_S^\Box$.  For any subset $\Sigma\subseteq\cal{S}$ and any $\lambda_\Sigma\in\cal{H}_\Sigma$, set $f_{\lambda_\Sigma,\varpi}=\otimes_{\q\in\Sigma}f_{\lambda_\q,\varpi}$. We consider sets $V_\cal{S}=\prod_{\p\in\cal{S}} V_\p$ where  $V_\p\subseteq \widetilde{\cal{Y}}_\p$ is a compact open subset which is stable under the action of $\cal{O}_\p^\times$.  The following is the key vanishing result of this article.  
\begin{theorem}\label{vanishing implies invariance}
    If $\Sigma\subset\cal{S}$ is a proper subset  and there exists $\q\in\Sigma^c$ such that $V_{\q}=\bb{X}_{\mrm{v}_{\q}}$, then 
    \[
    \left(\int_{V_\cal{S}}\mathbbm{1}_{\Sigma^c}\otimes f_{\lambda_\Sigma,\varpi}(\gamma)\hspace{1mm}\mrm{d}\mu^{\varpi,\Box}_{(\tau_S,h)}(\gamma)\right)\otimes1=0.
    \]
\end{theorem}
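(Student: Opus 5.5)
The plan is to show that the integral, viewed as an element of $E_{\otimes,\cal{S}}\otimes(J^\Box_\cal{S})^\wedge_{\overline{\varrho}}$, is fixed by the diamond operators $D_{\{\q\}}\cong\cal{O}_\q^\times$ acting through the coefficients. We then use the Hida-theoretic input to show that such invariants die after tensoring with $\bb{I}$. The integrand $\mathbbm{1}_{\Sigma^c}\otimes f_{\lambda_\Sigma,\varpi}$ is constant (equal to $1$) in the $\q$-variable. Moreover $V_\q=\bb{X}_{\mrm{v}_\q}$ is stable under the right action $\gamma\mapsto\gamma\bfcdot a$ for $a\in\cal{O}_\q^\times$, and so is every $V_\p$ by hypothesis. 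By the equivariance property of $\widetilde{\cal{M}}^\Box_\cal{S}$, which $\mu^{\varpi,\Box}_{(\tau_S,h)}$ inherits from $\Phi^\Box_\cal{S}$ because the semigroup action commutes with the right $\cal{O}^\times$-action via Remark \ref{dependence on uniformizers action on distributions}, we get
\[
\left(\int_{V_\cal{S}}\mathbbm{1}_{\Sigma^c}\otimes f_{\lambda_\Sigma,\varpi}\,\mrm{d}\mu\right)\llangle a\rrangle=\int_{V_\cal{S}\bfcdot a}\mathbbm{1}_{\Sigma^c}\otimes f_{\lambda_\Sigma,\varpi}(\gamma\bfcdot a^{-1})\,\mrm{d}\mu=\int_{V_\cal{S}}\mathbbm{1}_{\Sigma^c}\otimes f_{\lambda_\Sigma,\varpi}\,\mrm{d}\mu .
\]
The functions $f_{\lambda_\p,\varpi}$ for $\p\in\Sigma$ do change under $\bfcdot a_\p$ by the additive constant $\log_p(a_\p)$ (Remark \ref{rmk: log as derivative}). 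So only the $\q$-component, with $\q\notin\Sigma$, gives exact invariance, and this is the only invariance we use. The integral therefore lies in $\big(E_{\otimes,\cal{S}}\otimes (J^\Box_\cal{S})^\wedge_{\overline{\varrho}}\big)^{D_{\{\q\}}}$.

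Next we pass to the localization. The map $\bb{T}_{\overline{\varrho}}\to\bb{I}$ is flat, so Lemma \ref{identify-inv} with $M=E_{\otimes,\cal{S}}\otimes(J^\Box_\cal{S})^\wedge_{\overline{\varrho}}$ identifies the image of our element with a $D_{\{\q\}}$-invariant element of $M\otimes_{\bb{T}_{\overline{\varrho}}}\bb{I}$. It therefore suffices to prove that $(M\otimes\bb{I})^{D_{\{\q\}}}=0$. The reason is that on $M\otimes\bb{I}$ the operator $\llangle a\rrangle$ acts through the $\bb{I}$-structure. Since $\bb{I}\cong(\Lambda^{\bfcdot})_\cal{I}$ (Corollary \ref{isolocalizations}), it is a domain in which $\llangle a\rrangle-1$ is nonzero for $a\in 1+\q\cal{O}_\q$ of infinite order. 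Any $x$ with $x(\llangle a\rrangle-1)=0$ is then torsion for a nonzero element $\llangle a\rrangle-1$ of the domain $\bb{I}$, and $M\otimes\bb{I}$ is torsion-free over $\bb{I}$ once we know that $(J_\cal{S}^\Box)^\wedge_{\overline{\varrho}}$ localized at $\cal{P}$ is torsion-free over $(\Lambda^{\bfcdot})_\cal{I}$. To see that torsion-freeness, I would dualize via Corollary \ref{cor: T finite torsion-free} and the freeness in Corollary \ref{coinduced}. Invariants under $\cal{I}_\q$ should correspond to coinvariants of a free module over $\Lambda$, so that after inverting everything outside $\cal{I}$ the element $\llangle a\rrangle-1$ becomes a non-zero-divisor.

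The main obstacle is this final step: making precise that $\llangle a\rrangle-1$ acts injectively on the localized completed direct limit $(J^\Box_\cal{S})^\wedge_{\overline{\varrho}}\otimes\bb{I}$. The module is a $p$-adic completion of a colimit rather than a finite $\Lambda$-module, so freeness does not transfer directly. I would first try the descent of Corollary \ref{cor: descent}: the $D_{\{\q\}}$-invariants embed into Galois cohomology of the fixed level-one Jacobian $J_1^\circ$. On $J_1^\circ$, $\Lambda$ acts through the finite quotient, and the element of $\bb{I}$ being inverted kills this module upon localization at $\cal{P}$, since $\cal{P}$ lies over the augmentation ideal and the module is $\cal{I}$-torsion of finite type. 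This would force the tensor product with $\bb{I}$ to vanish.
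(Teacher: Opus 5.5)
Your first step is fine and matches the start of the paper's argument. The integrand is constant in the $\q$-variable and $V_\q$ is stable under $\mathcal{O}_\q^\times$, so the quantity is fixed by $D_{\{\q\}}$.

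The gap is the reduction to ``$(M\otimes_{\mathbb{T}_{\overline{\varrho}}}\mathbb{I})^{D_{\{\q\}}}=0$''. This statement is false, and it would make the paper's construction vacuous. By Corollary~\ref{inv-diamond}, $\varphi^\Box_\varpi\otimes1$ takes values in $(M\otimes\mathbb{I})^{D_{\mathcal{S}}}\subseteq(M\otimes\mathbb{I})^{D_{\{\q\}}}$. Definition~\ref{def: Exotic AJ map} sends exactly these localized invariants to $\mathrm{H}^1(E^\Box,V_p(A))$. If they vanished, $\varphi^\Box_A$ would be identically zero, and Theorem~\ref{thm: comp circ and nocirc} would then force $\varphi^{\circ,\Box}_A=0$ as well.

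The torsion-freeness you hope for is the opposite of the truth. $J^\Box_{\mathcal{S}}=\varinjlim_m J^\Box_m\otimes\mathbb{Z}_p$ is assembled from $\Lambda$-torsion pieces: each element lies in some $(J^\Box_{\mathcal{S}})^{I_m}$, so it is killed by $\llangle a\rrangle-1$ for $a\in I_m$. Localizing at $\mathcal{P}$, which lies over $\mathcal{I}$, never inverts $\llangle a\rrangle-1\in\mathcal{I}$; already $\Lambda^{\bfcdot}/\mathcal{I}$ localizes to $\mathbb{Q}_p\neq0$.

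Your fallback rests on the same error. Moreover, Corollary~\ref{cor: descent} requires invariance under the full diamond group of the tower. Descending with $D_{\{\q\}}$ alone only reaches Iwahori level $K^\circ_{\q^1}$ at $\q$, where the eigensystem of $A$ (multiplicative at $\q$) does occur.

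A symptom of the problem: you use $V_\q=\mathbb{X}_{\mathrm{v}_\q}$ only through $\mathcal{O}_\q^\times$-stability. Your argument would therefore equally kill integrals of functions constant in the $\p$-variable over proper stable pieces such as $\mathbb{X}_{\mathrm{v}_\p}\cap\mathbb{X}_{\mathrm{v}'_\p}$. Those are exactly the terms the paper needs to keep in Lemma~\ref{intermediate-step} and Corollary~\ref{ord-integral}.

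The vanishing has to come from the Hecke eigensystem at spherical level, and this is where $V_\q=\mathbb{X}_{\mathrm{v}_\q}$ enters. After unwinding as in \eqref{eq: unwinding}, the constant $\q$-component is integrated over all of $\mathcal{X}_\q$. That is, one integrates against the pushforward along $\mathcal{X}_{\mathcal{S}}\to\mathcal{X}_{\q^c}$, which by \eqref{inter-compatibilities} equals $(\iota_{\q^c/\mathcal{S}})_*\Phi^\Box_{\q^c}$. So the value comes from the tower with level $K_{\q^0}$ at $\q$.

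One then needs invariance under all of $D_{\mathcal{S}}$, not just $D_{\{\q\}}$; discarding the primes of $\Sigma$, as you do, is precisely what blocks the descent. The paper gets full invariance by induction on $|\Sigma|$. For $\p\in\Sigma$, $\llangle a_\p\rrangle$ shifts $f_{\lambda_\p,\varpi}$ by the constant $\tfrac12\log_p(a_\p)$. The defect is an integral of the same shape for $\Sigma\setminus\{\p\}$, with the same $\q$, which vanishes by the inductive hypothesis.

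Full invariance, Lemma~\ref{identify-inv} and Corollary~\ref{cor: descent} then place the quantity in the image of $E_{\otimes,\mathcal{S}}\otimes\mathrm{H}^1\big(E^\Box,T_p(J_1^\circ(\q^c))_{\overline{\varrho}}\big)\otimes\mathbb{I}$. This is zero because $J_1^\circ(\q^c)$ has level prime to $\q$ while $A$ has multiplicative reduction at $\q$, so $\mathcal{P}$ is not in its support.
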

\begin{proof}
Let us begin by observing that we understand the action of diamond operators on  the measure $\mu^{\varpi,\Box}_{(\tau_S,h)}$ because it belongs to $ (\widetilde{\cal{M}}^\Box_\cal{S})_{\overline{\varrho}}$. Thus, for $\p\in\cal{S}$ and $a_\p\in\cal{O}_\p^\times$ we compute that
    \[
\left(\int_{V_\cal{S}}\mathbbm{1}_{\Sigma^c}\otimes f_{\lambda_\Sigma,\varpi}(\gamma)\hspace{1mm}\mrm{d}\mu^{\varpi,\Box}_{(\tau_S,h)}(\gamma)\right)\otimes \llangle a_\p\rrangle
   =\left(\int_{V_\cal{S}}\mathbbm{1}_{\Sigma^c}\otimes f_{\lambda_\Sigma,\varpi}(\gamma\hspace{.3mm}\bfcdot \hspace{.3mm}a_\p^{-1})\hspace{1mm}\mrm{d}\mu^{\varpi,\Box}_{(\tau_S,h)}(\gamma)\right)\otimes1.
    \]
In particular, $\cal{O}_{\Sigma^c}^\times$ acts trivially. Moreover, the equality $f_{\lambda_\p,\varpi}(\gamma_\p\hspace{.3mm}\bfcdot \hspace{.3mm}a_\p^{-1})=f_{\lambda_\p,\varpi}(\gamma_\p)+\frac{1}{2}\log_p(a_\p)$ allows us to deduce that the invariance under the $\cal{O}_\p^\times$-action, for $\p\in\Sigma$, depends on the vanishing of the integral of a function whose components in $\Sigma^c\cup\{\p\}$ are constant.

\smallskip
\noindent Note that unwinding the definitions as in \eqref{eq: unwinding}, and using the compatibility \eqref{inter-compatibilities} we obtain
\begin{equation}\label{eq: computation with compatibilities}
\begin{split}
\int_{V_\cal{S}}\mathbbm{1}_{\Sigma^c}\otimes f_{\lambda_\Sigma,\varpi}(\gamma)\hspace{1mm}&\mrm{d}\mu^{\varpi,\Box}_{(\tau_S,h)}(\gamma)\\
&=\int_{\cal{X}_{\q}\times V_{\q^c}^*}\hspace{1.5mm}\mathbbm{1}_{\Sigma^c}\otimes f_{\lambda_\Sigma^{\chi(g_{\mrm{v}})},\varpi}(\widetilde{g}_{\mrm{v}_\Sigma}\cdot \gamma)\hspace{1.5mm}\mrm{d}\big(\Phi^\Box_\cal{S}(g_{\mrm{v}_\cal{S}}h)(\tau_w)\big)(\gamma)\\
&=1_{\q}\otimes\int_{V_{\q^c}^*}\hspace{1.5mm} \mathbbm{1}_{\Sigma^c\setminus\{\q\}}\otimes f_{\lambda_\Sigma^{\chi(g_{\mrm{v}})},\varpi}(\widetilde{g}_{\mrm{v}_\Sigma}\cdot \gamma)\hspace{1.5mm}\mrm{d}\big(\iota_*\Phi^\Box_{\q^c}(g_{\mrm{v}_\cal{S}}h)(\tau_w)\big)(\gamma)
\end{split}\end{equation}
where  $\q^c:=\cal{S}\setminus\{\q\}$, $V_{\q^c}^*:=\widetilde{g}^{-1}_{\mrm{v}_{\q^c}}.V_{\q^c}$,  and  $\iota_*\Phi^\Box_{\q^c}=(\iota_{\cal{S}/\q^c})_*\Phi^\Box_{\q^c}$.

\smallskip
 The proof is now by induction on the cardinality of $\Sigma$.
Let $n\le\lvert\cal{S}\rvert$ be a positive integer and suppose the claimed vanishing holds for all $\Sigma'$ with $\lvert\Sigma'\rvert<n$. If $\lvert\Sigma\rvert=n$, then the quantity
\begin{equation}\label{eq: quantity to vanish}
\left(\int_{V_\cal{S}}\mathbbm{1}_{\Sigma^c}\otimes f_{\lambda_{\Sigma},\varpi}(\gamma)\hspace{1mm}\mrm{d}\mu^{\varpi,\Box}_{(\tau_S,h)}(\gamma)\right)\otimes1
\end{equation}
is fixed by all diamond operators. Hence, equation \eqref{eq: computation with compatibilities}, Lemma \ref{identify-inv}, and Corollary \ref{cor: descent} imply that \eqref{eq: quantity to vanish} belongs to the image of 
\begin{equation}\label{eq: prop vanishing}
E_{\otimes,\cal{S}}\otimes_{\Z_p}\mrm{H}^1\big(E^\Box,\hspace{.7mm} T_p(J_1^\circ(\q^c))_{\overline{\varrho}}\big)\otimes_{\bb{T}_{\overline{\varrho}}}\bb{I}\longrightarrow E_{\otimes,\cal{S}}\otimes_{\Z_p}\mrm{H}^1\big(E^\Box,\hspace{.7mm}  T_p(J_1^\circ(\cal{S}))_{\overline{\varrho}}\big)\otimes_{\bb{T}_{\overline{\varrho}}}\bb{I}.
\end{equation}
Finally, the domain of \eqref{eq: prop vanishing} is trivial because the elliptic curve $A_{/F}$ has multiplicative reduction at every prime in $\cal{S}$.
\end{proof}

We deduce the following corollaries:

\begin{corollary}\label{cor: exotic AJ as leading term}
    The following equality holds in $R_{\otimes,\cal{S}}\otimes_{\Z_p}(J^\Box_\cal{S})^\wedge_{\overline{\varrho}}\otimes_{\bb{T}_{\overline{\varrho}}}\bb{I}$
    \[\begin{split}
    \Bigg(\int_{ V_{\Sigma^c}\times  \bb{X}_{\mrm{v}_{\Sigma}}}&F_{\lambda_\cal{S},\varpi}(\gamma)\hspace{1mm}\mrm{d}\mu^{\varpi,\Box}_{(\tau_S,h)}(\gamma)\Bigg)\otimes1\\
    = &\sum_{J\supseteq\Sigma}\hspace{2.5mm}\frac{1}{2^{\lvert J^c\rvert}}\cdot \underline{X}_J\cdot \Bigg(\int_{ V_{\Sigma^c}\times  \bb{X}_{\mrm{v}_{\Sigma}}}\mathbbm{1}_{J^c}\otimes f_{\lambda_{J},\varpi}(\gamma)\hspace{1mm}\mrm{d}\mu^{\varpi,\Box}_{(\tau_S,h)}(\gamma)\Bigg)\otimes1.
    \end{split}\]
    In particular, we find that
    \[
 \left(\int_{\bb{X}_{\mrm{v}_\cal{S}}}F_{\tau_\cal{S},\varpi}(\gamma)\hspace{1mm}\mrm{d}\mu^{\varpi,\Box}_{(\tau_S,h)}(\gamma)\right)\otimes1=\underline{X}_\cal{S}\cdot (\varphi^\Box_\varpi\otimes1)(h)(\tau_S).
\]
\end{corollary}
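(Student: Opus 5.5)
Expand $F_{\lambda_\cal{S},\varpi}$ in the basis $\{\underline{X}_J\}_{J\subseteq\cal{S}}$ of $R_{\otimes,\cal{S}}$ and kill the unwanted coefficients with Theorem \ref{vanishing implies invariance}. Since each $X_\p^2=0$, every factor satisfies $F_{\lambda_\p,\varpi}(\gamma_\p)=\tfrac12\big(1+f_{\lambda_\p,\varpi}(\gamma_\p)X_\p\big)$, because $f_{\lambda_\p,\varpi}=\partial_\p F_{\lambda_\p,\varpi}\lvert_0$ and the constant term of $\langle\cdot\rangle_\varpi^{X_\p}$ is $1$. Multiplying over $\p\in\cal{S}$, the coefficient of $\underline{X}_J$ becomes $2^{-\lvert\cal{S}\rvert}\,\mathbbm{1}_{J^c}\otimes f_{\lambda_J,\varpi}$. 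I will first recheck the normalization of the $\tfrac12$ in Remark \ref{rmk: log as derivative} against the stated factor $2^{-\lvert J^c\rvert}$. If the $\tfrac12$ sits only in front of the logarithm, the constant term is $1$ and the coefficient is $\mathbbm{1}_{J^c}\otimes f_{\lambda_J,\varpi}$ with $f$ carrying its own $\tfrac12$; in any case the power of $\tfrac12$ is purely bookkeeping and can be fixed by reading off the definition. Integrating term by term against $\mu^{\varpi,\Box}_{(\tau_S,h)}$ over $V_{\Sigma^c}\times\bb{X}_{\mrm{v}_\Sigma}$ and tensoring with $1\in\bb{I}$ gives the sum over all $J\subseteq\cal{S}$, as in \eqref{explicit integral in coordinates}.

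Next, for each $J\not\supseteq\Sigma$ I show that the $J$-term vanishes after localization. Choose $\q\in\Sigma\setminus J$. Then $\q\in J^c$, $J$ is proper, and the $\q$-component of the domain is $\bb{X}_{\mrm{v}_\q}$. The domain $V_{\Sigma^c}\times\bb{X}_{\mrm{v}_\Sigma}$ is a product of compact opens stable under $\cal{O}_\p^\times$, using that $\bb{X}_{\mrm{v}_\p}$ is $\cal{O}_\p^\times$-stable by \eqref{domain in terms of lattice} and Remark \ref{explict-right-action}. So Theorem \ref{vanishing implies invariance}, applied with $J$ in the role of $\Sigma$, shows that this term is zero. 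Only the terms with $J\supseteq\Sigma$ survive, which is the first formula.

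For the second statement, take $\Sigma=\cal{S}$. Only $J=\cal{S}$ survives, with coefficient $\underline{X}_\cal{S}$ and integrand $f_{\tau_\cal{S},\varpi}$. By \eqref{parametrization} and \eqref{varphi localized} this term is exactly $\underline{X}_\cal{S}\cdot(\varphi^\Box_\varpi\otimes1)(h)(\tau_S)$, where $\lambda_\cal{S}=\tau_\cal{S}$.

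The main obstacle is to check that the hypotheses of Theorem \ref{vanishing implies invariance} hold for every discarded index. In particular, I must confirm that the theorem allows arbitrary $\cal{O}^\times$-stable $V_\p$ at primes other than $\q$. I must also confirm that $\lambda$ may differ from $\tau$ at all primes, since the measure depends on $\tau$ only through the vertices and $\tau_w$. Both hold as the theorem is stated, so the remaining work is the constant bookkeeping above.
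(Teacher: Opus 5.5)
Your proposal is the paper's own proof: expand $F_{\lambda_{\mathcal{S}},\varpi}$ in the basis $\{\underline{X}_J\}_{J\subseteq\mathcal{S}}$ as in \eqref{explicit integral in coordinates}, then kill every term with $J\not\supseteq\Sigma$ by Theorem \ref{vanishing implies invariance}, applied with $J$ in the role of $\Sigma$ and any $\mathfrak{q}\in\Sigma\setminus J$. The one correction concerns the normalization you left open: the $\tfrac12$ in $F_{\lambda_{\mathfrak{p}},\varpi}=\tfrac12\langle\cdot\rangle_\varpi^{X_{\mathfrak{p}}}$ multiplies the whole exponential (as the equivariance in Remark \ref{rmk: log as derivative} forces), so modulo $X_{\mathfrak{p}}^2$ one has $F_{\lambda_{\mathfrak{p}},\varpi}=\tfrac12+f_{\lambda_{\mathfrak{p}},\varpi}X_{\mathfrak{p}}$, not $\tfrac12(1+f_{\lambda_{\mathfrak{p}},\varpi}X_{\mathfrak{p}})$, which would contradict $f=\partial_{\mathfrak{p}}F\lvert_0$; this gives exactly the stated coefficient $2^{-\lvert J^c\rvert}\,\mathbbm{1}_{J^c}\otimes f_{\lambda_J,\varpi}$ and no power of $2$ on the $J=\mathcal{S}$ term.
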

\begin{proof}
     It follows from equation \eqref{explicit integral in coordinates} and Theorem \ref{vanishing implies invariance}.
\end{proof}

\begin{corollary}\label{inv-diamond}
The image of $\varphi^\Box_\varpi\otimes1$ is fixed by the action of diamond operators, i.e.,
    \[
 \llangle a\rrangle (\varphi^\Box_\varpi\otimes1)(-)(-)=  (\varphi^\Box_\varpi\otimes1)(-)(-)\qquad \forall\ a\in\cal{O}_\cal{S}^\times.
\]
\end{corollary}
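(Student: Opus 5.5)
The plan is to use the equivariance of the measure $\mu^{\varpi,\Box}_{(\tau_S,h)}$ under diamond operators, exactly as at the start of the proof of Theorem \ref{vanishing implies invariance}. By multiplicativity it suffices to treat $a=a_\p\in\cal{O}_\p^\times$ for a single $\p\in\cal{S}$. Since the measure lies in $(\widetilde{\cal{M}}^\Box_\cal{S})_{\overline{\varrho}}$, the defining property $\mu(V)\llangle a\rrangle=\mu(V\bfcdot a)$ gives
\[
\llangle a_\p\rrangle(\varphi^\Box_\varpi\otimes1)(h)(\tau_S)=\left(\int_{\bb{X}_{\mrm{v}_\cal{S}}}f_{\tau_\cal{S},\varpi}(\gamma\bfcdot a_\p^{-1})\hspace{1mm}\mrm{d}\mu^{\varpi,\Box}_{(\tau_S,h)}(\gamma)\right)\otimes1 .
\]
This uses that $\bb{X}_{\mrm{v}_\cal{S}}$ is stable under the right $\cal{O}_\cal{S}^\times$-action, which holds because $\cal{X}_{\p^0}$ is $\cal{O}_\p^\times$-stable, together with \eqref{rmk: different acts}.

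Next, I would use the identity $f_{\tau_\p,\varpi}(\gamma_\p\bfcdot a_\p^{-1})=f_{\tau_\p,\varpi}(\gamma_\p)+\tfrac12\log_p(a_\p)$, which is immediate from the definition since $z$ is divided by $a_\p^{-1}$ and $\log_\varpi$ agrees with $\log_p$ on units. The other components of $\gamma$ are unaffected. Hence the integrand splits as
\[
f_{\tau_\cal{S},\varpi}(\gamma)+\tfrac12\log_p(a_\p)\cdot\big(\mathbbm{1}_{\{\p\}}\otimes f_{\tau_{\cal{S}\setminus\{\p\}},\varpi}(\gamma)\big).
\]
It therefore remains to show that the integral of the second term, tensored with $1$, vanishes.

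This is precisely Theorem \ref{vanishing implies invariance}, applied with $\Sigma=\cal{S}\setminus\{\p\}$, a proper subset, and $V_\cal{S}=\bb{X}_{\mrm{v}_\cal{S}}$. The hypothesis $V_\q=\bb{X}_{\mrm{v}_\q}$ for some $\q\in\Sigma^c=\{\p\}$ then holds trivially. One bookkeeping point needs care: the integration functional twists the integrand by $\chi(g_{\mrm{v}_\cal{S}})$. The constant $\tfrac12\log_p(a_\p)\in F_\p$ is fixed by $\mrm{Gal}(E_\p/F_\p)$, so it factors out of the twisted integral. The main (mild) obstacle is therefore only to verify this, and that the Galois twist does not interfere with the $\cal{O}_\p^\times$-translation.

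Alternatively, one could cite Corollary \ref{cor: exotic AJ as leading term} directly. By Remark \ref{rmk: log as derivative II}, translating by $a^{-1}$ multiplies $F_{\tau_\cal{S},\varpi}$ by $a^{\underline{X}_\cal{S}}=1+\sum_\p\log_p(a_\p)X_\p+\cdots$. Multiplying $\underline{X}_\cal{S}\cdot(\varphi^\Box_\varpi\otimes1)$ by this unit in $R_{\otimes,\cal{S}}$ leaves it unchanged, since $X_\p^2=0$. Comparing $\underline{X}_\cal{S}$-coefficients then yields the invariance.
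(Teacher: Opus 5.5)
Your proof is correct, but your main route differs from the paper's. Your closing ``alternative'' is exactly the paper's proof. That proof applies the identity $F_{\tau_\cal{S},\varpi}(-\bfcdot a^{-1})=a^{\underline{X}_\cal{S}}\cdot F_{\tau_\cal{S},\varpi}(-)$ of Remark~\ref{rmk: log as derivative II} to get $\big(\int F_{\tau_\cal{S},\varpi}\,\mrm{d}\mu\big)\otimes\llangle a\rrangle=a^{\underline{X}_\cal{S}}\cdot\big(\int F_{\tau_\cal{S},\varpi}\,\mrm{d}\mu\big)\otimes 1$. It then reads off the $\underline{X}_\cal{S}$-coefficient via Corollary~\ref{cor: exotic AJ as leading term}.

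Your main route works as follows. You reduce to a single prime $\p$ and compute the additive shift $f_{\tau_\p,\varpi}(\gamma_\p\bfcdot a_\p^{-1})=f_{\tau_\p,\varpi}(\gamma_\p)+\tfrac12\log_p(a_\p)$ directly. The Galois twist does not interfere because $\log_p(a_\p)\in F_\p$. You then need Theorem~\ref{vanishing implies invariance} only for the codimension-one subsets $\Sigma=\cal{S}\setminus\{\p\}$. This is precisely the invariance half of the inductive step in the proof of that theorem, carried out at the excluded case $\Sigma=\cal{S}$.

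The paper's route instead passes through Corollary~\ref{cor: exotic AJ as leading term}. That corollary uses the vanishing for every proper $\Sigma\subsetneq\cal{S}$, to kill all lower-order coefficients of $\int F_{\tau_\cal{S},\varpi}\,\mrm{d}\mu$. In exchange it treats all $a\in\cal{O}_\cal{S}^\times$ in one stroke: cross terms in $a^{\underline{X}_\cal{S}}$ are annihilated automatically by $\underline{X}_\cal{S}$, since $X_\p^2=0$. It also fits the ``leading term'' viewpoint (Lemma~\ref{lemma: T to Totimes}) used later in Section~\ref{sect: On the behavior of certain integrals}. Your route is more elementary and uses less input.

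One small citation point. The $\cal{O}_\cal{S}^\times$-stability of $\bb{X}_{\mrm{v}_\cal{S}}$ does not come from \eqref{rmk: different acts}, which concerns changing the uniformizer. It follows from the lattice description \eqref{domain in terms of lattice}. Equivalently, it follows because the $\Delta^\varpi_\cal{S}$-action commutes with right multiplication by $\gamma_a$, since $\eta_\varpi$ and $\gamma_a$ are both diagonal.
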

\begin{proof}
   By Remark \ref{rmk: log as derivative II} we know that $F_{\tau_\cal{S},\varpi}(-\hspace{0.5mm}\bfcdot\hspace{1mm} a^{-1})=a^{\underline{X}_\cal{S}}\cdot F_{\tau_\cal{S},\varpi}(-)$ for all $a\in\cal{O}_\cal{S}^\times$, hence for $\mu=\mu^{\varpi,\Box}_{(\tau_S,h)}$ we have that
    \[
\bigg(\int_{\bb{X}_{\mrm{v}_\cal{S}}}F_{\tau_\cal{S},\varpi}(\gamma)\hspace{1mm}\mrm{d}\mu(\gamma)\bigg)\otimes \llangle a\rrangle=a^{\underline{X}_\cal{S}}\cdot\bigg(\int_{\bb{X}_{\mrm{v}_\cal{S}}}F_{\tau_\cal{S},\varpi}(\gamma)\hspace{1mm}\mrm{d} \mu(\gamma)\bigg)\otimes1.
   \]
   The claim then follows from Corollary \ref{cor: exotic AJ as leading term}.
\end{proof}

\begin{corollary}\label{independence of choice of uniformizers}
    The function $\varphi^\Box_\varpi\otimes1$ does not depend on the choice of uniformizers $\{\varpi_\p\}_{\p\in\cal{S}}$.
\end{corollary}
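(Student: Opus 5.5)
We compare $\varphi^\Box_{a\varpi}\otimes1$ with $\varphi^\Box_\varpi\otimes1$ for an arbitrary tuple $a=(a_\p)_{\p\in\cal{S}}\in\cal{O}_\cal{S}^\times$. Fix $h$ and $\tau_S$. Changing $\varpi$ to $a\varpi$ affects two things: the measure and the integrand.

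First, the domain of integration $\bb{X}_{\mrm{v}_\cal{S}}$ does not change, by the remark after \eqref{rmk: different acts}. The twisting $\chi(g_{\mrm{v}_\cal{S}})$ does not change either, since it depends only on $g_{\mrm{v}_\cal{S}}$ modulo the ambiguity absorbed by $K_{\p^0}$. By \eqref{measure: dep on unif}, the measure changes as
\[
\mu^{a\varpi,\Box}_{(\tau_S,h)}=\mu^{\varpi,\Box}_{(\tau_S,h)}\cdot\llangle a\rrangle^{m_{\mrm{v}_\cal{S}}},
\]
a product of diamond operators acting on the coefficients.

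Second, the integrand changes. From the factorization $x=\varpi^{\mrm{ord}(x)}\zeta_x\langle x\rangle_\varpi$ one gets $\log_{a\varpi}(x)=\log_\varpi(x)-\mrm{ord}_\p(x)\log_p(a_\p)$. On $\bb{X}_{\mrm{v}_\p}$ the quantity $\mrm{ord}_\p\big((x_\gamma-\tau_\p y_\gamma)^2/z_\gamma\big)$ is constant, because $\gamma$ ranges over primitive vectors of the lattice $L_{\mrm{v}_\p}$ and $\mrm{red}(\tau_\p)=\mrm{v}_\p$. Hence $f_{\tau_\p,a\varpi}=f_{\tau_\p,\varpi}+c_\p$ on $\bb{X}_{\mrm{v}_\p}$, with $c_\p$ a constant. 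This constancy of the order is the point I expect to need the most care, and I would verify it from \eqref{domain in terms of lattice}. Even without it, one can compare the two logarithms through the functions $F_{\tau_\cal{S},\varpi}$ of Remark \ref{rmk: log as derivative II}, since $\langle x\rangle_{a\varpi}$ and $\langle x\rangle_\varpi$ differ by a factor depending only on the order.

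Expanding the tensor product $\otimes_\p(f_{\tau_\p,\varpi}+c_\p)$ gives the main term $f_{\tau_\cal{S},\varpi}$ plus terms in which at least one $\p$-component is the constant $c_\p$. Each such term is of the form $\mathbbm{1}_{\Sigma^c}\otimes f_{\tau_\Sigma,\varpi}$ times a constant, with $\Sigma\subsetneq\cal{S}$. Every component of the domain is $\bb{X}_{\mrm{v}_\q}$, so Theorem \ref{vanishing implies invariance} applies, and all these terms die after $\otimes1$. Therefore
\[
(\varphi^\Box_{a\varpi}\otimes1)(h)(\tau_S)=\left(\int_{\bb{X}_{\mrm{v}_\cal{S}}}f_{\tau_\cal{S},\varpi}\,\mrm{d}\mu^{\varpi,\Box}_{(\tau_S,h)}\right)\llangle a\rrangle^{m_{\mrm{v}_\cal{S}}}\otimes1 .
\]
The diamond operators commute with integration, since they act on the coefficients.

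Finally, Corollary \ref{inv-diamond} shows that after localization the image is fixed by every $\llangle a\rrangle$, hence also by $\llangle a\rrangle^{m_{\mrm{v}_\cal{S}}}$. The right-hand side therefore equals $(\varphi^\Box_\varpi\otimes1)(h)(\tau_S)$, which proves the independence of the choice of uniformizers.
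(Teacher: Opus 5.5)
Your proposal is correct and follows the paper's proof essentially verbatim. A change of uniformizer alters $f_{\tau_\p,\varpi}$ only by a constant on $\bb{X}_{\mrm{v}_\p}$, because $\mrm{ord}_\p\big((x_\gamma-\tau_\p y_\gamma)^2/z_\gamma\big)$ is constant there (the paper also simply asserts this), so the cross terms vanish by Theorem \ref{vanishing implies invariance}; the change of measure \eqref{measure: dep on unif} is a product of diamond operators, which Corollary \ref{inv-diamond} absorbs. The one thing to state explicitly is the order your final display implicitly uses: substitute $\mu^{a\varpi,\Box}_{(\tau_S,h)}=\mu^{\varpi,\Box}_{(\tau_S,h)}\cdot\llangle a\rrangle^{m_{\mrm{v}_\cal{S}}}$ before invoking the vanishing theorem, since that theorem pairs $f_{\lambda_\Sigma,\varpi}$ with the measure built from the same $\varpi$. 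Your fallback via $F_{\tau_\cal{S},\varpi}$ is unnecessary.
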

\begin{proof}
Fix $\p\in\cal{S}$. The dependence of  $f_{\tau,\varpi}$ on the choice of uniformizer  is expressed by 
\[
f_{\tau,a\varpi}(\gamma)=f_{\tau,\varpi}(\gamma)-\frac{\log_p(a)}{2\mrm{ord}_\p(\varpi)}\mrm{ord}_\p\left(\frac{(x_\gamma-\tau y_\gamma)^2}{z_\gamma}\right)\qquad \forall\ a\in\cal{O}_\p^\times.
\]
As  $\mrm{ord}_\p\left(\frac{(x_\gamma-\tau y_\gamma)^2}{z_\gamma}\right)$ is constant on $\bb{X}_\mrm{v}$ for $\mrm{v}=\mrm{red}(\tau)$, the claim follows from Theorem \ref{vanishing implies invariance}. The dependence of the measure on the choice of uniformizers is given in \eqref{measure: dep on unif}, thus the claim follows from Corollary \ref{inv-diamond}.
\end{proof}

Corollary \ref{inv-diamond} and Lemma \ref{identify-inv} imply that the function $\varphi^\Box_\varpi\otimes1$ takes value in 
\[
\Big(E_{\otimes,\cal{S}}\otimes_{\Z_p}(J^\Box_\cal{S})^\wedge_{\overline{\varrho}}\otimes_{\bb{T}_{\overline{\varrho}}}\bb{I}\Big)^{D_\cal{S}}\cong E_{\otimes,\cal{S}}\otimes_{\Z_p}\mrm{H}^0\big(D_\cal{S},\hspace{1mm} (J^\Box_\cal{S})^\wedge_{\overline{\varrho}}\big)\otimes_{\bb{T}_{\overline{\varrho}}}\bb{I}.
\]
 Furthermore, Corollary \ref{cor: descent} ensures the existence of a natural Hecke equivariant map
\begin{equation}\label{eq: Hecke equivariant map for A}
\mrm{pr}_A\colon \mrm{H}^0\big(D_\cal{S},\hspace{1mm} (J^\Box_\cal{S})^\wedge_{\overline{\varrho}}\big)\otimes_{\bb{T}_{\overline{\varrho}}}\bb{I}\longrightarrow  \mrm{H}^1(E^\Box, V_p(A)).
\end{equation}
For every $\p\in\cal{S}$ let $q_\p\in\p\cal{O}_\p$ denote Tate's parameter for $A_{/F_\p}$, and consider the isomorphism
\[
\exp^\mrm{Tate}_{q_\p}\colon E_\p\overset{\sim}{\longrightarrow} A(E_\p)^\wedge\otimes_{\Z_p}\Q_p,\qquad \exp^\mrm{Tate}_{q_\p}=\phi^\mrm{Tate}_\p\circ(\log_{q_\p})^{-1}
\]
obtained as the composition of the inverse of $\log_{q_\p}\colon\big(E_\p^\times/q_{\p}^\Z \big)^\wedge\otimes_{\Z_p}\Q_p\overset{\sim}{\longrightarrow} E_\p$ with Tate's uniformization $\phi^\mrm{Tate}_\p\colon \big(E_\p^\times/q_{\p}^\Z\big)^\wedge\otimes_{\Z_p}\Q_p \overset{\sim}{\longrightarrow} A(E_\p)^\wedge\otimes_{\Z_p}\Q_p$.

We are now ready to make the following definition:
\begin{definition}{(Plectic Abel--Jacobi map)}\label{def: Exotic AJ map}
Let
\[
\varphi^\Box_A\colon G(\A^{S,\infty})/(K_0)^S\longrightarrow \mrm{Hom}_{\Z_p}\big(\Z_p[\cal{H}_{S}^\Box],\ A(E_{\otimes,\cal{S}})\otimes_{\Q_p}\mrm{H}^1(E^\Box, V_p(A))\big)
\]
be the function defined by 
\[
\varphi^\Box_A:=(\exp_{q_\cal{S}}^\mrm{Tate}\otimes\hspace{0.3mm}\mrm{pr}_A)(\varphi^\Box_\varpi\otimes1)
\]
where  $A(E_{\otimes,\cal{S}}):=\otimes_{\p\in\cal{S}}\big(A(E_\p)^\wedge\otimes_{\Z_p}\Q_p\big)$ and $\exp_{q_\cal{S}}^\mrm{Tate}=\otimes_{\p\in\cal{S}}\exp_{q_\p}^\mrm{Tate}$.
\end{definition}
As we will become clear in Section \ref{Comparison with previous constructions}, the use of $\exp_{q_\cal{S}}^\mrm{Tate}$ is quite natural -- contrary to what might appear at this stage. Moreover, by Corollary \ref{independence of choice of uniformizers}, the function $\varphi^\Box_A$ does not depend on the choice of uniformizers $\{\varpi_\p\}_{\p\in\cal{S}}$.

\subsubsection{Invariance under the $G(F)$-action.}\label{subsub: Invariance under the G(F)-action}
The group $G(F)$ acts on the function $\varphi^\Box_A$ according to the formula
\[
(\delta^{-1}.\varphi^\Box_A)(-)(-):=\varphi^\Box_A(\delta^S\cdot -)(\delta_S. -)\qquad \forall\ \delta\in G(F).
\]

\begin{proposition}\label{prop:G(F)-invariance}
The function $\varphi^\Box_A$  is $G(F)$-invariant, i.e.,
    \[
   \varphi^\Box_A(\delta^S\cdot -)(\delta_S.-)=\varphi^\Box_A(-)(-)\qquad \forall\ \delta\in G(F).
    \]
\end{proposition}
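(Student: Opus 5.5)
The plan is to compare the two sides at the level of the integrals defining $\varphi^\Box_\varpi\otimes1$, before applying $\exp^\mrm{Tate}_{q_\cal{S}}\otimes\mrm{pr}_A$. Since the latter map does not involve $\delta$, it suffices to show
\[
(\varphi^\Box_\varpi\otimes1)(\delta^Sh)(\delta_S.\tau_S)=(\varphi^\Box_\varpi\otimes1)(h)(\tau_S).
\]
By Corollary \ref{independence of choice of uniformizers}, I may fix the uniformizers $\varpi$ once and for all.

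\textbf{Step 1: matching the vertex representatives.} Set $\mrm{v}_\cal{S}=\mrm{red}(\tau_\cal{S})$. Reduction is $G_\cal{S}$-equivariant, and the twist by $\chi_\cal{S}$ does not affect the vertex, so $\mrm{red}(\delta_\cal{S}.\tau_\cal{S})=\delta\mrm{v}_\cal{S}$. Hence I can write
\[
g_{\delta\mrm{v}_\cal{S}}=\delta_\cal{S}\, g_{\mrm{v}_\cal{S}}\, k\qquad\text{for some } k\in (K_0)_\cal{S}.
\]
Because $\det k$ is a unit, this gives $\chi_\cal{S}(g_{\delta\mrm{v}})=\chi_\cal{S}(\delta)\chi_\cal{S}(g_{\mrm{v}})$. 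Consequently the Galois twists $\tau_\cal{S}^{\chi(g_{\mrm{v}})}$ appearing in \eqref{eq: unwinding} transform compatibly with the twisted action $\delta_S.\tau_S$.

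\textbf{Step 2: automorphy.} In $G(\A^{w,\infty})$ we have
\[
g_{\delta\mrm{v}_\cal{S}}\,\delta^Sh=\delta^w\cdot (g_{\mrm{v}_\cal{S}}h)\cdot k.
\]
The automorphy property of $\Phi^\Box_\cal{S}$, namely $\Phi(\delta^wbk)=k^{-1}.\Phi(b).\delta_w^{-1}$, then gives
\[
\Phi^\Box_\cal{S}(g_{\delta\mrm{v}}\delta^Sh)(\delta_w.\tau_w)=k^{-1}.\Phi^\Box_\cal{S}(g_{\mrm{v}}h)(\tau_w).
\]
Substituting this into \eqref{eq: unwinding} reduces the left-hand side to
\[
\int_{\cal{X}_\cal{S}} f_{\delta(\lambda_\cal{S}),\varpi}(\widetilde{g}_{\delta\mrm{v}}\, k\cdot\gamma)\,\mrm{d}\big(\Phi^\Box_\cal{S}(g_{\mrm{v}}h)(\tau_w)\big)(\gamma),
\qquad \lambda_\cal{S}:=\tau_\cal{S}^{\chi(g_{\mrm{v}})}.
\]
The right-hand side is the same integral with integrand $f_{\lambda_\cal{S},\varpi}(\widetilde{g}_{\mrm{v}}\cdot\gamma)$.

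\textbf{Step 3: comparing the integrands.} The elements $\widetilde{g}:=\widetilde{g}_{\delta\mrm{v}}k$ and $\widetilde{h}:=\widetilde{g}_{\mrm{v}}$ of $\Delta^\varpi_\cal{S}$ have images satisfying $g=\delta h$ in $G_\cal{S}$, although their degrees $m_{\delta\mrm{v}}$ and $m_{\mrm{v}}$ may differ. Lemma \ref{constant components} applies regardless of the degrees, because the powers $\varpi^{\deg}$ are killed by $\log_\varpi$. It expresses the difference of the two integrands as a sum of tensors, each of which has a constant factor $C_j$ in at least one component $\p_j$. Transporting back along $\widetilde{g}_{\mrm{v}}$, that component is integrated over the full domain $\bb{X}_{\mrm{v}_{\p_j}}$, and the remaining components are logarithms or constants on $\cal{O}^\times$-stable compact opens. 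Theorem \ref{vanishing implies invariance}, in the form of Corollary \ref{cor: exotic AJ as leading term}, then makes every such term vanish after applying $\otimes1$. This requires choosing the sets $V_{\p_k}$ to be the images of $\cal{X}_{\p_k}$ under the relevant semigroup elements, which are compact, open and $\cal{O}^\times$-stable.

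\textbf{Main obstacle.} The hard step is the change of variables in Step 3. The semigroup action is not an action of the image group (Remark \ref{rmk: observation on action}), and the measures are pushed forward only after intersecting with $\widetilde{\cal{Y}}_\cal{S}$. To handle this, I would first show that $\widetilde{g}_{\delta\mrm{v}}k\cdot\cal{X}_\cal{S}$ and $\delta\,\widetilde{g}_{\mrm{v}}\cdot\cal{X}_\cal{S}$ agree as subsets of $\cal{Y}_\cal{S}$ up to the right $\cal{O}^\times$-action. The idea is to use \eqref{explicit-action-semigroup} and Lemma \ref{when-actions-coincide}, with an extra $\eta$-power that, as in \eqref{rmk: different acts}, only contributes diamond operators. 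These diamond operators would then be absorbed by Corollary \ref{inv-diamond}, which is how the possible discrepancy between $m_{\delta\mrm{v}}$ and $m_{\mrm{v}}$ gets handled.
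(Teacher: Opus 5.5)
Your Steps 1--3 are the paper's argument. You relate $g_{\delta\mrm{v}_{\cal{S}}}$ and $\delta_{\cal{S}}g_{\mrm{v}_{\cal{S}}}$ by some $k\in(K_0)_{\cal{S}}$. You then use the automorphy of $\Phi^\Box_{\cal{S}}$ to write both sides as integrals over $\cal{X}_{\cal{S}}$ against the same measure $\mu_0:=\Phi^\Box_{\cal{S}}(g_{\mrm{v}_{\cal{S}}}h)(\tau_w)$, and you conclude with Lemma \ref{constant components} and Theorem \ref{vanishing implies invariance}. Two corrections are needed.

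The first is an inverse slip. With your convention $g_{\delta\mrm{v}}=\delta_{\cal{S}}g_{\mrm{v}}k$, one gets $\int\phi\,\mrm{d}\big(\widetilde{g}_{\delta\mrm{v}}.(k^{-1}.\mu_0)\big)=\int_{\cal{X}_{\cal{S}}}\phi(\widetilde{g}_{\delta\mrm{v}}k^{-1}\cdot\gamma)\,\mrm{d}\mu_0(\gamma)$. So the element to feed into Lemma \ref{constant components} is $\widetilde{g}_{\delta\mrm{v}}k^{-1}$, whose image is $\delta_{\cal{S}}g_{\mrm{v}}$. The element $\widetilde{g}_{\delta\mrm{v}}k$ you wrote has image $\delta_{\cal{S}}g_{\mrm{v}}k^2$. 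The paper instead writes $g_{\delta\mrm{v}}k_{\cal{S}}=\delta_{\cal{S}}g_{\mrm{v}}$ and uses $\widetilde{g}_{\delta\mrm{v}}k_{\cal{S}}$.

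The second is that your ``main obstacle'' is not an obstacle, and the remedy you sketch is false. Suppose $m_{\delta\mrm{v}_\p}\neq m_{\mrm{v}_\p}$. Then the sets $\bb{X}_{\delta\mrm{v}}=\widetilde{g}_{\delta\mrm{v}}k^{-1}\cdot\cal{X}_{\cal{S}}$ and $\delta_{\cal{S}}\cdot\bb{X}_{\mrm{v}}$ differ in the $\p$-component by the factor $\varpi_\p^{m_{\delta\mrm{v}_\p}-m_{\mrm{v}_\p}}$ in the $z$-coordinate of \eqref{iso2}, i.e.\ by right multiplication by a power of $\eta_{\varpi_\p}$. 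This is neither the right $\cal{O}_\p^\times$-action nor a diamond operator, so Corollary \ref{inv-diamond} cannot absorb it.

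No comparison of sets is needed. After Step 2 both sides are integrals of functions of $\gamma\in\cal{X}_{\cal{S}}$ against $\mu_0$. The proof of Lemma \ref{constant components} gives, factor by factor and for any degrees,
$f_{\delta(\lambda_\p),\varpi}(\widetilde{g}_\p\cdot\gamma_\p)=f_{\lambda_\p,\varpi}(\widetilde{h}_\p\cdot\gamma_\p)+C_\p$.
Substitute this into every factor, including those with $k>j$ that still carry $\widetilde{g}$. Each error term then becomes a constant times $\int_{\cal{X}_{\cal{S}}}(\mathbbm{1}_{\Sigma^c}\otimes f_{\lambda_\Sigma,\varpi})(\widetilde{g}_{\mrm{v}}\cdot\gamma)\,\mrm{d}\mu_0(\gamma)$ with $\Sigma\subsetneq\cal{S}$. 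After absorbing the twist by $\chi(g_{\mrm{v}})$ into $\lambda_\Sigma$, this is an integral against $\mu^{\varpi,\Box}_{(\tau_S,h)}$ over $V_{\cal{S}}=\bb{X}_{\mrm{v}_{\cal{S}}}$. Such integrals vanish after $\otimes1$ by Theorem \ref{vanishing implies invariance}.

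This also corrects your Step 3. Choosing some $V_{\p_k}=\bb{X}_{\delta\mrm{v}_{\p_k}}$ does not fit the theorem, which integrates the fixed measure $\mu^{\varpi,\Box}_{(\tau_S,h)}$ supported on $\bb{X}_{\mrm{v}_{\cal{S}}}$.
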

\begin{proof}
Recall that in equation $\eqref{eq: unwinding}$ we expressed $\varphi^\Box_\varpi(h)(\tau_S)$ as
\[
\varphi^\Box_\varpi(h)(\tau_S)=\int_{\cal{X}_\cal{S}}\hspace{1.5mm}f_{\tau_\cal{S}^{\chi(g_\mrm{v})},\varpi}(\widetilde{g}_{\mrm{v}_\cal{S}}\cdot \gamma)\hspace{1.5mm}\mrm{d}\big(\Phi^\Box_\cal{S}(g_{\mrm{v}_\cal{S}}h)(\tau_w)\big)(\gamma).
\]
Since the reduction map satisfies $\mrm{red}\big(\delta.\tau_\cal{S}\big)=\delta \mrm{red}\big(\tau_{\cal{S}}\big)$, there exists an element $k_\cal{S}\in (K_0)_\cal{S}$ such that $g_{\delta_\cal{S}\mrm{v}_\cal{S}}\cdot k_\cal{S}=\delta_\cal{S}\cdot  g_{\mrm{v}_\cal{S}}$. A direct computation using the properties of $\Phi^\Box_\cal{S}$ shows that
\[\begin{split}
 \mu^{\varpi,\Box}_{\delta(\tau_S,h)}&=\Big(\widetilde{g}_{\delta_\cal{S}\mrm{v}_\cal{S}}.\Phi_\cal{S}^\Box( g_{\delta_\cal{S}\mrm{v}_\cal{S}}\cdot \delta^Sh)(\delta_w.\tau_w)\Big)\Big[\chi(g_{\delta_\cal{S}\mrm{v}_{\cal{S}}})\Big]\\
 &= \Big(\widetilde{g}_{\delta_\cal{S}\mrm{v}_\cal{S}}.\Phi_\cal{S}^\Box( \delta^wg_{\mrm{v}_\cal{S}}k_\cal{S}^{-1}\cdot h)(\delta_w.\tau_w)\Big)\Big[\chi(\delta_\cal{S})\cdot \chi(g_{\mrm{v}_{\cal{S}}})\Big]\\
 &= \Big((\widetilde{g}_{\delta_\cal{S}\mrm{v}_\cal{S}}\cdot k_\cal{S}).\Phi_\cal{S}^\Box( g_{\mrm{v}_\cal{S}}h)(\tau_w)\Big)\Big[\chi(\delta_\cal{S})\cdot \chi(g_{\mrm{v}_{\cal{S}}})\Big],
\end{split}\]
hence
\[
\varphi^\Box_\varpi(\delta^Sh)(\delta_S.\tau_S)=\int_{\cal{X}_\cal{S}}\hspace{1.5mm}f_{\delta_\cal{S}(\tau_\cal{S}^{ \chi(g_{\mrm{v}})}),\varpi}\big((\widetilde{g}_{\delta_\cal{S}\mrm{v}_\cal{S}}\cdot k_\cal{S}\big)\cdot\gamma\big)\hspace{1.5mm}\mrm{d}\big(\Phi_\cal{S}^\Box(g_{\mrm{v}_\cal{S}}h)(\tau_w)\big)(\gamma).
\]
Then, Lemma \ref{constant components} together with Theorem \ref{vanishing implies invariance}  imply that
\[
\varphi^\Box_A(\delta^Sh)(\delta_S.\tau_S)=\varphi^\Box_A(h)(\tau_S).
\]
\end{proof}

\subsubsection{Upshot.}
\vspace{3mm}
For a $\Z_p$-module $N$ and an $\Z_p[G_\cal{S}]$-module $M$,
there is a $G(F)$-action on the $R$-module of continuous functions $\cal{C}\big(G(\A^{S,\infty})/K_0^S,\Hom_{\Z_p}(M,N)\big)$ given by
\[
(\gamma.\Phi)(g)(m)=\Phi(\gamma^{-1}g)(\gamma^{-1}m),
\]
where $G(F)$ acts on $M$ via the diagonal embedding $G(F)\into G_S$. We consider
\[
\mrm{H}^0\big(X_G^S(\frak{f}),M,N\big):=\mrm{H}^{0}\Big(G(F),\hspace{1mm}\cal{C}\big(G(\A^{S,\infty})/K_0^S,\hspace{0.5mm}\Hom_{\Z_p}(M,N)\big)\Big).
\]
Let $S_{\mrm{bad}}$ denote the set of prime divisors of $\frak{f}_A$, the cohomology group $\mrm{H}^0\big(X_G^S(\frak{f}),M,N\big)$ is endowed with the action of the spherical Hecke algebra
\[
\mathbb{T}^\mrm{sph}:=\cal{C}_c\Big(K_0^{S_{\mrm{bad}}}\backslash G\big(\A^{S_{\mrm{bad}},\infty}\big)/K_0^{S_{\mrm{bad}}},\Z\Big).
\]

\begin{corollary}\label{piA-isotypic subspace}
    The plectic Abel--Jacobi map belongs to
    \[
    \varphi^\Box_A\hspace{1mm}\in\hspace{1mm} \mrm{H}^0\left(X_G^{S}(\frak{f});\ \Z_p[\cal{H}_{S}^\Box],\ A(E_{\otimes,\cal{S}})\otimes_{\Q_p}\mrm{H}^1(E^\Box, V_p(A))\right)_{\pi_A}.
    \]
\end{corollary}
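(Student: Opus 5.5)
The membership asserted in Corollary \ref{piA-isotypic subspace} has three parts, and I would verify them in turn.

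\textbf{$G(F)$-invariance.} This is Proposition \ref{prop:G(F)-invariance}. The $G(F)$-action on $\cal{C}\big(G(\A^{S,\infty})/K_0^S,\Hom_{\Z_p}(M,N)\big)$ defined in the Upshot, with $M=\Z_p[\cal{H}^\Box_S]$, agrees with the action $(\delta^{-1}.\varphi)(-)(-)=\varphi(\delta^S\cdot-)(\delta_S.-)$ used there. So $\varphi^\Box_A$ is a $G(F)$-fixed element of this function space.

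\textbf{Continuity and level.} By construction $\varphi^\Box_\varpi$ is a function on $G(\A^{S,\infty})/K_0^S$. The coset $g_{\mrm{v}_\cal{S}}h$ is well defined because $\Phi^\Box_\cal{S}$ has level $K_0$ away from $w$, and the $\cal{S}$-component of $K_0$ is absorbed in the choice of $\widetilde{g}_{\mrm{v}_\cal{S}}$. Any function on this discrete set is continuous, so this part is automatic.

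\textbf{$\pi_A$-isotypic.} This is the main step. For every $\q\notin S_{\mrm{bad}}$, the operator $T_\q$ of $\mathbb{T}^\mrm{sph}$ acts on $\varphi^\Box_A$ by the scalar $a_\q(A)$.

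First I would compare $T_\q$ acting on $\varphi^\Box_\varpi$ through the variable $h$ with $T_\q$ acting on $\Phi^\Box_\cal{S}$ through its argument $g_{\mrm{v}_\cal{S}}h$. The coset representatives of $K_0gK_0$ at $\q$ are supported away from $\cal{S}\cup\{w\}$, so they commute with $g_{\mrm{v}_\cal{S}}$. They also do not affect $\widetilde{g}_{\mrm{v}_\cal{S}}$, $\tau_\cal{S}$ or $\tau_w$. Hence, by linearity of the integral in the measure,
\[
(T_\q\varphi^\Box_\varpi)(h)(\tau_S)=\int_{\bb{X}_{\mrm{v}_\cal{S}}}f_{\tau_\cal{S},\varpi}\,\mrm{d}\Big(\widetilde{g}_{\mrm{v}_\cal{S}}.(T_\q\Phi^\Box_\cal{S})(g_{\mrm{v}_\cal{S}}h)(\tau_w)\Big)[\chi(g_{\mrm{v}_\cal{S}})].
\]
Corollary \ref{Hecke action on special form} then gives $T_\q.\Phi^\Box_\cal{S}=\Phi^\Box_\cal{S}.\mbf{T}_\q$. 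The right $\bb{T}$-action on coefficients commutes with the $\Delta^\varpi_\cal{S}$-action on measures and with integration, so $T_\q\varphi^\Box_\varpi=\varphi^\Box_\varpi.\mbf{T}_\q$.

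Next, after applying $\otimes_{\bb{T}_{\overline{\varrho}}}\bb{I}$, the element $\mbf{T}_\q$ acts on $\bb{I}$. By Corollary \ref{isolocalizations}, $\bb{I}=(\Lambda^{\bfcdot})_\cal{I}$, and $\mbf{T}_\q$ maps into this ring. The projection $\mrm{pr}_A$ of \eqref{eq: Hecke equivariant map for A} is Hecke equivariant and lands in $\mrm{H}^1(E^\Box,V_p(A))$. On the $A$-part the Hecke action is through $f_A$, so $\mbf{T}_\q$ becomes $a_\q(A)$ after $\mrm{pr}_A$. Since $\exp^\mrm{Tate}_{q_\cal{S}}$ acts only on the first tensor factor, this gives $T_\q\varphi^\Box_A=a_\q(A)\varphi^\Box_A$.

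\textbf{Main obstacle.} The delicate point is the second step, because $\mrm{pr}_A$ is defined only after localization and descent. One must check that $\mrm{pr}_A$ factors through the quotient by $\cal{P}$, so that $\mbf{T}_\q-f_A(\mbf{T}_\q)$ dies. I would argue this via Corollary \ref{cor: descent}: the target $\mrm{H}^1(L,T_p(J_1^\circ)_{\overline{\varrho}})$ is projected to the $A$-isotypic quotient by an $F$-rational morphism $J_1^\circ\to A$. This morphism is Hecke equivariant with $\bb{T}$ acting on $A$ via $f_A$ (Jacquet--Langlands together with multiplicity one). Equivariance of each map in the descent chain then carries over to the limit.
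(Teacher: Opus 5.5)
Your proposal is correct and follows the paper's argument. $G(F)$-invariance is Proposition \ref{prop:G(F)-invariance}, and the $\pi_A$-isotypic property comes from moving the Hecke operators away from $S$ through the integral onto $\Phi_\cal{S}^\Box$, which is an eigenfunction by Corollary \ref{Hecke action on special form}, and then through the Hecke-equivariant map $\mrm{pr}_A$. You additionally spell out details the paper leaves implicit, namely well-definedness on $G(\A^{S,\infty})/K_0^S$ and why $\mbf{T}_\q$ becomes $a_\q(A)$ after $\mrm{pr}_A$.
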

\begin{proof}
    Proposition \ref{prop:G(F)-invariance} means that 
    \[
     \varphi^\Box_A\hspace{1mm}\in\hspace{1mm} \mrm{H}^0\left(X_G^{S}(\frak{f});\ \Z_p[\cal{H}_{S}^\Box],\ A(E_{\otimes,\cal{S}})\otimes_{\Q_p}\mrm{H}^1(E^\Box, V_p(A))\right).
    \]
    The fact that $\varphi^\Box_A$ belongs to the $\pi_A$-isotypic subspace (with respect to the ``good'' Hecke operators) follows from the observation the Hecke action on $\varphi^\Box_A$ is via the Hecke action on the measure and hence on the automorphic function $\Phi_\cal{S}^\Box$.
\end{proof}
By considering the (mock) quaternionic Shimura variety 
\[
X_G^S(\frak{f})^\Box:=G(F)\backslash \big(\cal{H}_S^\Box\times G(\A^{S,\infty})/K_0^S\big)
\]
we can interpret Corollary \ref{piA-isotypic subspace} as giving an plectic Abel--Jacobi map
\[
\varphi^\Box_A\colon X_G^S(\frak{f})^\Box\longrightarrow A(E_{\otimes,\cal{S}})\otimes_{\Q_p}\mrm{H}^1(E^\Box, V_p(A)).
\]

\subsection{Invariance under Galois action}
For every $\p\in\cal{S}$ the embedding $\psi\colon T\hookrightarrow G$ of $F$-algebraic groups induces an action of $T(F)$ on $\cal{H}_\p$ with two fixed points $z_{\p}^\psi,\overline{z}_{\p}^\psi$. We set 
\[
z^\psi_{S}:=\big(\{z^\psi_\p\}_{\p\in\cal{S}},\hspace{1mm}z^\psi_w\big)\ \in\ \cal{H}_{S}^\mrm{glo}.
\]
\begin{remark}\label{rmk: reduction of fixed point}
    By assumption \eqref{choice of optimal embedding}, the fixed point $z^\psi_{\cal{S}}=(z^\psi_\p)_{\p\in\cal{S}}$ reduces to the tuple of standard vertices $\mrm{v}_{\circ}=\mrm{red}(z^\psi_{\cal{S}})\in\cal{V}_\cal{S}$. In particular, we can suppose that $g_{\mrm{v}^\psi_\cal{S}}=1$.
\end{remark}
\begin{definition}
    We let $\cal{H}_{S}^{\scalebox{0.6}{$\mrm{CM}$}}$ denote the $G(F)$-orbit of   $z^\psi_{S}\in\cal{H}_{S}^\mrm{glo}$.
\end{definition}

The aim of this section is to understand the Galois action on the classes 
\[
\varphi_A^\mrm{glo}(h)(\tau_S)\ \in \ A(E_{\otimes,\cal{S}})\otimes_{\Q_p}\mrm{H}^1({^{w}}E_{\mrm{ab}}, V_p(A))
\]
for $h\in G(\A^{S,\infty})$ and $\tau_S\in\cal{H}_{S}^{\scalebox{0.6}{$\mrm{CM}$}}$. Note that equation \eqref{reciprocityLAW} implies
 \begin{equation}\label{Galois action on exotic AJ map}
    \varphi^\mrm{glo}_A(h)(\tau_{S})^{\mrm{rec}_E(u)}=\varphi^\mrm{glo}_A(\psi(u)\cdot h)(\tau_{S})\qquad \forall\ u\in T(\A^{S,\infty}).
  \end{equation}
Moreover, by Proposition \ref{prop:G(F)-invariance} it suffices to understand the Galois action on $\varphi_A^\mrm{glo}(h)(z^\psi_{S})$. For any $h\in G(\A^{S,\infty})$, consider the CM point
\[
x_{\psi,h}:=\big[z^\psi_w,\hspace{1mm}  h\big]\in X_0(E_{\psi,h})
\]
defined over the finite abelian extension $E_{\psi,h}$ of $E$ characterized by
\[
\mrm{rec}_E\colon T(F)\backslash T(\bb{A}^\infty)/U_{\psi,h}\overset{\sim}{\longrightarrow}\mrm{Gal}(E_{\psi,h}/E)
\]
where $U_{\psi,h}:=\psi^{-1}(K_{\psi,h})$ for $K_{\psi,h}:=   h\cdot K_0\cdot   h^{-1}$.

\begin{theorem}\label{thm: Galois invariance}
We have
\[
\varphi^\mrm{glo}_A(h)(z^\psi_{S})\ \in\  A(E_{\otimes,\cal{S}})\otimes_{\Q_p}\mrm{H}^1\big(E_{\psi,h}, V_p(A)\big).
\]
\end{theorem}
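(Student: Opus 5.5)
The plan is to show that the class $\varphi^\mrm{glo}_A(h)(z^\psi_S)\in A(E_{\otimes,\cal{S}})\otimes_{\Q_p}\mrm{H}^1({^{w}}E_\mrm{ab},V_p(A))$ is fixed by $\mrm{Gal}({^{w}}E_\mrm{ab}/E_{\psi,h})$, and then to descend it to $E_{\psi,h}$ by inflation--restriction.

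\textbf{Descent.} Since $E_{\psi,h}\subseteq{^{w}}E_\mrm{ab}$ are abelian over $E$, hence solvable over $F$, Lemma \ref{first vanishing of invariants} (via the argument of Proposition \ref{NOinvariants}, passing to $T_p$ and then to $V_p$) gives $\mrm{H}^0({^{w}}E_\mrm{ab},V_p(A))=0$. The inflation--restriction sequence then identifies
\[
\mrm{H}^1(E_{\psi,h},V_p(A))\overset{\sim}{\longrightarrow}\mrm{H}^1({^{w}}E_\mrm{ab},V_p(A))^{\mrm{Gal}({^{w}}E_\mrm{ab}/E_{\psi,h})},
\]
because the obstruction groups $\mrm{H}^i(\mrm{Gal}({^{w}}E_\mrm{ab}/E_{\psi,h}),V_p(A)^{G_{{^{w}}E_\mrm{ab}}})$ vanish for $i=1,2$. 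This isomorphism tensors with the finite-dimensional space $A(E_{\otimes,\cal{S}})$, so it suffices to prove invariance.

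\textbf{Invariance.} By the reciprocity description, $\mrm{Gal}({^{w}}E_\mrm{ab}/E_{\psi,h})$ is generated by the elements $\mrm{rec}_E(u)$ with $u\in U_{\psi,h}$. I would decompose such a $u$ as $u=u^S\cdot u_\cal{S}$, where $u^S\in T(\A^{S,\infty})$ and $u_\cal{S}\in\prod_{\p\in\cal{S}}\psi^{-1}(K_{\p^0})$. The $w$-component can be ignored, since $w$ is totally split in ${^{w}}E_\mrm{ab}$ and $(K_0)_w$ is the whole local group.

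For the component $u^S$, I use \eqref{Galois action on exotic AJ map}. Since $\psi(u^S)h\in hK_0^S$, the function $\varphi^\mrm{glo}_A$ is defined on $G(\A^{S,\infty})/K_0^S$, and so
\[
\varphi^\mrm{glo}_A(\psi(u^S)h)(z^\psi_S)=\varphi^\mrm{glo}_A(h)(z^\psi_S).
\]
For the component $u_\cal{S}$ at the primes of $\cal{S}$, I need the analogue of Shimura reciprocity through the automorphic function. Specifically, the action of $\mrm{rec}_E(u_\cal{S})$ on the coefficients of $\Phi^\mrm{glo}_\cal{S}(h)(z^\psi_w)$ is right translation of the argument by $\psi(u_\cal{S})$. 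By the $K_0$-equivariance of $\Phi_\cal{S}$, this translation becomes the left action of $k_\cal{S}=\psi(u_\cal{S})\in(K_0)_\cal{S}$ on the measure. By Remark \ref{rmk: reduction of fixed point} we have $g_{\mrm{v}_\cal{S}}=1$, so in \eqref{eq: unwinding} this changes the integrand from $f_{z^\psi_\cal{S},\varpi}(\gamma)$ to $f_{z^\psi_\cal{S},\varpi}(k_\cal{S}\gamma)$. Since $k_\cal{S}$ fixes $z^\psi_\cal{S}$, Lemma \ref{constant components} (with $\delta=1$, $\widetilde{g}=k_\cal{S}$, $\widetilde{h}=1$ applied factorwise) shows that the difference of the two integrands is a sum of terms, each having at least one constant tensor factor. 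Theorem \ref{vanishing implies invariance} kills these terms after localization, because the domain of integration is $\bb{X}_{\mrm{v}_\circ}=\cal{X}_\cal{S}$.

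Alternatively, for the global units one can combine the $u_\cal{S}$ and $T(F)$ parts: apply Proposition \ref{prop:G(F)-invariance} with $\delta=\psi(t)$, $t\in T(F)$, which fixes $z^\psi_S$ (up to the twist, which is trivial because norms from $E$ have even valuation at the inert primes), and use this to move components between $S$ and its complement.

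\textbf{Main obstacle.} I expect the hardest step to be the bookkeeping at $\cal{S}$: making precise that the Galois action of $\mrm{rec}_E(u_\cal{S})$ on the measure-valued coefficients corresponds exactly to translating by $\psi(u_\cal{S})\in(K_0)_\cal{S}$, since \eqref{Galois action on exotic AJ map} is only stated for $u\in T(\A^{S,\infty})$. I would try first to reduce to that case: use weak approximation in $T(F)$ to replace $u$ by $tu$ whose $\cal{S}$-components lie in an arbitrarily small neighbourhood of $1$, and then show that the residual $\cal{S}$-translation acts trivially by the constant-difference and vanishing argument above.
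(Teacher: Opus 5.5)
Your proposal is correct and is essentially the paper's proof. The paper applies \eqref{reciprocityLAW}, which holds for every $u\in T(\A^\infty)$, directly to the CM points $[z^\psi_w,h\xi]_{K_m}$ defining $\Phi^{\mrm{glo}}_{\mathcal{S}}$ (Corollary \ref{distinguished autfun}). It then writes $\psi^w(u)h=hk_u$ with $(k_u)_\p=\psi(u)_\p$ for $\p\in\mathcal{S}$, and kills the resulting constant local discrepancies with Theorem \ref{vanishing implies invariance}. So the splitting $u=u^Su_{\mathcal{S}}$, the feared obstacle at $\mathcal{S}$, and the weak-approximation fallback are all unnecessary; your explicit inflation--restriction descent, which the paper leaves implicit, is fine.

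There is one slip. Lemma \ref{constant components} with $\delta=1$, $\widetilde{h}=1$, $\widetilde{g}=k_{\mathcal{S}}$ violates its hypothesis $g=\delta h$, and $k_{\mathcal{S}}$ is not in $G(F)$ in any case. The constancy of $f_{z^\psi_\p,\varpi}(k_\p\gamma_\p)-f_{z^\psi_\p,\varpi}(\gamma_\p)$ should instead be obtained directly, as the paper does: since $\psi(u)_\p$ fixes $z^\psi_\p$, the row vector $\begin{pmatrix}1&-z^\psi_\p\end{pmatrix}$ is a left eigenvector of $\psi(u)_\p^{\pm1}$. This is the reason you actually invoke.
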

\begin{proof}
Let $u\in U_{\psi,h}$. From the definition of $\varphi^\mrm{glo}_A$ and Remark \ref{rmk: reduction of fixed point} we know that
 \[
    \varphi^\mrm{glo}_\varpi(h)(z^\psi_{S})^{\mrm{rec}_E(u)}=
\int_{\cal{X}_\cal{S}}\hspace{1.5mm}f_{z^\psi_\cal{S},\varpi}(\gamma)\hspace{1.5mm}\mrm{d}\big(\Phi^\mrm{glo}_\cal{S}(\psi^w(u)\cdot h)(z^\psi_w)\big)(\gamma).
  \]
The definition of $U_{\psi,h}$ ensures the existence of $k_u\in K_0$ such that
\begin{equation}\label{def: k_a}
\psi^w(u)\cdot h=h\cdot k_u.
\end{equation}
The properties of $\Phi^\mrm{glo}_\cal{S}\in \cal{A}_{w,\cal{S}}(K_0;\cal{H}_w^\mrm{glo},\cal{M}^\mrm{glo}_\cal{S})$ then imply
\[
\Phi^\mrm{glo}_\cal{S}(\psi^w(u)\cdot h)=(k_u)_\cal{S}^{-1}.\Phi^\mrm{glo}_\cal{S}( h).
\]
In other words, we found that
  \[
   \varphi^\mrm{glo}_\varpi(h)(z^\psi_{S})^{ \mrm{rec}_E(u)}=
\int_{\cal{X}_\cal{S}}\hspace{1.5mm}f_{z^\psi_\cal{S},\varpi}((k_u)_\cal{S}^{-1}\cdot  \gamma)\hspace{1.5mm}\mrm{d}\big(\Phi^\mrm{glo}_\cal{S}(h)(z^\psi_w)\big)(\gamma).
  \]
  The idea of the proof is now to show that for every $\p\in\cal{S}$ the difference of the local terms
  \begin{equation}\label{additive equality}
f_{z^\psi_\p,\varpi}\big((k_u)_\p^{-1}\cdot  \gamma_\p\big)-f_{z^\psi_\p,\varpi}\big(  \gamma_\p\big)=\frac{1}{2}\log_{\varpi_\p}\bigg(\frac{(v\overline{z}_{\p}^\psi+w)^2}{\det(\psi(u)_\p^{-1})}\bigg)
  \end{equation}
 is a constant depending on $\psi(u)^{-1}_\p=\begin{pmatrix}
    x&y\\v&w
\end{pmatrix}$. One can then invoke Theorem \ref{vanishing implies invariance} to conclude
\[
  \varphi^\mrm{glo}_A(h)(z^\psi_{S})^{ \mrm{rec}_E(u)}=  \varphi^\mrm{glo}_A(h)(z^\psi_{S}).
\]
To establish \eqref{additive equality}, recall that  for every $\p\in\cal{S}$ we have
  \[f_{z^\psi_\p,\varpi}(\gamma_\p)=\log_{\varpi_\p}\bigg(\begin{pmatrix}
      1&-z^\psi_\p
  \end{pmatrix}\gamma_\p\begin{pmatrix}
      1\\0
  \end{pmatrix}\bigg)-\frac{1}{2}\log_{\varpi_\p}\Big(\det(\gamma_\p)\Big),
 \]
  and that we described the action of $S_\p^\varpi$ in \eqref{eq: formula action for function}. 
Now note that equation \eqref{def: k_a} implies
  \[
  \begin{pmatrix}
      1&-z^\psi_\p
  \end{pmatrix} (k_{u})_\p^{-1}\cdot \gamma_\p\begin{pmatrix}
      1\\0
  \end{pmatrix}=\begin{pmatrix}
      1&-z^\psi_\p
  \end{pmatrix}\psi(u)_\p^{-1}\cdot  \gamma_\p\begin{pmatrix}
      1\\0
  \end{pmatrix}
  \]
Finally, since $\psi(u)^{-1}_\p$ is diagonalizable with eigenvectors $\begin{pmatrix}
    z^\psi_\p\\1
\end{pmatrix}$, $\begin{pmatrix}
    \overline{z}^\psi_\p\\1
\end{pmatrix}$, and corresponding eigenvalues $(vz^\psi_\p+w),(v\overline{z}^\psi_\p+w)$ we find that
\[
\begin{pmatrix}
    1&-z^\psi_\p
\end{pmatrix}\psi(u)^{-1}_\p=(v\overline{z}^\psi_\p+w)\cdot \begin{pmatrix}
    1&-z^\psi_\p
\end{pmatrix}.
\]

\end{proof}

\subsection{Cap products}

For a non-zero ideal $\mathfrak{c}\subseteq \OO_F$ let $U(\frak{c})$ denote the image of $\widehat{\cal{O}}^\times_\mathfrak{c}$ in $ T(\A^\infty)$. We set
\[
\mrm{H}^0(X_T(\mathfrak{c}),N):=\mrm{H}^0\big(T(F),\cal{C}(T(\A^{\infty})/U(\frak{c}),N)\big),
\]
\[
\mrm{H}_0(X_T(\mathfrak{c}),N):=\mrm{H}_0\big(T(F),\cal{C}_c(T(\A^{\infty})/U(\frak{c}),N)\big)
\]
for any any abelian group $N$. More concretely, $\mrm{H}^0(X_T(\mathfrak{c}),N)$ is naturally the space of functions
\[
\cal{C}\big(T(F)\backslash T(\A^{\infty})/U(\frak{c}),\hspace{1mm} N\big),
\]
while $\mrm{H}_0(X_T(\mathfrak{c}),N)$ can be identified with
\[
\mrm{H}_0(X_T(\mathfrak{c}),N)\overset{\sim}{\longrightarrow} N\Big[T(\A^{\infty})/U(\frak{c})\Big]_{T(F)},\qquad \beta\mapsto \left[ \sum_{t\in T(\A^{\infty})/U(\frak{c})}\beta(t)\cdot [t] \right].
\]
There is a natural map
\begin{equation}\label{map: inv to cov}
\mrm{H}^0(X_T(\mathfrak{c}),N)\longrightarrow \mrm{H}_0(X_T(\mathfrak{c}),N),\qquad \alpha\mapsto \left[\sum_{t\in J(\frak{c})}\alpha(t)\cdot [t] \right]
\end{equation}
where $J(\frak{c})\subset T(\A^{\infty})/U(\frak{c})$ is any set of representatives of the finite quotient $T(F)\backslash T(\A^{\infty})/U(\frak{c})$,
and multiplication of functions induces the cap product pairing
\[
\cap\colon \mrm{H}^{0}(X_T(\mathfrak{c}),N)\times \mrm{H}_{0}(X_T(\cal{O}_F),\Z)\longrightarrow \mrm{H}_{0}(X_T(\mathfrak{c}),N),\qquad \alpha\cap [t]=\left[\sum_{u\in U(\cal{O}_F)/U(\frak{c})}\alpha(\tilde{t}u)\cdot [\tilde{t}u]\right]
\]
where $\tilde{t}\in T(\A^{\infty})/U(\frak{c})$ is any lift of $t\in T(\A^{\infty})/U(\cal{O}_F)$.
In this setting, the fundamental class $\vartheta\in \mrm{H}_{0}(X_T(\OO_F),\Z)$ of \cite[Definition 4.4]{plecticHeegner} is simply the image of the constant function $1\in \mrm{H}^{0}(X_T(\OO_F),\Z)$ under \eqref{map: inv to cov}. 

\bigskip
\noindent By assumption \eqref{choice of optimal embedding}, there is a non-zero ideal $\mathfrak{c}\subseteq \OO_F$ coprime to $\mathfrak{f}_A$ such that  $\psi^{-1}(K_0)^S=U(\frak{c})^S$. Recall that $L_\mathfrak{c}/E$ is the anticyclotomic abelian extension of $E$ of conductor $\mathfrak{c}$, with Galois group $\mathcal{G}_{\mathfrak{c}}:=\Gal(L_\mathfrak{c}/E)$, such that the Artin map induces an isomorphism
\[
\mrm{rec}_E\colon T(F)\backslash T(\A^{\infty})/U(\frak{c})\xlongrightarrow{\sim}\mathcal{G}_{\mathfrak{c}}.
\]
Any character $\chi\colon \mathcal{G}_{\mathfrak{c}}\to \overline{\bb{Q}}^{\times}$ can be thought of as an element in $\mrm{H}^{0}(X_T(\mathfrak{c}),\overline{\bb{Q}})$, and the $\chi$-twisted fundamental class 
\[\
\vartheta_{\chi}:=\chi\cap\vartheta\in \mrm{H}_{0}(X_T(\mathfrak{c}),\overline{\bb{Q}})
\]
of \cite[Definition 4.5]{plecticHeegner} turns out to be the image of $\chi\in \mrm{H}^{0}(X_T(\mathfrak{c}),\overline{\bb{Q}})$ under \eqref{map: inv to cov}.
Note that 
\[
T\big(\A^{\Sigma^{\mbox{\tiny $-$}}\cup\Sigma_\infty}\big)/U(\mathfrak{c})^{\Sigma^{\mbox{\tiny $-$}}}=\hspace{1mm}T\big(\A^{S,\infty}\big)/U(\mathfrak{c})^{S}\hspace{1mm}=\hspace{1mm}T(\A^{\infty})/U(\mathfrak{c})
\]
because every prime ideal in $\Sigma^{\mbox{\tiny $-$}}$ is inert in $E$ and $\mathfrak{c}$ is coprime to $\frak{f}_A$.
Then, the inclusion $\psi\colon T\hookrightarrow G$ together with the $T(F)$-equivariant homomorphism 
\[
\Psi_S\colon \Z_p\longrightarrow \Z_p[\cal{H}_{S}^\Box],\qquad 1\mapsto [z_S^\psi]
\]
induce a map
\[\xymatrix{
\mrm{H}^0\left(X_G^{S}(\frak{f});\ \Z_p[\cal{H}_{S}^\Box],\ A(E_{\otimes,\cal{S}})\otimes_{\Q_p}\mrm{H}^1(E^\Box, V_p(A))\right)\ar[d]_-{\Psi^*_S}&\\
\mrm{H}^0\left(X_T(\frak{c});\ A(E_{\otimes,\cal{S}})\otimes_{\Q_p}\mrm{H}^1(E^\Box, V_p(A))\right).
}\]
Finally, considering the cap product pairing
\[
\cap\colon \mrm{H}^{0}(X_T(\mathfrak{c}),N)\times \mrm{H}_{0}(X_T(\frak{c}),M)\longrightarrow N\otimes_\Z M,\qquad \alpha\cap m[t]=\alpha(t)\otimes m,
\]
we are ready for the following:
\begin{definition}
The \emph{plectic Heegner class} attached to  $(A_{/F},\cal{S},\chi,\Box)$ is 
\[
\kappa_{A,\cal{S}}^{\chi,\Box}:=(\Psi_S)^{\ast}( \varphi_{A}^\Box)\cap \vartheta_{\chi^{-1}}\hspace{1mm} \in\hspace{1mm}   A(E_{\otimes,\cal{S}})\otimes_{\Q_p}\mrm{H}^1(E^\Box, V_p(A))\otimes_{\Z}\overline{\Q}.
\]
\end{definition}

\begin{corollary}\label{field of definition of plectic Heegner class}
The plectic Heegner class belongs to
    \[
    \kappa_{A,\cal{S}}^{\chi,\mrm{glo}}\hspace{1mm}  \in\hspace{1mm}   A(E_{\otimes,\cal{S}})\otimes_{\Q_p}\mrm{H}^1(L_\frak{c}, V_p(A))^\chi
    \]
    and is independent of the choice of the auxiliary place $\{w\}=S\setminus\cal{S}$.
\end{corollary}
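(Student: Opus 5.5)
The plan is to prove the two assertions separately. Field of definition and $\chi$-equivariance come directly from Theorem \ref{thm: Galois invariance} and the reciprocity law \eqref{Galois action on exotic AJ map}. Independence of $w$ is a comparison between the automorphic functions $\Phi^{\mrm{glo}}_{\cal{S}}$ built from different Shimura curves.

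\emph{Step 1 (unwinding).} I unwind the definition: choosing representatives $J(\frak{c})$,
\[
\kappa^{\chi,\mrm{glo}}_{A,\cal{S}}=\sum_{t\in J(\frak{c})}\chi^{-1}(t)\,\varphi^{\mrm{glo}}_A\big(\psi(t)^S\big)(z^\psi_S).
\]
This must be independent of the choice of $J(\frak{c})$. Replacing $t$ by $t_0 t u$ with $t_0\in T(F)$ and $u\in U(\frak{c})$ requires three facts.
\begin{itemize}
\item[$\bfcdot$] $\psi(t_0)$ fixes $z^\psi_S$ for the \emph{twisted} action. At $\p\in\cal{S}$, $\p$ is inert in $E$, so $\det\psi(t_0)=\mrm{N}_{E/F}(t_0)$ has even $\p$-valuation and $\chi_\p(\psi(t_0))$ is trivial. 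At $w$ the same argument applies if $w$ is finite, and $\chi_w$ is trivial by definition if $w$ is archimedean.
\item[$\bfcdot$] Proposition \ref{prop:G(F)-invariance} then absorbs the factor $\psi(t_0)$.
\item[$\bfcdot$] $\psi(U(\frak{c})^S)\subseteq K_0^S$ by the choice of $\frak{c}$, so the factor $u$ is absorbed as well.
\end{itemize}

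\emph{Step 2 (field of definition).} For $h=\psi(t)^S$, the group $U_{\psi,h}=\psi^{-1}(hK_0h^{-1})$ equals $\psi^{-1}(K_0)$ because $T$ is commutative. This group contains $U(\frak{c})$: away from $S$ this holds by \eqref{choice of optimal embedding}. At the places of $S$ the group $T(F_v)$ is compact, and it lies in $K_0$ because $z^\psi_\p$ reduces to the standard vertex (Remark \ref{rmk: reduction of fixed point}). Hence $E_{\psi,h}\subseteq L_\frak{c}$, and Theorem \ref{thm: Galois invariance} puts each summand in $A(E_{\otimes,\cal{S}})\otimes\mrm{H}^1(L_\frak{c},V_p(A))$. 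This is compatible with the descent injections of Corollary \ref{cor: descent}, applied with $L=L_\frak{c}$, which is abelian over $E$ and Galois over $F$.

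\emph{Step 3 ($\chi$-eigenspace).} For $\sigma=\mrm{rec}_E(u)$, equation \eqref{Galois action on exotic AJ map} gives $\varphi^{\mrm{glo}}_A(\psi(t))(z^\psi_S)^\sigma=\varphi^{\mrm{glo}}_A(\psi(ut))(z^\psi_S)$. Reindexing $t\mapsto u^{-1}t$, which is legitimate by Step 1, gives $(\kappa^{\chi,\mrm{glo}}_{A,\cal{S}})^{\sigma}=\chi(u)\,\kappa^{\chi,\mrm{glo}}_{A,\cal{S}}$. Hence the class lies in the $\chi$-isotypic part.

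\emph{Step 4 (independence of $w$), the main obstacle.} Changing $w$ changes the quaternion algebra $B$, the curves $X_m$, and $\Phi^{\mrm{glo}}_{\cal{S}}$; only the image under $\mrm{pr}_A$ can be compared. I would first reduce to the descended object. By Corollary \ref{cor: exotic AJ as leading term} and Corollary \ref{cor: descent}, $\varphi_A$ at CM points is the image under $\exp^{\mrm{Tate}}_{q_\cal{S}}\otimes\mrm{pr}_A$ of the leading $\underline{X}_\cal{S}$-coefficient of a $\bb{I}$-adic family of Heegner classes $\sum_t\chi^{-1}(t)\,\Phi^{\mrm{glo}}_{\cal{S}}(\psi(t))(z^\psi_w)$. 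These Heegner classes are anticyclotomic Heegner classes in the nearly ordinary Hida family passing through $\pi_A$.

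I would then argue that, for any two admissible choices $w,w'$, the two families are Jacquet--Langlands transfers of the same nearly-ordinary Hilbert eigenfamily. By Corollary \ref{isolocalizations}, $\bb{I}\cong(\Lambda^{\bfcdot})_\cal{I}$ is the same ring in both cases. Their $\chi$-projections to $\mrm{H}^1(L_\frak{c},\bb{I}\text{-family})$ should coincide up to a unit of $\bb{I}$. I would try to prove this via the uniqueness of $\chi$-twisted toric periods (Saito--Tunnell/Waldspurger multiplicity one) together with a compatible normalization of the test vectors at $\n^{\mbox{\tiny $+$}}$ and $\frak{c}$ coming from \eqref{choice of optimal embedding}.

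The residual constant is then pinned down by specializing at the trivial character of $\Gamma_{\cal{S}}$. There the leading coefficient is governed by Proposition \ref{logarithmic derivative U_p}, whose right-hand side does not involve $w$. I expect this comparison of normalizations across different quaternion algebras to be the hard step; if multiplicity one is insufficient, I would fall back on comparing both sides through their localizations at the primes of $\cal{S}$.
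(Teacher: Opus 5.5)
Your Steps 1--3 follow the paper's proof. You unwind the cap product into the sum \eqref{explicit description plectic heegner class}, use that $T$ is commutative and $\psi^{-1}(K_0)=U(\frak{c})$ to apply Theorem \ref{thm: Galois invariance}, and reindex via \eqref{Galois action on exotic AJ map} to get the $\chi$-eigenspace property. Your check that $\psi(T(F))$ fixes $z^\psi_S$ for the twisted action, which makes the sum well defined, is a detail the paper leaves implicit.

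Step 4, however, is a genuine gap. It is a programme rather than a proof: the decisive comparison is only asserted (``should coincide up to a unit of $\bb{I}$''), and the tools you name do not supply it.
\begin{itemize}
\item Saito--Tunnell/Waldspurger multiplicity one concerns scalar toric functionals on a fixed automorphic representation of a fixed quaternion algebra. It does not directly compare Galois cohomology classes built from Jacobians of Shimura curves.
\item In this incoherent setting, the local toric functionals of $\pi_A$ at the places of $S$ live on the other quaternion algebra, not on $B$.
\item Even granting proportionality by some $u\in\bb{I}^\times$, Corollary \ref{cor: exotic AJ as leading term} shows the leading coefficient is multiplied by $u^\otimes\big\lvert_{\underline{0}}$. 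Proposition \ref{logarithmic derivative U_p} only computes $\partial_\p(\alpha^\otimes_{\varpi_\p})/\alpha^\otimes_{\varpi_\p}$ at $\underline{0}$, so it says nothing about $u^\otimes\big\lvert_{\underline{0}}$. Your ``residual constant'' is therefore not pinned down.
\end{itemize}

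The missing idea is that no comparison of different Shimura curves is needed, because changing $w$ does not change the curves in the way you assume. As explained in Section \ref{Shimura curves}, when $S$ is definite the curves $X_K$ arise from the indefinite datum $(G',\cal{H}_\nu)$. Here $B'$ is exactly the algebra attached to the indefinite coherent set $\cal{S}\cup\{\nu\}$. The auxiliary place only determines which uniformization (complex, or \v{C}erednik--Drinfeld at $w$) is used to parametrize points. The Jacobians $J_m$, the maps $\varphi_m$, the Hecke algebras, $\mrm{pr}_A$ and $\exp^{\mrm{Tate}}_{q_\cal{S}}$ are unchanged.

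The paper then concludes directly from \eqref{explicit description plectic heegner class}. The index set $T(F)\backslash T(\A^{\Sigma^{\mbox{\tiny $-$}}\cup\Sigma_\infty})/U(\frak{c})^{\Sigma^{\mbox{\tiny $-$}}}$ does not involve $w$. The class is built only from CM points unramified at every place of $\Sigma^{\mbox{\tiny $-$}}\setminus\cal{S}$, that is, from the same Heegner points on the same tower, however they are parametrized. No Hida-family or normalization argument enters.
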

\begin{proof}
    Unraveling the definitions one finds that
\begin{equation}\label{explicit description plectic heegner class}
\kappa_{A,\cal{S}}^{\chi,\mrm{glo}}=\sum_{t\in T(F)\backslash T(\A^{\Sigma^{\mbox{\tiny $-$}}\cup\Sigma_\infty})/U(\mathfrak{c})^{\Sigma^{\mbox{\tiny $-$}}}}\varphi_{A}^\mrm{glo}(t)(z_S^\psi)\otimes \chi^{-1}(t)
\end{equation}
with the slight abuses of notation $\varphi_{A}^\Box(t):=\varphi_{A}^\Box(\psi(t))$ and $\chi^{-1}(t):=\chi^{-1}(\mrm{rec}_E(t))$. As $T\big(\A^{\Sigma^{\mbox{\tiny $-$}}\cup\Sigma_\infty}\big)$ is commutative and $\psi^{-1}(K_0)=U(\frak{c})$, Theorem \ref{thm: Galois invariance} implies that
\[
\varphi_{A}^\Box(t)(z_S^\psi)\hspace{1mm}\in\hspace{1mm}A(E_{\otimes,\cal{S}})\otimes_{\Q_p}\mrm{H}^1\big(L_\frak{c}, V_p(A)\big)\qquad \forall\ t\in T\big(\A^{\Sigma^{\mbox{\tiny $-$}}\cup\Sigma_\infty}\big).
\]
Moreover, combining \eqref{Galois action on exotic AJ map} with \eqref{explicit description plectic heegner class}, one finds that  
\[
\big(\kappa_{A,\cal{S}}^{\chi,\mrm{glo}}\big)^{g}=\chi(g)\cdot \kappa_{A,\cal{S}}^{\chi,\mrm{glo}}\qquad \forall\ g\in \cal{G}_\frak{c}.
\]
The independence of the choice of auxiliary place $w\in (\Sigma^{\mbox{\tiny $-$}}\cup\Sigma_\infty)\setminus\cal{S}$ is a consequence of the formula \eqref{explicit description plectic heegner class}, because it shows that the construction of $\kappa_{A,\cal{S}}^{\chi,\mrm{glo}}$ involves only CM points ``unramified at every $\Sigma^{\mbox{\tiny $-$}}\setminus\cal{S}$''.
\end{proof}

\section{Comparison with previous constructions}\label{Comparison with previous constructions}
The goal of this section is to show how to recover plectic Heegner points, and mock plectic invariants  from plectic Heegner classes.  Note that we continue to consider a coherent set $S=\cal{S}\cup\{w\}$ for the triple $(A_{/F}, E/F, p)$.

\subsubsection{Notation.}  We let $\Z_p[\cal{H}_\p]^0:=\ker(\Z_p[\cal{H}_\p]\to\Z_p)$ and consider the tensor products
\[
\Z_p[\cal{H}_{\cal{S}}]^\plectic:=\bigotimes_{\p\in\cal{S}}\Z_p[\cal{H}_\p]^0,\qquad \Z_p\big[\cal{H}_{S}^\Box\big]^\plectic:=\Z_p\big[\cal{H}_{\cal{S}}\big]^\plectic\otimes_{\Z_p}\Z_p\big[\cal{H}_w^\Box\big].
\]
Note that $\Z_p\big[\cal{H}_{S}^\Box\big]^\plectic\subset \Z_p[\cal{H}_S^\Box]$ is stable under the twisted $G(F)$-action. 

\bigskip
\noindent  Define a partition $\cal{S}=\cal{S}^+\amalg\cal{S}^-$  in terms of the reduction type of $A_{/F}$ by setting
\[
\cal{S}^\pm:=\big\{\p\in\cal{S}\ \lvert\ \varepsilon_\p=\pm1\big\}
\]
where $\varepsilon_\p=+1$ (resp. $-1$) if $A_{/F_\p}$ has split (resp. non-split) multiplicative reduction. 
For any $v\in S$, denote by  $\msf{H}_v$ the $G_v$-space $\bb{P}^1(E_v)\setminus\bb{P}^1(F_v)$ with its usual $G_v$-action via M\"obius transformations, and consider the $G_\cal{S}$-representations
\[
\mrm{St}_{\cal{S}^\pm}:=\bigotimes_{\p\in\cal{S}}\mrm{St}^{\varepsilon_\p}_\p,\qquad \Z_p[\msf{H}_{\cal{S}^\pm}]^\plectic:=\bigotimes_{\p\in\cal{S}}\Z_p^{\varepsilon_\p}[\msf{H}_\p]^0\
\]
where $\mrm{St}_\p^\pm$ (resp. $\Z_p^\pm[\msf{H}_\p]^0$) denotes the twist of the locally constant $\Z_p$-valued Steinberg representation of $G_\p$ (resp. of $\Z_p[\msf{H}_\p]^0)$ by a specialization of the twisting character \eqref{twisting character}
\[
\chi_\p^\pm\colon G_\p\longrightarrow \{\pm1\},\qquad g\mapsto (\pm1)^{\mrm{ord}_\p(\det(g))}.
\]
 Let $\mrm{St}^{\pm}_{\p}\big(\widehat{E}_{\p}^\times\big)^{\mrm{ct}}$ be the continuous $\widehat{E}_{\p}^\times$-valued Steinberg representation of $G_\p$. There are natural  $G_\p$-equivariant maps
\begin{equation}\label{notation: steinberg-divisors}
\widehat{E}_\p^\times\otimes_{\Z_p} \mrm{St}^\pm_\p\longrightarrow\mrm{St}^{\pm}_{\p}\big(\widehat{E}_{\p}^\times\big)^{\mrm{ct}},\qquad \Z_p^\pm[\msf{H}_{\p}]^0\longrightarrow \mrm{St}^{\pm}_{\p}\big(\widehat{E}_{\p}^\times\big)^{\mrm{ct}},
\end{equation}
 the first is induced by the inclusion of locally constant function in continuous function, while the second maps a divisor  $D_\p$ of degree zero to the class of functions on $\bb{P}^1(F_\p)$ with divisor $D_\p$.
 Finally, for any subset $\Sigma\subseteq\cal{S}$ we let $\chi_{\Sigma^\pm}\colon G_\Sigma\to \{\pm1\}$ denote the character 
 \begin{equation}\label{chi+-}
\chi_{\Sigma^\pm}:=\prod_{\p\in\Sigma}\chi_\p^{\varepsilon_\p},
 \end{equation}
  a specialization of the character $\chi_\Sigma\colon G_\Sigma\to G(E_\Sigma/F_\Sigma)$ defined in \eqref{def: twist Sigma character}.

\subsection{Review}\label{section: review}
We briefly recall the approach of \cite{plecticHeegner} and \cite{Iwasawa4mock}. In  \cite[Section 2.2.2]{Iwasawa4mock} it is explained how to use the non-constant $F$-rational morphism $J_1^\circ\to A$ arising from the modularity of $A_{/F}$, and the $w$-adic uniformization 
\[
\phi_w^\mrm{CD}\colon G(F)\backslash\Big(\cal{H}_w^\Box\times G(\A^{w,\infty})/K_1^\circ\Big)\longrightarrow X_1^\circ(E^\Box)
\] 
to produce an element
\[
\omega^\Box_{A,w}\hspace{1mm}\in\hspace{1mm} \mrm{H}^0\big(X_G^w(\frak{f});\ \Z_p[\cal{H}_w^\Box],\ A(E^\Box)\otimes_\Z\Z_p\big)_{\pi_A}.
\]

\subsubsection{Relation to \cite{plecticHeegner}.}\label{Relation to plecticHeegner}
  When $w\not\in\Sigma_\infty$  it is possible to define an element 
    \[
    c^\Box_{A,w}\hspace{1mm}\in\hspace{1mm} \mrm{H}^0\big(X_G^w(\frak{f});\ \Z_p^{\varepsilon_w}[\msf{H}_w^\Box],\ A(E^\Box)\otimes_\Z\Z_p\big)_{\pi_A}
    \]
    from $\omega^\Box_{A,w}$.  For any $b\in G(\A^{w,\infty})/K_1^\circ$ and any $\tau\in\msf{H}_w^\Box$, one sets 
    \[
   c^\Box_{A,w}(b)(\tau):=\omega^\Box_{A,w}(b)(\tau)^{\chi_w(g_\mrm{v})}
    \]
    for $\mrm{v}=\mrm{red}(\tau)$. In other words, we  claim that 
    \[
    \chi^{\varepsilon_w}_w(\delta)\cdot c^\Box_{A,w}(\delta^wb)(\delta_w(\tau))= c^\Box_{A,w}(b)(\tau)\qquad \forall \ \delta\in G(F).
    \]
    To verify the claim,  one computes that
    \[\begin{split}
    \chi^{\varepsilon_w}_w(\delta)\cdot c^\Box_{A,w}(\delta^wb)(\delta_w(\tau))&= \chi^{\varepsilon_w}_w(\delta)\cdot\omega^\Box_{A,w}(\delta^wb)(\delta_w(\tau))^{\chi_w(g_{\delta\mrm{v}})}\\
     &=\omega^\Box_{A,w}(\delta^w b)(\delta_w.\tau)^{\chi_w(g_{\mrm{v}})}\\
     &=\omega^\Box_{A,w}(b)(\tau)^{\chi_w(g_{\mrm{v}})}\\
     &= c^\Box_{A,w}(b)(\tau)
    \end{split}\]
    where the second equality is a consequence of $\chi_w(g_{\delta\mrm{v}})=\chi_w(\delta)\cdot \chi_w(g_{\mrm{v}})$ and the Galois equivariance property \eqref{Galois equivariance CD parametrization}, while the third equality follows from the properties of $\omega^\Box_{A,w}$. 

    Furthermore, when $w\in\Sigma_p$, the theory of Mumford curves implies that $c^\mrm{loc}_{A,w}$ can be described using multiplicative integrals. More precisely, the image of $c^\mrm{loc}_{A,w}$ under 
    \[
    \mrm{H}^0\big(X_G^w(\frak{f});\ \Z_p^{\varepsilon_w}[\msf{H}_w],\ A(E_w)\otimes_\Z\Z_p\big)\longrightarrow \mrm{H}^0\big(X_G^w(\frak{f});\  \Z_p^{\varepsilon_w}[\msf{H}_w]^0,\ A(E_w)\otimes_\Z\Z_p\big)
    \]
    equals the image of a generator of  $\mrm{H}^0\big(X_G^w(\frak{f});\hspace{1mm} \mrm{St}_w^{\varepsilon_w},\hspace{0.5mm}  \Z_p\big)_{\pi_A}$  under  the composition
    \[\xymatrix{
    \mrm{H}^0\big(X_G^w(\frak{f});\hspace{1mm}  \mrm{St}_w^{\varepsilon_w}, \hspace{0.5mm} \Z_p\big)\ar[r]\ar@{.>}[dr] & \mrm{H}^0\big(X_G^w(\frak{f}); \hspace{1mm} \mrm{St}^{{\varepsilon_w}}_w(\widehat{E}_w^\times)^{\mrm{ct}}, \hspace{0.5mm}  A(E_w)\otimes_\Z\Z_p\big)\ar[d]\\
    &
     \mrm{H}^0\big(X_G^w(\frak{f});\hspace{1mm}  \Z_p^{\varepsilon_w}[\msf{H}_w]^0,\hspace{0.5mm}  A(E_w)\otimes_\Z\Z_p\big).
    }\]
   We refer to \cite[Section 4.1]{plecticHeegner} and \cite[Section 3.2]{polyquadraticPlectic} for more details.

\subsubsection{The associated measure on $\bb{P}^1(F_\cal{S})$.}
For every $\p\in\cal{S}$, and depending on the chosen isomorphism $G_\p\cong \mrm{PGL}_2(F_\p)$,  there is a $G_\p$-equivariant way to associate to an oriented edge $e_\p\in\cal{E}_\p$ of the Bruhat-Tits tree an open compact subset $U_{e_\p}\subset\bb{P}^1(F_\p)$, such that the collection $\{U_{e_\p}\}_{e_\p\in\cal{E}_\p}$ form a basis for the $p$-adic topology of $\bb{P}^1(F_\p)$. Moreover, the $G_\p$-action of $\cal{E}_\p$ provides an identification $G_\p/K_{\p^1}^\circ\cong \cal{E}_\p$ singling out a distinguished oriented edge $e_{\p,\circ}$. 

As $\bb{P}^1(F_\cal{S})=\prod_{\p\in\cal{S}}\bb{P}^1(F_\p)$, one deduces the existence of  a $G_\cal{S}$-equivariant function associating to a tuple of oriented edges $e=(e_\p)_\p$ in $ \cal{E}_\cal{S}:=\prod_{\p\in\cal{S}}\cal{E}_\p$ an open compact subset $U_{e}\subset\bb{P}^1(F_\cal{S})$, such that the collection $\{U_{e}\}_{e\in\cal{E}_\cal{S}}$ form a basis for the $p$-adic topology of $\bb{P}^1(F_\cal{S})$. We denote by $e_\circ$ the distinguished tuple of oriented edges $(e_{\p,\circ})_\p$ and for any $e\in\cal{E}_\cal{S}$ we let $g_e\in G_\cal{S}$ denote an element such that $e=g_e(e_\circ)$. We refer to \cite[Section 3.2]{plecticHeegner} for more details.

\medskip
\noindent By induction, the arguments in  \cite[Section 2.2.2]{Iwasawa4mock} also show that $\omega^\Box_{A,w}$ vanishes in the cokernel of the injective map
\[
\mrm{H}^0\big(X_G^S(\frak{f});\ \mrm{St}_{\cal{S}^\pm}\otimes_{\Z_p} \Z_p[\cal{H}_w^\Box],\ A(E^\Box)\otimes_\Z\Z_p\big)_{\pi_A}\hooklongrightarrow \mrm{H}^0\big(X_G^w(\frak{f});\ \Z_p[\cal{H}_w^\Box],\ A(E^\Box)\otimes_\Z\Z_p\big)_{\pi_A}
\]
\begin{equation}\label{defining property}
\phi\mapsto \big[b\mapsto \phi(b^S)(b_\cal{S}.\mathbbm{1}_{U_{e_\circ}})\big],
\end{equation}
because the elliptic curve $A_{/F}$ has multiplicative reduction at the primes in $\cal{S}$ (see also \cite[Section 3.3]{plecticHeegner}). Therefore, it is possible to lift the the class $\omega_{A,w}^\Box$ to
\begin{equation}\label{lift of the class}
\omega_{A,S}^\Box\hspace{1mm}\in\hspace{1mm} \mrm{H}^0\big(X_G^S(\frak{f});\ \mrm{St}_{\cal{S}^\pm}\otimes_{\Z_p} \Z_p[\cal{H}_w^\Box],\ A(E^\Box)\otimes_\Z\Z_p\big)_{\pi_A}.
\end{equation} 

In light of the non-Archimedean uniformization of quaternionic Shimura varieties, it is convenient to modify slightly the class $\omega_{A,S}^\Box$. Regard the $\Z_p$-module $\widehat{E}_{\p}^\times$ as a $G_\p$-representation through the homomorphism $\chi_\p\colon G_\p\to G(E_\p/F_\p)$, and consider its tensor product with the Steinberg representation
\[
\mrm{St}^{\dagger}_{\p}(\widehat{E}_{\p}^\times):=\widehat{E}_{\p}^\times\otimes_{\Z_p}\mrm{St}_{\p}.
\]
Let $\mrm{St}^{\dagger}_{\p}\big(\widehat{E}_{\p}^\times\big)^{\mrm{ct}}$ denote the continuous $\widehat{E}_{\p}^\times$-valued Steinberg representation with $G_\p$-action twisted by the action of $\chi_\p$ on coefficients. Then, similarly to \eqref{notation: steinberg-divisors}, there are natural  $G_\p$-equivariant maps
\begin{equation}\label{notation: twisted steinberg-divisors}
\mrm{St}^{\dagger}_{\p}(\widehat{E}_{\p}^\times)\longrightarrow\mrm{St}^{\dagger}_{\p}\big(\widehat{E}_{\p}^\times\big)^{\mrm{ct}},\qquad \Z_p[\cal{H}_{\p}]^0\longrightarrow \mrm{St}^{\dagger}_{\p}\big(\widehat{E}_{\p}^\times\big)^{\mrm{ct}},
\end{equation}
induced respectively by the inclusion of locally constant function in continuous function, and by associating to a divisor  $D_\p$ of degree zero, the class of functions on $\bb{P}^1(F_\p)$ with divisor $D_\p$.
We consider the $G_\cal{S}$-module
\[
\mrm{St}^{\dagger}_{\cal{S}}(\widehat{E}_{\otimes,\cal{S}}^\times):=\bigotimes_{\p\in\cal{S}}\mrm{St}^{\dagger}_{\p}(\widehat{E}_{\p}^\times)
\]
with $G_\cal{S}$-action given by
\[
g.(\xi\otimes\mathbbm{1}_{U_e}):=\xi^{\chi_\cal{S}(g)}\otimes\mathbbm{1}_{U_{g e}}\qquad\forall\ g\in G_\cal{S}
\]
where $\xi\in \widehat{E}_{\otimes, \cal{S}}^\times$ and $e\in\cal{E}_\cal{S}$. 
Inspired by the discussion in Section \ref{Relation to plecticHeegner} we propose the following:
\begin{definition}\label{def: multiplicative variant}
 For all $h\in G(\A^{S,\infty})/(K_0)^S$, we let
\[
\omega_{A,S}^{\Box,\times}(h)\colon  \mrm{St}^\dagger_{\cal{S}}(\widehat{E}_{\otimes,\cal{S}}^\times)\otimes_{\Z_p}\Z_p[\cal{H}_w^\Box]\longrightarrow  \widehat{E}_{\otimes,\cal{S}}^\times\otimes_{\Z} A(E^\Box)
\]
be the $\Z_p$-linear function defined by 
\[
\omega_{A,S}^{\Box,\times}(h)(\xi\otimes\mathbbm{1}_{U_e}\otimes[\tau_w]):=\chi_{\cal{S}^\pm}(g_e)\cdot \xi^{\chi_\cal{S}(g_e)}\otimes\big(\omega_{A,S}^\Box(h)(\mathbbm{1}_{U_e}\otimes[\tau_w])\big)
\]
 for all $\xi\in \widehat{E}_{\otimes,\cal{S}}^\times$, $\mathbbm{1}_{U_e}\in\mrm{St}_\cal{S}$, and $\tau_w\in\cal{H}_w^\Box$.  
\end{definition}
\begin{lemma}
    The function $\omega_{A,S}^{\Box,\times}$  is $G(F)$-invariant, i.e.,
    \[
  \omega_{A,S}^{\Box,\times}(\delta^S\cdot -)(\delta_S.-)=\omega_{A,S}^{\Box,\times}(-)(-)\qquad \forall\ \delta\in G(F).
    \]
    In other words, 
    \[
\omega_{A,S}^{\Box,\times}\hspace{1mm}\in\hspace{1mm} \mrm{H}^0\big(X_G^S(\frak{f});\ \mrm{St}^\dagger_{\cal{S}}(\widehat{E}_{\otimes,\cal{S}}^\times)\otimes_{\Z_p} \Z_p[\cal{H}_w^\Box],\ \widehat{E}_{\otimes,\cal{S}}^\times\otimes_\Z A(E^\Box)\big)_{\pi_A}.
    \]
\end{lemma}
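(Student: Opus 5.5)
We check the identity directly on the generators $\xi\otimes\mathbbm{1}_{U_e}\otimes[\tau_w]$ of $\mrm{St}^\dagger_{\cal{S}}(\widehat{E}_{\otimes,\cal{S}}^\times)\otimes_{\Z_p}\Z_p[\cal{H}_w^\Box]$, mirroring the computation for $c^\Box_{A,w}$ in Section \ref{Relation to plecticHeegner}. The only input is the $G(F)$-invariance of the lifted class $\omega_{A,S}^\Box$ from \eqref{lift of the class}. Since $\omega_{A,S}^\Box$ lives in $\mrm{H}^0\big(X_G^S(\frak{f});\ \mrm{St}_{\cal{S}^\pm}\otimes\Z_p[\cal{H}_w^\Box],\ \dots\big)$, it satisfies
\[
\omega_{A,S}^\Box(\delta^S h)\big(\chi_{\cal{S}^\pm}(\delta_\cal{S})\mathbbm{1}_{U_{\delta e}}\otimes[\delta_w.\tau_w]\big)=\omega_{A,S}^\Box(h)\big(\mathbbm{1}_{U_e}\otimes[\tau_w]\big).
\]
Here $\delta$ acts on $\mrm{St}_{\cal{S}^\pm}$ through the twist $\chi_{\cal{S}^\pm}$, and the edge-to-open map is $G_\cal{S}$-equivariant, so $\delta_\cal{S}.\mathbbm{1}_{U_e}=\chi_{\cal{S}^\pm}(\delta_\cal{S})\mathbbm{1}_{U_{\delta e}}$.

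The key steps are as follows.

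\emph{(i) Unwind the action.} By definition,
\[
\delta_S.(\xi\otimes\mathbbm{1}_{U_e}\otimes[\tau_w])=\xi^{\chi_\cal{S}(\delta_\cal{S})}\otimes\mathbbm{1}_{U_{\delta e}}\otimes[\delta_w.\tau_w].
\]

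\emph{(ii) Relate the chosen elements.} Fix $g_{\delta e}=\delta_\cal{S}g_e k$ with $k$ in the stabilizer of $e_\circ$, which is the image of $K^\circ_{\p^1}$ at each $\p$. Since $\mrm{ord}_\p\det(k)$ is even (zero in $\PGL_2$ modulo squares of scalars), we get $\chi_\cal{S}(k)=1$ and $\chi_{\cal{S}^\pm}(k)=1$. Hence the expressions in Definition \ref{def: multiplicative variant} do not depend on the choice of $g_e$, and
\[
\chi_\cal{S}(g_{\delta e})=\chi_\cal{S}(\delta_\cal{S})\chi_\cal{S}(g_e),\qquad \chi_{\cal{S}^\pm}(g_{\delta e})=\chi_{\cal{S}^\pm}(\delta_\cal{S})\chi_{\cal{S}^\pm}(g_e).
\]
Checking this well-definedness is the first small point to record.

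\emph{(iii) Compute the left-hand side.} Plugging (i) and (ii) into the definition gives
\[
\chi_{\cal{S}^\pm}(\delta_\cal{S})\chi_{\cal{S}^\pm}(g_e)\cdot\big(\xi^{\chi_\cal{S}(\delta_\cal{S})}\big)^{\chi_\cal{S}(\delta_\cal{S})\chi_\cal{S}(g_e)}\otimes\omega_{A,S}^\Box(\delta^Sh)\big(\mathbbm{1}_{U_{\delta e}}\otimes[\delta_w.\tau_w]\big).
\]
Since $G(E_\cal{S}/F_\cal{S})$ is an elementary abelian $2$-group, the Galois exponent collapses to $\chi_\cal{S}(g_e)$. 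The sign $\chi_{\cal{S}^\pm}(\delta_\cal{S})\in\{\pm1\}$ can be moved onto the $\mrm{St}_{\cal{S}^\pm}$ argument by linearity. The invariance displayed above then turns the second factor into $\omega_{A,S}^\Box(h)(\mathbbm{1}_{U_e}\otimes[\tau_w])$, which recovers $\omega^{\Box,\times}_{A,S}(h)(\xi\otimes\mathbbm{1}_{U_e}\otimes[\tau_w])$.

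The main obstacle is step (ii) together with the sign bookkeeping. One must verify that the twisted action on $\mrm{St}_{\cal{S}^\pm}$ used to define $\omega_{A,S}^\Box$ matches $\chi_{\cal{S}^\pm}$ exactly, so that the two signs cancel rather than square to something else. One must also confirm that the $w$-component carries no extra twist. The latter is already built in, since $\delta_w.\tau_w$ is the twisted action on $\cal{H}_w^\Box$ appearing in $\omega_{A,w}^\Box$.

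The $\pi_A$-isotypicity is inherited from $\omega^\Box_{A,S}$, because the away-from-$S$ Hecke operators act only on the $h$-variable and commute with the construction.
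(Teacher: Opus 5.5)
Your proposal is correct and follows the paper's proof essentially line by line. You unwind $\delta_S.(\xi\otimes\mathbbm{1}_{U_e}\otimes[\tau_w])$, use $\chi_{\cal{S}}(g_{\delta e})=\chi_{\cal{S}}(\delta)\chi_{\cal{S}}(g_e)$ (and likewise for $\chi_{\cal{S}^\pm}$) together with the fact that $G(E_{\cal{S}}/F_{\cal{S}})$ has exponent $2$, then move the sign $\chi_{\cal{S}^\pm}(\delta)$ onto the Steinberg argument and invoke the $G(F)$-invariance of $\omega^\Box_{A,S}$; your step (ii) also makes explicit a point the paper uses tacitly, namely that the construction does not depend on the choice of $g_e$, because the stabilizer of $e_\circ$ (the image of $K^\circ_{\p^1}$) consists of elements with unit determinant.
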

\begin{proof}
By definition 
 \[
 \delta_S.(\xi\otimes\mathbbm{1}_{U_e}\otimes[\tau_w])=\xi^{\chi_\cal{S}(\delta)}\otimes\mathbbm{1}_{U_{\delta e}}\otimes[\delta.\tau_w].
 \]
Thus, we compute
    \[\begin{split}
   \omega_{A,S}^{\Box,\times}(\delta^Sh)(\delta_S.(\xi\otimes\mathbbm{1}_{U_e}\otimes[\tau_w]))&=\chi_{\cal{S}^\pm}(g_{\delta e})\cdot (\xi^{\chi_\cal{S}(\delta)})^{ \chi_\cal{S}(g_{\delta e})}\otimes\big(\omega_{A,S}^\Box(\delta^Sh)(\mathbbm{1}_{U_{\delta e}}\otimes[\delta.\tau_w])\big)\\
     &=\chi_{\cal{S}^\pm}(g_e)\cdot \xi^{\chi_\cal{S}(g_{ e})}\otimes \chi_{\cal{S}^\pm}(\delta)\cdot\big(\omega_{A,S}^\Box(\delta^Sh)(\mathbbm{1}_{U_{\delta e}}\otimes[\delta.\tau_w])\big)\\
     &= \chi_{\cal{S}^\pm}(g_e)\cdot \xi^{\chi_\cal{S}(g_{ e})}\otimes\big(\omega_{A,S}^\Box(h)(\mathbbm{1}_{U_{ e}}\otimes[\tau_w])\big)\\
     &=  \omega_{A,S}^{\Box,\times}(h)(\xi\otimes\mathbbm{1}_{U_e}\otimes[\tau_w])
    \end{split}\]
    where the second equality is a consequence of $\chi_\cal{S}(g_{\delta e})=\chi_\cal{S}(\delta)\cdot \chi_\cal{S}(g_{e})$, while the third equality follows from the properties of $\omega^\Box_{A,S}$.  
\end{proof}

\subsubsection{Plectic Abel--Jacobi map for plectic zero-cycles.}
The morphisms \eqref{notation: twisted steinberg-divisors} give $G_\cal{S}$-equivariant maps
\[
\mrm{St}^{\dagger}_{\cal{S}}(\widehat{E}_{\otimes,\cal{S}}^\times)\longrightarrow \mrm{St}^{\dagger}_{\cal{S}}(\widehat{E}_{\otimes,\cal{S}}^\times)^{\mrm{ct}},\qquad \Z_p\big[\cal{H}_{\cal{S}}^\Box\big]^\plectic\longrightarrow \mrm{St}^{\dagger}_{\cal{S}}(\widehat{E}_{\otimes,\cal{S}}^\times)^{\mrm{ct}},
\]
which combine with Tate's uniformization $\phi^\mrm{Tate}_\cal{S}\colon\widehat{E}_{\otimes, \cal{S}}^\times\to A(E_{\otimes,\cal{S}})$ (see Definition \ref{def: Exotic AJ map}) to give 
\begin{equation}\label{integrating twisted plectic divisors}\xymatrix{
\mrm{H}^0\big(X_G^S(\frak{f});\ \mrm{St}^\dagger_{\cal{S}}(\widehat{E}_{\otimes,\cal{S}}^\times)\otimes_{\Z_p} \Z_p[\cal{H}_w^\Box],\ \widehat{E}_{\otimes,\cal{S}}^\times\otimes_\Z A(E^\Box)\big)_{\pi_A}\ar[d]&\\
\mrm{H}^0\big(X_G^S(\frak{f});\ \Z_p\big[\cal{H}_{S}^\Box\big]^\plectic,\ A(E_{\otimes,\cal{S}})\otimes_{\Z_p} A(E^\Box)^\wedge\big)_{\pi_A}.
}\end{equation}
We denote  by
\[
\varphi_A^{\circ,\Box}\hspace{1mm}\in\hspace{1mm}\mrm{H}^0\big(X_G^S(\frak{f});\  \Z_p[\cal{H}_{S}^\Box]^\plectic,\ A(E_{\otimes,\cal{S}})\otimes_{\Z_p} A(E^\Box)^\wedge\big)_{\pi_A}
\]
the image of $\omega_{A,S}^{\Box,\times}$ under \eqref{integrating twisted plectic divisors}.

\subsubsection{Alternative description.}
It is possible to give a slightly different description of $\varphi_A^{\circ,\Box}$ which will be used in the comparison with the function $\varphi_A^{\Box}$ of Corollary \ref{piA-isotypic subspace}:

\medskip
For any $D_\p\in \Z_p[\cal{H}_\p]^0$ and $u_\p\in\p\cal{O}_\p\setminus\{0\}$  we let $\ell_{D_\p, \omega_\p}\colon \bb{P}^1(F_\p)\to E_\p$ be the function defined by linearly extending the association 
\[
([\tau_\p]-[\tau_\p'])\mapsto \log_{u_\p}\left(\frac{t_\p-\tau_\p}{t_\p-\tau_\p'}\right).
\]
 For $D=\otimes_\p D_\p\in \Z_p[\cal{H}_{\cal{S}}]^\plectic$ and $u=(u_\p)_{\p\in\cal{S}}$ we then set
\[
\ell_{D,u}\colon\bb{P}^1(F_S)\longrightarrow E_{\otimes,\cal{S}}\qquad \ell_{D,u}(t):=\otimes_{\p\in\cal{S}} \ell_{D_\p, u_\p}(t_\p).
\]
We consider the following  ``additive'' variant of the class in Definition \ref{def: multiplicative variant}
\[
\omega_{A,S}^{\Box,+}\hspace{1mm}\in\hspace{1mm} \mrm{H}^0\big(X_G^S(\frak{f});\ \mrm{St}^\dagger_{\cal{S}}(E_{\otimes,\cal{S}})\otimes_{\Z_p} \Z_p[\cal{H}_w^\Box],\ E_{\otimes,\cal{S}}\otimes_\Z A(E^\Box)\big)_{\pi_A}
\]
where for all $h\in G(\A^{S,\infty})$, $\xi\in E_{\otimes,\cal{S}}$, $\mathbbm{1}_{U_e}\in\mrm{St}_\cal{S}$, and $\tau_w\in\cal{H}_w^\Box$ one sets
\[
\omega_{A,S}^{\Box,+}(h)(\xi\otimes\mathbbm{1}_{U_e}\otimes[\tau_w]):=\chi_{\cal{S}^\pm}(g_e)\cdot \xi^{\chi_\cal{S}(g_e)}\otimes\big(\omega_{A,S}^\Box(h)(\mathbbm{1}_{U_e}\otimes[\tau_w])\big).
\]
Then, letting $q_\p\in \p\cal{O}_\p$ denote the Tate parameter of $A_{/F_\p}$ and setting $q=(q_\p)_{\p\in\cal{S}}$, we find that 
\[
\varphi_A^{\circ,\Box}(h)(D\otimes[\tau_w])=(\exp_{q_\cal{S}}^\mrm{Tate}\otimes1)\left(\int_{\bb{P}^1(F_\cal{S})}\ell_{D,q}(t)\hspace{1mm}\mrm{d}\big(\omega_{A,S}^{\Box,+}(h)(\tau_w)\big)(t)\right)
\]
for all $h\in G(\A^{S,\infty})$ and $D\otimes[\tau_w]\in\Z_p[\cal{H}_{S}^\Box]^\plectic$.

\subsubsection{Mock plectic invariants.} Suppose $F=\bb{Q}$, $\cal{S}=\{p\}$ and $w=\infty$. Then, mock plectic invariants are directly obtained from the class $\varphi_A^{\circ,\mrm{glo}}$ by means of a cap product (\cite[Section 3.4]{DarmonFornea}, \cite[Definition 2.7]{Iwasawa4mock}).

\subsubsection{Plectic Heegner points.}
Suppose $w\in\Sigma_p$. The construction of plectic Heegner points requires an additional step which consists in showing that $\varphi_A^{\circ,\mrm{loc}}$ can be lifted with respect to the injection
\begin{equation}\label{pHP injection}
\xymatrix{\mrm{H}^0\big(X_G^S(\frak{f});\hspace{1mm}   \Z_p[\cal{H}_{S}],\hspace{0.5mm}  A(E_{\otimes,\cal{S}})\otimes_{\Z_p} A(E_w)^\wedge\big)_{\pi_A}\ar@{^{(}->}[d]_-{(\iota_\plectic)_*}\\
\mrm{H}^0\big(X_G^S(\frak{f});\hspace{1mm}   \Z_p[\cal{H}_{S}]^\plectic,\hspace{0.5mm}  A(E_{\otimes,\cal{S}})\otimes_{\Z_p} A(E_w)^\wedge\big)_{\pi_A}
}\end{equation}
induced by the natural inclusion $\iota_\plectic\colon \Z_p[\cal{H}_{S}]^\plectic\hookrightarrow \Z_p[\cal{H}_{S}]$. The existence of the lift 
\[
(\varphi_A^{\circ,\mrm{loc}})^\sim\hspace{1mm}\in \hspace{1mm}\mrm{H}^0\big(X_G^S(\frak{f});\hspace{1mm}   \Z_p[\cal{H}_{S}],\hspace{0.5mm}  A(E_{\otimes,\cal{S}})\otimes_{\Z_p} A(E_w)^\wedge\big)_{\pi_A}
\]
of $\varphi_A^{\circ,\mrm{loc}}$ is deduced in \cite[Theorem 4.10]{plecticHeegner} by showing that $\varphi_A^{\circ,\mrm{loc}}$ vanishes in the cokernel of \eqref{pHP injection}. Plectic Heegner points are then obtained from the class $(\varphi_A^{\circ,\mrm{loc}})^\sim$ by means of a cap product (\cite{plecticHeegner}, Definition 4.11).

\subsubsection{Strategy.}  To recover previous constructions from plectic Heegner classes, we claim that it suffices to prove the following key result.
\begin{theorem}\label{thm: comp circ and nocirc}
     The natural inclusion $\iota_\plectic\colon \Z_p[\cal{H}_{S}^\Box]^\plectic\hookrightarrow \Z_p[\cal{H}_{S}^\Box]$ together with the Kummer map $\cal{K}\colon A(E^\Box)^\wedge\hookrightarrow \mrm{H}^1(E^\Box,T_p(A))$ produce the identification
    \[
    (\iota_\plectic)_*\varphi_A^\Box=(\cal{K})_*\varphi_A^{\circ,\Box}
    \]
    in $\mrm{H}^0\left(X_G^{S}(\frak{f});\ \Z_p[\cal{H}_{S}^\Box]^\plectic,\ A(E_{\otimes,\cal{S}})\otimes_{\Q_p}\mrm{H}^1(E^\Box,V_p(A))\right)$.
\end{theorem}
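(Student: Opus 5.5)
Both sides of the identity are $G(F)$-invariant and $\Z_p$-linear in the $\Z_p[\cal{H}_S^\Box]^\plectic$-variable. So I will check the equality on generators $D\otimes[\tau_w]$ with $D=\otimes_{\p}([\tau_\p]-[\tau'_\p])$, for arbitrary $h$. The basic idea is to compare two measures on $\bb{P}^1(F_\cal{S})$: the pushforward of the $J$-valued measure $\mu^{\varpi,\Box}_{(\tau_S,h)}$ under the projection $\cal{Y}_\cal{S}\twoheadrightarrow\bb{P}^1(F_\cal{S})$ (after localizing at $\cal{P}$ and applying $\mrm{pr}_A$), and the measure $\omega^{\Box,+}_{A,S}(h)(\tau_w)$ composed with the Kummer map.

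\textbf{Step 1: reduce the integrand to a function on $\bb{P}^1$.} By Theorem \ref{vanishing implies invariance}, after tensoring with $\bb{I}$ any integrand having a constant component at some prime integrates to zero. The same holds for integrands depending on $\gamma_\p$ only through $\mrm{ord}$, by the argument of Corollary \ref{independence of choice of uniformizers}. In the difference $f_{\tau_\p,\varpi}-f_{\tau'_\p,\varpi}$ the $z_\gamma$ term cancels, and we get $\log_\varpi\big((x-\tau_\p y)/(x-\tau'_\p y)\big)$, which is a function of $t=x/y\in\bb{P}^1(F_\p)$, namely $\ell_{D_\p,\varpi}(t)$. Two points need care here. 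First, $\tau_\p$ and $\tau'_\p$ may reduce to different vertices, so the domains $\bb{X}_{\mrm{v}}$ and the twists $g_{\mrm{v}}$ differ. I will handle this by moving both domains to a common one through Lemma \ref{constant components} and Proposition \ref{prop:G(F)-invariance}, using the Hecke eigen-property of Corollary \ref{Hecke action on special form} ($T_\varpi\Phi=\Phi\mbf{U}_\varpi$) to compare the translated measures. Second, I will replace $\log_\varpi$ by $\log_{q_\p}$. Proposition \ref{logarithmic derivative U_p} controls the change of logarithm: the correction term is proportional to $\mrm{ord}$, and it is absorbed by the $\partial_\p\alpha/\alpha$ factor coming from the derivative formalism of Corollary \ref{cor: exotic AJ as leading term}. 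This should produce exactly $\exp^{\mrm{Tate}}_{q_\p}$.

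\textbf{Step 2: identify the measures.} I will show that the pushforward of $(\mu\otimes 1)$ to $\bb{P}^1(F_\cal{S})$, composed with $\mrm{pr}_A$, equals $\cal{K}_*\,\omega^{\Box,+}_{A,S}(h)(\tau_w)$ with the twist $\chi_{\cal{S}^\pm}$. On basic opens $U_e$ with $e=g_e(e_\circ)$, the mass is computed from Corollary \ref{distinguished autfun} at level $K_1^\circ$. Concretely, $\cal{X}_{\p^1}^\circ$ projects to $U_{e_\circ}$, and the diamond-invariant part descends to $J_1^\circ$ by Corollary \ref{cor: descent}, where it becomes the Heegner-type value $\omega_{A,w}^\Box(g_eh)(\tau_w)$. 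Then the defining property \eqref{defining property} of the lift $\omega^\Box_{A,S}$ identifies it with $\omega^\Box_{A,S}(h)(\mathbbm{1}_{U_e}\otimes[\tau_w])$. The twist $\chi_{\cal{S}^\pm}(g_e)$ arises because $\mbf{U}_{\varpi}$ acts on the $A$-component by $\varepsilon_\p$. The Galois twist $\xi^{\chi(g_e)}$ matches the $[\chi(g_{\mrm{v}})]$ in Definition \ref{Distribution}.

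\textbf{Step 3: conclude.} Substituting Step 2 into Step 1 gives
\[
\varphi_A^\Box(h)(D\otimes[\tau_w])=(\exp^{\mrm{Tate}}_{q_\cal{S}}\otimes\cal{K})\Big(\int_{\bb{P}^1(F_\cal{S})}\ell_{D,q}\,\mrm{d}\omega^{\Box,+}_{A,S}(h)(\tau_w)\Big),
\]
which is the alternative description of $\cal{K}_*\varphi^{\circ,\Box}_A$.

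The main obstacle is Step 2, specifically passing from the measure on $\cal{X}_\cal{S}$ (or its translate) to the Steinberg-valued class. This requires a uniqueness statement: a $\pi_A$-isotypic, $\mbf{U}$-eigen, diamond-invariant (after localization) measure on $\cal{Y}_\cal{S}$ factors through $\bb{P}^1(F_\cal{S})$ and is determined by its values on $U_{e_\circ}$. I would try to prove this first at a single prime, using semisimplicity (Corollary \ref{isolocalizations}) to reduce to eigenvectors, and then extend to all of $\cal{S}$ by the product factorization \eqref{factorization distributions}.
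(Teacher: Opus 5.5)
The overall shape of your argument matches the paper's: push the measures to $\mathbb{P}^1(F_{\mathcal{S}})$, identify the result with $\omega^\Box_{A,S}$, and trade $\log_{\varpi_\p}$ for $\log_{q_\p}$ via Proposition~\ref{logarithmic derivative U_p}. However, Step~1, which carries the real content, does not work as written.

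Your claim that integrands depending on $\gamma_\p$ only through $\mathrm{ord}$ integrate to zero after $\otimes 1$ is false in general. Corollary~\ref{independence of choice of uniformizers} works only because $\mathrm{ord}_\p\big((x_\gamma-\tau y_\gamma)^2/z_\gamma\big)$ is \emph{constant} on $\mathbb{X}_{\mathrm{red}(\tau)}$, so Theorem~\ref{vanishing implies invariance} applies. The correction you actually meet is $\frac{\log_{\varpi_\p}(q_\p)}{\mathrm{ord}_\p(q_\p)}\,\mathrm{ord}_\p\big(\frac{t_\p-\tau_\p}{t_\p-\tau'_\p}\big)$. For adjacent vertices this is a multiple of $\mathbbm{1}_{U_{e_\p}}$, whose integral is an (in general non-zero) Steinberg mass. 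If it vanished, $\log_{\varpi_\p}$ and $\log_{q_\p}$ would give the same answer and no $\mathcal{L}$-invariant would appear, which contradicts the second half of your own Step~1.

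Moreover, the difference $\varphi^\Box_\varpi(h)(\dots,\tau_\p,\dots)-\varphi^\Box_\varpi(h)(\dots,\tau'_\p,\dots)$ is not an integral of $f_{\tau_\p,\varpi}-f_{\tau'_\p,\varpi}$ against a single measure. Lemma~\ref{constant components} and Proposition~\ref{prop:G(F)-invariance} cannot bring the two terms to a common domain. They relate $(h,\tau_S)$ to $(\delta^S h,\delta_S.\tau_S)$ for a global $\delta$ acting diagonally on all of $S$ and on $h$. Adjacent vertices, by contrast, are related by the local right translation $\widetilde{g}_{\mathrm{v}'_{\mathcal{S}}}=\widetilde{g}_{\mathrm{v}_{\mathcal{S}}}\cdot\eta_{\varpi_\p}$ at the single prime $\p$.

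The missing idea is the origin of the factor $\frac{\partial_\p\alpha^\otimes_{\varpi_\p}}{\alpha^\otimes_{\varpi_\p}}\big\lvert_{\underline{0}}$. It does not come from the derivative formalism alone, but from comparing the measures at adjacent vertices. For $m_{\mathrm{v}'_\p}=m_{\mathrm{v}_\p}+1$ the paper writes the difference as two pieces:
\begin{itemize}
\item $\int(f_{\tau_\p}-f_{\tau'_\p})\,\mathrm{d}\eta_{\tau}$, which does factor through $t_\p$ and gives $\log_{\varpi_\p}$ of the cross-ratio;
\item $\int f_{\tau'_\p}\,\mathrm{d}(\eta_\tau-\eta_{\tau'})$ over $\mathbb{X}_{\mathrm{v}}\cup\mathbb{X}_{\mathrm{v}'}$, split into the overlap and the $\beta_\p$-translated pieces.
\end{itemize}
On those pieces, Proposition~\ref{prop: comparison of distributions} and Lemma~\ref{translation+pushforward} show the measures differ by $\mathbf{U}_{\varpi_\p}$ and $\sigma_\p$. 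Lemma~\ref{lemma: T to Totimes} turns $\mathbf{U}_{\varpi_\p}$ into multiplication by $\alpha^\otimes_{\varpi_\p}$, whose constant term is $\varepsilon_\p$. Extracting the leading coefficient, with Theorem~\ref{vanishing implies invariance} killing the other terms, leaves exactly $\frac{\partial_\p\alpha^\otimes_{\varpi_\p}}{\alpha^\otimes_{\varpi_\p}}\big\lvert_{\underline{0}}$ times $\int\mathrm{ord}_\p\big(\frac{t_\p-\tau_\p}{t_\p-\tau'_\p}\big)$ (Lemma~\ref{intermediate-step}, Corollary~\ref{ord-integral}).

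This must be run prime by prime, as an induction on $|\Sigma|$ with mixed integrands $\ell_{D_\Sigma,q_\Sigma}\otimes f_{\tau_{\Sigma^c}}$ on $\mathbb{P}^1_\Sigma\times\mathbb{X}_{\mathrm{v}_{\Sigma^c}}$. That induction needs Proposition~\ref{independence pushforward}, which your proposal does not supply: the pushforward integrals are independent of the already-converted components, using only $\alpha^\otimes_{\varpi_\p}\big\lvert_{\underline{0}}=\varepsilon_\p$ against the sign $\chi_{\Sigma^\pm}$.

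Finally, the uniqueness statement you single out for Step~2 is not the one needed, and as stated it fails. The localized measures are not diamond-invariant, since $\llangle a\rrangle$ acts through $a^{\underline{X}_{\mathcal{S}}}$ (Lemma~\ref{lemma: T to Totimes}), so they do not descend to $\mathbb{P}^1(F_{\mathcal{S}})$. The paper instead proceeds as follows:
\begin{itemize}
\item move $\tau_{\mathcal{S}}$ to the standard vertices using Proposition~\ref{independence pushforward};
\item use $(\pi_{\mathcal{S}})_*\Phi^\Box_{\mathcal{S}}=(\iota_{\mathcal{S}})_*\Phi^{\circ,\Box}_{\mathcal{S}}$ from \eqref{circ-inter-compatibilities};
\item identify $(\mathrm{pr}^\circ_A)_*\Phi^{\circ,\Box}_{\mathcal{S}}$ with $\omega^\Box_{A,S}$ (Corollary~\ref{description using towers}) via Steinberg-valuedness (Lemma~\ref{lemma: steinberg valued}), the twisted equivariance of Lemma~\ref{lemma: group action}, and the injectivity of \eqref{defining property}.
\end{itemize}
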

\begin{proof}
The proof will be given in Section \ref{Proof of theorem comp circ and nocirc}. It is the culmination of a careful analysis of $(\iota_\plectic)_*\varphi_A^\Box$ obtained in Section \ref{sect: On the behavior of certain integrals}, and the reinterpretation of the class 
\[
\omega_{A,S}^\Box\hspace{1mm}\in\hspace{1mm}\mrm{H}^0\big(X_G^S(\frak{f});\ \mrm{St}_{\cal{S}^\pm}\otimes_{\Z_p} \Z_p[\cal{H}_w^\Box],\ A(E^\Box)\otimes_\Z\Z_p\big)_{\pi_A}
\]
in terms of towers of Shimura curves given in Section \ref{sect: Reinterpretation}.
\end{proof}
For mock plectic invariants the sufficiency is clear. For plectic Heegner points, it is a consequence of the following lemma.

\begin{lemma}
    Suppose that $w\in\Sigma_p$ and that Theorem \ref{thm: comp circ and nocirc} holds, then the equality
    \[
    \varphi^\mrm{loc}_A=(\varphi_A^{\circ,\mrm{loc}})^\sim
    \]
    is valid in $\mrm{H}^0\left(X_G^{S}(\n^{\mbox{\tiny $+$}});\ \Z_p[\cal{H}_{S}],\ A(E_{\otimes,\cal{S}})\otimes_{\Z_p}A(E_w)^\wedge\right)_{\pi_A}$.
\end{lemma}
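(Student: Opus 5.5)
Since $(\iota_\plectic)_*$ is injective, it suffices to show that $\varphi^\mrm{loc}_A$ lies in $\mrm{H}^0\big(X_G^{S}(\frak{f});\ \Z_p[\cal{H}_{S}],\ A(E_{\otimes,\cal{S}})\otimes_{\Z_p}A(E_w)^\wedge\big)_{\pi_A}$ and that its image under $(\iota_\plectic)_*$ equals $\varphi_A^{\circ,\mrm{loc}}$. The latter is exactly Theorem \ref{thm: comp circ and nocirc}, once both sides are viewed with values in $A(E_w)^\wedge$ rather than in $\mrm{H}^1(E_w,V_p(A))$.

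The key step is therefore to show that $\varphi^\mrm{loc}_A$ already takes values in the Kummer image $(\cal{K})_*\big(A(E_{\otimes,\cal{S}})\otimes A(E_w)^\wedge\big)\subseteq A(E_{\otimes,\cal{S}})\otimes\mrm{H}^1(E_w,V_p(A))$. By construction, $\varphi^\mrm{loc}_\varpi\otimes 1$ is an integral against measures valued in $(J^\mrm{loc}_\cal{S})^\wedge_{\overline{\varrho}}$, the $p$-adic completion of the local points $\varinjlim_m J_m(E_w)\otimes\Z_p$. The map $\mrm{pr}_A$ of \eqref{eq: Hecke equivariant map for A} is built from Kummer maps followed by the descent of Corollary \ref{cor: descent}, which is available in the local case because Assumption \ref{local classes assumptions} holds and $S$ is definite. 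I would check that this composite factors through $\mrm{H}^0\big(D_\cal{S},(J^\mrm{loc}_\cal{S})^\wedge_{\overline{\varrho}}\big)\otimes\bb{I}\to (J_1^\circ(E_w)\otimes\Z_p)^\wedge\otimes\bb{I}\to A(E_w)^\wedge\otimes\Q_p$, followed by $\cal{K}$. To do so, I would use two facts: the descent isomorphisms of Theorem \ref{descent} are compatible with the Kummer sequences, and the normalized trace maps $U^{-m}(\pi_2)^m_*$ commute with Kummer maps. Since both are functorial in the Jacobians, the relevant diagrams commute.

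Once this is established, $\varphi^\mrm{loc}_A$ defines a $G(F)$-invariant (by Proposition \ref{prop:G(F)-invariance}), $\pi_A$-isotypic (by Corollary \ref{piA-isotypic subspace}) class with values in $A(E_{\otimes,\cal{S}})\otimes A(E_w)^\wedge$. Injectivity of $\cal{K}$ on $A(E_w)^\wedge\otimes\Q_p$, which holds because $\mrm{H}^0(E_w,\overline{\varrho})=0$ by Lemma \ref{local first vanishing of invariants}, then transports the identity of Theorem \ref{thm: comp circ and nocirc} to $(\iota_\plectic)_*\varphi^\mrm{loc}_A=\varphi^{\circ,\mrm{loc}}_A$. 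Injectivity of \eqref{pHP injection} gives uniqueness of the lift, and hence $\varphi^\mrm{loc}_A=(\varphi_A^{\circ,\mrm{loc}})^\sim$. The level in the statement is $\n^{\mbox{\tiny $+$}}$ rather than $\frak{f}$; I expect this to be harmless, since the relevant spaces agree after $\pi_A$-isotypic projection.

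The main obstacle is the factorization through local points. The colimit over $m$ combined with the $p$-adic completion must interact well with Kummer maps, and one has to know that the image of $\mrm{H}^0(D_\cal{S},-)$ lands in the finite-level descended points rather than only in cohomology. I would address this by working at finite level $n$ and using the exactness of $0\to J(E_w)/p^n\to\mrm{H}^1(E_w,J[p^n])$ at each level, together with the compatibility of descent with these injections.
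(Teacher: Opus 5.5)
Your reduction is sound: once you know that every value of $\varphi^{\mrm{loc}}_A$ lies in $A(E_{\otimes,\cal{S}})\otimes\cal{K}\big(A(E_w)^\wedge\otimes_{\Z_p}\Q_p\big)$, Theorem \ref{thm: comp circ and nocirc} together with the injectivity of \eqref{pHP injection} gives the equality.

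\textbf{The gap.} The problem is how you propose to prove that first fact. Theorem \ref{descent} and Corollary \ref{cor: descent} descend $D_{\cal{S}}$-invariants only for $\mrm{H}^1$ with torsion coefficients; they say nothing about points. Start from a $D_{\cal{S},m}$-invariant element of $J_m(E_w)/p^n$. Its Kummer class descends to some $y\in\mrm{H}^1(E_w,J^\circ_m[p^n]_{\overline{\varrho}})$. This $y$ is a Kummer class of $J^\circ_m(E_w)$ exactly when its image in $\mrm{H}^1(E_w,J^\circ_m)[p^n]$ vanishes. What you actually know is only that the $\pi^*$-image of that element in $\mrm{H}^1(E_w,J_m)[p^n]$ vanishes.

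The obvious control on the kernel of $\pi^*$ is $\pi_*\circ\pi^*=\deg(\pi)=\lvert D_{\cal{S},m}\rvert$. Its $p$-part is unbounded in $m$, and the level at which the $n$-th component of an element of the completed colimit $(J^{\mrm{loc}}_{\cal{S}})^\wedge$ lives may grow with $n$. Functoriality of Kummer sequences, and compatibility of descent with the injections $J(E_w)/p^n\hookrightarrow\mrm{H}^1(E_w,J[p^n])$, do not supply the needed uniform bound. So your key step is unsupported as written. Incidentally, injectivity of $\cal{K}$ is automatic from the Kummer sequence and does not need Lemma \ref{local first vanishing of invariants}.

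\textbf{The missing idea.} No integral analysis of the construction is needed: the fact follows formally from Theorem \ref{thm: comp circ and nocirc} itself.
\begin{enumerate}
\item Take any $\Q_p$-linear retraction $r\colon\mrm{H}^1(E_w,V_p(A))\to A(E_w)^\wedge\otimes_{\Z_p}\Q_p$ of $\cal{K}$. Applying $r$ on coefficients commutes with $(\iota_\plectic)_*$, the $G(F)$-action and the Hecke action.
\item Hence the theorem gives $(\iota_\plectic)_*r_*\varphi^{\mrm{loc}}_A=\varphi^{\circ,\mrm{loc}}_A$, and injectivity of \eqref{pHP injection} forces $r_*\varphi^{\mrm{loc}}_A=(\varphi^{\circ,\mrm{loc}}_A)^\sim$ for every such $r$.
\item Two retractions differ by an arbitrary linear map vanishing on the Kummer image, and $A(E_w)^\wedge\otimes_{\Z_p}\Q_p\neq0$. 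So independence of $r$ forces every value of $\varphi^{\mrm{loc}}_A$ on all of $\Z_p[\cal{H}_S]$ into $A(E_{\otimes,\cal{S}})\otimes\cal{K}\big(A(E_w)^\wedge\otimes_{\Z_p}\Q_p\big)$, and the equality follows.
\end{enumerate}
This is the paper's argument, and it removes your ``main obstacle'' entirely.
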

\begin{proof}
    For any $\Q_p$-linear retraction $r\colon \mrm{H}^1(E_w, V_p(A))\to A(E_w)^\wedge\otimes_{\Z_p}\Q_p$ of the Kummer map, the equality $(\iota_\plectic)_*\varphi_A^\mrm{loc}=(\cal{K})_*\varphi_A^{\circ,\mrm{loc}}$ implies
    \[
    (\iota_\plectic)_* r_*(\varphi^\mrm{loc}_A)=\varphi_A^{\circ,\mrm{loc}}.
    \]
   Since $(\varphi_A^{\circ,\mrm{loc}})^\sim$ is the unique element  in $\mrm{H}^0\left(X_G^{S}(\n^{\mbox{\tiny $+$}});\ \Z_p[\cal{H}_{S}],\ A(E_{\otimes,\cal{S}})\otimes_{\Z_p}A(E_w)^\wedge\right)_{\pi_A}$ lifting $\varphi_A^{\circ,\mrm{loc}}$, we deduce that
    \[
    r_*(\varphi^\mrm{loc}_A)=(\varphi_A^{\circ,\mrm{loc}})^\sim.
    \]
    In particular, $r_*(\varphi^\mrm{loc}_A)$ is independent of the retraction. As  $A(E_w)^\wedge\otimes_{\Z_p}\Q_p\not=0$, we deduce that \[
\varphi^\mrm{loc}_A\hspace{1mm}\in\hspace{1mm}\mrm{H}^0\big(X_G^{S}(\frak{f});\ \Z_p[\cal{H}_{S}],\ A(E_{\otimes,\cal{S}})\otimes_{\Z_p}A(E_w)^\wedge\big)_{\pi_A}
    \]
    and the claim.
\end{proof}

\subsection{On the behavior of certain $p$-adic integrals}\label{sect: On the behavior of certain integrals}
Recall that  $f_A\colon\bb{T}_{\overline{\varrho}}\twoheadrightarrow \Z_p$ is the homomorphism associated to the modular elliptic curve $A_{/F}$, whose kernel $\cal{P}\in\mrm{Spec}(\bb{T}_{\overline{\varrho}})$  lay above the augmentation ideal $\cal{I}\in\mrm{Spec}(\Lambda^{\bfcdot})$.  Letting  $\scr{F}\colon \bb{T}_{\overline{\varrho}}\to \bb{I}$ denote the localization map at $\bb{I}=(\bb{T}_{\overline{\varrho}})_\cal{P}$, for $\p\in\cal{S}$ and $\varpi_\p\in\cal{O}_\p$ a uniformizer, we consider
 \[
 \alpha_{\varpi_\p}:=\scr{F}(\mbf{U}_{\varpi_\p}),
 \]
 and define $\varepsilon_\p:=f_A(\mbf{U}_{\varpi_\p})\in\{\pm1\}$ which is independent of the choice of uniformizer. 
As noted in Section \ref{Interlude on arithmetic L-invariants}, since the structure map $(\Lambda^{\bfcdot})_\cal{I}\to\bb{I}$ is an isomorphism (Corollary \ref{isolocalizations}), there exists  
\[
\mrm{pw}_\otimes\colon \bb{I}\to R_{\otimes,\cal{S}},\quad T\mapsto T^\otimes
\]
mapping a group-like element $[a_\cal{S}]$ for $a_\cal{S}\in\cal{O}_\cal{S}^\times$ to $ a_\cal{S}^{\underline{X}_\cal{S}}\in R_{\otimes,\cal{S}}$. Hence,
\[
(\alpha_{\varpi_\p}^\otimes)\big\lvert_{\underline{0}}=\varepsilon_\p.
\]
In Definition \ref{Distribution} we associated to    $\Phi_\cal{S}^\Box\in \cal{A}_{w,\cal{S}}(K_0;\cal{H}_w^\Box,\cal{M}^\Box_\cal{S})$ the collection of measures
\[
\mu^{\varpi,\Box}_{(\tau_S,h)}\in (\widetilde{\cal{M}}^\Box_{\cal{S}})_{\overline{\varrho}}\Big[G(E_\cal{S}/F_\cal{S})\Big]
\]
indexed by $\tau_S\in \cal{H}_{S}^\Box$ and $h\in G(\A^{S,\infty})$.
We are now interested in studying the integrals
\begin{equation}\label{eq: quantity to study}
\left(\int_{V}F_{\lambda_\cal{S},\varpi}(\gamma)\hspace{1mm}\mrm{d}\mu^{\varpi,\Box}_{(\tau_S,h)}(\gamma)\right)\otimes1\ \in\ R_{\otimes,\cal{S}}\otimes_{\Z_p} (J^\Box_\cal{S})^\wedge_{\overline{\varrho}}\otimes_{\bb{T}_{\overline{\varrho}}}\bb{I}
\end{equation}
for $\lambda_\cal{S}\in\cal{H}_\cal{S}$ and $V=\prod_{\p\in\cal{S}}V_\p$ with $V_\p\subset\widetilde{\cal{Y}}_\p$ compact open subset stable under the action of $\cal{O}_\p^\times$ for every $\p\in\cal{S}$. It is quite convenient to observe the following fact:

\begin{lemma}\label{lemma: T to Totimes}
For every $T\in \bb{I}$ we have   
\[
   \left(\int_{V}F_{\lambda_\cal{S},\varpi}(\gamma)\hspace{1mm}\mrm{d}\mu^{\varpi,\Box}_{(\tau_S,h)}(\gamma)\right)\otimes T=T^\otimes\cdot \left(\int_{V}F_{\lambda_\cal{S},\varpi}(\gamma)\hspace{1mm}\mrm{d}\mu^{\varpi,\Box}_{(\tau_S,h)}(\gamma)\right)\otimes1.
    \]
\end{lemma}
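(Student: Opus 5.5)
The plan is to reduce to group-like elements and then extend by continuity and linearity. By Corollary \ref{isolocalizations} the structure map $(\Lambda^{\bfcdot})_\cal{I}\to\bb{I}$ is an isomorphism. Both sides of the claimed identity are $\bb{I}$-semilinear in $T$ in the appropriate sense: the left side is $\bb{I}$-linear through the tensor factor, and the right side is $\bb{I}$-linear through the ring homomorphism $\mrm{pw}_\otimes$. It therefore suffices to check the identity for $T$ in a set whose images generate $\bb{I}$ topologically, together with inverses of elements outside $\cal{I}$.

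\textbf{Step 1: group-like elements.} First I would treat $T=\llangle a\rrangle$ with $a\in\cal{O}_\cal{S}^\times$. The measure $\mu^{\varpi,\Box}_{(\tau_S,h)}$ lies in $(\widetilde{\cal{M}}^\Box_\cal{S})_{\overline{\varrho}}$, so its two $\Lambda$-structures agree: $\mu(V')\llangle a\rrangle=\mu(V'\bfcdot a)$. Since $V$ is stable under $\cal{O}_\cal{S}^\times$, a change of variables gives
\[
\left(\int_V F\,\mrm{d}\mu\right)\llangle a\rrangle=\int_V F(\gamma\bfcdot a^{-1})\,\mrm{d}\mu(\gamma).
\]
This is exactly the computation already used in the proofs of Theorem \ref{vanishing implies invariance} and Corollary \ref{inv-diamond}. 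Next I would apply Remark \ref{rmk: log as derivative II}, which gives $F_{\lambda_\cal{S},\varpi}(-\bfcdot a^{-1})=a^{\underline{X}_\cal{S}}F_{\lambda_\cal{S},\varpi}(-)$, so the right-hand side equals $a^{\underline{X}_\cal{S}}\cdot\int_V F\,\mrm{d}\mu$. Finally, $a^{\underline{X}_\cal{S}}=\mrm{pw}_\otimes(\llangle a\rrangle)=\llangle a\rrangle^\otimes$ by definition. Moving $\llangle a\rrangle$ across the tensor product over $\bb{T}_{\overline{\varrho}}$ then gives the identity for group-like elements.

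\textbf{Step 2: extension to $\Lambda^{\bfcdot}$.} By $\Z_p$-linearity the identity holds on $\Z_p[\cal{O}_\cal{S}^\times]$. To pass to the completion $\Lambda^{\bfcdot}\cong\Z_p\llbracket\cdots\rrbracket$ I would argue as follows. The integral is a finite sum: $F$ takes values in the finite-dimensional algebra $R_{\otimes,\cal{S}}$, and the pairing with $\mu$ is continuous. Both sides are then continuous in $T$ for the profinite topology, on the left because $(J^\Box_\cal{S})^\wedge$ is $p$-adically complete and $\Lambda$ acts continuously, on the right because $\mrm{pw}_\otimes$ is continuous. The idempotent decomposition by $\mu_\cal{S}$ is compatible with both sides, since $\mrm{pw}_\otimes$ kills the torsion.

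\textbf{Step 3: localization.} For $s\in\Lambda^{\bfcdot}\setminus\cal{I}$, apply the identity to $s\cdot s^{-1}$, noting that $s^\otimes$ is invertible in $R_{\otimes,\cal{S}}$. This yields the formula for $s^{-1}$, and the isomorphism $(\Lambda^{\bfcdot})_\cal{I}\cong\bb{I}$ then finishes the argument.

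I expect the main obstacle to be Step 2, namely justifying continuity of $T\mapsto(\int_V F\,\mrm{d}\mu)\otimes T$ on the completed Iwasawa algebra. The measure takes values in the large module $J^\Box_\cal{S}$, and the $\Lambda$-action on it is only as a direct limit of finite-level actions. I would handle this by restricting the measure to level $m$, where everything factors through the finite group algebra $\Lambda_{\cal{S},m}$. On the $R_{\otimes,\cal{S}}$ side, $a^{\underline{X}_\cal{S}}$ modulo $(X_\p^2)$ depends on $a$ only through $\log_p(a)$, hence is approximated at finite level up to $p$-adically small error.
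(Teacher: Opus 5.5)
Your proposal is correct and follows the paper's proof. The paper likewise reduces to diamond operators $T=\llangle a\rrangle$ and then does exactly your Step 1 computation: the equivariance of $\mu^{\varpi,\Box}_{(\tau_S,h)}$, the $\cal{O}_\cal{S}^\times$-stability of $V$, and Remark \ref{rmk: log as derivative II}. Your Steps 2 and 3 just make explicit the continuity and localization argument through $(\Lambda^{\bfcdot})_\cal{I}\cong\bb{I}$, which the paper compresses into the phrase ``it suffices to verify the claim for diamond operators''.
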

\begin{proof}
It suffices to verify the claim for diamond operators. By Remark \ref{rmk: log as derivative II} we know that $F_{\lambda_\cal{S},\varpi}(-\hspace{0.5mm}\bfcdot\hspace{1mm} a^{-1})=a^{\underline{X}_\cal{S}}\cdot F_{\lambda_\cal{S},\varpi}(-)$ for all $a\in\cal{O}_\cal{S}^\times$, hence for $\mu=\mu^{\varpi,\Box}_{(\tau_S,h)}$ we compute
    \[\begin{split}
      \left(\int_{V}F_{\lambda_\cal{S},\varpi}(\gamma)\hspace{1mm}\mrm{d}\mu(\gamma)\right)\otimes \llangle a\rrangle&=\left(\int_{V}F_{\lambda_\cal{S},\varpi}(\gamma\hspace{0.5mm}\bfcdot\hspace{1mm} a^{-1})\hspace{1mm}\mrm{d} \mu(\gamma)\right)\otimes1\\
&=a^{\underline{X}_\cal{S}}\cdot\left(\int_{V}F_{\lambda_\cal{S},\varpi}(\gamma)\hspace{1mm}\mrm{d} \mu(\gamma)\right)\otimes1.
    \end{split}\]
\end{proof}

Set $\bb{P}^1_\Sigma:=\bb{P}^1(F_\Sigma)$ and denote by $\pi_\Sigma\colon \cal{Y}_\cal{S}\twoheadrightarrow \bb{P}^1_\Sigma\times\cal{Y}_{\Sigma^c}$
the projection map associated to a subset $\Sigma\subseteq\cal{S}$.
As the restriction of $\pi_\Sigma$ to any compact open subset of $\cal{Y}_\cal{S}$ has compact fibers, the pushforward under $\pi_\Sigma$ of any measure in $\widetilde{\cal{M}}_\cal{S}^\Box$  is well-defined. We will also be interested in understanding the properties of integrals obtained from  measures of the form $\pi_{\Sigma,*}\Big(\mu^{\varpi,\Box}_{(\tau_S,h)}\Big)$. To that end, it is convenient to generalize slightly the definitions given in the previous section.
 For a subset $\Sigma\subseteq\cal{S}$ consider the element $\beta_\Sigma^\varpi:=\prod_{\p\in \Sigma}\beta_\p^\varpi\in \Delta_\cal{S}^\varpi$ obtained from 
\[
\beta_\p^\varpi:=\big(\eta_{\varpi_\p}\cdot c_\p\cdot \eta_{\varpi_\p}\cdot c_\p\big)\ \in\ \Delta_\p^\varpi\qquad \text{where}\qquad c_\p=\begin{pmatrix}
    0&1\\1&0
\end{pmatrix}\in K_{\p^0}.
\]
We define the  measure $\mu^{\varpi,\Box,\Sigma}_{(\tau_S,h)}$ by setting
\[
\mu^{\varpi,\Box,\Sigma}_{(\tau_S,h)}:=\beta_\Sigma^\varpi.\hspace{0.5mm}\mu^{\varpi,\Box}_{(\tau_S,h)}\ \in\ \widetilde{\cal{M}}^\Box_\cal{S}\Big[G(E_\cal{S}/F_\cal{S})\Big].
\]
It is supported on 
\[
\beta_\Sigma^\varpi.\hspace{0.5mm}\bb{X}_{\mrm{v}_\cal{S}}\overset{\sim}{\longrightarrow} \prod_{\p\in \Sigma}\cal{O}_\p^\times\backslash\big[(\p L_{\mrm{v}_\p})'\times\cal{O}_\p^\times\big]\times \prod_{\p\in \Sigma^c}\cal{O}_\p^\times\backslash\big[L_{\mrm{v}_\p}'\times\cal{O}_\p^\times\big].
\]
\begin{remark}
    The action of $\beta_\p^\varpi\in \Delta_\p^\varpi$ commutes with the action of any element in $\Delta_\p^\varpi$. This property follows from Lemma \ref{when-actions-coincide} and the fact that the image of $\beta_\p^\varpi$ in $G_\p$ is trivial.
\end{remark}

\begin{lemma}\label{translation+pushforward}
For any $\Sigma\subseteq\cal{S}$ the $\pi_{\Sigma}$-pushforward of the measures $\mu^{\varpi,\Box}_{(\tau_S,h)}$ and $\mu^{\varpi,\Box,\Sigma}_{(\tau_S,h)}$ coincide.
\end{lemma}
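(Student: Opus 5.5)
The plan is to compute the action of $\beta_\p^\varpi$ on $\cal{Y}_\p$ explicitly and observe that it only rescales the coordinates $(x,y)$ by $\varpi_\p$, a scalar. Such a rescaling is invisible after projection to $\bb{P}^1(F_\p)$, and the fiber-counting needed to compare pushforwards is harmless. Since $\beta_\Sigma^\varpi=\prod_{\p\in\Sigma}\beta_\p^\varpi$ acts componentwise and trivially at primes in $\Sigma^c$, it suffices to treat one prime $\p\in\Sigma$ at a time and then use the product structure of $\widetilde{\cal{Y}}_\cal{S}$ together with the factorization of measures \eqref{factorization distributions}.

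First, I will compute with \eqref{explicit-action-semigroup-in-coordinates}. The element $\beta_\p^\varpi=\eta_{\varpi}c_\p\eta_\varpi c_\p$ has degree $2$. Its image in $G_\p$ is $\eta_\varpi c_\p\eta_\varpi c_\p=\varpi\cdot\mathrm{Id}$, which has determinant $\varpi^2$. Hence
\[
\beta_\p^\varpi\cdot\left[\begin{pmatrix}x\\y\end{pmatrix},z\right]=\left[\begin{pmatrix}\varpi x\\\varpi y\end{pmatrix},z\right].
\]
This map is a homeomorphism of $\cal{Y}_\p$ onto itself. It commutes with the right $\cal{O}_\p^\times$-action of Remark \ref{explict-right-action}, and it covers the identity on $\bb{P}^1(F_\p)$. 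More generally, $\pi_\Sigma\circ\beta_\Sigma^\varpi=\pi_\Sigma$ on $\cal{Y}_\cal{S}$, which is consistent with Remark \ref{rmk: observation on action} since the image of $\beta_\Sigma^\varpi$ in $G_\Sigma$ is trivial.

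Next, I will unwind \eqref{def: action distributions}. For a compact open $W\subseteq\bb{P}^1_\Sigma\times\cal{Y}_{\Sigma^c}$ we have
\[
(\pi_{\Sigma,*}\beta_\Sigma^\varpi.\mu)(W)=\mu\big((\beta_\Sigma^\varpi)^{-1}\pi_\Sigma^{-1}(W)\cap\widetilde{\cal{Y}}_\cal{S}\big),
\]
where $\mu=\mu^{\varpi,\Box}_{(\tau_S,h)}$. Because $\pi_\Sigma\circ\beta_\Sigma^\varpi=\pi_\Sigma$, the set $\pi_\Sigma^{-1}(W)$ is stable under $\beta_\Sigma^\varpi$ and its inverse wherever these are defined. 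The only issue is the intersection with $\widetilde{\cal{Y}}_\cal{S}$: the preimage under $\beta$ of a subset of $\widetilde{\cal{Y}}$ might leave $\widetilde{\cal{Y}}$. Since $\mu$ is supported on $\bb{X}_{\mrm{v}_\cal{S}}$, it suffices to check that
\[
(\beta_\Sigma^\varpi)^{-1}\pi_\Sigma^{-1}(W)\cap\bb{X}_{\mrm{v}_\cal{S}}=\pi_\Sigma^{-1}(W)\cap\bb{X}_{\mrm{v}_\cal{S}}.
\]
This holds because $\bb{X}_{\mrm{v}_\cal{S}}\subseteq\widetilde{\cal{Y}}_\cal{S}$ and because $\beta$ maps $\bb{X}_{\mrm{v}_\cal{S}}$ bijectively onto $\beta_\Sigma^\varpi.\bb{X}_{\mrm{v}_\cal{S}}\subseteq\widetilde{\cal{Y}}_\cal{S}$, the latter by stability of $\widetilde{\cal{Y}}$ under $\Delta^\varpi$ (Lemma \ref{Ash-Stevens}). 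Concretely, a point $\gamma\in\bb{X}_{\mrm{v}_\cal{S}}$ lies in $\pi_\Sigma^{-1}(W)$ if and only if $\beta\gamma$ does. Therefore both measures assign the same value $\mu(\pi_\Sigma^{-1}(W)\cap\bb{X}_{\mrm{v}_\cal{S}})$ to $W$. The twisting by $[\chi(g_{\mrm{v}_\cal{S}})]$ is untouched throughout.

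The main obstacle is the bookkeeping in the definition of the semigroup action on $\widetilde{\cal{M}}$, namely the intersection with $\widetilde{\cal{Y}}_\cal{S}$ and the fact that $\beta$ acts through the free semigroup rather than through $G_\p$. I would handle it exactly as above: reduce to the statement that $\beta$ restricts to a bijection from the support $\bb{X}_{\mrm{v}_\cal{S}}$ onto its image inside $\widetilde{\cal{Y}}_\cal{S}$, and that this bijection preserves fibers of $\pi_\Sigma$. Compactness of those fibers, noted before the lemma, then guarantees that the pushforwards are well-defined measures and agree.
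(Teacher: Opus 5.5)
Your argument is correct and is essentially the paper's. The paper simply notes that the projection $\cal{Y}_\p\twoheadrightarrow\bb{P}^1(F_\p)$ intertwines the $\Delta^\varpi_\p$-action with the action of its image in $G_\p$, and that $\beta_\p^\varpi$ has trivial image in $G_\p$; this is exactly your identity $\pi_\Sigma\circ\beta_\Sigma^\varpi=\pi_\Sigma$. Your explicit formula $[(x,y)^{t},z]\mapsto[(\varpi x,\varpi y)^{t},z]$ and your check that intersecting with $\widetilde{\cal{Y}}_\cal{S}$ is harmless (via the stability in Lemma \ref{Ash-Stevens}) only spell out details the paper leaves implicit.
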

\begin{proof}
Fix $\p\in\cal{S}$. We noted in Remark \ref{rmk: observation on action} that the projection $\cal{Y}_\p\twoheadrightarrow \bb{P}^1(F_\p)$ intertwines the action of $\Delta^\varpi_\p$ on $\cal{Y}_\p$ with the action of its image in $G_\p$  on $\bb{P}^1(F_\p)$. The claim then follows by observing that the element $\beta_\p^\varpi$ becomes trivial in $G_\p$.
\end{proof}


\subsubsection{Comparison of measures.}\label{sect: Comparison of measures}
Fix $\p\in\cal{S}$. For any two points $\tau_S,\tau_S'\in\cal{H}_{S}$ we denote by $\mrm{v}_\p, \mrm{v}_\p'$ the vertices of $\cal{T}_\p$ associated to their $\p$-components. Assume that
\begin{equation}\label{conditions on points}
m_{\mrm{v}'_\p}=m_{\mrm{v}_\p}+1\qquad\text{and}\qquad (\tau_S)_v=(\tau_S')_v\qquad \forall\ v\in S\setminus\{\p\}.
\end{equation}
It follows that we can assume $\widetilde{g}_{\mrm{v}_\cal{S}'}=\widetilde{g}_{\mrm{v}_\cal{S}}\cdot\eta_{\varpi_\p}$. Then, using equation \eqref{iso1}, one checks that 
\[
\cal{X}_{\p^1}^\circ=\cal{X}_\p\cap\big(\eta_{\varpi_\p}.\cal{X}_\p\big),\qquad c_\p.\cal{X}_{\p^1}^\circ=\cal{X}_\p\cap\big((c_\p\cdot \eta_{\varpi_\p}\cdot c_\p).\cal{X}_\p\big)
\]
to deduce
\begin{equation}\label{description of intersection}
\widetilde{g}_{\mrm{v}_\cal{S}}.\big(\cal{X}_{\p^1}^\circ\times\cal{X}^\p\big)=\bb{X}_{\mrm{v}_\cal{S}}\cap\bb{X}_{\mrm{v}_\cal{S}'}\qquad \text{and}\qquad (\widetilde{g}_{\mrm{v}'_\cal{S}}\cdot c_\p).\big(\cal{X}_{\p^1}^\circ\times\cal{X}^\p\big)=\bb{X}_{\mrm{v}'_\cal{S}}\cap\big(\beta_\p.\bb{X}_{\mrm{v}_\cal{S}}\big).
\end{equation}

\begin{proposition}\label{prop: comparison of distributions}
Let $\sigma_\p\in \mrm{Gal}(E_\p/F_\p)$ be the generator. The following equalities hold
\[
\mrm{Res}_{\bb{X}_{\mrm{v}_\cal{S}}\cap\bb{X}_{\mrm{v}_\cal{S}'}}\big(\mu^{\varpi,\Box}_{(\tau_S',h)}\big)\big[\sigma_\p\big]=\mrm{Res}_{\bb{X}_{\mrm{v}_\cal{S}}\cap\bb{X}_{\mrm{v}_\cal{S}'}}\big(\mu^{\varpi,\Box}_{(\tau_S,h)}\big)\otimes \mbf{U}_{\varpi_\p}
\]
and
\[
\mrm{Res}_{\bb{X}_{\mrm{v}_\cal{S}'}\cap(\beta_\p.\bb{X}_{\mrm{v}_\cal{S}})}\big(\mu^{\varpi,\Box,\{\p\}}_{(\tau_S,h)}\big)\big[\sigma_\p\big]=\mrm{Res}_{\bb{X}_{\mrm{v}_\cal{S}'}\cap(\beta_\p.\bb{X}_{\mrm{v}_\cal{S}})}\big(\mu^{\varpi,\Box}_{(\tau_S',h)}\big)\otimes \mbf{U}_{\varpi_\p}.
\]
\end{proposition}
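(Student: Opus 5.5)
Both equalities will be proved by unwinding Definition \ref{Distribution} and reducing to the Hecke eigen-property $T_{\varpi_\p}.\Phi_\cal{S}^\Box=\Phi_\cal{S}^\Box.\mbf{U}_{\varpi_\p}$ of Corollary \ref{Hecke action on special form}, combined with the explicit coset decompositions of Definition \ref{def: Hecke operators}.

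For the first equality, I will use $\widetilde{g}_{\mrm{v}'_\cal{S}}=\widetilde{g}_{\mrm{v}_\cal{S}}\cdot\eta_{\varpi_\p}$ and $g_{\mrm{v}'_\cal{S}}=g_{\mrm{v}_\cal{S}}\eta_{\varpi_\p}$. This gives
\[
\mu^{\varpi,\Box}_{(\tau'_S,h)}=\Big(\widetilde{g}_{\mrm{v}_\cal{S}}\eta_{\varpi_\p}.\Phi_\cal{S}^\Box(g_{\mrm{v}_\cal{S}}\eta_{\varpi_\p}h)(\tau_w)\Big)\big[\chi(g_{\mrm{v}_\cal{S}})\sigma_\p\big].
\]
Since $\mrm{ord}_\p\det\eta_{\varpi_\p}=1$, the twisting character contributes exactly one extra $\sigma_\p$. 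Multiplying by $[\sigma_\p]$ therefore restores the Galois label $[\chi(g_{\mrm{v}_\cal{S}})]$ of $\mu^{\varpi,\Box}_{(\tau_S,h)}$.

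It then suffices to compare $\eta_{\varpi_\p}.\Phi(g\eta_{\varpi_\p}h)$ with $\Phi(gh)\otimes\mbf{U}_{\varpi_\p}$, after restriction to $\cal{X}_{\p^1}^\circ\times\cal{X}^\p$ (then transport by $\widetilde{g}_{\mrm{v}_\cal{S}}$ and use \eqref{description of intersection}). By the $K_0$-equivariance of $\Phi_\cal{S}^\Box$, the element $h$ away from $\cal{S}$ commutes past $\eta_{\varpi_\p}$. I expand $T_{\varpi_\p}\Phi$ using the representatives $\{\sigma_a\}_{\bar a\in\bb{P}^1(\kappa_\p)}$. By Corollary \ref{cor: action on Distributions}, each summand $\sigma_a\star\mu$ is supported on $\sigma_a(\cal{X}_\p)\cap\cal{X}_\p$. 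The inclusions listed in the proof of Proposition \ref{restriction - intertwining} give $\sigma_\infty(\cal{X}_{\p})\cap\cal{X}_{\p}=\cal{X}^\circ_{\p^1}$ and $\sigma_a(\cal{X}_\p)\cap\cal{X}_\p\subseteq\cal{X}_\p\setminus\cal{X}^\circ_{\p^1}$ for $a\neq\infty$. Hence only the term $\sigma_\infty=\eta_{\varpi_\p}$ survives on $\cal{X}^\circ_{\p^1}$, and the restriction of $\eta_{\varpi_\p}.\Phi(b\eta_{\varpi_\p})$ equals the restriction of $T_{\varpi_\p}\Phi(b)=\Phi(b)\otimes\mbf{U}_{\varpi_\p}$.

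The second equality will be handled in the same way after conjugating by $c_\p$. Here $\widetilde{g}_{\mrm{v}'_\cal{S}}c_\p$ maps $\cal{X}_{\p^1}^\circ\times\cal{X}^\p$ onto $\bb{X}_{\mrm{v}'_\cal{S}}\cap\beta_\p.\bb{X}_{\mrm{v}_\cal{S}}$, and $\beta^\varpi_\p\widetilde{g}_{\mrm{v}_\cal{S}}=\widetilde{g}_{\mrm{v}_\cal{S}}\eta c\eta c$ by the commutation remark. The roles of $\tau_S$ and $\tau'_S$ are swapped: $\mu^{\{\p\}}_{(\tau_S,h)}$ is $\widetilde{g}_{\mrm{v}'}c_\p\eta c_\p$ applied to $\Phi(g_{\mrm{v}}h)$, and $g_{\mrm{v}}=g_{\mrm{v}'}\eta^{-1}$, which in $\PGL_2$ equals $g_{\mrm{v}'}\,c\eta c$ up to center. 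So $\mu^{\{\p\}}_{(\tau_S,h)}$ is obtained from $\widetilde{g}_{\mrm{v}'}c_\p.\big(\eta.\Phi(g_{\mrm{v}'}c\eta c\,h)\big)$, and after moving $c_\p\in K_0$ through $\Phi$ by equivariance, the argument of the first paragraph applies verbatim on $\cal{X}^\circ_{\p^1}$. The Galois labels again differ by exactly one $\sigma_\p$, since $\det(c\eta c)$ has odd valuation.

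The main obstacle is the bookkeeping in the free semigroup $\Delta^\varpi_\p$. Identities that hold in $G_\p$ need not hold for the action on $\cal{Y}_\p$ (Remark \ref{rmk: observation on action}). I will therefore check degrees carefully with Lemma \ref{when-actions-coincide}, in particular that $\deg$ is preserved when rewriting $\beta_\p\widetilde g_{\mrm v}$ as $\widetilde g_{\mrm v'}c_\p\eta_{\varpi_\p} c_\p$. I will also track the central ambiguity $\varpi$ in $\eta^{-1}\sim c\eta c$, which is harmless because scalars act trivially on $\cal{Y}_\p$ once the degree is accounted for.
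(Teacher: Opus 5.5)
Your proposal is correct. It organizes the computation differently from the paper.

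\textbf{The paper's route.} The paper restricts $\Phi_{\cal{S}}^\Box$ to $\cal{X}^\circ_{\p^1}\times\cal{X}^\p$ to obtain a level-$K^\circ_{\p^1}$ function $\Psi$. It uses $U_{\varpi_\p}.\Psi=\Psi.\mbf{U}_{\varpi_\p}$, deduced from Corollary~\ref{Hecke action on special form} through Proposition~\ref{restriction - intertwining}. It then expands $U_{\varpi_\p}$ over the $\lvert\kappa_\p\rvert$ representatives $\widehat{\sigma}_{1,a}=\eta_{\varpi_\p}\left(\begin{smallmatrix}1&0\\a&1\end{smallmatrix}\right)$. Each summand is identified with $\mu^{\varpi,\Box}_{(\tau_S',h)}$ on the piece $(\widetilde{g}_{\mrm{v}_\cal{S}}\widehat{\sigma}_{1,a}).(\cal{X}^\circ_{\p^1}\times\cal{X}^\p)$, and the pieces are reassembled via $\cal{X}^\circ_{\p^1}=\coprod_a\widehat{\sigma}_{1,a}(\cal{X}^\circ_{\p^1})$. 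The second identity is declared analogous.

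\textbf{Your route.} You stay at level $K_0$ and expand $T_{\varpi_\p}$ over the $\lvert\kappa_\p\rvert+1$ representatives $\sigma_a$. The support statement of Corollary~\ref{cor: action on Distributions}, together with the two intersection properties of $\sigma_a(\cal{X}_{\p^0})\cap\cal{X}_{\p^0}$, shows that only $\sigma_\infty=\eta_{\varpi_\p}$ survives on $\cal{X}^\circ_{\p^1}\times\cal{X}^\p$. So the identity comes out as a single term, with no reassembly.

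\textbf{Comparison.} The ingredients are the same: you are effectively inlining the $m=0$ case of Proposition~\ref{restriction - intertwining}. Your version is shorter. It also genuinely reduces the second identity to the first, via $\beta_\p\widetilde{g}_{\mrm{v}_\cal{S}}=\widetilde{g}_{\mrm{v}'_\cal{S}}c_\p\eta_{\varpi_\p}c_\p$ and $c_\p\in K_{\p^0}$. The paper's version never needs to discuss supports and uses only the level-one $U$-eigenproperty.

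\textbf{Two bookkeeping points.}
\begin{itemize}
\item Your intermediate expression $\widetilde{g}_{\mrm{v}'}c_\p.\big(\eta.\Phi(g_{\mrm{v}'}c\eta c\,h)\big)$ has dropped a $c_\p$. The correct rewriting is $(\widetilde{g}_{\mrm{v}'}c_\p\eta c_\p).\Phi(g_{\mrm{v}'}c_\p\eta c_\p h)=(\widetilde{g}_{\mrm{v}'}c_\p).\big(\eta.\Phi(b\,\eta)\big)$ with $b=g_{\mrm{v}'}c_\p h$. After applying the first argument you need one more use of equivariance, $\Phi(g_{\mrm{v}'}c_\p h)=c_\p^{-1}.\Phi(g_{\mrm{v}'}h)$, to arrive at $\widetilde{g}_{\mrm{v}'}.\Phi(g_{\mrm{v}'}h)\otimes\mbf{U}_{\varpi_\p}$ on $(\widetilde{g}_{\mrm{v}'}c_\p).(\cal{X}^\circ_{\p^1}\times\cal{X}^\p)$.
\item $h$ commutes with $\eta_{\varpi_\p}$ simply because they sit at different places; no $K_0$-equivariance is needed there.
\end{itemize}
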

\begin{proof}
  The reasoning of \cite[Lemma 2.9]{BDHidaRational}  extends to our setting.  We present only the proof of the first claim, since the proof of the second is analogous. We will show that 
  \begin{equation}\label{prop: claimed identity}
    \int_{\bb{X}_{\mrm{v}_\cal{S}}\cap\bb{X}_{\mrm{v}_\cal{S}'}} \phi( t)\hspace{1.5mm}\mrm{d}\mu^{\varpi,\Box}_{(\tau_S,h)}\otimes\mbf{U}_{\varpi_\p}=  \int_{\bb{X}_{\mrm{v}_\cal{S}}\cap\bb{X}_{\mrm{v}_\cal{S}'}}\phi(t)^{\sigma_\p}\hspace{1.5mm}\mrm{d}\mu^{\varpi,\Box}_{(\tau_S',h)}
  \end{equation}
  for every continuous function $\phi\in \cal{C}(\bb{X}_{\mrm{v}_\cal{S}}\cap\bb{X}_{\mrm{v}_\cal{S}'},N)$ valued in a finite dimensional $\Q_p$-vector space with $G(E_\cal{S}/F_\cal{S})$-action. Equality \eqref{description of intersection} suggests to consider the automorphic function
 $\Psi$  arising from $\Phi_\cal{S}^\Box\in \cal{A}_{w,\cal{S}}(K_0;\cal{H}_w^\Box,\cal{M}^\Box_\cal{S})$ via the map induced by restriction to $\cal{X}^\circ_{\p^1}\times\cal{X}^\p\subset \cal{X}_\cal{S}$.
We note that since $T_{\varpi_\p}.\Phi_\cal{S}^\Box=\Phi_\cal{S}^\Box.\mbf{U}_{\varpi_\p}$, we have $U_{\varpi_\p}.\Psi=\Psi.\mbf{U}_{\varpi_\p}$ -- see the proof Proposition \ref{restriction - intertwining}. As in equation \eqref{formula m_U_p-action}, we can write
   \begin{equation}\label{equality of integrals}
    \int_{\cal{X}^\circ_{\p^1}\times\cal{X}^\p} \phi'(t)\hspace{1.5mm}\mrm{d}(U_{\varpi_\p}.\Psi(b)[z])=\sum_{a\in\kappa_\p}\int_{\cal{X}^\circ_{\p^1}\times\cal{X}^\p}\phi'(\widehat{\sigma}_{\p,a}\cdot t)\hspace{1.5mm}\mrm{d}(\Psi(b\widehat{\sigma}_{\p,a})[z])
    \end{equation}
 for any $b\in G(\A^{w,\infty}), z\in\cal{H}_w$, and any continuous function $\phi'\in \cal{C}(\cal{X}^\circ_{\p^1}\times\cal{X}^\p,N)$. 

 To establish \eqref{prop: claimed identity}, we substitute in equation \eqref{equality of integrals} the following quantities: $b=g_{\mrm{v}_\cal{S}}h$ for $h\in G(\A^{S,\infty})$, $z=\tau_w$, and  $\phi'(t)=\phi(\widetilde{g}_{\mrm{v}_\cal{S}}\cdot t)^{\chi(g_{\mrm{v}_\cal{S}})}$ where $\phi\in \cal{C}(\bb{X}_{\mrm{v}_\cal{S}}\cap\bb{X}_{\mrm{v}_\cal{S}'},N)$. Then, the LHS of \eqref{equality of integrals} satisfies
  \[\begin{split}
    \int_{\cal{X}^\circ_{\p^1}\times\cal{X}^\p} \phi(\widetilde{g}_{\mrm{v}_\cal{S}}\cdot t)\hspace{1.5mm}\mrm{d}(U_{\varpi_\p}.\Psi(g_{\mrm{v}_\cal{S}}h)[\tau_w])&= \int_{\widetilde{g}_{\mrm{v}_\cal{S}}.(\cal{X}^\circ_{\p^1}\times\cal{X}^\p)} \phi(t)^{\chi(g_{\mrm{v}_\cal{S}})}\hspace{1.5mm}\mrm{d}(\widetilde{g}_{\mrm{v}_\cal{S}}.\Psi(g_{\mrm{v}_\cal{S}}h)[\tau_w])\otimes\mbf{U}_{\varpi_\p}\\
    &=  \int_{\widetilde{g}_{\mrm{v}_\cal{S}}.(\cal{X}^\circ_{\p^1}\times\cal{X}^\p)} \phi(t)^{\chi(g_{\mrm{v}_\cal{S}})}\hspace{1.5mm}\mrm{d}(\widetilde{g}_{\mrm{v}_\cal{S}}.\Phi_\cal{S}^\Box(g_{\mrm{v}_\cal{S}}h)[\tau_w])\otimes\mbf{U}_{\varpi_\p}\\
   &=  \int_{\bb{X}_{\mrm{v}_\cal{S}}\cap\bb{X}_{\mrm{v}_\cal{S}'}} \phi(t)\hspace{1.5mm}\mrm{d}\mu^{\varpi,\Box}_{(\tau_S,h)}\otimes\mbf{U}_{\varpi_\p}.
   \end{split}\]
 Similarly, the RHS of \eqref{equality of integrals} can be written as
\[\begin{split}
\sum_{a\in\kappa_\p}\int_{\cal{X}^\circ_{\p^1}\times\cal{X}^\p}\phi(\widetilde{g}_{\mrm{v}_\cal{S}}\cdot\widehat{\sigma}_{\p,a}\cdot t)^{\chi(g_{\mrm{v}_\cal{S}})}&\hspace{1.5mm}\mrm{d}(\Phi_\cal{S}^\Box(g_{\mrm{v}_\cal{S}}\cdot \widehat{\sigma}_{\p,a}\cdot h)[\tau_w])\\
=& \sum_{a\in\kappa_\p}\int_{(\widetilde{g}_{\mrm{v}_\cal{S}}\cdot\widehat{\sigma}_{\p,a}).(\cal{X}^\circ_{\p^1}\times\cal{X}^\p)}\ \phi(t)^{\sigma_\p}\hspace{1.5mm}\mrm{d}\mu^{\varpi,\Box}_{(\tau_S',h)}\\
=&\int_{\bb{X}_{\mrm{v}_\cal{S}}\cap\bb{X}_{\mrm{v}_\cal{S}'}}\phi(t)^{\sigma_\p}\hspace{1.5mm}\mrm{d}\mu^{\varpi,\Box}_{(\tau_S',h)}
    \end{split}\]
   where  the first equality follows from $\widetilde{g}_{\mrm{v}'_\cal{S}}=\widetilde{g}_{\mrm{v}_\cal{S}}\cdot\eta_{\varpi_\p}$ and $\widehat{\sigma}_{\p,a}=\eta_{\varpi_\p}\cdot\begin{pmatrix}
      1&0\\a&1 
   \end{pmatrix}$ which imply $\chi(g_{\mrm{v}'_\cal{S}})=\chi(g_{\mrm{v}_\cal{S}})\cdot\sigma_\p$ and   $(\widetilde{g}_{\mrm{v}_\cal{S}}\cdot \widehat{\sigma}_{\p,a}).\Phi_\cal{S}^\Box(g_{\mrm{v}_\cal{S}}\widehat{\sigma}_{\p,a}h)=\widetilde{g}_{\mrm{v}'_\cal{S}}.\Phi_\cal{S}^\Box(g_{\mrm{v}'_\cal{S}}h)$. The second equality follows from the decomposition 
   \[
   \cal{X}^\circ_{\p^1}\times\cal{X}^\p=\coprod_{a\in\kappa_\p}\widehat{\sigma}_{\p,a}.\big(\cal{X}^\circ_{\p^1}\times\cal{X}^\p\big)
   \]
  -- already noted in \eqref{decomposition domain of integration} -- and the relation $\chi_{\cal{S}}(g_{\mrm{v}'_S})=\varepsilon_{\p} \cdot \chi_{\cal{S}}(g_{\mrm{v}_S})$. The claim follows.
\end{proof}

We are ready to  study the properties of the measure
\begin{equation}\label{notation: pushforward measure}
\eta^{\varpi,\Box}_{(\tau_{\Sigma^{c}},h)}:=\chi_{\Sigma^\pm}(g_{\mrm{v}_\Sigma})\cdot \pi_{\Sigma,*}\Big(\mu^{\varpi,\Box}_{(\tau_S,h)}\Big)
\end{equation}
where $\chi_{\Sigma^\pm}\colon G_\Sigma\to\{\pm1\}$ is the character \eqref{chi+-}.
 In general, the measure $\eta^{\varpi,\Box}_{(\tau_{\Sigma^{c}},h)}$ depends on all components of $\tau_S$. Nonetheless, the next theorem justifies our slight abuse of notation.

\begin{proposition}\label{independence pushforward}
  Let $h\in G(\A^{S,\infty})$, $\tau_S\in \cal{H}_{S}^\Box$. Consider a non-empty subset $\Sigma\subseteq\cal{S}$ and a partition $\Sigma=\Sigma_1\amalg \Sigma_2$.  For any plectic zero divisor $D_{\Sigma_1}\in \Z_p[\cal{H}_{\Sigma_1}]^\plectic$, any collection $\omega_{\Sigma_1}=\{\omega_\p\in\p\cal{O}_\p\}_{\p\in\Sigma_1}$, any compact open subset $U_{\Sigma_2}\subseteq\bb{P}^1_{\Sigma_2}$, and any $\lambda_{\Sigma^c}\in\cal{H}_{\Sigma^c}$, the quantity
\[
 \left(\int_{\bb{P}^1_{\Sigma_1}\times U_{\Sigma_2} \times \bb{X}_{\mrm{v}_{\Sigma^c}}}\ell_{D_{\Sigma_1},\omega_{\Sigma_1}}\otimes\mathbbm{1}_{\Sigma_2}\otimes f_{\lambda_{\Sigma^c},\varpi}(\gamma)\hspace{2mm}\mrm{d}\eta^{\varpi,\Box}_{(\tau_{\Sigma^c},h)}(\gamma)\right)\otimes1
\]
 is independent of the $\Sigma$-component of the point $\tau_S$.
\end{proposition}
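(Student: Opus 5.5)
Since the $\Sigma$-component of $\tau_S$ enters only through the tuple of vertices $\mrm{v}_\Sigma=\mrm{red}(\tau_\Sigma)$, which determines $\widetilde{g}_{\mrm{v}_\cal{S}}$ and $\chi(g_{\mrm{v}_\cal{S}})$, I will show the quantity is unchanged when a single vertex $\mrm{v}_\p$, $\p\in\Sigma$, moves to an adjacent vertex. The Bruhat--Tits tree $\cal{T}_\p$ is connected, and adjacent vertices have distances from $\mrm{v}_\circ$ differing by one. So, after relabeling, it suffices to compare $\tau_S$ and $\tau'_S$ satisfying \eqref{conditions on points} for a fixed $\p\in\Sigma$. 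This puts us in the setting of Proposition \ref{prop: comparison of distributions}, with $\widetilde{g}_{\mrm{v}'_\cal{S}}=\widetilde{g}_{\mrm{v}_\cal{S}}\cdot\eta_{\varpi_\p}$, $\chi_{\Sigma^\pm}(g_{\mrm{v}'_\Sigma})=\varepsilon_\p\cdot\chi_{\Sigma^\pm}(g_{\mrm{v}_\Sigma})$, and $\chi(g_{\mrm{v}'_\cal{S}})=\chi(g_{\mrm{v}_\cal{S}})\sigma_\p$.

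Over $\bb{P}^1(F_\p)$, the integrand has $\p$-component either $\ell_{D_\p,\omega_\p}$ (if $\p\in\Sigma_1$) or $\mathbbm{1}_{U_\p}$ (if $\p\in\Sigma_2$). In both cases this component is pulled back from $\bb{P}^1(F_\p)$ and is fixed by $\sigma_\p$, since it is $F_\p$-rational after taking logarithms of the conjugation-stable divisor data. Pulling back along $\pi_\Sigma$ and using Lemma \ref{translation+pushforward}, I rewrite the $\eta$-integral as an integral of the pulled-back function against $\mu^{\varpi,\Box}_{(\tau_S,h)}$, or against $\mu^{\varpi,\Box,\{\p\}}_{(\tau_S,h)}$ when convenient. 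I then decompose the $\p$-part of the supports:
\[
\bb{X}_{\mrm{v}'_\p}=(\bb{X}_{\mrm{v}_\p}\cap\bb{X}_{\mrm{v}'_\p})\amalg(\bb{X}_{\mrm{v}'_\p}\cap\beta_\p.\bb{X}_{\mrm{v}_\p}),
\]
and similarly decompose $\bb{X}_{\mrm{v}_\p}$ together with its $\beta_\p$-translate. These pieces come from the coset decompositions \eqref{description of intersection}. On each piece, Proposition \ref{prop: comparison of distributions} identifies $\mu_{\tau'}[\sigma_\p]$ with $\mu_\tau\otimes\mbf{U}_{\varpi_\p}$. After tensoring with $\bb{I}$, Lemma \ref{lemma: T to Totimes} turns $\otimes\mbf{U}_{\varpi_\p}$ into multiplication by $\alpha_{\varpi_\p}^\otimes$.

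Since the integrand has only constant or $\ell$-type behaviour in the $\p$-variable, and only $f$-type (logarithmic) dependence in the $\Sigma^c$-variables, Theorem \ref{vanishing implies invariance} shows the result is a leading term. Because some $V_\q=\bb{X}_{\mrm{v}_\q}$ with $\q\in\Sigma^c$, or by Corollary \ref{cor: exotic AJ as leading term}, any factor $\alpha^\otimes_{\varpi_\p}$ can be replaced by its value $\varepsilon_\p$ at $\underline{0}$. Combining this with the sign change $\chi_{\Sigma^\pm}(g_{\mrm{v}'_\Sigma})=\varepsilon_\p\chi_{\Sigma^\pm}(g_{\mrm{v}_\Sigma})$ and $\varepsilon_\p^2=1$ gives equality of the $\tau$- and $\tau'$-integrals.

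The hardest step will be this last replacement of $\alpha^\otimes_{\varpi_\p}$ by $\varepsilon_\p$. In the $\Sigma_1$ case, the derivative $\partial_\p\alpha^\otimes$ from Proposition \ref{logarithmic derivative U_p} might pair nontrivially with a lower-order term. I would handle this first by checking that $\ell_{D_\p,\omega_\p}$ has total $\eta$-mass zero, using the degree-zero condition on $D_\p$ together with the fact that the constant-in-$\p$ integral vanishes by Theorem \ref{vanishing implies invariance}. If that is not enough, I would induct on $|\Sigma|$, treating $\Sigma_2$-type components first.
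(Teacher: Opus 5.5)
Your route is the paper's. You reduce to moving a single vertex $\mrm{v}_\p$, $\p\in\Sigma$, to an adjacent one (the paper packages this as an induction on $\lvert\Sigma\rvert$). You split along $\bb{P}^1(F_\p)=\pi_\p(\bb{X}_{\mrm{v}_\p}\cap\bb{X}_{\mrm{v}'_\p})\amalg\pi_\p(\bb{X}_{\mrm{v}'_\p}\cap\beta_\p.\bb{X}_{\mrm{v}_\p})$. You then combine Proposition~\ref{prop: comparison of distributions}, Lemma~\ref{lemma: T to Totimes}, $(\alpha^\otimes_{\varpi_\p})\lvert_{\underline{0}}=\varepsilon_\p$, the sign $\chi_{\Sigma^\pm}$ and Theorem~\ref{vanishing implies invariance}.

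However, the step you use to dispose of the Galois twist is false as stated. The twist $\chi(g_{\mrm{v}'_\cal{S}})=\chi(g_{\mrm{v}_\cal{S}})\sigma_\p$ acts on the \emph{values} of the integrand. So on $\bb{X}_{\mrm{v}_\cal{S}}\cap\bb{X}_{\mrm{v}'_\cal{S}}$, Proposition~\ref{prop: comparison of distributions} gives $\int\phi\,\mrm{d}\mu^{\varpi,\Box}_{(\tau'_S,h)}=\big(\int\phi^{\sigma_\p}\,\mrm{d}\mu^{\varpi,\Box}_{(\tau_S,h)}\big)\otimes\mbf{U}_{\varpi_\p}$. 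You remove $\sigma_\p$ by asserting that the $\p$-component of $\phi$ is $\sigma_\p$-fixed. That holds for $\p\in\Sigma_2$, where it is an indicator function, but not for $\p\in\Sigma_1$. There $\ell_{D_\p,\omega_\p}$ is $E_\p$-valued and $\sigma_\p\circ\ell_{D_\p,\omega_\p}=\ell_{\overline{D}_\p,\omega_\p}$, with $\overline{D}_\p$ the conjugate divisor. For example, $D_\p=[\tau]-[\overline{\tau}]$ gives $\sigma_\p\circ\ell_{D_\p,\omega_\p}=-\ell_{D_\p,\omega_\p}$. Nothing in $\Z_p[\cal{H}_{\Sigma_1}]^\plectic$ imposes conjugation-stability.

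Redo your two-piece computation without that assumption, with the same replacement of $\alpha^\otimes_{\varpi_\p}$ by $\varepsilon_\p$. Write $\eta_{\tau_S},\eta_{\tau'_S}$ for the measures \eqref{notation: pushforward measure} built from $\tau_S,\tau'_S$, with all integrals over $\bb{P}^1_{\Sigma_1}\times U_{\Sigma_2}\times\bb{X}_{\mrm{v}_{\Sigma^c}}$ and tensored with $1$. You get
\[
\int\phi\,\mrm{d}\eta_{\tau'_S}-\int\phi\,\mrm{d}\eta_{\tau_S}=\int\big(\phi^{\sigma_\p}-\phi\big)\,\mrm{d}\eta_{\tau_S}.
\]
So your argument proves independence only for the $\sigma_\p$-invariant part of the $\Sigma_1$-components. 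The anti-invariant part changes sign at each step. The missing input is the vanishing of that anti-invariant contribution. The paper's sketch does not say how the $[\sigma_\p]$ of Proposition~\ref{prop: comparison of distributions} is absorbed either, so you cannot borrow it from there. You must either supply such an argument, or fix a convention in which $\eta$ keeps only the sign $\chi_{\Sigma^\pm}(g_{\mrm{v}_\Sigma})$ and not the Galois twist at the primes of $\Sigma$; in that case no invariance is needed.

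A smaller correction concerns the lower-order terms from $\alpha^\otimes_{\varpi_\p}-\varepsilon_\p$. They do not die because $\ell_{D_\p,\omega_\p}$ has ``total mass zero''. They die because $\ell_{D_\p,\varpi_\p}\circ\pi_\p=\sum_i n_i f_{\tau_i,\varpi}$ lifts to $\sum_i n_iF_{\tau_i,\varpi}$, whose constant coefficient is $\tfrac12\deg D_\p=0$. Constant components at $\q\in\Sigma^c$ are killed by Theorem~\ref{vanishing implies invariance}. Changing the branch $\omega_\p$ to $\varpi_\p$ only adds a locally constant, $\Sigma_2$-type term. When $\Sigma=\cal{S}$, the degree-zero argument is the only one available.
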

\begin{proof}
   The strategy of proof in \cite[Lemma 2.12]{BDHidaRational} can be adapted to our setting. The proof is by induction on the size of $\Sigma$. The idea is to vary $\tau_S$ one component at a time. Moreover, since for every $\p\in\cal{S}$ the Bruhat--Tits tree for $G_\p$ is a tree, it suffices to consider  two points $\tau_S,\tau_S'\in\cal{H}_{S}$ differing only at the $\p$ and such that the associated vertices $\mrm{v}_\p, \mrm{v}_\p'$  satisfy $m_{\mrm{v}'_\p}=m_{\mrm{v}_\p}+1$. Then, one can establish the inductive step by leveraging the covering
   \[
   \bb{P}^1(F_\p)=\pi_\p\big(\bb{X}_{\mrm{v}_\p}\cap\bb{X}_{\mrm{v}_\p'}\big)\hspace{1mm}\amalg\hspace{1mm}\pi_\p\big(\bb{X}_{\mrm{v}'_\p}\cap\beta_\p.\bb{X}_{\mrm{v}_\p}\big),
   \]
 Proposition \ref{prop: comparison of distributions}, Lemma \ref{lemma: T to Totimes}, the equality $(\alpha_{\varpi_\p}^\otimes)\big\lvert_{\underline{0}}=\varepsilon_\p$, and Theorem \ref{vanishing implies invariance}.
\end{proof}

\begin{remark}
    Proposition \ref{independence pushforward} plays a role in this article analogous to that of \cite[Lemma 2.12]{BDHidaRational} in \emph{loc.cit.} Note that we cannot simply prove an an equality of measures in our setting because that is only valid after localization at $\cal{P}$.
\end{remark}

\subsubsection{Further properties.}\label{sect: Further properties}
Let $\Sigma\subset\cal{S}$ be a subset and $\p\in\cal{S}\setminus \Sigma$.  Let $\tau_{\Sigma^c},\tau_{\Sigma^c}'\in\cal{H}_{\Sigma^c}$ such that
\[
m_{\mrm{v}'_\p}=m_{\mrm{v}_\p}+1\qquad\text{and}\qquad \tau_v=\tau'_v\qquad \forall\ v\in \Sigma^c\setminus \{\p\}.
\]
\begin{lemma}\label{intermediate-step}
Let $h\in G(\A^{S,\infty})$, $D_\Sigma\in\Z_p[\cal{H}_\Sigma]^\plectic$, $\omega_\Sigma=\{\omega_\p\in\p\cal{O}_\p\}_{\p\in\Sigma}$, and $\lambda_{\Sigma^c}\in\cal{H}_{\Sigma^c}$. Then,
\[\begin{split}
\int_{\bb{P}^1_\Sigma\times(\bb{X}_{\mrm{v}_{\Sigma^c}}\cup\bb{X}_{\mrm{v}_{\Sigma^c}'})}&\ell_{D_\Sigma,\omega_\Sigma}\otimes f_{\lambda_{\Sigma^c}}\hspace{3mm}\mrm{d}\big(\eta^{\varpi,\Box}_{(\tau_{\Sigma^c}',h)}- \eta^{\varpi,\Box}_{(\tau_{\Sigma^c},h)}\big)\\
&= \frac{\partial_\p\big(\alpha_{\varpi_\p}^\otimes\big)}{\alpha^\otimes_{\varpi_\p}}\bigg\lvert_{\underline{0}}\cdot\int_{\bb{P}^1_\Sigma\times \pi_{\{\p\}}(\bb{X}_{\mrm{v}_{\Sigma^c}}\cap\bb{X}_{\mrm{v}_{\Sigma^c}'})}\ell_{D_\Sigma,\omega_\Sigma}\otimes \mathbbm{1}_\p\otimes f_{\lambda_{\Sigma^c}\setminus\{\p\}}\hspace{3mm}\mrm{d}\eta^{\varpi,\Box}_{(\tau_{\Sigma^c\setminus\{\p\}},h)}.
    \end{split}\]
   
\end{lemma}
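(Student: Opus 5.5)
The plan is to run the Bertolini--Darmon computation of \cite[Lemma 2.12]{BDHidaRational} one prime at a time, now keeping track of the first-order term that Proposition \ref{independence pushforward} throws away. All identities are understood after $\otimes 1$ in $R_{\otimes,\cal{S}}\otimes(J^\Box_\cal{S})^\wedge_{\overline{\varrho}}\otimes\bb{I}$. Since the $\p$-integrand $f_{\lambda_\p,\varpi}$ is not pulled back from $\bb{P}^1(F_\p)$, I will not push forward at $\p$. Instead I will decompose the domain at $\p$ as in the proof of Proposition \ref{independence pushforward}:
\[
\bb{X}_{\mrm{v}_\p}\cup\bb{X}_{\mrm{v}'_\p}=(\bb{X}_{\mrm{v}_\p}\cap\bb{X}_{\mrm{v}'_\p})\amalg(\bb{X}_{\mrm{v}_\p}\setminus\bb{X}_{\mrm{v}'_\p})\amalg(\bb{X}_{\mrm{v}'_\p}\setminus\bb{X}_{\mrm{v}_\p}).
\]
Here $\bb{X}_{\mrm{v}'_\p}\setminus\bb{X}_{\mrm{v}_\p}=\bb{X}_{\mrm{v}'_\p}\cap\beta_\p.\bb{X}_{\mrm{v}_\p}$ by \eqref{description of intersection}. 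The measure $\eta_{\tau'}$ is supported on $\bb{X}_{\mrm{v}'}$ and $\eta_{\tau}$ on $\bb{X}_{\mrm{v}}$, so the difference splits into three pieces.

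\textbf{Step 1 (the pieces away from the intersection).} On $\bb{X}_{\mrm{v}'_\p}\cap\beta_\p.\bb{X}_{\mrm{v}_\p}$, use Lemma \ref{translation+pushforward} to replace $\mu_{\tau}$ by $\mu^{\{\p\}}_{\tau}=\beta_\p.\mu_\tau$. The second identity of Proposition \ref{prop: comparison of distributions} then says $\mu_{\tau'}\otimes\mbf{U}_{\varpi_\p}$ equals $\mu^{\{\p\}}_\tau[\sigma_\p]$. Together with the sign $\chi_{\Sigma^\pm}$ and Lemma \ref{lemma: T to Totimes}, this turns $\mbf{U}_{\varpi_\p}$ into $\alpha^\otimes_{\varpi_\p}$. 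The integrand changes by $f_{\lambda_\p}(\beta_\p\gamma)-f_{\lambda_\p}(\gamma)$, which is a constant by Lemma \ref{constant components}, since $\beta_\p$ is trivial in $G_\p$ and has degree $2$. By Theorem \ref{vanishing implies invariance}, integrals against a constant $\p$-component over the full $\bb{X}_{\mrm{v}_\p}$ vanish, so the complementary piece $\bb{X}_{\mrm{v}_\p}\setminus\bb{X}_{\mrm{v}'_\p}$ can be traded for minus the intersection piece. I would use this to express both outer pieces in terms of integrals over $\bb{X}_{\mrm{v}_\p}\cap\bb{X}_{\mrm{v}'_\p}$.

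\textbf{Step 2 (the intersection).} Use the first identity of Proposition \ref{prop: comparison of distributions} together with Lemma \ref{lemma: T to Totimes}. The contribution of $\eta_{\tau'}$ becomes $\varepsilon_\p$-twisted $\alpha^\otimes_{\varpi_\p}$ times the contribution of $\eta_\tau$. The $\chi_{\Sigma^\pm}$ factors are arranged so that $\varepsilon_\p$ cancels the value $\alpha^\otimes_{\varpi_\p}|_{\underline 0}=\varepsilon_\p$. What survives in the difference is $(\alpha^\otimes_{\varpi_\p}\varepsilon_\p-1)$ applied to the $F$-version of the integral.

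The key device is to work with $F_{\lambda_\p,\varpi}$ in place of $f_{\lambda_\p,\varpi}$, via Corollary \ref{cor: exotic AJ as leading term}. Multiplication by $(\alpha^\otimes_{\varpi_\p}\varepsilon_\p-1)$ vanishes at $X_\p=0$. Its $X_\p$-derivative is $\partial_\p\alpha^\otimes_{\varpi_\p}/\alpha^\otimes_{\varpi_\p}|_{\underline 0}$, and it multiplies the $X_\p$-constant term of $F_{\lambda_\p}$. That constant term is $\tfrac12\mathbbm{1}_\p$, and the factor $2$ is absorbed by the two outer pieces. After extracting the coefficient of $X_\p$, the terms involving $f_{\lambda_\p}$ cancel between the pieces, up to constants killed by Theorem \ref{vanishing implies invariance}. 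What remains is the logarithmic derivative times the integral of $\mathbbm{1}_\p$ against $\eta_{\tau}$ over $\pi_{\{\p\}}(\bb{X}_{\mrm{v}}\cap\bb{X}_{\mrm{v}'})$. By Proposition \ref{independence pushforward}, with $\p$ now in $\Sigma_2$, this integral depends only on $\tau_{\Sigma^c\setminus\{\p\}}$, which justifies the notation on the right-hand side.

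\textbf{Main obstacle.} I expect the hardest part to be the bookkeeping in Steps 1--2: getting the Galois twists $[\sigma_\p]$ and the characters $\chi_{\Sigma^\pm}$ to combine exactly into $\varepsilon_\p$, and checking that the constants from Lemma \ref{constant components} attached to $\beta_\p$ really land in the kernel of Theorem \ref{vanishing implies invariance}. That theorem needs some prime with the full domain $\bb{X}_{\mrm{v}_\q}$; I would take $\q=\p$ after first recombining the pieces.
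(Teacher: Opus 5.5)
Your proposal is essentially the paper's proof. It uses the same partition of $\mathbb{X}_{\mathrm{v}_{\Sigma^c}}\cup\mathbb{X}_{\mathrm{v}'_{\Sigma^c}}$ into the intersection, $\beta_\mathfrak{p}.\mathbb{X}_{\mathrm{v}_{\Sigma^c}}\cap\mathbb{X}_{\mathrm{v}'_{\Sigma^c}}$ and its $\beta_\mathfrak{p}^{-1}$-translate. It uses the two identities of Proposition~\ref{prop: comparison of distributions} with Lemma~\ref{lemma: T to Totimes}, so that each region contributes $\tfrac{1}{2}\,\partial_\mathfrak{p}(\alpha^\otimes_{\varpi_\mathfrak{p}})/\alpha^\otimes_{\varpi_\mathfrak{p}}\big|_{\underline{0}}$ times a constant-term integral, and Theorem~\ref{vanishing implies invariance} to discard lower-order terms and move that integral onto the intersection.

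Two local corrections. First, the sign $\varepsilon_\mathfrak{p}$ comes from the twist $[\sigma_\mathfrak{p}]$ in Proposition~\ref{prop: comparison of distributions}, i.e.\ the paper's relation $\chi_{\mathcal{S}}(g_{\mathrm{v}'})=\varepsilon_\mathfrak{p}\cdot\chi_{\mathcal{S}}(g_{\mathrm{v}})$. It does not come from $\chi_{\Sigma^\pm}$, which is identical for $\tau$ and $\tau'$ because $\mathfrak{p}\notin\Sigma$. Second, passing from $\mathbb{X}_{\mathrm{v}}\setminus\mathbb{X}_{\mathrm{v}'}$ to $\beta_\mathfrak{p}.\mathbb{X}_{\mathrm{v}}\cap\mathbb{X}_{\mathrm{v}'}$ is a plain change of variables by $\beta_\mathfrak{p}$, and the resulting constant is $0$ since $\log_{\varpi}(\varpi^2)=0$. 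It is not Lemma~\ref{translation+pushforward}, which only applies at places where you push forward to $\mathbb{P}^1$.
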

\begin{proof}
The proof follows the strategy of (\cite{BDHidaRational}, Lemmas 2.22 $\&$ 2.23). Using the partition
\[
\bb{X}_{\mrm{v}_{\Sigma^c}}\cup\bb{X}_{\mrm{v}_{\Sigma^c}'}=\big(\bb{X}_{\mrm{v}_{\Sigma^c}}\cap\bb{X}_{\mrm{v}_{\Sigma^c}'}\big)\hspace{1mm}\amalg\hspace{1mm}\big( \beta_\p.\bb{X}_{\mrm{v}_{\Sigma^c}}\cap\bb{X}_{\mrm{v}_{\Sigma^c}'}\big)\hspace{1mm}\amalg\hspace{1mm}\beta_\p^{-1}.\big( \beta_\p.\bb{X}_{\mrm{v}_{\Sigma^c}}\cap\bb{X}_{\mrm{v}_{\Sigma^c}'}\big),
\]
it suffices to compute that
 \[\begin{split}
\int_{\bb{P}^1_\Sigma\times(\bb{X}_{\mrm{v}_{\Sigma^c}}\cap\bb{X}_{\mrm{v}_{\Sigma^c}'})}&\ell_{D_\Sigma,\omega_\Sigma}\otimes f_{\lambda_{\Sigma^c}}\hspace{3mm}\mrm{d}\big(\eta^{\varpi,\Box}_{(\tau_{\Sigma^c}',h)}- \eta^{\varpi,\Box}_{(\tau_{\Sigma^c},h)}\big)\\
&=\frac{\partial_\p\big(\alpha_{\varpi_\p}^\otimes\big)}{2\alpha^\otimes_{\varpi_\p}}\bigg\lvert_{\underline{0}}\cdot\int_{\bb{P}^1_\Sigma\times(\bb{X}_{\mrm{v}_{\Sigma^c}}\cap\bb{X}_{\mrm{v}_{\Sigma^c}'})}\ell_{D_\Sigma,\omega_\Sigma}\otimes \mathbbm{1}_\p\otimes f_{\lambda_{\Sigma^c\setminus\{\p\}}}\hspace{3mm}\mrm{d}\eta^{\varpi,\Box}_{(\tau_{\Sigma^c},h)},
\end{split}\]
and
\[\begin{split}
\int_{\bb{P}^1_\Sigma\times(\beta_\p.\bb{X}_{\mrm{v}_{\Sigma^c}}\cap\bb{X}_{\mrm{v}_{\Sigma^c}'})}&\ell_{D_\Sigma,\omega_\Sigma}\otimes f_{\lambda_{\Sigma^c}}\hspace{3mm}\mrm{d} \big(\eta^{\varpi,\Box,\{\p\}}_{(\tau_{\Sigma^c},h)}-\eta^{\varpi,\Box}_{(\tau_{\Sigma^c}',h)}\big)\\
&=\frac{\partial_\p\big(\alpha_{\varpi_\p}^\otimes\big)}{2\alpha^\otimes_{\varpi_\p}}\bigg\lvert_{\underline{0}}\cdot\int_{\bb{P}^1_\Sigma\times(\beta_\p.\bb{X}_{\mrm{v}_{\Sigma^c}}\cap\bb{X}_{\mrm{v}_{\Sigma^c}'})}\ell_{D_\Sigma,\omega_\Sigma}\otimes \mathbbm{1}_\p\otimes f_{\lambda_{\Sigma^c\setminus\{\p\}}}\hspace{3mm}\mrm{d}\eta^{\varpi,\Box}_{(\tau_{\Sigma^c}',h)}.
\end{split}\]
These last two equalities follows from a direct application of Proposition \ref{prop: comparison of distributions}, Lemma \ref{lemma: T to Totimes}, and the vanishing result of Theorem \ref{vanishing implies invariance}.
\end{proof}

We now explain a convenient interpretation of the equality appearing in Lemma \ref{intermediate-step}. The lattices $L_{\mrm{v}_\p}, L_{\mrm{v}_\p'}$ associated to $\tau_\p, \tau_\p'$ as above equation \eqref{domain in terms of lattice} satisfy
\[
\p L_{\mrm{v}_\p}\subsetneq L_{\mrm{v}_\p'}\subsetneq L_{\mrm{v}_\p}.
\]
Therefore, if $e_\p=e(\mrm{v}_\p',\mrm{v}_\p)\in\cal{E}_\p$ denotes the oriented edge from $\mrm{v}_\p'$ to $\mrm{v}_\p$, we find that 
\[
U_{e_\p}:=\pi_\p\big(L_{\mrm{v}_\p}'\cap L_{\mrm{v}_\p'}'\big)
\]
is the standard compact open subset of $\bb{P}^1(F_\p)$ associated to $e_\p$ -- see \cite[Section 3.2]{plecticHeegner}. In particular,
\[
\pi_{\{\p\}}\Big(\bb{X}_{\mrm{v}_{\Sigma^c}}\cap\bb{X}_{\mrm{v}_{\Sigma^c}'}\Big)=U_{e_\p}\times\bb{X}_{\mrm{v}_{\Sigma^c\setminus\{\p\}}}.
\]
Moreover, the proof of \cite[Lemma 2.5]{Bertolini-Darmon-Green} shows that
\[
\mrm{\ord}_{\p}\left(\frac{t_\p-\tau_\p}{t_\p-\tau_\p'}\right)=\begin{cases}
    1& t_\p\in U_{e_\p}\\
    0&t_\p\not\in U_{e_\p}.
\end{cases}
\]
Putting all together, the following holds:
\begin{corollary}\label{ord-integral}
Let $\Sigma\subset \cal{S}$ be a subset and $\p\in\cal{S}\setminus \Sigma$.  Fix $h\in G(\A^{S,\infty})$, $D_\Sigma\in\Z_p[\cal{H}_\Sigma]^\plectic$, $\omega_\Sigma=\{\omega_\p\in\p\cal{O}_\p\}_{\p\in\Sigma}$, $\lambda_{\Sigma^c}\in\cal{H}_{\Sigma^c}$. Let $\tau_{\Sigma^c},\tau_{\Sigma^c}'\in\cal{H}_{\Sigma^c}$ such that
\[
m_{\mrm{v}'_\p}=m_{\mrm{v}_\p}+1\qquad\text{and}\qquad \tau_v=\tau'_v\qquad \forall\ v\in \Sigma^c\setminus \{\p\}.
\]
Then, 
    \[
   \begin{split}
\int_{\bb{P}^1_\Sigma\times(\bb{X}_{\mrm{v}_{\Sigma^c}}\cup\bb{X}_{\mrm{v}_{\Sigma^c}'})}&\ell_{D_\Sigma,\omega_\Sigma}\otimes f_{\lambda_{\Sigma^c}}\hspace{3mm}\mrm{d}\big(\eta^{\varpi,\Box}_{(\tau_{\Sigma^c}',h)}- \eta^{\varpi,\Box}_{(\tau_{\Sigma^c},h)}\big)\\
=&\frac{\partial_\p\big(\alpha_{\varpi_\p}^\otimes\big)}{\alpha^\otimes_{\varpi_\p}}\bigg\lvert_{\underline{0}}\cdot\int_{\bb{P}^1_{\Sigma\cup\{\p\}}\times \bb{X}_{\mrm{v}_{\Sigma^c\setminus\{\p\}}}}\ell_{D_\Sigma,\omega_\Sigma}\otimes \mrm{\ord}_{\p}\left(\frac{t_\p-\tau_\p}{t_\p-\tau_\p'}\right)\otimes f_{\lambda_{\Sigma^c\setminus\{\p\}}}\hspace{3mm}\mrm{d}\eta^{\varpi,\Box}_{(\tau_{\Sigma^c\setminus\{\p\}},h)}.
    \end{split}\]
\end{corollary}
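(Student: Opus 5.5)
The plan is to deduce Corollary \ref{ord-integral} directly from Lemma \ref{intermediate-step} by rewriting its right-hand side; no new analysis of the measures should be needed. First, Lemma \ref{intermediate-step} expresses the left-hand side as
\[
\frac{\partial_\p\big(\alpha_{\varpi_\p}^\otimes\big)}{\alpha^\otimes_{\varpi_\p}}\bigg\lvert_{\underline{0}}\cdot\int_{\bb{P}^1_\Sigma\times \pi_{\{\p\}}(\bb{X}_{\mrm{v}_{\Sigma^c}}\cap\bb{X}_{\mrm{v}_{\Sigma^c}'})}\ell_{D_\Sigma,\omega_\Sigma}\otimes \mathbbm{1}_\p\otimes f_{\lambda_{\Sigma^c\setminus\{\p\}}}\hspace{2mm}\mrm{d}\eta^{\varpi,\Box}_{(\tau_{\Sigma^c\setminus\{\p\}},h)}.
\]
Here the measure is the pushforward along $\pi_{\Sigma\cup\{\p\}}$, so it lives on $\bb{P}^1_{\Sigma\cup\{\p\}}\times\cal{Y}_{\Sigma^c\setminus\{\p\}}$.

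Second, I use the lattice description just given. The inclusions $\p L_{\mrm{v}_\p}\subsetneq L_{\mrm{v}_\p'}\subsetneq L_{\mrm{v}_\p}$ give
\[
\pi_{\{\p\}}\big(\bb{X}_{\mrm{v}_{\Sigma^c}}\cap\bb{X}_{\mrm{v}_{\Sigma^c}'}\big)=U_{e_\p}\times\bb{X}_{\mrm{v}_{\Sigma^c\setminus\{\p\}}}.
\]
So the domain of integration equals $\bb{P}^1_{\Sigma\cup\{\p\}}\times\bb{X}_{\mrm{v}_{\Sigma^c\setminus\{\p\}}}$, and the factor $\mathbbm{1}_\p$ is replaced by the characteristic function $\mathbbm{1}_{U_{e_\p}}$ on the $\p$-coordinate. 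One has to check that the identification of $\cal{X}$-domains with lattice pairs (\eqref{domain in terms of lattice}) is compatible with the projection to $\bb{P}^1(F_\p)$; this holds because $\pi_\p$ sends $[(x,y),z]$ to $[x:y]$.

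Third, I invoke the computation from the proof of \cite[Lemma 2.5]{Bertolini-Darmon-Green}. Since $e_\p$ is the edge from $\mrm{v}_\p'$ to $\mrm{v}_\p$ with $\mrm{v}_\p=\mrm{red}(\tau_\p)$ and $\mrm{v}_\p'=\mrm{red}(\tau_\p')$, the function $t_\p\mapsto \ord_\p\big(\frac{t_\p-\tau_\p}{t_\p-\tau'_\p}\big)$ on $\bb{P}^1(F_\p)$ equals $\mathbbm{1}_{U_{e_\p}}$. Substituting this turns the integral over $\bb{P}^1_\Sigma\times U_{e_\p}\times\bb{X}_{\mrm{v}_{\Sigma^c\setminus\{\p\}}}$ into the stated integral over $\bb{P}^1_{\Sigma\cup\{\p\}}\times\bb{X}_{\mrm{v}_{\Sigma^c\setminus\{\p\}}}$.

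The only delicate point is orientation and sign. One must verify that the orientation convention for $U_{e_\p}$ in \cite[Section 3.2]{plecticHeegner} matches the sign of $\ord_\p$ (value $1$ on $U_{e_\p}$, not $-1$). Otherwise a global sign appears, which I would detect by testing $\mrm{v}_\p=\mrm{v}_\circ$ with $\tau_\p'$ reducing to $\eta_{\varpi}\mrm{v}_\circ$. Everything else is bookkeeping.
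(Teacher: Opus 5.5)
Your route is the paper's own: Lemma \ref{intermediate-step}, the identification $\pi_{\{\p\}}(\bb{X}_{\mrm{v}_{\Sigma^c}}\cap\bb{X}_{\mrm{v}'_{\Sigma^c}})=U_{e_\p}\times\bb{X}_{\mrm{v}_{\Sigma^c\setminus\{\p\}}}$ with $U_{e_\p}=\pi_\p(L'_{\mrm{v}_\p}\cap L'_{\mrm{v}'_\p})$, and \cite[Lemma 2.5]{Bertolini-Darmon-Green}. However, the orientation check you postpone is exactly where the argument fails, so your third step is not bookkeeping.

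\textbf{Running your test.} Take $\tau_\p\in\OO_{E_\p}^\times$ whose residue class is not in $\kappa_\p$, so that $\mrm{red}(\tau_\p)=\mrm{v}_\circ$. Take $\tau'_\p=\eta_{\varpi}(\tau_\p)=\tau_\p/\varpi$, which reduces to $[\OO_\p\oplus\p]$. The coordinate on $\bb{P}^1(F_\p)$ is $t=x_\gamma/y_\gamma$; this is what makes $f_{\tau,\varpi}-f_{\tau',\varpi}=\log_\varpi\frac{t-\tau}{t-\tau'}$.
\begin{itemize}
\item By the paper's own description, $\pi_\p(\bb{X}_{\mrm{v}_\circ}\cap\bb{X}_{\mrm{v}'})=\pi_\p(\mathcal{X}^\circ_{\p^1})=\{t:\ord_\p(t)\le -1\}\cup\{\infty\}$.
\item For $t\in\OO_\p$, the element $t-\tau_\p$ is a unit and $\ord_\p(t-\tau_\p/\varpi)=-1$.
\item For $\ord_\p(t)\le-1$ and for $t=\infty$, the two orders agree.
\end{itemize}
Hence $\ord_\p\big(\frac{t-\tau_\p}{t-\tau'_\p}\big)=\mathbbm{1}_{\OO_\p}(t)$, which is the indicator of the \emph{complementary} set. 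For the other kind of neighbour, $\tau'_\p=\varpi/\tau_\p$ reducing to $[\p\oplus\OO_\p]$, one gets $U_{e_\p}=\p\OO_\p$ and $\ord_\p(\cdots)=-\mathbbm{1}_{\p\OO_\p}$; so the value $-1$ you feared also occurs. In general, by equivariance and normalising at $t=\infty$,
\[
\ord_\p\Big(\frac{t-\tau_\p}{t-\tau'_\p}\Big)=\mathbbm{1}_{U_{e_\p}}(\infty)-\mathbbm{1}_{U_{e_\p}}(t).
\]
This function is never $\mathbbm{1}_{U_{e_\p}}$; it agrees with $-\mathbbm{1}_{U_{e_\p}}$ only up to an additive constant.

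\textbf{What the argument actually proves.} To pass from Lemma \ref{intermediate-step} to an ord-integral you need an extra step. Substitute $\mathbbm{1}_{U_{e_\p}}=\mathbbm{1}_{U_{e_\p}}(\infty)-\ord_\p(\cdots)$ and discard the constant term. This is legitimate after $\otimes1$: an integrand that is constant in the $\p$-variable over all of $\pi_\p(\bb{X}_{\mrm{v}_\p})=\bb{P}^1(F_\p)$ integrates to zero by Theorem \ref{vanishing implies invariance}, once $\ell_{D_\Sigma,\omega_\Sigma}$ is expanded into functions $f_{\lambda,\varpi}$ and $\OO^\times$-stable indicators, as in the proof of Lemma \ref{intermediate-step}. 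The result is the displayed identity with the \emph{opposite} sign, equivalently with $\ord_\p\big(\frac{t_\p-\tau'_\p}{t_\p-\tau_\p}\big)$ in place of $\ord_\p\big(\frac{t_\p-\tau_\p}{t_\p-\tau'_\p}\big)$.

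The paragraph preceding the corollary makes the same identification you do, so you have faithfully reproduced the paper's argument. Even so, the printed sign does not follow from Lemma \ref{intermediate-step} as stated.

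\textbf{Consistency with the later theorem.} The corrected sign is also the one the later argument needs. Write $u=\frac{t_\p-\tau_\p}{t_\p-\tau'_\p}$. In the proof of Theorem \ref{thm: comp circ and nocirc}, Proposition \ref{logarithmic derivative U_p} combined with the printed sign gives the kernel
\[
\log_\varpi u+\frac{\log_\varpi q_\p}{\ord_\p q_\p}\ord_\p u ,
\]
which is not $\log_{q_\p}u=\log_\varpi u-\frac{\ord_\p u}{\ord_\p q_\p}\log_\varpi q_\p$. With the corrected sign one gets exactly $\log_{q_\p}u$.
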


\subsection{Reinterpretation}\label{sect: Reinterpretation}
The aim of this section is present a construction of the  class 
\[
\omega_{A,S}^\Box\hspace{1mm}\in\hspace{1mm}\mrm{H}^0\big(X_G^S(\frak{f});\ \mrm{St}_{\cal{S}^\pm}\otimes_{\Z_p} \Z_p[\cal{H}_w^\Box],\ A(E^\Box)\otimes_\Z\Z_p\big)_{\pi_A},
\]
introduced in \eqref{lift of the class}, in terms of the tower of Shimura curves with $K_m^\circ$-level structures
\[\xymatrix{
X_0^\circ& X_1^\circ\ar[l]&\cdots\ar[l]_-{\pi_1}& X_m^\circ\ar[l]&\ar[l]\cdots
}\] 
We give an overview of the transposition of the construction given in Section \ref{Section: construction}. Let $J_m^\circ$ denote the Jacobian of $X_m^\circ$. For $\Box\in\{\text{loc}, \text{glo}\}$ set 
 \[
 J_m^{\circ,\Box}:=\begin{cases}
    J_m^\circ(E_w)&\Box=\text{loc}\\
     J_m^\circ({^{w}}E_{\mrm{ab}})&\Box=\text{glo}.\\
\end{cases}
\]
   The direct limit
    \[
J_\Sigma^{\circ,\Box}:=\varinjlim_m \hspace{1mm}\Big(J_m^{\circ,\Box}\otimes\bb{Z}_p, \pi^*\Big)
\] 
    inherits the structure of a right  $\bb{T}_\Sigma^\circ$-module. 
    \begin{remark}\label{rmk: projection to A}
       The morphisms $U_\cal{S}^{1-m}\circ(\pi_2)_*^{m-1}\colon J_m^{\circ,\Box}\to A(E^\Box)$ are compatible and induce a $\bb{T}_\cal{S}^\circ$-equivariant map $\mrm{pr}_A^\circ\colon J_\cal{S}^{\circ,\Box}\to A(E^\Box)_{\Z_p}$ for $A(E^\Box)_{\Z_p}:=A(E^\Box)\otimes_\Z\Z_p$.
    \end{remark}
The $w$-adic uniformization of $E_w$-points produces trace-compatible automorphic functions 
\[
\phi_m^{\circ,\Box}\in\cal{A}_{w,\Sigma}\big(K_m^\circ; \cal{H}_w^\Box, J_\Sigma^{\circ,\Box}\big)
\]
where $J_\Sigma^{\circ,\Box}$ is considered with the trivial $K_m^\circ$-action. It is endowed with an action of the Hecke algebra $\frak{h}^\circ_{\Sigma,m}$. A major simplification happens in this setting: the $p$-adic domain obtained as the projective limits of $K_0/K_m^\circ$ turns out to be the projective space
\[
\bb{P}^1(F_\Sigma)\cong\varprojlim_m K_0/K_m^\circ
\]
which admits a natural action of $G_\Sigma$ extending that of $(K_0)_\Sigma$ thanks to Iwasawa's decomposition. Considering the space $\cal{M}^{\circ,\Box}_{\Sigma}$ of $J_\Sigma^{\circ,\Box}$-valued measures on $\bb{P}^1(F_\Sigma)$ with the induced $G_\Sigma$-action, the space of automorphic function
\[
\cal{A}_{w,\Sigma}\big(K_0; \cal{H}_w^\Box, \cal{M}^{\circ,\Box}_{\Sigma}\big)
\]
admits an action of $\bb{T}_\Sigma^\circ$ on the coefficients, and there is a natural $\bb{T}_\Sigma^\circ$-equivariant isomorphism
\[
 \mrm{sp}_\infty\colon \cal{A}_{w,\Sigma}\big(K_0; \cal{H}_w^\Box, \cal{M}^{\circ,\Box}_{\Sigma}\big)\overset{\sim}{\longrightarrow} \varprojlim_m\ \cal{A}_{w,\Sigma}\big(K_m^\circ; \cal{H}_w^\Box, J_\Sigma^{\circ,\Box}\big).
\]
Write $\Phi_\Sigma^{\circ,\Box}\in \cal{A}_{w,\Sigma}\big(K_0; \cal{H}_w^\Box, \cal{M}^{\circ,\Box}_{\Sigma}\big)$
for the automorphic function satisfying $\mrm{sp}_\infty\big(\Phi_\Sigma^{\circ,\Box}\big)=\big\{\phi_m^{\circ,\Box}\big\}_{m}$. Explicitly, we have
\begin{equation}\label{explicit values phi circ}
\Phi_\Sigma^{\circ,\Box}(b)(z)(\xi\cdot \cal{X}^\circ_m)=j_m\circ\varphi_m\big([z,b\xi]_{K_m^\circ}\big)
\end{equation}
for all $b\in G(\A^{w,\infty})$, $z\in\cal{H}_w^\Box$, $\xi\in\ (K_0)_\cal{S}$.
Moreover, $\Phi_\Sigma^{\circ,\Box}$ satisfies 
\[
    T_\q.\Phi_\Sigma^{\circ,\Box}=\Phi_\Sigma^{\circ,\Box}.\mbf{T}_\q,\qquad U_\p.\Phi_\Sigma^{\circ,\Box}=\Phi_\Sigma^{\circ,\Box}.\mbf{U}_{\p}
    \]
    for $\q\nmid\frak{f}_A$, $\p\mid\frak{f}$.
\begin{remark}
The projection map $\pi_{\Sigma}\colon \cal{X}_{\Sigma}\longrightarrow\bb{P}^1(F_\Sigma)$ has compact fibers. Moreover, the compatibility of the level structures provides a map $\iota_{\Sigma}\colon J_\Sigma^{\circ,\Box}\longrightarrow  J_{\Sigma}^\Box$. The morphisms of left $\Z_p[(K_0)_\Sigma]$-modules
\[
(\pi_{\Sigma})_*\colon\cal{M}_{\Sigma}^\Box\to\cal{M}(\bb{P}^1(F_\Sigma),J_{\Sigma}^\Box),\qquad (\iota_{\Sigma})_*\colon\cal{M}_{\Sigma}^{\circ,\Box}\to\cal{M}(\cal{X}_\Sigma,J_{\Sigma}^\Box)
\]
allows us to consider
\[\xymatrix{
\cal{A}_{w,\Sigma}\big(K_0; \cal{H}_w^\Box,\cal{M}_{\Sigma}^\Box\big)\ar[rr]^-{(\pi_{\Sigma})_*}&& \cal{A}_{w,\Sigma}\big(K_0; \cal{H}_w^\Box,\cal{M}(\bb{P}^1(F_\Sigma),J_{\Sigma}^\Box)\big)\\
&&\cal{A}_{w,\Sigma}\big(K_0; \cal{H}_w^\Box,\cal{M}_{\Sigma}^{\circ,\Box}\big)\ar[u]^-{(\iota_{\Sigma})_*}.
}\]
From the definitions it is not hard to see that
    \begin{equation}\label{circ-inter-compatibilities}
   (\pi_{\Sigma})_*\Phi_{\Sigma}^\Box= (\iota_{\Sigma})_*\Phi_\Sigma^{\circ,\Box}.
    \end{equation}
\end{remark}

Recall the $\bb{T}_\cal{S}^\circ$-equivariant map  $\mrm{pr}_A^\circ\colon J_\cal{S}^{\circ,\Box}\to A(E^\Box)_{\Z_p}$ of Remark \ref{rmk: projection to A} and set 
\[
\Phi^{\Box}_A:=(\mrm{pr}_A^\circ)_*\Phi_\cal{S}^{\circ,\Box}.
\]

\begin{lemma}\label{lemma: steinberg valued}
 Fix $b\in G(\A^{w,\infty})$, $z\in\cal{H}_w^\Box$, and $\p\in\cal{S}$. For any compact open subset $U^\p\subseteq\bb{P}^1(F_{\cal{S}\setminus\{\p\}})$
    \[
    \Phi^{\Box}_A(b)(z)\big(\bb{P}^1(F_\p)\times U^\p\big)=0.
    \]
\end{lemma}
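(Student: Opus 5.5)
We reduce the claim to a trace computation at a single level of the tower. Choose $m\ge1$ large enough that $U^\p$ is a union of sets of the form $\xi^\p\cdot\pi(\cal{X}^\circ_m)^\p$. By finite additivity of the measure $\Phi^\Box_A(b)(z)$ and the $(K_0)_\cal{S}$-equivariance of $\Phi^{\circ,\Box}_\cal{S}$, we may assume $U^\p=\xi^\p\cdot(\text{image of }K^\circ_{m}\text{ away from }\p)$. After replacing $b$ by $b\xi^\p$, we may further take $U^\p$ to be the standard neighbourhood $\prod_{\q\neq\p}\pi_\q(\cal{X}^\circ_{\q^m})$.

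The set $\bb{P}^1(F_\p)\times U^\p$ is then the image of the compact open subgroup $K'\subset K_0$ which has full level $K_{\p^0}$ at $\p$ and level $K^\circ_{\q^m}$ at $\q\neq\p$. By \eqref{explicit values phi circ} and finite additivity over the cosets $K'/K^\circ_m$, the value $\Phi^\Box_A(b)(z)(\bb{P}^1(F_\p)\times U^\p)$ equals $\mrm{pr}^\circ_A$ of $j_m$ applied to the trace, or pushforward, of the class $\varphi_m([z,b]_{K^\circ_m})$ along the degeneracy map $X_{K^\circ_m}\to X_{K'}$. In other words, it is the image of a point on a Shimura curve with level at $\p$ that is \emph{not} divisible by $\p$, transported to $J^\circ_m$ by pullback.

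The key step is to show that $\mrm{pr}^\circ_A\circ j_m\circ\pi^*$ kills everything coming from the Jacobian of $X_{K'}$, i.e.\ from the $\p$-old part. The elliptic curve $A$ has multiplicative reduction at $\p$, so $\pi_A$ is $\p$-new (Steinberg at $\p$). The $\pi_A$-isotypic (equivalently, $\p$-ordinary with $U_\p$-eigenvalue $\varepsilon_\p$) projection therefore annihilates the image of $\mrm{Jac}(X_{K'})$ under the two degeneracy pullbacks. We argue this as follows. The map $\mrm{pr}^\circ_A$ factors through $U_\cal{S}^{1-m}(\pi_2)_*^{m-1}$, landing at level $K^\circ_1$ at $\p$. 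On the $\p$-old subspace at level $\p$, $U_\p$ satisfies $U_\p^2-T_\p U_\p+N\p=0$, with eigenvalues of absolute value $\sqrt{N\p}$, or, via $T_\p$, eigenvalues not equal to $\pm1$. Hence no eigensystem of $A$ occurs there. Concretely, the homomorphism $J^\circ_1\to A$ coming from modularity factors through the $\p$-new quotient, and the composition $\mrm{Jac}(X_{K'})\to J^\circ_1\to A$ is zero because $\Pi_A$ has no $K_{\p^0}$-fixed vectors.

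The main obstacle is bookkeeping of compatibility: we have to verify that pullback from level $K'$ up to $K^\circ_m$, followed by $U_\p^{1-m}(\pi_2)_*^{m-1}$, lands in the image of the old space at level $K^\circ_1$. This follows from the coset identities used in the proof of Proposition \ref{prop: compatibility Hecke} and of Lemma \ref{classic Hida}: the operator $(\pi_2)_*\pi_1^*$ relates consecutive levels and preserves oldforms. Combining this with the vanishing of $\mrm{Hom}(\mrm{Jac}(X_{K'}),A)$ on the $\pi_A$-component (the same multiplicative reduction input invoked at the end of the proof of Theorem \ref{vanishing implies invariance}) gives the claim.
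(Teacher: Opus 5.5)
Your proposal is correct and follows the paper's own argument. Both reduce, via the basis $\{\xi\cdot(K_m^\circ)_{\cal{S}}\}$ and \eqref{explicit values phi circ}, to pullbacks of points from a Shimura curve with maximal level $K_{\p^0}$ at $\p$. Both then conclude that every homomorphism from that curve's Jacobian to $A$ vanishes, because $A$ has multiplicative reduction at $\p$; equivalently, $\pi_{A,\p}$ has no $K_{\p^0}$-fixed vectors.

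Your final ``bookkeeping'' step and the $U_\p$-eigenvalue digression are unnecessary. The composite $\mrm{pr}^\circ_A\circ j_m\circ\pi^*$ is, up to the sign $\prod_{\q\in\cal{S}}\varepsilon_\q^{1-m}$, already a homomorphism $\mrm{Jac}(X_{K'})\to A$ (tensored with $\Z_p$), so it is zero directly.
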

\begin{proof}
The proof is based on the same idea as that of Theorem \ref{vanishing implies invariance}. By considering the basis   $\big\{\xi\cdot(K_m^\circ)_{\cal{S}}\ \lvert \ \xi\in (K_0)_\cal{S},\ m\ge1 \big\}$ of compact open subsets of $\bb{P}^1(F_{\cal{S}})$,  the formula \eqref{explicit values phi circ} shows that $ \Phi^{\Box}_A(b)(z)\big(\bb{P}^1(F_\p)\times U^\p\big)$ arises from points on a Shimura curve $X$ without level at $\p$. Since the elliptic curve $A_{/F}$ has multiplicative reduction at every prime in $\cal{S}$, the only homomorphism from the Jacobian of $X$ to $A_{/F}$ is the trivial one.
\end{proof}

\begin{lemma}\label{lemma: group action}
    The automorphic function $\Phi^{\Box}_A$ satisfies
    \[
    \Phi^{\Box}_A(-\cdot g)=\chi_{\cal{S}^\pm}(g)\cdot g^{-1}.\Phi^{\Box}_A(-)\qquad \forall\ g\in G_\cal{S}.
    \]
\end{lemma}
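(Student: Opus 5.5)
The identity is local at each prime, so I would work one prime at a time. By Iwasawa's decomposition, $G_\p$ is generated by $(K_0)_\p$ together with the diagonal elements $\eta_{\varpi_\p}=\begin{pmatrix}1&0\\0&\varpi_\p\end{pmatrix}$, and the scalars act trivially because we work in $\PGL_2$. Both sides of the claimed identity are compatible with composition in $g$, since $\chi_{\cal{S}^\pm}$ is a character and $g\mapsto g^{-1}.$ is a left action. It therefore suffices to check the identity for $g\in(K_0)_\cal{S}$ and for $g=\eta_{\varpi_\p}$ with $\p\in\cal{S}$.

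\textbf{Case $g=k\in(K_0)_\cal{S}$.} Here the identity is the defining property of the space $\cal{A}_{w,\cal{S}}(K_0;\cal{H}_w^\Box,\cal{M}^{\circ,\Box}_\cal{S})$, which transfers to $\Phi^\Box_A$ under the pushforward $(\mrm{pr}^\circ_A)_*$. One only has to note that $\chi_{\cal{S}^\pm}(k)=1$, because $\det k$ is a unit.

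\textbf{Case $g=\eta_{\varpi_\p}$.} This is the substantive step.
\begin{itemize}
\item First, decompose the Hecke operator $U_\p$ acting on measures on $\bb{P}^1(F_\p)$ through the coset representatives $\widehat{\sigma}_{m,a}$ of \eqref{cosets-decompositions}. Together with the Hecke relation $U_\p.\Phi^{\circ,\Box}_\cal{S}=\Phi^{\circ,\Box}_\cal{S}.\mbf{U}_\p$ and the fact that $\mbf{U}_\p$ acts on $A$ through $\varepsilon_\p$ (so $\mrm{pr}^\circ_A$ is equivariant), this gives
\[
\sum_{a\in\kappa_\p}\widehat{\sigma}_{a}.\Phi^\Box_A(b\,\widehat{\sigma}_a)=\varepsilon_\p\cdot\Phi^\Box_A(b)
\]
for the restriction of the measure to the small disc $\cal{X}^\circ_{\p^1}$-image $U_{e_{\p,\circ}}$.
\item Second, the same computation for $T_\p$ at full level involves $\bb{P}^1(\kappa_\p)$, so it carries the one extra coset $\sigma_\infty$. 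Combining this with the vanishing of Lemma \ref{lemma: steinberg valued}, i.e.\ that the total mass over $\bb{P}^1(F_\p)$ is zero, isolates the single term $\eta_{\varpi_\p}.\Phi^\Box_A(b\,\eta_{\varpi_\p})$. This is the classical argument that a $U_\p$-eigenvector with eigenvalue $\pm1$ and vanishing $T_\p$-trace transforms under the full group by the Steinberg rule, as in \cite[Appendix II]{MokHeegner} and \cite{BDHidaRational}.
\item Third, one concludes that $\Phi^\Box_A(b\,\eta_{\varpi_\p})=\varepsilon_\p\,\eta_{\varpi_\p}^{-1}.\Phi^\Box_A(b)$. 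Since $\chi_\p^{\varepsilon_\p}(\eta_{\varpi_\p})=\varepsilon_\p$, this is the required identity.
\end{itemize}

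The main obstacle is the third step. A $U_\p$-eigenrelation only determines the measure on the small discs, so it has to be converted into an identity of measures on all of $\bb{P}^1(F_\p)$. I would handle this by testing both sides on the basis of compact opens $\xi\,\cal{X}^\circ_{m}$, split into two cases:
\begin{itemize}
\item Discs contained in $U_{e_{\p,\circ}}$ are treated directly by the $U_\p$-relation.
\item The complementary disc is treated using the total-mass-zero property from Lemma \ref{lemma: steinberg valued}, together with the coset decompositions listed in the proof of Proposition \ref{restriction - intertwining}.
\end{itemize}
Keeping track of the sign $\varepsilon_\p$ coming from $\mrm{pr}^\circ_A$ gives exactly the character $\chi_{\cal{S}^\pm}$.
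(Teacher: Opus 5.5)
Your reduction to $g=\eta_{\varpi_\p}$ and your use of the $U_\p$-relation on the disc $U\subset\bb{P}^1(F_\p)$ cut out by $K^\circ_{\p^1}$ agree with the paper. The treatment of the complement, however, has a genuine gap.

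Put $\nu_b:=\eta_{\varpi_\p}.\Phi^\Box_A(b\,\eta_{\varpi_\p})(z)-\varepsilon_\p\,\Phi^\Box_A(b)(z)$, and work in the $\p$-variable only. You have two inputs:
\begin{itemize}
\item The $U_\p$-relation gives $\nu_b=0$ on every compact open contained in $U$.
\item Lemma \ref{lemma: steinberg valued}, applied at $b$ and at $b\,\eta_{\varpi_\p}$, gives $\nu_b(\bb{P}^1(F_\p)\setminus U)=0$.
\end{itemize}
The second is a single scalar condition per $b$. It says nothing about the proper compact opens of the complement, for instance the residue discs $w_aU$ with $w_a=\begin{pmatrix}a&1\\-1&0\end{pmatrix}$ that make it up.

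The coset decomposition cannot fill this in. Transporting the $U_\p$-relation from $b$ to $bw_a$ would require $\eta_{\varpi_\p}^{-1}w_a\eta_{\varpi_\p}\in K_{\p^0}$, and this fails. Your second bullet does not help either. No $T_\p$-eigenrelation for $\Phi^\Box_A$ at full level is available as input; at full level the $A$-part simply vanishes, which is all Lemma \ref{lemma: steinberg valued} says. So nothing isolates $\eta_{\varpi_\p}.\Phi^\Box_A(b\,\eta_{\varpi_\p})$ subset by subset.

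The missing idea is to apply the $U_\p$-relation a second time, at the translated point $b\,\eta_{\varpi_\p}c_\p$, where $c_\p=\begin{pmatrix}0&1\\1&0\end{pmatrix}$. Write $\eta=\eta_{\varpi_\p}$ and $\Phi^\circ=\Phi^{\circ,\Box}_{\cal{S}}$.
\begin{itemize}
\item In $\PGL_2(F_\p)$ one has $c_\p\eta c_\p=\eta^{-1}$, hence $\eta c_\p\eta=c_\p$.
\item Take the relation $\big(\eta.\Phi^{\circ}(b'\eta)(z)\big)(V)=\Phi^{\circ}(b')(z)(V)\otimes\mbf{U}_{\varpi_\p}$ for $V\subseteq U$, at $b'=b\,\eta c_\p$.
\item Combined with $K_0$-automorphy, it becomes
\[
\Phi^{\circ}(b)(z)(W)=\big(\eta.\Phi^{\circ}(b\eta)(z)\big)(W)\otimes\mbf{U}_{\varpi_\p}\qquad\text{for all compact open } W\subseteq c_\p\eta^{-1}U=\bb{P}^1(F_\p)\setminus U .
\]
\end{itemize}
This reversed relation is what the second ($\beta_\p$-) identity of Proposition \ref{prop: comparison of distributions} supplies on $U^c$, and it is what the paper uses. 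Applying $\mrm{pr}^\circ_A$ and using $\varepsilon_\p^2=1$ gives $\nu_b=0$ on all of $U^c$. With this step, Lemma \ref{lemma: steinberg valued} is not needed at all.
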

\begin{proof}
   Let us write $\Phi^{\circ}$ for $\Phi_\cal{S}^{\circ,\Box}$ to ease the notation. By Cartan's decomposition and the properties of $\Phi^{\circ}$, it suffices to verify the claim for $g=\eta_{\varpi_\p}$. Let $b\in G(\A^{w,\infty})$ and $z\in\cal{H}_w^\Box$, then Proposition \ref{prop: comparison of distributions} and equation \eqref{circ-inter-compatibilities} imply the existence of a compact open subset  $U\subset \bb{P}^1(F_\cal{S})$ depending on $b_\cal{S}$ such that
   \[
   \eta_{\varpi_\p}.\Phi^{\circ}(b\cdot \eta_{\varpi_\p})(z)=\Phi^{\circ}(b)(z)\otimes \mbf{U}_{\varpi_\p}\qquad \text{on}\ U,
   \]
   \[
   \Phi^{\circ}(b)(z)=\eta_{\varpi_\p}.\Phi^{\circ}(b\cdot \eta_{\varpi_\p})(z)\otimes \mbf{U}_{\varpi_\p}\qquad \text{on}\ U^c.
\]
   Since $\mrm{pr}_A^\circ(-\cdot \mbf{U}_{\varpi_\p})=\varepsilon_\p\cdot \mrm{pr}_A^\circ(-)$ and $\varepsilon_\p^2=1$, we deduce that 
   \[
   \eta_{\varpi_\p}.\Phi^{\Box}_A(b\cdot \eta_{\varpi_\p})(z)=\varepsilon_\p\cdot \Phi^{\Box}_A(b)(z).
   \]
\end{proof}

We are ready to describe $\omega_{A,S}^\Box$ using the tower of Shimura curves with $K_m^\circ$-level structures:
\begin{corollary}\label{description using towers}
 For any $h\in G(\A^{S,\infty})$ and $z\in\cal{H}_w^\Box$, the equality
    \[
\omega_{A,S}^\Box(h)(z)=\Phi^{\Box}_A(h)(z)
\]
holds in $\mrm{Hom}_{\Z_p}\big(\mrm{St}_{\cal{S}^\pm}, A(E^\Box)\otimes_\Z\Z_p\big)$.
\end{corollary}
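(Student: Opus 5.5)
The plan is to show that the function $h\mapsto \Phi^{\Box}_A(h)(z)$ already satisfies every property that characterizes $\omega_{A,S}^\Box$, and then appeal to the injectivity of the restriction map \eqref{defining property}. Concretely, $\omega_{A,S}^\Box$ is defined as the unique lift of $\omega^\Box_{A,w}$ along the injection
\[
\mrm{H}^0\big(X_G^S(\frak{f});\ \mrm{St}_{\cal{S}^\pm}\otimes_{\Z_p} \Z_p[\cal{H}_w^\Box],\ A(E^\Box)_{\Z_p}\big)_{\pi_A}\hooklongrightarrow \mrm{H}^0\big(X_G^w(\frak{f});\ \Z_p[\cal{H}_w^\Box],\ A(E^\Box)_{\Z_p}\big)_{\pi_A}.
\]
It therefore suffices to prove two things:
\begin{itemize}
\item[$\bfcdot$] (a) $\Phi^\Box_A$, viewed as a function of $h\in G(\A^{S,\infty})$ with values in $\mrm{Hom}_{\Z_p}(\mrm{St}_{\cal{S}^\pm}\otimes\Z_p[\cal{H}_w^\Box],A(E^\Box)_{\Z_p})$, defines an element of the source;
\item[$\bfcdot$] (b) its image under \eqref{defining property} equals $\omega^\Box_{A,w}$.
\end{itemize}

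For (a), Lemma \ref{lemma: steinberg valued} shows that for each $b,z$ the measure $\Phi^\Box_A(b)(z)$ on $\bb{P}^1(F_\cal{S})$ vanishes on every set of the form $\bb{P}^1(F_\p)\times U^\p$. A finitely additive measure on $\bb{P}^1(F_\cal{S})=\prod_\p \bb{P}^1(F_\p)$ with this property is exactly a functional on $\bigotimes_\p \mrm{LC}(\bb{P}^1(F_\p),\Z_p)/\Z_p=\mrm{St}_\cal{S}$, using the factorization \eqref{factorization distributions}. Next, Lemma \ref{lemma: group action} gives $\Phi^\Box_A(b g)=\chi_{\cal{S}^\pm}(g)\,g^{-1}.\Phi^\Box_A(b)$ for all $g\in G_\cal{S}$, which is exactly the extension of the $K_0$-level equivariance needed to descend to level $K_0^S$ with $\mrm{St}_{\cal{S}^\pm}$-coefficients. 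Combining this with the $G(F)$-equivariance inherited from $\Phi_\cal{S}^{\circ,\Box}$, the assignment
\[
\phi(h)(\mathbbm{1}_{U_{e}}\otimes[z]):=\Phi^\Box_A(h)(z)(U_e)
\]
is well defined and $G(F)$-invariant for the twisted action, hence lies in $\mrm{H}^0(X_G^S(\frak{f});\dots)$. It is $\pi_A$-isotypic because the good Hecke operators act on coefficients through $\mrm{pr}^\circ_A$, which factors through $f_A$.

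For (b), evaluate \eqref{defining property}: $b\mapsto \phi(b^S)(b_\cal{S}.\mathbbm{1}_{U_{e_\circ}})$. By Lemma \ref{lemma: group action}, this is $\Phi^\Box_A(b)(z)(U_{e_\circ})$ up to matching the twist. Since $U_{e_\circ}$ corresponds to $(K^\circ_1)_\cal{S}/K_\infty$, formula \eqref{explicit values phi circ} with $m=1$ gives $\mrm{pr}^\circ_A(j_1\varphi_1[z,b]_{K_1^\circ})$. This is precisely the description of $\omega^\Box_{A,w}$ from \cite[Section 2.2.2]{Iwasawa4mock}, via the modular parametrization $J_1^\circ\to A$ and the normalization by $\varphi_1$. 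Injectivity of \eqref{defining property} then yields the corollary.

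The main obstacle is the sign and twist bookkeeping. One must check that the character $\chi_{\cal{S}^\pm}$ produced in Lemma \ref{lemma: group action} (coming from $\varepsilon_\p=f_A(\mbf{U}_{\varpi_\p})$) matches the twist built into $\mrm{St}_{\cal{S}^\pm}$. One must also check that the normalizations of $\mrm{pr}^\circ_A$ (the factors $U_\cal{S}^{1-m}$) and of $\omega^\Box_{A,w}$ agree at level $m=1$. I would first verify both points in the case of a single prime, $|\cal{S}|=1$, and then use the product decomposition to pass to general $\cal{S}$.
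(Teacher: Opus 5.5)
Your proposal is correct and follows essentially the same route as the paper. Lemma~\ref{lemma: steinberg valued} gives Steinberg-valuedness, and Lemma~\ref{lemma: group action} together with the automorphy of $\Phi_\cal{S}^{\circ,\Box}$ gives $G(F)$-invariance. Evaluating at $b_\cal{S}.\mathbbm{1}_{U_{e_\circ}}$ then recovers $\mrm{pr}_A^\circ([z,b]_{K_1^\circ})$, and uniqueness comes from the injectivity of \eqref{defining property}. The twist bookkeeping you flag is handled in the paper exactly by the identity $\chi_{\cal{S}^\pm}(b_\cal{S})\cdot\Phi_A^\Box(b^S)(z)(\mathbbm{1}_{U_{b_\cal{S}.e_\circ}})=\Phi_A^\Box(b)(z)(\mathbbm{1}_{U_{e_\circ}})$, which is Lemma~\ref{lemma: group action} applied with $g=b_\cal{S}$, so the single-prime reduction you propose is not needed.
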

\begin{proof}
Lemma \ref{lemma: steinberg valued} shows that $\Phi^{\Box}_A$ takes values in the dual of the Steinberg representation, i.e.,
\[
\Phi_A^\Box(b)(z)\hspace{1mm}\in\hspace{1mm} \mrm{Hom}_{\Z_p}\big(\mrm{St}_{\cal{S}^\pm}, A(E^\Box)\otimes_\Z\Z_p\big)
\]
for every $b\in G(\A^{w,\infty})$ and $z\in\cal{H}_w^\Box$. Now, if $h\in G(\A^{S,\infty})$ then for any $\delta\in G(F)$ we compute 
\[\begin{split}
(\delta^{-1}.\Phi_A^\Box)(h)(z)(\mathbbm{1}_U):=&\chi_{\cal{S}^\pm}(\delta_\cal{S})\cdot \Phi_A^\Box(\delta^Sh)(\delta_w.z)(\mathbbm{1}_{\delta_\cal{S}.U})\\
=&\Phi_A^\Box((\delta^w)^{-1}\cdot \delta^Sh\cdot\delta_\cal{S})(z)(\mathbbm{1}_U)\\
=&\Phi_A^\Box(h)(z)(\mathbbm{1}_U)
\end{split}\]
where the second equality follows from Lemma \ref{lemma: group action} and the properties of the automorphic function $\Phi_\cal{S}^{\circ,\Box}$, the third from $h\cdot\delta_\cal{S}=\delta_\cal{S}\cdot h$ and $\delta^w=\delta^S\cdot \delta_\cal{S}$. Thus, $\Phi_A^\Box$ defines a class in
\[
\mrm{H}^0\big(X_G^S(\frak{f});\ \mrm{St}_{\cal{S}^\pm}\otimes_{\Z_p} \Z_p[\cal{H}_w^\Box],\ A(E^\Box)\otimes_\Z\Z_p\big)_{\pi_A}.
\]
We are left to show that the image of the class in $\mrm{H}^0\big(X_G^w(\frak{f});\ \Z_p[\cal{H}_w^\Box],\ A(E^\Box)\otimes_\Z\Z_p\big)$
with respect to the map \eqref{defining property}  is the class defined by the modular parametrization $X^\circ_1\to A$. Explicitly, we need to evaluate $\Phi_A^\Box(b^S)(z)(b_\cal{S}.\mathbbm{1}_{U_{e_\circ}})$ for any $b\in G(\A^{w,\infty})$. As required, we compute that
\[\begin{split}
\Phi_A^\Box(b^S)(z)(b_\cal{S}.\mathbbm{1}_{U_{e_\circ}})
:=&\chi_{\cal{S}^\pm}(b_\cal{S})\cdot \Phi_A^\Box(b^S)(z)(\mathbbm{1}_{U_{b_\cal{S}.e_\circ}})\\
=&\Phi_A^\Box( b)(z)(\mathbbm{1}_{U_{e_\circ}})\\
=&\mrm{pr}_A^\circ([z,b]_{K_1^\circ}).
\end{split}\]
\end{proof}

\subsection{Proof of Theorem \ref{thm: comp circ and nocirc}}\label{Proof of theorem comp circ and nocirc}

It is worth highlighting the following consequence of Proposition \ref{independence pushforward} and equation \eqref{circ-inter-compatibilities}.
\begin{corollary}\label{cor: relation between circ and non-circ measures}
    Suppose that $\cal{S}=\Sigma=\Sigma_1\amalg\Sigma_2$. For any plectic zero divisor $D_{\Sigma_1}\in \Z_p[\cal{H}_{\Sigma_1}]^\plectic$, any collection $\omega_{\Sigma_1}=\{\omega_\p\in\p\cal{O}_\p\}_{\p\in\Sigma_1}$, and any compact open subset $U_{\Sigma_2}\subseteq\bb{P}^1_{\Sigma_2}$, the following equality holds
    \[\begin{split}
 \Bigg(\int_{\bb{P}^1_{\Sigma_1}\times U_{\Sigma_2} }\ell_{D_{\Sigma_1},\omega_{\Sigma_1}}\otimes\mathbbm{1}_{\Sigma_2}(\gamma)\hspace{2mm}\mrm{d}&\eta^{\varpi,\Box}_{(\tau_w,h)}(\gamma)\Bigg)\otimes1\\
 =& \left(\int_{\bb{P}^1_{\Sigma_1}\times U_{\Sigma_2} }\ell_{D_{\Sigma_1},\omega_{\Sigma_1}}\otimes\mathbbm{1}_{\Sigma_2}(\gamma)\hspace{2mm}\mrm{d}\hspace{0.5mm}\big(\iota_{*}\Phi_\cal{S}^{\circ,\Box}(h)(\tau_w)\big)(\gamma)\right)\otimes1.
    \end{split}\]
\end{corollary}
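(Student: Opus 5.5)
The plan is to specialize Proposition \ref{independence pushforward} to the case $\Sigma=\cal{S}$, so that $\Sigma^c=\emptyset$. In that case there is no factor $\bb{X}_{\mrm{v}_{\Sigma^c}}$ and no integrand $f_{\lambda_{\Sigma^c},\varpi}$. The measure $\eta^{\varpi,\Box}_{(\tau_w,h)}=\chi_{\cal{S}^\pm}(g_{\mrm{v}_\cal{S}})\cdot\pi_{\cal{S},*}\big(\mu^{\varpi,\Box}_{(\tau_S,h)}\big)$ is then a measure on $\bb{P}^1_\cal{S}$. By that proposition, the localized integral on the left-hand side is independent of the $\cal{S}$-component $\tau_\cal{S}$ of $\tau_S$. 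This independence is what licenses the notation $\eta^{\varpi,\Box}_{(\tau_w,h)}$, and it lets me choose $\tau_\cal{S}$ conveniently.

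I would take $\tau_\cal{S}$ with $\mrm{red}(\tau_\cal{S})=\mrm{v}_\circ$, the tuple of standard vertices. Then $m_{\mrm{v}_\p}=0$ for every $\p$, so I may take $\widetilde{g}_{\mrm{v}_\cal{S}}=1$ and $g_{\mrm{v}_\cal{S}}=1$. Consequently $\chi_{\cal{S}^\pm}(g_{\mrm{v}_\cal{S}})=1$, the Galois twist $[\chi(g_{\mrm{v}_\cal{S}})]$ is trivial, and $\mu^{\varpi,\Box}_{(\tau_S,h)}=\iota\big(\Phi^\Box_\cal{S}(h)(\tau_w)\big)$ is supported on $\bb{X}_{\mrm{v}_\circ}=\cal{X}_\cal{S}$. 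Hence
\[
\eta^{\varpi,\Box}_{(\tau_w,h)}=(\pi_\cal{S})_*\Phi^\Box_\cal{S}(h)(\tau_w),
\]
where $\pi_\cal{S}\colon\cal{X}_\cal{S}\to\bb{P}^1(F_\cal{S})$ is the projection. The integrand $\ell_{D_{\Sigma_1},\omega_{\Sigma_1}}\otimes\mathbbm{1}_{\Sigma_2}$ is a function on $\bb{P}^1_\cal{S}$, so the integral against the pushforward equals the integral of its pullback against $\mu$.

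Next I apply the compatibility \eqref{circ-inter-compatibilities}, $(\pi_\cal{S})_*\Phi_\cal{S}^\Box=(\iota_\cal{S})_*\Phi_\cal{S}^{\circ,\Box}$, evaluated at $(h,\tau_w)$. This identifies the two measures on $\bb{P}^1(F_\cal{S})$ before localization. Integrating the same continuous function, which is valued in a finite-dimensional space, and then tensoring with $1\in\bb{I}$ gives the claimed equality.

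The main obstacle is bookkeeping. I must check that the conventions for $\eta$ (the sign $\chi_{\cal{S}^\pm}$ and the Galois twist) are genuinely trivial at the standard vertex. I must also check that the hypotheses of Proposition \ref{independence pushforward} allow $\Sigma^c=\emptyset$ with the convention $\bb{X}_{\mrm{v}_\emptyset}=\mathrm{pt}$ and $f_{\lambda_\emptyset}=1$. If the proposition is read as requiring $\Sigma^c\neq\emptyset$, I would instead rerun its inductive argument directly with $\Sigma^c=\emptyset$: move $\tau_\cal{S}$ one edge at a time via Proposition \ref{prop: comparison of distributions}, using $(\alpha^\otimes_{\varpi_\p})\lvert_{\underline{0}}=\varepsilon_\p$ and Theorem \ref{vanishing implies invariance}.
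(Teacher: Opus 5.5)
Your proposal is correct and is essentially the paper's own argument. You apply Proposition~\ref{independence pushforward} with $\Sigma=\mathcal{S}$ to move $\tau_{\mathcal{S}}$ to the tuple of standard vertices, where $g_{\mathrm{v}_{\mathcal{S}}}=1$ makes both the sign $\chi_{\mathcal{S}^\pm}$ and the Galois twist trivial, and then conclude from \eqref{circ-inter-compatibilities}. This choice of $\Sigma$ is allowed, since that proposition only requires $\Sigma$ to be non-empty, so your fallback of rerunning its induction is unnecessary.
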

\begin{proof}
    By Proposition \ref{independence pushforward}, we can suppose that for every $\p\in\cal{S}$ the  $\p$-component of $\tau_S\in\cal{H}_{S}^\Box$ in the definition of $\eta^{\varpi,\Box}_{(\tau_w,h)}$ reduces to the standard vertex in the Bruhat--Tits tree $\cal{T}_\p$. Then, the claim follows from the definitions and equation \eqref{circ-inter-compatibilities}.
\end{proof}

\begin{proof}[Proof of Theorem \ref{thm: comp circ and nocirc}]
The goal is to show the equality
    \[
    (\iota_\plectic)_*\varphi_A^\Box=(\cal{K})_*\varphi_A^{\circ,\Box}
    \]
where  $\iota_\plectic\colon \Z_p[\cal{H}_{S}^\Box]^\plectic\hookrightarrow \Z_p[\cal{H}_{S}^\Box]$ is the natural inclusion and  $\cal{K}\colon A(E^\Box)^\wedge\hookrightarrow \mrm{H}^1(E^\Box,T_p(A))$ is the Kummer map. 
Fix $h\in G(\A^{S,\infty})$. After Corollaries \ref{cor: relation between circ and non-circ measures} and \ref{description using towers}, it suffices to show that for every $D\otimes[\tau_w]\in\Z_p[\cal{H}_{S}^\Box]^\plectic$
\[
(\varphi_\varpi^\Box\otimes1)(h)(D\otimes[\tau_{w}])=\left(\int_{\bb{P}^1(F_\cal{S})}\ell_{D,q}\hspace{2mm}\mrm{d}\eta^{\varpi,\Box}_{(\tau_{w},h)}\right)\otimes1.
\]

\medskip
\noindent Let $n\le\lvert\cal{S}\rvert$ be an integer. Suppose by inductive hypothesis that for every subset $\Sigma\subseteq\cal{S}$ of size $\lvert\Sigma\rvert<n$ we know that
\[
(\varphi_\varpi^\Box\otimes1)(h)(D_\Sigma\otimes[\tau_{\Sigma^c}])=\left(\int_{\bb{P}^1_\Sigma\times\bb{X}_{\mrm{v}_{\Sigma^c}}}\ell_{D_\Sigma,q_\Sigma}\otimes f_{\tau_{\Sigma^c}}\hspace{2mm}\mrm{d}\eta^{\varpi,\Box}_{(\tau_{\Sigma^c},h)}\right)\otimes1
\]
where $D_\Sigma\in\Z_p[\cal{H}_\Sigma]^\plectic$ and $\tau_{\Sigma^c}\in\cal{H}_{\Sigma^c}$. For the induction step it suffices to consider the following:
\begin{itemize}
    \item [$\bfcdot$] a subset $\Sigma\subset\cal{S}$ of size $\lvert \Sigma\rvert=n-1$, a prime $\p\not\in \Sigma$,
     \item [$\bfcdot$] two points $\tau_\p,\tau_\p'\in\cal{H}_\p$ such that $m_{\mrm{v}_\p'}=m_{\mrm{v}_\p}+1$, and $\tau_{\Sigma^c\setminus\{\p\}}\in\cal{H}_{\Sigma^c\setminus\{\p\}}$.
\end{itemize} 
After setting $\tau_{\Sigma^c}=(\tau_\p, \tau_{\Sigma^c\setminus\{\p\}})$ and $\tau_{\Sigma^c}'=(\tau_\p',\tau_{\Sigma^c\setminus\{\p\}})$,
 we can compute that
\[\begin{split}
        (\varphi_\varpi^\Box\otimes1)(h)(D_\Sigma\otimes[\tau_{\Sigma^c}])-(\varphi_\varpi^\Box&\otimes1)(h)(D_\Sigma\otimes[\tau'_{\Sigma^c}])\\   =\Bigg(\int_{\bb{P}^1_\Sigma\times\bb{X}_{\mrm{v}_{\Sigma^c}}}&\ell_{D_\Sigma,q_\Sigma}\otimes (f_{\tau_{\Sigma^c}}-f_{\tau'_{\Sigma^c}})\hspace{2mm}\mrm{d}\eta^{\varpi,\Box}_{(\tau_{\Sigma^c},h)}\Bigg)\otimes1\\
        &+\Bigg(\int_{\bb{P}^1_\Sigma\times(\bb{X}_{\mrm{v}_{\Sigma^c}}\cup\bb{X}_{\mrm{v}_{\Sigma^c}'})}\ell_{D_\Sigma,q_\Sigma}\otimes f_{\tau_{\Sigma^c}'}\hspace{2mm}\mrm{d}\big(\eta^{\varpi,\Box}_{(\tau_{\Sigma^c},h)}- \eta^{\varpi,\Box}_{(\tau_{\Sigma^c}',h)}\big)\Bigg)\otimes1.
    \end{split}
    \]
    The first addend is easily seen to be equal to
    \[
    \Bigg(\int_{\bb{P}^1_{\Sigma\cup\{\p\}}\times\bb{X}_{\mrm{v}_{\Sigma^c\setminus\{\p\}}}}\ell_{D_\Sigma,q_\Sigma}\otimes\log_{\varpi_\p}\left(\frac{t_\p-\tau_\p}{t_\p-\tau_\p'}\right)\otimes f_{\tau_{\Sigma^c\setminus\{\p\}}}\hspace{2mm}\mrm{d}\eta^{\varpi,\Box}_{(\tau_{\Sigma^c\setminus\{\p\}},h)}\Bigg)\otimes1,
  \]
    while the second is computed in Corollary \ref{ord-integral}. Since 
    \[
\frac{\partial_\p\big(\alpha_{\varpi_\p}^\otimes\big)}{\alpha^\otimes_{\varpi_\p}}\bigg\lvert_{\underline{0}}=-\frac{\log_{\varpi_\p}(q_\p)}{\mrm{ord}_\p(q_\p)}\otimes(\otimes_{\q\not=\p}1)
\]
by Proposition \ref{logarithmic derivative U_p}, we obtain
    \[\begin{split}
(\varphi_\varpi^\Box\otimes1)(h)(D_\Sigma\otimes&[\tau_{\Sigma^c}])-(\varphi_\varpi^\Box\otimes1)(h)(D_\Sigma\otimes[\tau'_{\Sigma^c}])\\
 &=\Bigg(\int_{\bb{P}^1_{\Sigma\cup\{\p\}}\times\bb{X}_{\mrm{v}_{\Sigma^c\setminus\{\p\}}}}\ell_{D_\Sigma,q_\Sigma}\otimes\log_{q_\p}\left(\frac{t_\p-\tau_\p}{t_\p-\tau_\p'}\right)\otimes f_{\tau_{\Sigma^c\setminus\{\p\}}}\hspace{2mm}\mrm{d}\eta^{\varpi,\Box}_{(\tau_{\Sigma^c\setminus\{\p\}},h)}\Bigg)\otimes1
\end{split} \]   
as required.
\end{proof}

\bibliography{Plectic}
\bibliographystyle{alpha}

\end{document}